% !TEX encoding = UTF-8 Unicode
\documentclass[a4paper,12pt]{article}

\usepackage{amsmath}
\usepackage{amsthm}
\usepackage{dsfont}
\usepackage{suetterl}
\usepackage{adforn}
\usepackage{calligra}
\usepackage{pbsi}
\usepackage{aurical}
\usepackage[T1]{fontenc}
\usepackage{textgreek}

\usepackage[utf8]{inputenc}
\usepackage{makeidx}
\usepackage{enumerate}
\usepackage{auto-pst-pdf}
\usepackage{longtable}
\usepackage{pb-diagram}
\usepackage{multirow}
%\usepackage{booktabs}
%\usetikzlibrary{arrows,snakes,backgrounds}
%\usetikzlibrary{matrix}
%\tikzstyle{point}=[circle,draw,inner sep=0pt,minimum size=4mm]
%\tikzstyle{ordinario}=[circle,draw,inner sep=0pt,minimum size=3mm]
%\tikzstyle{grande}=[circle,draw,inner sep=0pt,minimum size=6mm]

\usepackage{calc}
\usepackage{ifthen}
\usepackage{latexsym}
\usepackage{amssymb}
\usepackage{amsfonts}
\usepackage[mathscr]{eucal}
\usepackage{amscd}
\usepackage{array}
\usepackage{amsrefs}

%%%%%%%%%%%%%%%%%%%%%%%%%%%%%%%%%%%%%%%%%%%%%%%%%%%%%%%%%%%%%%%%%%%%%%%%%%%%%%%%%%%%%%%%%

 %\DeclareMathOperator{\ind}{Ind\,}
 \DeclareMathOperator{\Rep}{Rep}
 \DeclareMathOperator{\soc}{soc}

\DeclareMathOperator{\R}{Re}\DeclareMathOperator{\I}{Im}
 \DeclareMathOperator{\ev}{ev}
\DeclareMathOperator{\Ker}{Ker}
\DeclareMathOperator{\Hom}{Hom} \DeclareMathOperator{\Mod}{Mod}
 \DeclareMathOperator{\Cor}{CoRep}
\DeclareMathOperator{\ch}{char} \DeclareMathOperator{\End}{End}
\DeclareMathOperator{\Ext}{Ext}

%%%%%%%%%%%%%%%%%%%%%%%%%%%%%%%%%%%%%%%%%%%%%%%%%%%%%%%%%%%%%%%%%%%%%%%%%%%%%%%%%%%%%

\theoremstyle{plain}
 \newtheorem{theorem}{Theorem}
 \newtheorem{lemma}[theorem]{Lemma}
 \newtheorem{proposition}[theorem]{Proposition}
 \newtheorem{corollary}[theorem]{Corollary}

\theoremstyle{remark}
 \newtheorem{remark}[theorem]{Remark}

\theoremstyle{definition}

 \newtheorem{definition}[theorem]{Definition}

%\setlength{\parskip}{1.8}
%\renewcommand{\baselinestretch}{1.05}

%\newpsobject{grilla}{psgrid}{subgriddiv=1,griddots=5,gridlabels=6pt}

\makeindex

\begin{document}

\title{Algebraically equipped posets}

\author{Raymundo Bautista \\ \textit{\small{raymundo@matmor.unam.mx}} \\ 
\text{Ivon Dorado} \\ \textit{\small{idorado@matmor.unam.mx}} \\ %\text{} \\
\\ \text{\small{Centro de Ciencias Matem\'aticas, Morelia}}\\ \text{\small{Universidad Nacional Nacional Aut\'onoma de Mexico}} }
\date{}
\maketitle

\centerline{\textit{To the memory of Alexander Zavadskij}}

\begin{abstract}
\noindent {\footnotesize We introduce partially ordered sets (posets) with an additional structure given by a collection of vector subspaces of an algebra $A$. We call them algebraically equipped posets. Some particular cases of these, are generalized equipped posets and $p$-equipped posets, for a prime number $p$.
We study their categories of representations and establish equivalences with some module categories, categories of morphisms and a subcategory of representations of a differential tensor algebra. Through this, we obtain matrix representations and its corresponding matrix classification problem.}
\end{abstract}

 {\small {\it Mathematics Subject Classification}: 06A11, 16G20, 16G60.\\

 {\small {\it Keywords}: Equipped poset, Algebraically equipped poset,  Representation, Corepresentation, Matrix problem, Right-peak algebra, Ditalgebra.

%%%%%%%%%%%%%%%%%%%%%%%%%%%%%%%%%%%%%%%%%%%%%%%%%%%%%%%%%%%%%%%%%%%%%%%%%%%%%%%%%%%%%%%%%
%%%%%%%%%%%%%%%%%%%%%%%%%%%%%%%%%%%%%%%%%%%%%%%%%%%%%%%%%%%%%%%%%%%%%%%%%%%%%%%%%%%%%%%%%

\section{Introduction}\label{intro}

Representation theory of partially ordered sets (posets), and posets with additional structure, has its roots in the representation theory of finite-dimensional algebras \cites{ARS-95,Gabriel-72,Gab-Roi-92,Ringel-84}. It is strongly related to the theory of rings and modules \cite{Zav-Kir-76,Zav-Kir-77,KS}, the theory of abelian groups \cite{Arnold2000,Arnold,Butler}, the theory of modular lattices, and the theory of bilinear and quadratic forms \cites{Ringel-84,Dr3,Zav-91-555,Naz-Roi-02,Naz-Roi-Sm-05}.

A significant example of posets with additional structure are equipped posets \cite{corep07,Zab-Zav,Zav-03-TEP,Zav-05-EPFG,R-11}, they are posets with an order relation of two kinds. The ideas of those researches were extended in two senses: In \cite{ZavGEP-11}, the terminology was modified and generalized equipped posets were introduced and their representations were studied over Galois field extensions, embracing completely the case studied in almost all the previous papers on the matter. On the other hand, in \cite{Dorado10}, there were introduced posets equipped with three kinds of binary relations (three-equipped posets) and its representations and corepresentations were defined over an inseparable cubic field extension. These works use, mainly, matrix methods.

Equipped posets (in all senses mentioned above) of finite-representation type were described in fact in \cite{KS}, in the course of a research about schurian representation-finite vector space categories.

In order to study relations of the representations of equipped posets with the representation theory of algebras, we introduce posets with an additional structure given by a collection of  vector subspaces of an algebra $A$. We call them algebraically equipped posets. Generalized equipped posets and $p$-equipped posets (also introduced in this paper), for a prime number $p$, are particular cases of these.

To every algebraically equipped poset, it is assigned an algebra $\Lambda$, in such a way that its representations are the objects of a subcategory of $\Lambda$-modules. If the equipment of the poset (i. e. the collection of vector subspaces) satisfies some conditions, $\Lambda$ is a right-peak algebra, in the sense of \cite{Simson} and \cite{KS}.

We can extend an algebraically equipped poset, as in \cite{BM}, to obtain an algebra $\Lambda$, which is right-peak, left-peak and 1-Gorenstein. Moreover, the category of representations of the original poset is equivalent to the category of right $\Lambda$-modules with projective socle, which do not have the projective-injective $\Lambda$-module as a direct summand. 

This last category is, in turn, equivalent to the category $\mathcal{V}$, of right $\Lambda$-modules with injective top, which do not have the projective-injective $\Lambda$-module as a direct summand. Every object $M\in \mathcal{V}$ has a minimal projective presentation $P_1 \rightarrow P_2$, where $P_2$ is projective-injective and $P_1$ belongs to $\mathcal{V}$.

The category $\mathcal{M}$ formed by the morphisms of the form $P_1 \rightarrow P_2$, has an exact structure $\mathcal{E}$ with enough projective and injective objects, and it holds $\Ext_{\mathcal{E}}^2(M,N)=0$, for all $M,N \in \mathcal{M}$. Then $\mathcal{M}$ is hereditary, in contrast to $\mathcal{U}$ and $\mathcal{V}$ which, in general, are not hereditary. We prove that $\mathcal{U}$ and $\mathcal{V}$ are equivalent to the the category $\mathcal{M}$ modulo the morphisms which have a factorization through objects of the form $P\rightarrow 0$.

We will show that $\mathcal{M}$ is equivalent to  the category of representations of a differential tensor algebra. Through this, we obtain matrix representations and its corresponding matrix classification problem. It should be mentioned that matrix problems have been an important classification tool in most of the researches about representation theory of partially ordered sets. 

To be more clear with the equivalences and constructions, this paper includes an appendix, following \cite{BSZ}, about differential tensor algebras (ditalgebras). 

%%%%%%%%%%%%%%%%%%%%%%%%%%%%%%%%%%%%%%%%%%%%%%%%%%%%%%%%%%%%%%%%%%%%%%%%%%%%%%%%%%%%%%%%%
%%%%%%%%%%%%%%%%%%%%%%%%%%%%%%%%%%%%%%%%%%%%%%%%%%%%%%%%%%%%%%%%%%%%%%%%%%%%%%%%%%%%%%%%%

\section{Equipped posets and its representations}

%%%%%%%%%%%%%%%%%%%%%%%%%%%%%%%%%%%%%%%%%%%%%%%%%%%%%%%%%%%%%%%%%%%%%%%%%%%%%%%%%%%%%%%%%

\subsection{$p$-equipped posets}\label{defPequipados}

Let $p$ be a prime number, a finite poset $(\mathscr{P},\leq )$ is called \textit{p-equipped} if to every pair $(x,y)\in\  \leq$, it is assigned only one value $\ell\in \{1,2,\dots,p\}$, with the notation $x\leq ^\ell y$, and the following condition holds:

\begin{equation}\label{definition}
\text{If }x\leq ^\ell y\leq ^mz\ \text{ and  }x\leq ^nz\text{, then }n\geq \min \{ \ell+m-1, p \}.
\end{equation}

A relation $x\leq y$ is called \textit{weak} or \textit{strong}, if $x\leq ^1 y$ or $x\leq ^p y$, respectively. It follows that the composition of a strong relation with any other relation is strong. 

For each point $x\in \mathscr{P}$, we have that $x\leq^\ell x\leq^\ell x$ implies $\ell \geq \min \{2\ell -1,p\}$. If $\ell \geq p$ then $\ell=p$. And if $\ell \geq 2 \ell -1$, then $\ell =1$. This is $x\leq ^1 x$, or $x\leq ^p x$. In the first case, we call $x$ a \textit{weak} point, and in the second one \textit{strong}. A relation between an arbitrary point and a strong is always strong.

We write $x<^\ell y$, if $x\leq^\ell y$ and $x\neq y$. In
particular, if $\ell\in \{2,3,\dots,p-1\}$ we have $x<^\ell y$.

If a $p$-equipped poset $\mathscr{P}$ is \textit{trivially equipped}, i. e. it contains only strong points, then it is an \textit{ordinary} poset.

%%%%%%%%%%%%%%%%%%%%%%%%%%%%%%%%%%%

\subsubsection{Representations of $p$-equipped posets}\label{reps}

Consider a $p$-equipped poset $\mathscr{P}$, and a pair of fields $(\textsf{F},\textsf{G})$, where \textsf{G} is a normal extension of degree $p$ over \textsf{F}. We know that $\textsf{G}=\textsf{F}(\xi)$, for some primitive element $\xi$, such that $\xi^p=q\in \textsf{F}$.

Depending on the characteristic of \textsf{F}, the extension $\textsf{G}/\textsf{F}$ may be cyclic or purely inseparable. 

When $\ch \textsf{F}=p$, the extension $\textsf{G}/\textsf{F}$ is purely inseparable. In this case, there is a natural derivation $\delta$ of \textsf{G}, which is an endomorphism of \textsf{G}, satisfying 
\[\delta(a)=0, \text{\ for all\ } a\in \textsf{F},\ \ \delta(\xi^i)=i\xi^{i-1}, \ \  \text{\ for each \ } i<p.\] 

Indeed, the minimal polynomial of $\xi$ over \textsf{F} is $t^p-q$, and the usual derivative of the polynomial ring $\textsf{F}[t]$, is null on the ideal $\langle t^p-q \rangle$. Then, there is an induced derivation of $\textsf{F}[t]/\langle t^p-q \rangle$, from which we obtain $\delta$, by using the isomorphism $\textsf{F}[t]/\langle t^p-q \rangle \cong \textsf{G}$. Clearly $\delta^p=0$, and the Leibniz rule holds for every $a,b\in \textsf{G}$,
$$\delta(ab)=a\delta(b)+\delta(a)b.$$

If $\ch \textsf{F}\neq p$, let $\sigma : \textsf{G}\rightarrow \textsf{G}$ be a generator of the Galois group. 

\begin{definition}\label{}
Let $V$ be a \textsf{G}-vector space, with $\ch \textsf{F}= p$. An \textsf{F}-linear application $d:V\rightarrow V$ is called $\delta$\textit{-derivation} if satisfies:
\begin{enumerate}[D.1.]
\item $d^p=0$;
\item $d(vg)=d(v)g+v\delta(g)$, for each $g\in \textsf{G}, v\in V$.
\end{enumerate}
\end{definition}

\begin{definition}\label{sigmaAccion}
Let $V$ be a \textsf{G}-vector space, and $s:V\rightarrow V$ an \textsf{F}-linear application, with $\ch \textsf{F}\neq p$. We say that $s$ is a $\sigma$-\textit{action}, if the following conditions hold
\begin{enumerate}[S.1.]
\item $s^p=\text{id}_V$;
\item $s(va)=s(v)\sigma(a)$, for every $v\in V$, $a \in \textsf{G}$;
\end{enumerate}
\end{definition}

A \textit{representation} $\widetilde{V}$ of a $p$-equipped poset $\mathscr{P}$, over the pair $(\textsf{F},\textsf{G})$ is a collection of the form
\[\widetilde{V}=(V,r;V_x : x \in \mathscr{P})\]
where $V$ is a \textsf{G}-vector space with a $\sigma$-action ($\delta$-derivation) $r$, if $\ch \textsf{F}\neq p$ ($\ch \textsf{F}= p$), every $V_x$ is a \textsf{G}-subspace of $V$, and for all $x, y\in \mathscr{P}$ it holds
\[x\leq^\ell y \Rightarrow \sum_{i=0}^{\ell-1} r^i(V_x)\subseteq V_y.\]

Let $\widetilde{V}=(V,r;V_x : x \in \mathscr{P})$ and $\widetilde{V'}=(V',r';V_x' : x \in \mathscr{P})$ be two representations, a \textit{morphism} $\psi:\widetilde{V}\rightarrow \widetilde{V'}$ is a \textsf{G}-linear transformation $\psi:V\rightarrow V'$ that commutes with the respective $\sigma$-actions or $\delta$-derivations, and is such that $\psi(V_x)\subseteq V_x'$, for every $x \in \mathscr{P}$.

We denote by $\Rep \mathscr{P}$ the category of representations of a poset $\mathscr{P}$, this is an additive category, if $\widetilde{V}$ and $\widetilde{V'}$ are two representations, its direct sum is
\[\widetilde{V} \oplus \widetilde{V'} = (V \oplus V',r \oplus r'; V_x \oplus V_x' : x \in \mathscr{P}).\]

%%%%%%%%%%%%%%%%%%%%%%%%%%%%%%%%%%%

\subsubsection{Corepresentations}\label{coreps}

The $p$-equipped posets lead naturally to the following definition:

A \textit{corepresentation} $\widetilde{V}$, of a $p$-equipped poset $\mathscr{P}$, over the pair $(\textsf{F},\textsf{G})$, is a collection of the form
\[\widetilde{V}=(V;V_x : x \in \mathscr{P})\]
where $V$ is a \textsf{G}-vector space, every $V_x$ is an \textsf{F}-subspace of $V$, and for all $x, y\in \mathscr{P}$ it holds
\[x\leq^\ell y \Rightarrow \sum_{i=0}^{\ell-1}V_x \xi^i \subseteq V_y.\]

A \textit{morphism} $\psi:\widetilde{V}\rightarrow \widetilde{V'}$ between two corepresentations $\widetilde{V}$ and $\widetilde{V'}$, is a \textsf{G}-linear transformation $\psi:V\rightarrow V'$, such that $\psi(V_x)\subseteq V_x'$, for every $x \in \mathscr{P}$.

The corepresentations and its morphisms form an additive category called $\Cor \mathscr{P}$. In fact, if $\widetilde{V}$ and $\widetilde{V'}$ are two corepresentations, its direct sum is
\[\widetilde{V} \oplus \widetilde{V'} = (V \oplus V', V_x \oplus V_x' : x \in \mathscr{P}).\]

%%%%%%%%%%%%%%%%%%%%%%%%%%%%%%%%%%%%%%%%%%%%%%%%%%%%%%%%%%%%%%%%%%%%%%%%%%%%%%%%%%%%%%%%%

\subsection{Generalized equipped posets}

These posets were introduced \cite{ZavGEP-11}, with the following definition:

A \textit{generalized equipped} poset is a triple $(\mathscr{P},\Delta,S)$, where $\mathscr{P}$ is a partially ordered set, $\Delta$ is a finite multiplicative group, 
$$S=\{\Delta_{xy} | x\leq y \text{ in } \mathscr{P} \}$$ 
is any collection of non-empty subsets of the group $\Delta$ satisfying the condition
\begin{equation}\label{e-definition of an equipped poset}
  \Delta_{xy} \Delta_{yz} \subseteq  \Delta_{xz}, \text{ for any chain } x\leq y \leq z \text{ in } \mathscr{P}.
\end{equation}

Their representations were studied over a Galois group $\Gamma$, of a field extension $\textsf{L}/\textsf{K}$. 

We will give a definition equivalent to that of \cite{ZavGEP-11}, by using the following idea of vector space with $\Gamma$-action.

An $\textsf{L}$-vector space $V$ is called \textit{space with $\Gamma$-action} if it has a $\textsf{K}$-linear transformation $s_{\sigma}:V \rightarrow V$, for each  $\sigma\in \Gamma$, such that
\begin{enumerate}[T.1.]
\item $s_{id_{\textsf{L}}}=\text{id}_V$;
\item $s_\tau s_\sigma=s_{\tau \sigma}$;
\item $s_\tau (va)= s_\tau (v) \tau (a)$, for every $a\in \textsf{L}$.
\end{enumerate}

A \textit{representation} of a generalized equipped poset  $(\mathscr{P},\Gamma,S)$, where $S=\{\Gamma_{xy} | x\leq y \text{ in } \mathscr{P} \}$, is any collection of the form
\[\widetilde{V}=(V;V_x : x \in \mathscr{P}),\]
where $V$ is an $\textsf{L}$-space with $\Gamma$-action, every $V_x$ is an $\textsf{L}$-subspace of $V$, and for all $x,y\in \mathscr{P}$, it holds
\[x\leq y \text{ implies } \sum_{\sigma\in \Gamma_{xy}} s_\sigma(V_x) \subseteq V_y.\]

Let $\widetilde{V}=(V;V_x : x \in \mathscr{P})$ and $\widetilde{V'}=(V';V_x' : x \in \mathscr{P})$ be representations. A \textit{morphism} $\psi:\widetilde{V}\rightarrow \widetilde{V'}$ is an \textsf{L}-linear transformation $\psi:V\rightarrow V'$ that commutes with the transformations $s_{\sigma}:V \rightarrow V$ and $s'_{\sigma}:V' \rightarrow V'$, for every $\sigma\in \Gamma$, and is such that $\psi(V_x)\subseteq V_x'$, for all $x \in \mathscr{P}$.

The category formed by the representations of a generalized equipped poset and the morphisms between them, is denoted $\Rep (\mathscr{P},\Gamma,S)$.

\begin{remark}\label{pequipadosSonGen}
If $\mathscr{P}$ is a $p$-equipped poset and we consider its representations over a cyclic extension $\textsf{G}/\textsf{F}$, we have a particular case of generalized equipped poset $(\mathscr{P},\Gamma,S)$ in which $\Gamma$ is the cyclic Galois group generated by $\sigma$ and for every $x\leq^{\ell}y$ in $\mathscr{P}$,
$$\Gamma_{xy} = \{\sigma^i | 0\leq i \leq \ell-1\}.$$

Notice that a \textsf{G}-space $V$, is a space with $\Gamma$-action if and only if is a space with $\sigma$-action, in the sense of the definition \ref{sigmaAccion}. So we conclude that the categories $\Rep \mathscr{P}$ and $\Rep (\mathscr{P},\Gamma,S)$ are the same.
\end{remark}

%%%%%%%%%%%%%%%%%%%%%%%%%%%%%%%%%%%%%%%%%%%%%%%%%%%%%%%%%%%%%%%%%%%%%%%%%%%%%%%%%%%%%%%%%
%%%%%%%%%%%%%%%%%%%%%%%%%%%%%%%%%%%%%%%%%%%%%%%%%%%%%%%%%%%%%%%%%%%%%%%%%%%%%%%%%%%%%%%%%

\section{Algebraically equipped posets}

\begin{definition}\label{sisMulti}
Let $\textsf{K}$ be a field, $A$ be a $\textsf{K}$-algebra and $(\mathscr{P},\leq )$ be a partially ordered set. A \textit{$\mathscr{P}$-multiplicative system} $\mathcal{R}$, is a collection 
$$\mathcal{R}=\{\mathcal{R}_{i,j}\subseteq A \ | \ i,j\in \mathscr{P} \ \text{ such that } i\leq j,\} , $$
of $\textsf{K}$-vector subspaces $\mathcal{R}_{i,j}$ of $A$, that satisfy the following conditions:
\begin{enumerate}[M.1]
\item For every $i,j,l\in \mathscr{P}$ with $i\leq j\leq l$, we have $\mathcal{R}_{i,j}\mathcal{R}_{j,l}\subseteq \mathcal{R}_{i,l}$.
\item For every $i\in \mathscr{P}$, the space $\mathcal{R}_i=\mathcal{R}_{i,i}$ is a $\textsf{K}$-algebra with unit $1_i$, such that for all $r\in \mathcal{R}_{i,j}$ it holds $1_ir=r=r1_j$, for each $j\in \mathscr{P}$, $i\leq j$.
\end{enumerate}
\end{definition}

The triple $(\mathscr{P},A,\mathcal{R})$ is and \textit{algebraically equipped poset}. We say that $\mathcal{R}$ is an \textit{equipment} of the poset $\mathscr{P}$, over $A$.

\begin{definition}\label{defRep}
A \textit{representation} $\widetilde{L}$, of an algebraically equipped poset $(\mathscr{P},A,\mathcal{R})$, is a collection $\widetilde{L}=(L, L_i:i\in \mathscr{P})$, where $L$ is a right $A$-module, and $L_i$ is a vector subspace of $L$, for every $i\in \mathscr{P}$, such that the following conditions hold;
\begin{enumerate}[L.1]
\item $L_i\mathcal{R}_{i,j}\subseteq L_j$, for each pair $i,j\in\mathscr{P}$, with $i\leq j$.
\item $a1_{i}=a$, for all $a\in L_{i}$.
\end{enumerate}
\end{definition}

A \textit{representation morphism} $h:\widetilde{L}\rightarrow \widetilde{M}$, is an right $A$-module morphism $h:L\rightarrow M$, such that $h(L_i)\subseteq M_i$, for every $i\in \mathscr{P}$.

The representations and the morphisms between them form a category called
$$\Rep (\mathscr{P},A,\mathcal{R}).$$

%%%%%%%%%%%%%%%%%%%%%%%%%%%%%%%%%%%%%%%%%%%%%%%%%%%%%%%%%%%%%%%%%%%%%%%%%%%%%%%%%%%%%%%%%

\subsection{Algebraically equipped posets associated to Generalized equipped posets}\label{genSonALg}

Representations of generalized equipped posets were studied over a Galois field extension  $\textsf{L}/\textsf{K}$, so we will construct an equipment $\mathcal{T}$ for a poset $\mathscr{P}$, over the algebra
$$A=(\End_{\textsf{K}}\textsf{L})^{op}.$$

Every $a\in \textsf{L}$, determines a \textsf{K}-endomorphism $\mu_a:\textsf{L}\rightarrow \textsf{L}$, given by multiplication, that is, 
\begin{equation}\label{defmu}
\mu_a(b)=ab\text{, for all }b\in \textsf{L}.
\end{equation}

Notice that, for every $\sigma$ in the Galois group $\Gamma$,
$$(\sigma \mu_a)(b)=\sigma(ab)= \sigma(a)\sigma(b) = \mu_{\sigma(a)}\sigma(b).$$

The \textsf{K}-algebra $\End_{\textsf{K}}\textsf{L}$ has a left \textsf{L}-vector space structure given by the identification of every $l\in \textsf{L}$ with $\mu_l$, then, for all $h\in \End_{\textsf{K}}\textsf{L}$ and $l\in \textsf{L}$, we identify $lh=\mu_lh$. This product gives to $A$ a right \textsf{L}-vector space structure.

Due to the linear independence of the Galois automorphisms over \textsf{L} (see \cite{Jacobson-64}), 
\[\dim_{\textsf{L}} \left\{ \sum _{\sigma \in \Gamma }\mu_{a_\sigma}\sigma |  a_\sigma \in \textsf{L} \right\} = [\textsf{L}:\textsf{K}].\]

And $\dim_{\textsf{L}} \End_{\textsf{K}}\textsf{L}= [\textsf{L}:\textsf{K}]$, then any $h\in \End_{\textsf{K}}\textsf{L}$ can be written in a unique way as 
\begin{equation}\label{baseDeA}
h=\sum _{\sigma \in \Gamma }\mu_{a_\sigma}\sigma,
\end{equation}
for some $a_\sigma \in \textsf{L}$.

For $h,h' \in \End_{\textsf{K}}\textsf{L}$, the multiplication in $A$ is denoted by $h*h'=h'h$.

Let $(\mathscr{P},\Gamma,S)$ be a generalized equipped poset with, $S=\{\Gamma_{xy} | x\leq y \text{ en } \mathscr{P} \}$. Consider the $\textsf{L}$-vector subspaces $\mathcal{T}_{x,y}\subseteq A$ spanned by $\Gamma _{x,y}$, for  $x\leq y$,
$$\mathcal{T}_{x,y} = \textsf{K} \left\langle \sum_{\sigma \in \Gamma_{xy}}  \sigma * \mu_{a_\sigma}\  |\ \  a_\sigma \in \textsf{L} \right\rangle = \textsf{K} \left\langle \sum_{\sigma \in \Gamma_{xy}} \mu_{a_\sigma} \sigma \  |\ \  a_\sigma \in \textsf{L} \right\rangle = \textsf{L} \left\langle \Gamma_{xy} \right\rangle.$$

If $x\leq y \leq z$ in $\mathscr{P}$, $\sigma \in \Gamma_{xy}$, $\tau \in \Gamma_{yz}$ and $a,b \in \textsf{L}$, we have
$$\mu_a \sigma \mu_b \tau = \mu_a \mu_{\sigma(b)} \sigma \tau = \mu_{a \sigma(b)}\sigma \tau.$$

By Condition (\ref{e-definition of an equipped poset}) in the definition of generalized equipped poset,
$$\mathcal{T}_{x,y}\mathcal{T}_{y,z}\subset \mathcal{T}_{x,z}.$$ 

For all $x\in \mathscr{P}$, the set $\Gamma  _{x}=\Gamma  _{x,x}$ is a subgroup of $\Gamma $, so we call $\textsf{K}(x)$ its fixed field.

If $\alpha \in \mathcal{T}_{x}=\mathcal{T}_{x,x}$, it can be written as $\alpha =\sum _{\sigma \in \Gamma  _{x}}\sigma * \mu_{a_\sigma}$, for some $a_\sigma \in \textsf{L}$. Therefore, every $\mathcal{T}_{x}$ is a $\textsf{K}$-subalgebra of $A$, and 
\begin{equation}\label{TxSubalgebra}
\mathcal{T}_{x}\cong (\End_{\textsf{K}(x)}\textsf{L})^{op}.
\end{equation}

Then, the collection $\mathcal{T}=\{\mathcal{T}_{x,y}\subseteq A \ | \ x,y\in \mathscr{P} \ \text{ such that } x\leq y,\}$ is a $\mathscr{P}$-multiplicative system.

\begin{theorem}\label{equivalenciaRepGen}
Using the notation of this section, the categories $\Rep (\mathscr{P},\Gamma,S)$ and $\Rep (\mathscr{P},A,\mathcal{T})$ are equivalent.
\end{theorem}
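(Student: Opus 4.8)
The plan is to exhibit mutually quasi-inverse functors $F:\Rep(\mathscr{P},A,\mathcal{T})\to\Rep(\mathscr{P},\Gamma,S)$ and $G$ in the opposite direction. The whole argument rests on one algebraic identification: by the uniqueness of the expression (\ref{baseDeA}), the algebra $\End_{\textsf{K}}\textsf{L}$ is the crossed product of $\textsf{L}$ with $\Gamma$, so giving a right $A$-module structure on a space $L$ (equivalently a left $\End_{\textsf{K}}\textsf{L}$-module) is the same as giving an $\textsf{L}$-space with $\Gamma$-action. I would first establish this identification and then check that the subspace data L.1--L.2 correspond exactly to the order condition of a generalized representation.

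For $F$, given $\widetilde{L}=(L,L_i)$ I set $V=L$, define the $\textsf{L}$-structure by $va:=v\cdot\mu_a$ and the maps $s_\sigma(v):=v\cdot\sigma$ for $\sigma\in\Gamma$, viewing $\mu_a,\sigma\in\End_{\textsf{K}}\textsf{L}\subseteq A$. Using that the product in $A$ is the twisted one $h*h'=h'h$ together with the relation $\sigma\mu_a=\mu_{\sigma(a)}\sigma$, a direct check gives $s_{\mathrm{id}}=\mathrm{id}_V$, $\;s_\tau s_\sigma=s_{\tau\sigma}$ and $s_\sigma(va)=s_\sigma(v)\sigma(a)$, i.e.\ T.1--T.3 hold; the opposite convention is exactly what makes the semilinearity come out with $\sigma$ rather than $\sigma^{-1}$. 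I then put $V_x:=L_x$. Since $\mathcal{T}_x=\textsf{L}\langle\Gamma_x\rangle$ contains every $\mu_a$, applying L.1 with $i=j=x$ shows $L_x\mu_a\subseteq L_x$, so $V_x$ is an $\textsf{L}$-subspace; and since $\mathcal{T}_{x,y}=\textsf{L}\langle\Gamma_{xy}\rangle$ contains each $\sigma\in\Gamma_{xy}$, condition L.1 yields $\sum_{\sigma\in\Gamma_{xy}}s_\sigma(V_x)\subseteq L_x\mathcal{T}_{x,y}\subseteq L_y=V_y$, which is precisely the order condition of a generalized representation.

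For $G$, given $\widetilde{V}=(V,V_x)$ I define a right $A$-action on $L:=V$ by $v\cdot h:=\sum_{\sigma\in\Gamma}s_\sigma(v)\,a_\sigma$ for $h=\sum_\sigma\mu_{a_\sigma}\sigma$, which is well defined by the uniqueness in (\ref{baseDeA}); the module axiom $v\cdot(h*h')=(v\cdot h)\cdot h'$ reduces, on the basis elements $\mu_a\sigma$, to the same computation using $\sigma\mu_a=\mu_{\sigma(a)}\sigma$ and T.3. Setting $L_x:=V_x$, condition L.1 for $x\leq y$ follows because $v\cdot(\mu_a\sigma)=s_\sigma(v)a\in V_y$ whenever $v\in V_x$ and $\sigma\in\Gamma_{xy}$, while L.2 is automatic once one notes that the unit $1_x$ of $\mathcal{T}_x\cong(\End_{\textsf{K}(x)}\textsf{L})^{op}$ is $\mathrm{id}_{\textsf{L}}$, the global unit of $A$, so that $a1_x=a$ holds for every $a\in L$.

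Finally, a morphism in either category is just an $\textsf{L}$-linear map $\psi:V\to V'$ commuting with all the $s_\sigma$ and sending $V_x$ into $V'_x$: indeed a right $A$-module map is exactly an $\textsf{L}$-linear map (commutation with every $\mu_a$) that commutes with every $\sigma\in\Gamma$ (commutation with the $s_\sigma$). Thus $F$ and $G$ act as the identity on underlying maps and on subspaces, and they are visibly mutually inverse. I expect the only delicate point to be the bookkeeping forced by the opposite-algebra convention: one must track carefully that $h*h'=h'h$ and $\sigma\mu_a=\mu_{\sigma(a)}\sigma$, so that the $\Gamma$-action is a genuine action satisfying T.2 and the semilinearity T.3 with $\sigma$ (not $\sigma^{-1}$), and that the identification of right $A$-modules with $\textsf{L}$-spaces with $\Gamma$-action is bijective, which is where the uniqueness of the expression (\ref{baseDeA}) is essential.
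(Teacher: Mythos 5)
Your proposal is correct and takes essentially the same route as the paper's own proof: both rest on the identification of right $A$-modules with $\textsf{L}$-spaces with $\Gamma$-action via the unique expression (\ref{baseDeA}) and the relation $\sigma\mu_a=\mu_{\sigma(a)}\sigma$, and both check that condition L.1 for $\mathcal{T}_{x,y}=\textsf{L}\langle\Gamma_{xy}\rangle$ translates exactly into $\sum_{\sigma\in\Gamma_{xy}}s_\sigma(V_x)\subseteq V_y$ and that morphisms coincide in the two categories. The only difference is organizational (you package the two constructions as explicit quasi-inverse functors and start from the opposite direction), which changes nothing of substance.
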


\begin{proof}
Consider $\widetilde{V}=(V;V_x : x \in \mathscr{P})\in \Rep (\mathscr{P},\Gamma,S)$, let us define the following action of $A$ over $V$;
\begin{equation}\label{accionSobreV}
va=\sum _{\sigma \in \Gamma}s_\sigma(v)a_\sigma;
\end{equation}
for every $v\in V$ y $a\in A$, of the form $a=\sum _{\sigma \in \Gamma }\sigma*\mu_{a_\sigma}$, for some $a_\sigma \in \textsf{L}$.

Notice that when $a$ is the unit of $A$, we have $va=v$ for all $v\in V$. And if $\sigma, \tau \in \Gamma$ and $a, b \in \textsf{L}$,
\begin{align*}
(v \sigma*\mu_a) \tau*\mu_b &= (s_\sigma(v)a)\tau*\mu_b = s_\tau(s_\sigma(v)a) b\\
 &= s_\tau(s_\sigma(v))\tau (a) b\\
 &= s_{\tau \sigma}(v)\tau (a) b\\
 &= v(\tau \sigma * \mu_{\tau (a) b}) = v (\tau \sigma * \mu_{b\tau (a)})\\
 &= v (\mu_{b\tau (a)} \tau \sigma) = v(\mu_b \mu_{\tau (a)} \tau \sigma)\\
 &= v(\mu_b (\mu_{\tau (a)} \tau) \sigma) = v (\mu_b \tau \mu_a \sigma) = v((\mu_a \sigma)*(\mu_b \tau))\\
 &= v((\sigma*\mu_a)*(\tau*\mu_b)).
\end{align*}

That is, for every $a,b \in A$
$$(va)b=v(ab);$$
so the action (\ref{accionSobreV}), gives a right $A$-module structure to $V$, and is such that,
$$V_x \mathcal{T}_{x,y} \subseteq V_y,$$
is equivalent to
$$\sum_{\sigma\in \Gamma_{xy}} s_\sigma(V_x) \subseteq V_y.$$

Therefore, $\widetilde{V}$ is an object of $\Rep (\mathscr{P},A,\mathcal{T})$.

If $\widetilde{V}=(V, V_x:x\in \mathscr{P}) \in \Rep (\mathscr{P},A,\mathcal{T})$, as $V$ is a right $A$-module, then it is a right $\textsf{L}$-module by using the identification 
$$\textsf{L}= \left\{ \mu_a |  a \in \textsf{L} \right\}.$$

So $V$ is an \textsf{L}-vector space. To give it a $\Gamma$-action, we define for each $\sigma\in \Gamma$
$$s_\sigma:V\rightarrow V: v \mapsto v\sigma.$$

Clearly, with this definition
$$V_x \mathcal{T}_{x,y} \subseteq V_y,$$
if and only if
$$\sum_{\sigma\in \Gamma_{xy}} s_\sigma(V_x) \subseteq V_y.$$

By the $A$-module structure of $V$, the condition T.1 holds, and for $v\in V, \sigma, \tau \in \Gamma$,
$$s_\tau(s_\sigma(v)) = (v\sigma)\tau = v (\sigma*\tau) = v(\tau \sigma) = s_{\tau \sigma}(v);$$
then, we have T.2.

The condition T.3 is satisfied because for $a\in \textsf{L}$,
$$s_\sigma(va) = va\sigma = v(\mu_a *\sigma) = v (\sigma \mu_a) = v (\mu_{\sigma(a)} \sigma) = v (\sigma* \mu_{\sigma(a)}) = s_\sigma(v) \sigma(a).$$

Hence, $V$ is an \textsf{L}-space with $\Gamma$-action, and $\widetilde{V}\in \Rep (\mathscr{P},\Gamma,S)$.

Let $\widetilde{V}=(V,V_x : x \in \mathscr{P})$ and $\widetilde{V'}=(V',V_x' : x \in \mathscr{P})$ be two representations of $(\mathscr{P},A,\mathcal{T})$. An application $h:V\rightarrow V'$ is a morphism in $\Rep (\mathscr{P},A,\mathcal{T})$, if and only if, $h$ is an $A$-module morphism and $h(V_x)\subseteq V_x'$, for all $x \in \mathscr{P}$.
That is, for every $v\in V$, $a\in A$, we have that $h(va) = h(v)a$, and by (\ref{baseDeA}), this is equivalent to have
\begin{align*}
h(v(\sigma* \mu_l)) &= h(v)(\sigma* \mu_l)\\
h(s_\sigma(v)l) &= s'_\sigma(h(v))l,
\end{align*}
for every $\sigma\in \Gamma, l\in \textsf{L}$.

And the last happens if and only if $h$ is \textsf{L}-linear, and commutes with every $s_\sigma$ and $s'_\sigma$.

We conclude that $h$ is a morphism in $\Rep (\mathscr{P},A,\mathcal{T})$, if and only if, it  is a morphism in $\Rep (\mathscr{P},\Gamma,S)$, and the theorem is proved.
\end{proof}

%%%%%%%%%%%%%%%%%%%%%%%%%%%%%%%%%%%%%%%%%%%%%%%%%%%%%%%%%%%%%%%%%%%%%%%%%%%%%%%%%%%%%%%%%

\subsection{Algebraically equipped posets associated to $p$-equipped posets}\label{pequipadosSonALg}

%%%%%%%%%%%%%%%%%%%%%%%%%%%%%%%%%%%%%%%%%%%%

\subsubsection{Equipment over a field}\label{pequipadosSonALgCorep}

Let $(\textsf{F},\textsf{G})$ be a pair of fields as in the section \ref{defPequipados}.

\begin{definition}\label{sisQ}
Let $\mathscr{P}$ be a $p$-equipped poset. The field $\textsf{G}$, is an \textsf{F}-algebra, so we can define a $\mathscr{P}$-multiplicative system $\mathcal{Q}=\{\mathcal{Q}_{x,y}\subseteq \textsf{G} \ | \ x,y\in \mathscr{P} \ \text{ such that } x\leq y,\}$, by assigning to  each $x\leq^\ell y$ in $\mathscr{P}$, the \textsf{F}-space
\[\mathcal{Q}_{x,y}=\textsf{F} \langle 1, \xi, \xi^2, \ldots, \xi^{\ell-1} \rangle.\]
\end{definition}

For every $x, y, z \in\mathscr{P}$ such that $x\leq^\ell y\leq^m z$, we have 
$$\mathcal{Q}_{x,y}\mathcal{Q}_{y,z}=\textsf{F} \langle 1, \xi, \xi^2, \ldots, \xi^{\ell+m-2} \rangle.$$

If $\ell+m-1 \geq p$, then $\mathcal{Q}_{x,y}\mathcal{Q}_{y,z}=\textsf{F} \langle 1, \xi, \xi^2, \ldots, \xi^{p-1} \rangle = \textsf{G}$.

By the condition (\ref{definition}) in the definition of $p$-equipped poset, $x\leq^n z$, with $n\geq \min \{ \ell+m-1, p \}$, so 
$$\mathcal{Q}_{x,y}\mathcal{Q}_{y,z}\subseteq \textsf{F} \langle 1, \xi, \xi^2, \ldots, \xi^{n-1} \rangle = \mathcal{Q}_{x,z};$$ 
therefore $\mathcal{Q}$ satisfies M.1.

It is clear that $\mathcal{Q}_{x,y}=\textsf{F}$, when $x\leq^1y$, and $\mathcal{Q}_{x,y}=\textsf{G}$ if $x\leq^py$. In particular,
\[\mathcal{Q}_x= \begin{cases} 
\textsf{F}, & \text{ if } x \text{ is weak},\\
\textsf{G}, & \text{ si } x \text{ is strong},\\
\end{cases}\]
this is, $\mathcal{Q}$ satisfies the condition M.2, then it is a $\mathscr{P}$-multiplicative system.

We conclude that $(\mathscr{P},\textsf{G},\mathcal{Q})$ is an algebraically equipped poset.

For any collection $\widetilde{V}=(V;V_x : x \in \mathscr{P})$, where $V$ is a \textsf{G}-module, so it is a \textsf{G}-vector space, and each $V_x$ is an \textsf{F}-subespace of $V$, we have that, for all $x\leq^\ell y$ in $\mathscr{P}$,
the condition $V_x \mathcal{Q}_{x,y} \subseteq V_y$ is equivalent to $\sum_{i=0}^{\ell-1}V_x \xi^i \subseteq V_y$.

Hence, $\widetilde{V}\in \Cor \mathscr{P}$ if and only if $\widetilde{V}\in \Rep (\mathscr{P},\textsf{G},\mathcal{Q})$. 

Obviously a morphism in $\Cor \mathscr{P}$ is a morphism in $\Rep (\mathscr{P},\textsf{G},\mathcal{Q})$ and vice versa. Then, we have the following equivalence:
\[\Cor \mathscr{P} \cong \Rep (\mathscr{P},\textsf{G},\mathcal{Q}).\]

%%%%%%%%%%%%%%%%%%%%%%%%%%%%%%%%%%%%%%%%%%%%

\subsubsection{Equipment over an endomorphism algebra}\label{pequipadosSonALgRep}

Let $\mathscr{P}$ be a $p$-equipped poset, and $\Rep \mathscr{P}$ its category of representations over the pair $(\textsf{F},\textsf{G})$. Consider the \textsf{F}-algebras
$$\End_{\textsf{F}}\textsf{G} \hspace{0.5cm}\text{ and }\hspace{0.5cm} A=(\End_{\textsf{F}}\textsf{G})^{op}.$$

As in Section \ref{genSonALg}, the multiplication in $A$ is denoted by $h*h'=h'h$, for all $h,h' \in \End_{\textsf{F}}\textsf{G}$.

From every $g\in \textsf{G}$, we obtain an \textsf{F}-endomorphism $\mu_g:\textsf{G}\rightarrow \textsf{G}$, given by, 
\begin{equation}\label{defmuPequipados}
\mu_g(a)=ga\text{, for all }a\in \textsf{G}.
\end{equation}

We denote $\vartheta= \begin{cases} 
\sigma, & \text{ if } \ch \textsf{F}\neq p,\\
\delta, & \text{ if } \ch \textsf{F}= p.
\end{cases}$

In the following result, we define an equipment for a  $p$-equipped poset $\mathscr{P}$, over $A$. If $\ch \textsf{F}\neq p$, by Remark \ref{pequipadosSonGen}, the poset $\mathscr{P}$ is a particular case of generalized equipped poset, then the $\mathscr{P}$-multiplicative system we will construct next, is the same as in the section \ref{genSonALg}.

\begin{proposition}\label{sisMultEnd}
Let $\mathscr{P}$ be a  $p$-equipped poset. The colection 
$$\mathcal{T}=\{\mathcal{T}_{x,y} = A_\ell \ | \ x\leq^\ell y\}_{x,y\in \mathscr{P}},$$  
where, to every $x\leq^\ell y$ in $\mathscr{P}$, it is assigned the following \textsf{F}-subspace of $A$,
\[A_\ell =\textsf{F} \left\langle \sum_{i=0}^{\ell-1}  \vartheta^i * \mu_{g_i}\  |\ \  g_i\in \textsf{G} \right\rangle = \textsf{F} \left\langle \sum_{i=0}^{\ell-1} \mu_{g_i} \vartheta^i \  |\ \  g_i\in \textsf{G} \right\rangle,\]
is a $\mathscr{P}$-multiplicative system that satisfies
\[\mathcal{T}_x \cong \textsf{G},  \text{ if } x \text{ is weak, and}\]
\[\mathcal{T}_x = A,  \text{ if } x \text{ is strong}.\]
\end{proposition}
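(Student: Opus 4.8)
The plan is to verify that $\mathcal{T}$ satisfies the two axioms M.1 and M.2 of a $\mathscr{P}$-multiplicative system, and then to identify the diagonal algebras $\mathcal{T}_x$ explicitly. The structure closely parallels the verification already carried out for the system $\mathcal{Q}$ over $\textsf{G}$ in Definition~\ref{sisQ}, except that now the ambient algebra is $A=(\End_{\textsf{F}}\textsf{G})^{op}$ and the generators involve powers of $\vartheta$ (either $\sigma$ or $\delta$). The key computational input is a product rule, analogous to $\mu_a\sigma\mu_b\tau = \mu_{a\sigma(b)}\sigma\tau$ from Section~\ref{genSonALg}, that lets me multiply two generators of the $A_\ell$ and express the result back in the same normal form $\sum \mu_{g_i}\vartheta^i$.

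First I would establish, in both characteristic cases, the commutation identity needed to move a scalar multiplication $\mu_g$ past a power $\vartheta^j$. In the case $\ch\textsf{F}\neq p$ one has $\vartheta=\sigma$ and $\sigma^j\mu_g = \mu_{\sigma^j(g)}\sigma^j$, which is immediate from $S.2$-type behavior of the Galois action. In the case $\ch\textsf{F}=p$ one has $\vartheta=\delta$, and the Leibniz rule $\delta(ab)=a\delta(b)+\delta(a)b$ forces the binomial-type expansion $\delta^j\mu_g = \sum_{k=0}^{j}\binom{j}{k}\mu_{\delta^{j-k}(g)}\delta^{k}$. Using either identity, I would compute the product of a generator of $A_\ell$ with a generator of $A_m$ and collect terms, showing that the result is an $\textsf{F}$-linear combination of the monomials $\mu_h\vartheta^t$ with $0\le t\le \ell+m-2$. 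This identifies $A_\ell A_m$ with the span of $\{\mu_h\vartheta^t : h\in\textsf{G},\ 0\le t\le \ell+m-2\}$, exactly as in the $\mathcal{Q}$ computation. Invoking condition~(\ref{definition}) of the $p$-equipped poset (so that $x\le^\ell y\le^m z$ yields $x\le^n z$ with $n\ge\min\{\ell+m-1,p\}$) then gives $\mathcal{T}_{x,y}\mathcal{T}_{y,z}=A_\ell A_m\subseteq A_n=\mathcal{T}_{x,z}$, with the truncation $\vartheta^p=0$ (resp.\ $\sigma^p=\mathrm{id}$) ensuring that $A_{\ell+m-1}=A$ once $\ell+m-1\ge p$. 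This yields M.1.

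For M.2 I would treat the two endpoint values $\ell=1$ and $\ell=p$, corresponding to weak and strong points. For a weak point $x$ one has $\mathcal{T}_x=A_1=\textsf{F}\langle \mu_g : g\in\textsf{G}\rangle$, which is precisely the image of the multiplication embedding $\textsf{G}\hookrightarrow A$; since $\mu_g\mu_h=\mu_{gh}$ this is a subalgebra isomorphic to $\textsf{G}$, giving $\mathcal{T}_x\cong\textsf{G}$. For a strong point $x$ one has $\mathcal{T}_x=A_p$, the span of all $\sum_{i=0}^{p-1}\mu_{g_i}\vartheta^i$; by the same linear-independence / dimension argument used in Section~\ref{genSonALg} (via linear independence of distinct powers of $\vartheta$, or the basis statement~(\ref{baseDeA})), these monomials form an $\textsf{F}$-basis of all of $\End_{\textsf{F}}\textsf{G}$, so $A_p=A$. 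It remains to exhibit the unit $1_x$ and check the absorption condition $1_x r = r = r 1_x'$ relating diagonal to off-diagonal spaces: in the weak case $1_x=\mu_1=\mathrm{id}_{\textsf{G}}$ acts as the identity, and in the strong case the unit is the unit of $A$, so absorption is automatic.

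The step I expect to be the main obstacle is the purely inseparable case $\ch\textsf{F}=p$, where the Leibniz expansion of $\delta^j\mu_g$ produces genuinely mixed terms and one must be careful that the collected exponents of $\delta$ never exceed $\ell+m-2$ and that no term of degree $\ge p$ survives (since $\delta^p=0$). Verifying that $\{\mu_{g}\,\vartheta^i : 0\le i\le p-1,\ g\in\textsf{G}\}$ is genuinely an $\textsf{F}$-basis of $\End_{\textsf{F}}\textsf{G}$ in the inseparable case — where one cannot appeal to linear independence of Galois characters — is the real technical point; I would argue it by a dimension count, noting $\dim_{\textsf{F}}\End_{\textsf{F}}\textsf{G}=p^2$ and that the $p^2$ listed monomials are $\textsf{F}$-independent because $\delta,\delta^2,\dots,\delta^{p-1}$ are nonzero of strictly increasing ``order of vanishing'' on the $\textsf{F}$-basis $1,\xi,\dots,\xi^{p-1}$ of $\textsf{G}$. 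Once that basis fact is secured, the remaining verifications are the same routine collections as in the separable case.
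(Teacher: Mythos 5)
Your proposal is correct and follows essentially the same route as the paper: the commutation identities $\sigma^{k}\mu_g=\mu_{\sigma^k(g)}\sigma^k$ and $\delta^{k}\mu_g=\sum_i\binom{k}{i}\mu_{\delta^i(g)}\delta^{k-i}$, the resulting computation of $A_\ell A_m$ combined with condition (\ref{definition}) for M.1, and the identifications $A_1\cong\textsf{G}$, $A_p=A$ via a dimension count for M.2. The only (harmless) divergence is in the inseparable case: you prove $\textsf{F}$-independence of the monomials $\mu_g\delta^i$ by a triangularity argument on the basis $1,\xi,\dots,\xi^{p-1}$, whereas the paper isolates the minimal nonzero coefficient and composes on the right with $\delta^{p-i}$ to reach a contradiction; both arguments are valid.
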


\begin{proof}
Clearly, 
$$A_1 = \textsf{F} \left\langle \mu_{g} \  |\ \  g\in \textsf{G} \right\rangle \cong \textsf{G}.$$

For every weak point $x\in \mathscr{P}$, by definition $x\leq^1x$, then $\mathcal{T}_{x} \cong \textsf{G}.$

If $t\geq p$,
\[\textsf{F} \left\langle \sum_{i=0}^{t-1}  \vartheta^i * \mu_{g_i}\  |\ \  g_i\in \textsf{G} \right\rangle = \textsf{F} \left\langle \sum_{i=0}^{t-1} \mu_{g_i} \vartheta^i \  |\ \  g_i\in \textsf{G} \right\rangle = A_p,\]
because $\vartheta^p=\delta^p=0$, or $\vartheta^p=\sigma^p=id_{\textsf{G}}$.

When $\ch \textsf{F}= p$, we have that $\vartheta=\delta$ and for all $a, g \in \textsf{G}$, 
\[\delta \mu_g(a)=\delta(ga)=g\delta(a)+\delta(g)a=\mu_g\delta(a) + \mu_{\delta(g)}(a).\]

We suppose, as induction hypothesis, that   $k\in \mathbb{N}$,
\begin{equation}\label{derivarmultiplicarGeneral}
\delta^k \mu_g=\sum_{i=0}^k \binom{k}{i}\mu_{\delta^i(g)}\delta^{k-i};
\end{equation}
for $k\in \mathbb{N}$.

By composing with $\delta$,
\begin{align*}
\delta^{k+1} \mu_g &= \sum_{i=0}^k \binom{k}{i}\delta \mu_{\delta^i(g)}\delta^{k-i}\\
 &= \sum_{i=0}^k \binom{k}{i} (\mu_{\delta^i(g)} \delta + \mu_{\delta^{i+1}(g)}) \delta^{k-i}\\
 &= \mu_g \delta^{k+1} + \cdots + \binom{k}{i-1} \mu_{\delta^{i-1}(g)} \delta^{k-i+2}\\
 &\ \ \ + \binom{k}{i-1}  \mu_{\delta^i(g)} \delta^{k-i+1} + \binom{k}{i} \mu_{\delta^i(g)}\delta^{k-i+1}\\
 &\ \ \ + \binom{k}{i} \mu_{\delta^{i+1}(g)} \delta^{k-i} + \cdots + \mu_{\delta^{k+1}(g)}\\
 &= \sum_{i=0}^{k+1} \binom{k+1}{i} \mu_{\delta^i(g)}\delta^{k+1-i}.
\end{align*}

Then, we have (\ref{derivarmultiplicarGeneral}) for every $k$. 

If $\ch \textsf{F}\neq p$, as $\vartheta=\sigma$, then for all $a, g \in \textsf{G}$, 
$$(\sigma \mu_g)(a)=\sigma(ga)= \sigma(g)\sigma(a) = \mu_{\sigma(g)}\sigma(a).$$

For any natural number $k$, and any $g\in \textsf{G}$, let us give the induction hypothesis
\begin{equation}\label{sigmamultiplicar}
\sigma^{k} \mu_g= \mu_{\sigma^k(g)} \sigma^{k}.
\end{equation}

By composing with $\sigma$
\begin{align*}
\sigma^{k+1} \mu_g &= \sigma \mu_{\sigma^k(g)} \sigma^{k}\\
 &= \mu_{\sigma^{k+1}(g)} \sigma^{k+1}.
\end{align*}

Hence (\ref{sigmamultiplicar}) holds for every $k$. 

Consider $x\leq^\ell y\leq^m z$ in $\mathscr{P}$, for $a,b\in \textsf{G}$, when $\ch \textsf{F}= p$
\[\mu_a \delta^{\ell -1} \mu_b \delta^{m -1} =  \sum_{i=0}^{\ell -1} \binom{\ell -1}{i} \mu_{a\delta^i(b)}\delta^{\ell +m -2-i};\]
or, if $\ch \textsf{F}\neq p$
\[\mu_a \sigma^{\ell -1} \mu_b \sigma^{m -1} = \mu_{a\sigma^{\ell -1}(b)}\sigma^{\ell +m -2}.\]

Therefore $(\vartheta^{\ell -1} * \mu_a) (\vartheta^{m -1} * \mu_b) \in A_n$, where $n = \min \{ \ell+m-1, p \}$. That is,
$$A_\ell A_m \subseteq A_n.$$

Moreover, if $\ell+m-1 \geq p$, then $A_p \subseteq A_\ell A_m$, and if $n< p$, any element of the form $\vartheta^{n'} * \mu_b \in A_n$, can be written as $\vartheta^{\ell'} (\vartheta^{m'} * \mu_b)$, where $\ell' < \ell$ and $m' < n$, so $\vartheta^{n'} * \mu_b \in A_\ell A_m$. We have that
\begin{equation}\label{AlAm=An}
A_\ell A_m = A_n \text{, where } n = \min \{ \ell+m-1, p \}.
\end{equation}

By the definition of a $p$-equipped poset (see (\ref{definition})), $x\leq^t z$, with $t\geq n$, then
$$\mathcal{T}_{x,y}\mathcal{T}_{y,z}\subseteq \mathcal{T}_{x,z};$$ 
so $\mathcal{T}$ satisfies the condition M.1.

Notice that $A_\ell$ is an $A_1$-bimodule, for $1\leq \ell \leq p$, and, when $\ch \textsf{F}\neq p$, due to linear independence of that Galois automorphisms, 
\begin{equation}\label{dimAl}
\dim_{A_1}A_\ell = \ell.
\end{equation}

If $\ch \textsf{F}= p$, consider a non trivial equation in $A_\ell$, of the form
\[\mu_{a_1} + \delta * \mu_{a_2} + \cdots + \delta^{\ell-1} * \mu_{a_{\ell}} =0,\]
this is equivalent to 
\[\mu_{a_1} + \mu_{a_2}\delta + \cdots + \mu_{a_{\ell}} \delta^{\ell-1}=0.\]

Let $i$ be the minimum index such that $a_i\neq 0$, composing by $\mu_{a_i^{-1}}$ 
\[\delta^{i-1}+ \mu_{ b_{i+1}}\delta^{i} + \cdots + \mu_{ b_{\ell}} \delta^{l-1}=0,\]
where $b_j=a_ja_i^{-1}$ para $j\in \{i+1,\ldots, \ell-1\}$. Then
\[\delta^{i-1}=-\mu_{ b_{i+1}}\delta^{i} - \cdots - \mu_{ b_{\ell}} \delta^{l-1}.\]

Composing the previous expression from the right by $\delta^{p-i}$, we obtain $\delta^{\ell-1}=0$, which is a contradiction. Then, the equation (\ref{dimAl}) holds for any characteristic of \textsf{F}.

As $\dim_{A_1}A_p = p$, then $A_p=A$.

Therefore
\[\mathcal{T}_x= \begin{cases} 
A_1 \cong \textsf{G}, & \text{ if } x \text{ is weak},\\
A_p=A, & \text{ if } x \text{ is strong}.\\
\end{cases}\]
for each $x\in \mathscr{P}$,
 
So $\mathcal{T}$ also satisfies the condition M.2., which finishes the proof.
\end{proof}

For the categories of representations we have the next result.

\begin{theorem}\label{equivalenciaRepp}
If $\Rep \mathscr{P}$ is the category of representations of a $p$-equipped poset $\mathscr{P}$, over the pair $(\textsf{F},\textsf{G})$, we have the equivalence
\[\Rep \mathscr{P} \cong \Rep (\mathscr{P},A,\mathcal{T});\]
where $A=(\End_{\textsf{F}}\textsf{G})^{op}$ and $\mathcal{T}$ is the $\mathscr{P}$-multiplicative system defined in Proposition \ref{sisMultEnd}.
\end{theorem}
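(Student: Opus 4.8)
The strategy is to exhibit a pair of mutually inverse functors between $\Rep \mathscr{P}$ and $\Rep(\mathscr{P},A,\mathcal{T})$, imitating the proof of Theorem \ref{equivalenciaRepGen}. I would first dispatch the case $\ch \textsf{F}\neq p$: by Remark \ref{pequipadosSonGen} the poset $\mathscr{P}$ is then a particular generalized equipped poset, and, as noted just before Proposition \ref{sisMultEnd}, the system $\mathcal{T}$ of that proposition coincides with the one of Section \ref{genSonALg}; so in this case the equivalence is exactly Theorem \ref{equivalenciaRepGen}. The substantive case is $\ch \textsf{F}=p$, where $\vartheta=\delta$ and the $\sigma$-action is replaced by a $\delta$-derivation $r$.

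\emph{Direct functor.} Given $\widetilde{V}=(V,r;V_x:x\in\mathscr{P})\in\Rep\mathscr{P}$, I would turn $V$ into a right $A$-module. By Proposition \ref{sisMultEnd}, namely (\ref{dimAl}) with $\ell=p$ together with $A=A_p$, every $a\in A$ has a unique expression $a=\sum_{i=0}^{p-1}\vartheta^i*\mu_{g_i}$, so I can define $v\cdot a=\sum_{i=0}^{p-1}r^i(v)g_i$, in analogy with (\ref{accionSobreV}). The unit $\mu_1$ acts as the identity because $r^0=\mathrm{id}_V$. The essential point is associativity $(v\cdot a)\cdot b=v\cdot(a*b)$, which it suffices to check on generators $a=\vartheta^j*\mu_g$, $b=\vartheta^k*\mu_h$. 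For this I would first prove, by the same induction on $k$ as for (\ref{derivarmultiplicarGeneral}) but using the Leibniz rule D.2, the identity $r^k(vg)=\sum_{i=0}^{k}\binom{k}{i}r^{k-i}(v)\delta^i(g)$. Expanding $(v\cdot a)\cdot b=r^k\big(r^j(v)g\big)h$ with this identity, and expanding $v\cdot(a*b)=v\cdot(ba)$ from the formula $\delta^k\mu_g=\sum_{i}\binom{k}{i}\mu_{\delta^i(g)}\delta^{k-i}$ already established in the proposition, the two sides coincide because $\textsf{G}$ is commutative and $r^p=\delta^p=0$ annihilate the higher-order terms. This is precisely where the argument departs from Theorem \ref{equivalenciaRepGen}: the noncommutativity of $\delta$ with the $\mu_g$ replaces the multiplicativity of $\sigma$, and I expect this module-axiom verification to be the main obstacle.

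\emph{Translation of the conditions.} With this structure I would check that $V_x\,\mathcal{T}_{x,y}\subseteq V_y$ is equivalent to $\sum_{i=0}^{\ell-1}r^i(V_x)\subseteq V_y$ for $x\leq^\ell y$. Indeed $\mathcal{T}_{x,y}=A_\ell$ is $\textsf{F}$-spanned by the elements $\vartheta^i*\mu_g$ with $0\leq i\leq\ell-1$ and $g\in\textsf{G}$, and $v\cdot(\vartheta^i*\mu_g)=r^i(v)g$; since each $V_y$ is a $\textsf{G}$-subspace, the containment $r^i(V_x)g\subseteq V_y$ reduces to $r^i(V_x)\subseteq V_y$. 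Thus objects of $\Rep\mathscr{P}$ become objects of $\Rep(\mathscr{P},A,\mathcal{T})$, and condition L.2 is automatic because $\mu_1=\mathrm{id}_{\textsf{G}}$ is the unit of every $\mathcal{T}_x$, whether $\mathcal{T}_x=A_1$ or $\mathcal{T}_x=A_p=A$.

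\emph{Inverse functor and morphisms.} Conversely, for $(L,L_i)\in\Rep(\mathscr{P},A,\mathcal{T})$ I would regard $L$ as a $\textsf{G}$-space through the identification $\textsf{G}\cong A_1$, setting $vg:=v\mu_g$, and define $r(v):=v\cdot\delta$. Then $r$ is a $\delta$-derivation: condition D.1 follows since $r^k(v)=v\cdot\delta^k$ and $\delta^p=0$, while D.2 follows from $\delta\mu_g=\mu_g\delta+\mu_{\delta(g)}$ and the opposite product $*$ of $A$. These two assignments are mutually inverse by construction. Finally, a $\textsf{G}$-linear map $\psi$ is $A$-linear if and only if it commutes with every $\mu_g$ and with $\delta$, that is, if and only if it is $\textsf{G}$-linear and commutes with $r$ and $r'$; as the subspace conditions $\psi(V_x)\subseteq V_x'$ are identical on both sides, the morphism sets agree and the functors yield the claimed equivalence.
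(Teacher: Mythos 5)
Your proposal is correct and follows essentially the same route as the paper's proof: dispatching $\ch\textsf{F}\neq p$ via Remark \ref{pequipadosSonGen} and Theorem \ref{equivalenciaRepGen}, defining the action $va=\sum_i r^i(v)g_i$, verifying associativity on generators via the identity $r^k(vg)=\sum_i\binom{k}{i}r^{k-i}(v)\delta^i(g)$ against $\delta^k\mu_g=\sum_i\binom{k}{i}\mu_{\delta^i(g)}\delta^{k-i}$, and inverting with $r(v)=v\vartheta$. Your explicit remark that $r^p=\delta^p=0$ is needed to reconcile the terms of degree $\geq p$ on both sides of the associativity check is a small point the paper leaves implicit, but it does not change the argument.
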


\begin{proof}
By Remark \ref{pequipadosSonGen}, when $\ch \textsf{F} \neq p$, we have the present result because of Theorem \ref{equivalenciaRepGen}. So we will deal in this proof with the case $\ch \textsf{F}= p$.

If $\widetilde{V}=(V,r;V_x : x \in \mathscr{P})\in \Rep \mathscr{P}$, we want to give to $V$ a right $A$-module structure.

Every $a \in A$ can be written, in a unique way, as
$$a=\sum _{i=0}^{p-1}\vartheta^{i}* \mu_{g_{i}}.$$
for some $g_{i}\in G$.

We define the following action of $A$ over $V$, for all $v\in V$:
$$va=\sum _{i=0}^{p-1}r^{i}(v)g_{i}.$$

If $a$ is the unit in $A$, it is clear that $va=v$ for any $v\in V$.

To prove that this action gives to $V$ a right $A$-module structure, it is enough to prove that for every $a,b\in A$, of the form $a=\vartheta^{i}*\mu_{g}=\mu_{g}\vartheta^{i}$ and $b=\vartheta^{j}*\mu_{h} =\mu_{h}\vartheta^{j}$, for some $g,h \in G$ we have:
\begin{equation}\label{VesAmodulo}
(va)b=v(a*b)=v(ba).
\end{equation}

If $\ch \textsf{F}= p$, we have that $\vartheta = \delta$ and $r=d$ is a $\delta$-derivation. By D.2., for all $v\in V$, $g\in G$ y $j<p$,
$$d^{j}(vg)=\sum _{s=0}^{j}{j \choose s}d^{j-s}(v)\delta ^{s}(g).$$

Then,
$$(v(\mu_{g}\delta ^{i}))(\mu_{h}\delta ^{j})=[d^{i}(v)g)](\mu_{h}\delta ^{j})$$
$$=d^{j}(d^{i}(v)g)b=\sum _{s=0}^{j}{j\choose s}d^{j+i-s}(v)\delta^{s}(g)h.$$

Furthermore,
$$(\mu_{h}\delta ^{j})(\mu_{g}\delta ^{i})=\sum _{s=0}^{j}{j \choose s}\mu_{h}(\mu_{\delta ^{s}(g)})\delta ^{j+i-s}
=\sum _{s=0}^{j}{j \choose s}(\mu_{\delta ^{s}(g)h})\delta ^{j+i-s}.$$

Therefore,
$$v[(\mu_{h}\delta ^{j})(\mu_{g}\delta ^{i})] = \sum _{s=0}^{j}{j\choose s}d^{j+i-s}(v)\delta^{s}(g)h.$$

Consequently (\ref{VesAmodulo}) holds.

We conclude that $V$ is a right $A$-module. 

Notice that, for $x\leq^\ell y$ in $\mathscr{P}$, the condition 
$$V_x \mathcal{T}_{x,y} \subseteq V_y,$$
is equivalent to
$$\sum_{i=0}^{\ell-1} r^i(V_x)\subseteq V_y.$$

Hence $(V, V_x:x\in \mathscr{P})$ is an object of $\Rep (\mathscr{P},A,\mathcal{T})$.

Conversely, if $\widetilde{V}=(V, V_x:x\in \mathscr{P}) \in \Rep (\mathscr{P},A,\mathcal{T})$. As $V$ is a right $A$-module, in particular it is is a right $\textsf{G}$-module. We define $r:V\rightarrow V$ for $v\in V$ , 
\begin{equation}\label{derivacionAccion}
r(v)=v\vartheta .
\end{equation}

When $\ch \textsf{F}= p$, we have $r^{p}(v)=v\delta ^{p}=0$. And, for all $v\in V$ y $g\in G$, 
$$r(vg)=(vg)(\delta )=(v\mu_{g})(\delta)=v(\mu_{g}*\delta )=v(\delta \mu_{g})$$
$$=v(\mu_{g}\delta +\mu_{\delta (g)})=r(v)g+v\delta (g).$$
So $r$ is a $\delta $-derivation.

With this $r$, we have again
\[ V_x \mathcal{T}_{x,y} \subseteq V_y \Leftrightarrow \sum_{i=0}^{\ell-1} r^i(V_x)\subseteq V_y;\]
for every $x\leq^\ell y$ in $\mathscr{P}$.

Therefore $(V,r; V_x:x\in \mathscr{P}) \in \Rep \mathscr{P}$.

For two representations $\widetilde{V}=(V,V_x : x \in \mathscr{P})$ and $\widetilde{V'}=(V',V_x' : x \in \mathscr{P})$ of $(\mathscr{P},A,\mathcal{T})$, an application $h:V\rightarrow V'$ is a morphism in $\Rep (\mathscr{P},A,\mathcal{T})$, if and only if it is an $A$-module morphism, and $h(V_x)\subseteq V_x'$, for every $x \in \mathscr{P}$.
Then, for $v\in V$, $a=\sum _{i=0}^{p-1}\vartheta^{i}* \mu_{g_{i}}\in A$,
\begin{align*}
h(va) &= h(v)a;\\
h \left( v \sum _{i=0}^{p-1}\vartheta^{i}* \mu_{g_{i}} \right) &= h(v) \sum _{i=0}^{p-1}\vartheta^{i}* \mu_{g_{i}};\\
\intertext{by using (\ref{derivacionAccion}),}
h \left( \sum _{i=0}^{p-1} r^i(v) \mu_{g_{i}} \right) &= \sum _{i=0}^{p-1} (r')^i h(v) \mu_{g_{i}}.\\
\end{align*}

The last equation holds if and only if $h$ is \textsf{G}-linear, commutes with $r$ and $r'$, and $h(V_x)\subseteq V_x'$, for all $x \in \mathscr{P}$. That is, $h$ is a morphism in $\Rep (\mathscr{P},A,\mathcal{T})$, if and only if it is a morphism in $\Rep \mathscr{P}$. The theorem is proved.
\end{proof}

%%%%%%%%%%%%%%%%%%%%%%%%%%%%%%%%%%%%%%%%%%%%%%%%%%%%%%%%%%%%%%%%%%%%%%%%%%%%%%%%%%%%%%%%%
%%%%%%%%%%%%%%%%%%%%%%%%%%%%%%%%%%%%%%%%%%%%%%%%%%%%%%%%%%%%%%%%%%%%%%%%%%%%%%%%%%%%%%%%%

\section{Incidence algebras}\label{algebras}

Given the algebra $\mathscr{M}_m(\textsf{K})$\index{$\mathscr{M}_m(\textsf{K})$}, of matrices of size $m$ over a field $\textsf{K}$, its standard basis elements are the matrices $e_{i,j}$, with 1 at the place $(i,j)$ and 0 otherwise. Recall that the incidence algebra $A(\mathscr{P})$, of $\mathscr{P}$ over $\textsf{K}$, is a subalgebra of $\mathcal{M}_m(\textsf{K})$ spanned by the elements $e_{i,j}$ with $i\leq j$ in $\mathscr{P}$ (see for instance \cite{ASS}), i. e.,
\[\Lambda(\mathscr{P})=\bigoplus_{\substack{i\leq j \\ i,j\in \mathscr{P}}}e_{i,j}\textsf{K}.\]

Let $(\mathscr{P},A,\mathcal{R})$ be an algebraically equipped poset.

Notice that $\mathcal{R}_{i,j}$ is a $\mathcal{R}_i$-$\mathcal{R}_j$-bimodule, for all $i,j\in \mathscr{P}$.

We define 
\[\Lambda=\Lambda(\mathcal{R})=\bigoplus_{\substack{i\leq j \\ i,j\in \mathscr{P}}}e_{i,j}\mathcal{R}_{i,j};\]
which is a subset of the $\textsf{K}$-algebra $\bigoplus_{\substack{i\leq j \\ i,j\in \mathscr{P}}}e_{i,j}A\subset \mathscr{M}_m(A)$\index{$\Lambda=\Lambda(\mathcal{R})$}. 

Consider $a=\sum_{\substack{i\leq j \\ i,j\in \mathscr{P}}}e_{i,j}a_{i,j}, b=\sum_{\substack{k\leq l \\ k,l\in \mathscr{P}}}e_{k,l}b_{k,l} \in \Lambda$, for some $a_{i,j},b_{i,j}\in \mathcal{R}_{i,j}$. The sum and product 
\[a+b=\sum_{\substack{i\leq j \\ i,j\in \mathscr{P}}}e_{i,j}a_{i,j}+b_{i,j} \in \Lambda;\]
\[ab=\sum_{\substack{i\leq j\leq l \\ i,j,l\in \mathscr{P}}}e_{i,l}a_{i,j}b_{j,l} \in \Lambda;\]
and, for $z\in \textsf{K}$
\[za=\sum_{\substack{i\leq j \\ i,j\in \mathscr{P}}}e_{i,j}za_{i,j}; \hspace{2cm} bz=\sum_{\substack{k\leq l \\ i,j\in \mathscr{P}}}e_{k,l}b_{k,l}z;\]
every $\mathcal{R}_{i,j}$ is a \textsf{K}-vector space so, $za_{i,j}, b_{i,j}z\in \mathcal{R}_{i,j}$, for all $i,j\in \mathscr{P}$, therefore $za, bz  \in \Lambda$.

Then $\Lambda$ has a $\textsf{K}$-algebra structure, but it is not always a subalgebra of $\mathscr{M}_m(A)$, because their units are  not necessarily the same.

We denote $e_i=e_{i,i}1_i$, for all $i\in \mathscr{P}$. If 1 is the unit of $\Lambda$, then $1=\sum_{i\in \mathscr{P}}e_i$ is a decomposition in a sum of ortogonal idempotents.

The subalgebra $S=\sum_{i\in \mathscr{P}}e_{i,i}\mathcal{R}_i\subset \Lambda$, determines a decomposition of $\Lambda$ as a direct sum of $S$-$S$-bimodules:
\begin{equation}\label{semisimpleRadical}
\Lambda=S\oplus J, \ \ \text{ where } \ \ J=\bigoplus_{\substack{i< j \\ i,j\in \mathscr{P}}}e_{i,j}\mathcal{R}_{i,j}.
\end{equation}

It is easy to see that $J$ is a bilateral ideal of $\Lambda$, and $J^m=0$, moreover if $\mathcal{R}_i$ is a semisimple algebra, for all $i\in \mathscr{P}$, then $S$ is semisimple and $J$ is the radical of $\Lambda$.

For a representation $\widetilde{L}$ of $(\mathscr{P},A,\mathcal{R})$ (Definition \ref{defRep}), we define a right $\Lambda$-module, as follows:
\begin{equation}\label{ModdeSis}
\text{\Large\Fontamici u}(\widetilde{L})=\bigoplus_{i\in \mathscr{P}}L_i,
\end{equation}
if $x\in \bigoplus_{i\in \mathscr{P}}L_i$ and $a=\sum_{\substack{i\leq j \\ i,j\in \mathscr{P}}}e_{i,j}r_{i,j}\in \Lambda$, then $xa=\sum_{i=1}^m\textsf{\emph{i}}_i\left(\sum_{j=1}^m\pi_j(x)r_{j,i}\right)$, where $\textsf{\emph{i}}_i:L_i\rightarrow \text{\Large\Fontamici u}(\widetilde{L})$ and $\pi_j:\text{\Large\Fontamici u}(\widetilde{L})\rightarrow L_j$ are the $i$-th canonical inclusion and the $j$-th canonical projection.

Let $h:\widetilde{L} \rightarrow \widetilde{M}$ be a morphism of representations, we define 
$$\text{\Large\Fontamici u}(h):\text{\Large\Fontamici u}(\widetilde{L}) \rightarrow \text{\Large\Fontamici u}(\widetilde{M})$$ 
as follows: If $x=(x_1,x_2,\ldots, x_m)\in \text{\Large\Fontamici u}(\widetilde{L})$, then 
$$\text{\Large\Fontamici u}(h)(x)=(h(x_1),h(x_2),\ldots, h(x_m)).$$

For $a=\sum_{\substack{i\leq j \\ i,j\in \mathscr{P}}}e_{i,j}r_{i,j}\in \Lambda$ and $x=(x_1,x_2,\ldots, x_m)\in \text{\Large\Fontamici u}(\widetilde{L})$, we have
\begin{align*}
\text{\Large\Fontamici u}(h)(xa) &= (h(\pi_1(xa)),h(\pi_2(xa)),\ldots,h(\pi_m(xa)))\\
 &= \left(h\left(\sum_{j=1}^mx_jr_{j,1}\right),h\left(\sum_{j=1}^mx_jr_{j,2}\right),\ldots,h\left(\sum_{j=1}^mx_jr_{j,m}\right)\right)\\
 &= \left(\sum_{j=1}^mh(x_jr_{j,1}),\sum_{j=1}^mh(x_jr_{j,2}),\ldots,\sum_{j=1}^mh(x_jr_{j,m})\right)\\
 &= \left(\sum_{j=1}^mh(x_j)r_{j,1},\sum_{j=1}^mh(x_j)r_{j,2},\ldots,\sum_{j=1}^mh(x_j)r_{j,m}\right)\\
 &= \text{\Large\Fontamici u}(h)(x)a.
\end{align*}
In this way, $\text{\Large\Fontamici u}(h)$ is a right $\Lambda$-module morphism. Clearly, if $h$ is a monomorphism, $\text{\Large\Fontamici u}(h)$ is a monomorphism, too.

Consider the case where the units $1_{i}\in \mathcal{R}_i$ are equal to the unit 1 of $A$, for every $i\in \mathscr{P}$. An example of representation of $(\mathscr{P},A,\mathcal{R})$ is the \textit{constant representation} $\widetilde{M}_c$, associated to a right $A$-module $M$, in which $\widetilde{M}_c=(M,M_i:i\in \mathscr{P})$ with $M_i=M$, for all $i\in \mathscr{P}$.

We denote by $D$, the duality with respect to the field $\textsf{K}$. If $L$ is a left $A$-module, $D(L)$ is a right $A$-module, and we can consider the dual notion of \textit{left constant representation} $\widetilde{L}_c$, associated to $L$, and construct the left $\Lambda$-module $\text{\Large\Fontamici u}(\widetilde{L}_c)=L^m$. With this notation, we have the following result.

\begin{lemma}\label{cteDual}
Let $L$ be a left $A$-module, then
\[D\left(\text{\Large\Fontamici u}\left(\widetilde{L}_c\right)\right)\cong \text{\Large\Fontamici u}\left(\widetilde{D(L)}_c\right).\]
\end{lemma}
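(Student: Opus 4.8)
The plan is to produce an explicit isomorphism of right $\Lambda$-modules and verify it by a direct computation, relying on the finiteness of $\mathscr{P}$ so that $\textsf{K}$-duality carries finite direct sums to finite direct sums. As a $\textsf{K}$-vector space, $\text{\Large\Fontamici u}(\widetilde{L}_c)$ is $L^m$, hence $D(\text{\Large\Fontamici u}(\widetilde{L}_c)) = D(L^m)$, while $\text{\Large\Fontamici u}(\widetilde{D(L)}_c) = D(L)^m$ by construction. Since $m = |\mathscr{P}|$ is finite, the canonical map
\[\Phi : D(L^m) \to D(L)^m, \qquad \Phi(f) = (f\circ \textsf{\emph{i}}_1,\dots, f\circ \textsf{\emph{i}}_m),\]
where $\textsf{\emph{i}}_k : L \to L^m$ denotes the $k$-th inclusion, is a $\textsf{K}$-linear isomorphism. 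I would take $\Phi$ as the candidate and check that it commutes with the two right $\Lambda$-actions.

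First I would write down the relevant actions. On $L^m = \text{\Large\Fontamici u}(\widetilde{L}_c)$, the left $\Lambda$-action (the left analogue of the formula defining $\text{\Large\Fontamici u}$) sends $a = \sum_{i\le j} e_{i,j} r_{i,j}$ and $y = (y_1,\dots,y_m)$ to the tuple whose $i$-th entry is $\sum_{j\ge i} r_{i,j} y_j$; dualizing, $D(L^m)$ carries the right action $(f\cdot a)(y) = f(ay)$. On $D(L)^m = \text{\Large\Fontamici u}(\widetilde{D(L)}_c)$ the right action is, by the defining formula, $(x a)_i = \sum_{j\le i} x_j r_{j,i}$, where $r_{j,i}$ acts on $x_j \in D(L)$ through the dual right $A$-module structure $(g\cdot r)(v) = g(rv)$. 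Evaluating both sides of $\Phi(f\cdot a) = \Phi(f)\cdot a$ at a vector $v \in L$ placed in the $k$-th slot, I compute
\begin{align*}
\Phi(f\cdot a)_k(v) &= f\bigl(a\,\textsf{\emph{i}}_k(v)\bigr) = \sum_{i\le k} f\bigl(\textsf{\emph{i}}_i(r_{i,k} v)\bigr) = \sum_{i\le k} \Phi(f)_i(r_{i,k} v),\\
\bigl(\Phi(f)\cdot a\bigr)_k(v) &= \sum_{i\le k}\bigl(\Phi(f)_i\cdot r_{i,k}\bigr)(v) = \sum_{i\le k}\Phi(f)_i(r_{i,k} v),
\end{align*}
where the first line uses that $a\,\textsf{\emph{i}}_k(v) = \sum_{i\le k}\textsf{\emph{i}}_i(r_{i,k} v)$. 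The two expressions coincide, so $\Phi$ is an isomorphism of right $\Lambda$-modules.

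The only genuinely delicate point is the bookkeeping: one must keep straight that $D$ interchanges the left and right module structures and transposes the action, so that the \emph{rows} $i \le k$ read off by the left $\Lambda$-action on $L^m$ become exactly the \emph{columns} $j \le i$ summed by the right $\Lambda$-action on $D(L)^m$. Establishing that the left action on $L^m$ is well defined and associative, so that $D(L^m)$ is a genuine right $\Lambda$-module, is routine and rests on M.1; likewise the naturality of $\Phi$ in $L$ is immediate, should it be needed elsewhere. No input beyond the definitions of $\text{\Large\Fontamici u}$, of the constant representations, and the finiteness of $\mathscr{P}$ is required.
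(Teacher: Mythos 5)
Your proposal is correct and follows essentially the same route as the paper: both take the canonical $\textsf{K}$-linear map $f\mapsto (f\textsf{\emph{i}}_1,\dots,f\textsf{\emph{i}}_m)$ from $D(L^m)$ to $D(L)^m$ and verify compatibility with the right $\Lambda$-action by evaluating both sides on a vector placed in a single slot, using $a\,\textsf{\emph{i}}_k(v)=\sum_{i\le k}\textsf{\emph{i}}_i(r_{i,k}v)$. Your write-up is in fact slightly more explicit than the paper's about the left $\Lambda$-action on $L^m$ and the transposition of indices under duality, but the argument is the same.
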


\begin{proof}
We have $\text{\Large\Fontamici u}\left(\widetilde{L}_c\right)=L\oplus \cdots \oplus L=L^m$. There is a $\textsf{K}$-vector spaces isomorphism
\[\psi :D\left(\text{\Large\Fontamici u}\left(\widetilde{L}_c\right)\right) \rightarrow D(L)^m=D(L)\oplus \cdots \oplus D(L)=\text{\Large\Fontamici u}\left(\widetilde{D(L)}_c\right).\]

Let $f:\text{\Large\Fontamici u}\left(\widetilde{L}_c\right) \rightarrow \textsf{K}$ be a linear transformation in $D\left(\text{\Large\Fontamici u}\left(\widetilde{L}_c\right)\right)$ and $a=\sum_{\substack{i\leq j \\ i,j\in \mathscr{P}}}e_{i,j}r_{i,j}\in A$, then
\[\psi(f)=(f\textsf{\emph{i}}_1,f\textsf{\emph{i}}_2,\ldots,f\textsf{\emph{i}}_m),\]
besides
\[\psi(fa)=(fa\textsf{\emph{i}}a_1,fa\textsf{\emph{i}}_2,\ldots,fa\textsf{\emph{i}}_m).\]

For $i\in \{1,2,\ldots,m\}$ and $y\in L$
\begin{multline*}
\pi_i(\psi(fa))(y)=(fa)\textsf{\emph{i}}_i(y)=f(a\textsf{\emph{i}}_i(y))=f(a_{1,i}y,a_{2,i}y,\ldots,a_{m,i}y)\\
=\sum_{j=1}^mf(\textsf{\emph{i}}_j(a_{j,i}y))=\sum_{j=1}^mf\textsf{\emph{i}}_ja_{j,i}(y)=(\psi(f)a)(\textsf{\emph{i}}_i(y))=\pi_i(\psi(f)a)(y).
\end{multline*}

Therefore $\psi(fa)=\psi(f)a$, and the lemma is proved.
\end{proof}

The following proposition allows us to construct some other representations.

\begin{proposition}\label{espacio}
Let $V$ be a \textsf{K}-vector space and $\widetilde{L}=(L, L_i:i\in \mathscr{P})$ be a representation of $(\mathscr{P},A,\mathcal{R})$. Then $(V\otimes_{\textsf{K}}L,V\otimes_{\textsf{K}}L_i:i\in \mathscr{P})$ is another representation, denoted by $V\otimes_{\textsf{K}}\widetilde{L}$.
Moreover,
\[V\otimes_{\textsf{K}}\text{\Large\Fontamici u}(\widetilde{L}) \cong \text{\Large\Fontamici u}(V\otimes_{\textsf{K}}\widetilde{L}).\]
\end{proposition}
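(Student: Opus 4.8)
The plan is to establish the statement in two parts: first that $V\otimes_{\textsf{K}}\widetilde{L}$ genuinely lies in $\Rep(\mathscr{P},A,\mathcal{R})$, and then that the displayed isomorphism holds as right $\Lambda$-modules.

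For the first part I would begin by recording the module structures in play. Since $A$ is a $\textsf{K}$-algebra and $L$ is a right $A$-module, the map $x\mapsto xa$ is $\textsf{K}$-linear for each fixed $a\in A$, so $\mathrm{id}_V\otimes(\,\cdot\,a)$ endows $V\otimes_{\textsf{K}}L$ with a right $A$-module structure via $(v\otimes x)a=v\otimes(xa)$; functoriality of $\otimes_{\textsf{K}}$ gives associativity and the unit axiom for free. The one point that uses the hypothesis that $\textsf{K}$ is a field is the identification of $V\otimes_{\textsf{K}}L_i$ with a subspace of $V\otimes_{\textsf{K}}L$: because $V$ is a vector space it is a flat $\textsf{K}$-module, so the inclusion $L_i\hookrightarrow L$ remains injective after tensoring and $V\otimes_{\textsf{K}}L_i$ is naturally a subspace. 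With these identifications, condition L.1 reduces to the computation $(v\otimes x)r=v\otimes(xr)$ with $x\in L_i$, $r\in\mathcal{R}_{i,j}$, where $xr\in L_i\mathcal{R}_{i,j}\subseteq L_j$ by L.1 for $\widetilde{L}$; hence $(V\otimes_{\textsf{K}}L_i)\mathcal{R}_{i,j}\subseteq V\otimes_{\textsf{K}}L_j$. Condition L.2 is equally immediate: $(v\otimes x)1_i=v\otimes(x1_i)=v\otimes x$ for $x\in L_i$, using L.2 for $\widetilde{L}$.

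For the isomorphism I would start from the natural $\textsf{K}$-linear isomorphism $\phi\colon V\otimes_{\textsf{K}}\bigl(\bigoplus_{i}L_i\bigr)\to\bigoplus_{i}\bigl(V\otimes_{\textsf{K}}L_i\bigr)$ sending $v\otimes(y_i)_i$ to $(v\otimes y_i)_i$, which identifies the underlying spaces of $V\otimes_{\textsf{K}}\text{\Large\Fontamici u}(\widetilde{L})$ and $\text{\Large\Fontamici u}(V\otimes_{\textsf{K}}\widetilde{L})$ by (\ref{ModdeSis}). It then remains to check that $\phi$ commutes with the $\Lambda$-action. Here $V\otimes_{\textsf{K}}\text{\Large\Fontamici u}(\widetilde{L})$ carries the $\Lambda$-structure coming from its right factor, while $\text{\Large\Fontamici u}(V\otimes_{\textsf{K}}\widetilde{L})$ carries the structure prescribed by (\ref{ModdeSis}) for the representation $V\otimes_{\textsf{K}}\widetilde{L}$. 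Fixing $a=\sum_{i\leq j}e_{i,j}r_{i,j}\in\Lambda$ and $y=(y_1,\dots,y_m)\in\text{\Large\Fontamici u}(\widetilde{L})$, the $i$-th component of $ya$ is $\sum_{j}y_jr_{j,i}$, so the $i$-th component of $\phi((v\otimes y)a)=\phi(v\otimes ya)$ is $v\otimes\sum_{j}y_jr_{j,i}=\sum_{j}(v\otimes y_j)r_{j,i}$. On the other side, applying (\ref{ModdeSis}) to $\phi(v\otimes y)=(v\otimes y_1,\dots,v\otimes y_m)$ yields the $i$-th component $\sum_{j}(v\otimes y_j)r_{j,i}$ as well, the products being taken in the right $A$-module $V\otimes_{\textsf{K}}L$. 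The two agree, so $\phi$ is a morphism of right $\Lambda$-modules; being bijective, it is the desired isomorphism.

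I do not expect a genuine obstacle here; the content is the correct handling of the three module structures (the $A$-action on $V\otimes_{\textsf{K}}L$, the $\Lambda$-action inherited by $V\otimes_{\textsf{K}}\text{\Large\Fontamici u}(\widetilde{L})$ from the right factor, and the $\Lambda$-action defined by (\ref{ModdeSis})) and being careful that the ones being compared are indeed these. The only step carrying real mathematical input rather than bookkeeping is the flatness remark guaranteeing that $V\otimes_{\textsf{K}}L_i\hookrightarrow V\otimes_{\textsf{K}}L$ stays injective, which is precisely what lets one regard $V\otimes_{\textsf{K}}\widetilde{L}$ as a representation in the first place; over a field this is automatic, so the verification is routine throughout.
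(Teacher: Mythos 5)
Your proposal is correct and follows essentially the same route as the paper: verify the representation axioms via $(V\otimes_{\textsf{K}}L_i)\mathcal{R}_{i,j}=V\otimes_{\textsf{K}}L_i\mathcal{R}_{i,j}\subseteq V\otimes_{\textsf{K}}L_j$, then use the canonical isomorphism interchanging $V\otimes_{\textsf{K}}(-)$ with the direct sum and check componentwise that it intertwines the two $\Lambda$-actions. Your added remarks on flatness and on L.2 are sound but, as you note, automatic over a field; the paper simply omits them.
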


\begin{proof}
For $i\leq j$ in $\mathscr{P}$, we have 
\[(V\otimes_{\textsf{K}}L_i)\mathcal{R}_{i,j} = V\otimes_{\textsf{K}}L_i\mathcal{R}_{i,j}  \subseteq V\otimes_{\textsf{K}}L_j.\]

There is a $\textsf{K}$-vector spaces isomorphism
\[\psi: V\otimes_{\textsf{K}}\text{\Large\Fontamici u}(\widetilde{L})=V\otimes_{\textsf{K}}\left(\bigoplus_{i\in \mathscr{P}}L_i\right) \rightarrow \text{\Large\Fontamici u}(V\otimes_{\textsf{K}}\widetilde{L})=\bigoplus_{i\in \mathscr{P}}(V\otimes_{\textsf{K}}L_i),\]
such that, for all $v\in V, x=(x_1,x_2,\ldots , x_m)\in \text{\Large\Fontamici u}(\widetilde{L})$,
\[\psi(v\otimes x)=\psi(v\otimes (x_1,x_2,\ldots , x_m))=(v\otimes x_1,v\otimes x_2,\ldots ,v\otimes x_m).\]

If $a=(a_{i,j})\in \Lambda$, then
\begin{align*}
\psi([v\otimes x]a) &= \psi(v\otimes xa) = \psi \left(v\otimes \left(\sum_{j=1}^mx_ja_{j,1},\sum_{j=1}^mx_ja_{j,2},\ldots ,\sum_{j=1}^mx_ja_{j,m} \right)\right) \\
 &= \left(v\otimes \sum_{j=1}^mx_ja_{j,1},v\otimes \sum_{j=1}^mx_ja_{j,2},\ldots ,v\otimes \sum_{j=1}^mx_ja_{j,m} \right) \\
 &= (v\otimes x_1,v\otimes x_2,\ldots ,v\otimes x_m)a = \psi(v\otimes x)a.
\end{align*}

This proves that $\psi$ is a right $\Lambda$-module isomorphism, i. e., 
\[V\otimes_{\textsf{K}}\text{\Large\Fontamici u}(\widetilde{L}) \cong \text{\Large\Fontamici u}(V\otimes_{\textsf{K}}\widetilde{L}).\]
\end{proof}

Let $(\mathscr{P},A,\mathcal{R})$ be an algebraically equipped poset. We denote by $\mathscr{P}^\text{\large \Fontlukas m}$, the poset obtained by adding a maximal point $\text{\Large \Fontlukas m}\,$, to $\mathscr{P}$.

A $\mathscr{P}^\text{\large \Fontlukas m}$-multiplicative system $\mathcal{R}^\text{\large \Fontlukas m}$, is the collection
\begin{equation}\label{sisMultiMaximo}
\begin{split}
& \mathcal{R}_{i,j}, \text{ for every } i\leq j \text{ in } \mathscr{P},\\
& \mathcal{R}_{i,\text{\large \Fontlukas m}}=A, \text{ for every } i\in \mathscr{P}^\text{\large \Fontlukas m}.
\end{split}
\end{equation}

Any representation of $(\mathscr{P}^\text{\large \Fontlukas m},A,\mathcal{R}^\text{\large \Fontlukas m})$ is of the form $\widetilde{L}=(L, L_i:i\in \mathscr{P},L_\text{\large \Fontlukas m})$, notice that $L_\text{\large \Fontlukas m}\mathcal{R}_{\text{\large \Fontlukas m}} = L_\text{\large \Fontlukas m}A\subseteq L_m$, then $L_\text{\large \Fontlukas m}$ is a right $A$-module.

We call $\mathcal{S}$ the full subcategory of $\Rep (\mathscr{P}^\text{\large \Fontlukas m},A,\mathcal{R}^\text{\large \Fontlukas m})$ whose objects are such that $L=L_\text{\large \Fontlukas m}$.

If to each $(L, L_i:i\in \mathscr{P}) \in \Rep (\mathscr{P},A,\mathcal{R})$, we assign the representation $(L, L_i:i\in \mathscr{P},L_\text{\large \Fontlukas m}=L)$ of $(\mathscr{P}^\text{\large \Fontlukas m},A,\mathcal{R}^\text{\large \Fontlukas m})$, we have an equivalence between $\Rep (\mathscr{P},A,\mathcal{R})$ and $\mathcal{S}$.

Moreover, when $A$ is semisimple, if an object $\widetilde{L} \in \Rep (\mathscr{P}^\text{\large \Fontlukas m},A,\mathcal{R}^\text{\large \Fontlukas m})$ is indecomposable, then $\widetilde{L} \in \mathcal{S}$, or $L_i=0$, for every $i\in \mathscr{P}$. More generally, in this case, any $\widetilde{L} \in \Rep (\mathscr{P}^\text{\large \Fontlukas m},A,\mathcal{R}^\text{\large \Fontlukas m})$, can be written $\widetilde{L}=\widetilde{L}^1 \oplus \widetilde{L}^2$, where $\widetilde{L}^1 \in \mathcal{S}$ and $L^2_i=0$, for all $i\in \mathscr{P}$.

Let us construct the algebra $\Lambda=\Lambda(\mathscr{P}^\text{\large \Fontlukas m})$. The equation (\ref{ModdeSis}) determines a functor $\text{\Large\Fontamici u}:\mathcal{S}\rightarrow \Mod \Lambda$, with the following properties.

\begin{proposition}\label{uFielyPleno}
With the previous notation, the functor $\text{\Large\Fontamici u}:\mathcal{S}\rightarrow \Mod \Lambda$ is full and faithful.
\end{proposition}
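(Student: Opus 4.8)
The plan is to exploit the orthogonal idempotents $e_i=e_{i,i}1_i$ of $\Lambda$ to detect the summands of $\text{\Large\Fontamici u}(\widetilde{L})$, and then to single out the maximal point $\text{\Large \Fontlukas m}$ in order to reconstruct a morphism of representations from a given $\Lambda$-module map. First I would record the basic computation that, for $x\in\text{\Large\Fontamici u}(\widetilde{L})$, the rule (\ref{ModdeSis}) together with condition L.2 gives $x e_i=\textsf{\emph{i}}_i(\pi_i(x))$; hence $\text{\Large\Fontamici u}(\widetilde{L})e_i=L_i$, so the summand $L_i$ is exactly the image of right multiplication by $e_i$. Consequently any $\Lambda$-linear map $\phi\colon \text{\Large\Fontamici u}(\widetilde{L})\to\text{\Large\Fontamici u}(\widetilde{M})$ commutes with each $e_i$ and therefore restricts to $\textsf{K}$-linear maps $\phi_i\colon L_i\to M_i$ for every $i\in\mathscr{P}^\text{\large \Fontlukas m}$. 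Faithfulness is then immediate: since $\widetilde{L}\in\mathcal{S}$ we have $L_\text{\large \Fontlukas m}=L$, and the $\text{\Large \Fontlukas m}$-component of $\text{\Large\Fontamici u}(h)$ is $h$ itself, so $\text{\Large\Fontamici u}(h)=0$ forces $h=0$.

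For fullness, given $\phi$ I would set $h:=\phi_\text{\large \Fontlukas m}\colon L=L_\text{\large \Fontlukas m}\to M_\text{\large \Fontlukas m}=M$ and verify three things. That $h$ is a morphism of right $A$-modules follows by letting $\phi$ commute with right multiplication by the elements $e_{\text{\large \Fontlukas m},\text{\large \Fontlukas m}}r$ for $r\in\mathcal{R}_\text{\large \Fontlukas m}=A$ (these lie in $\Lambda$ by (\ref{sisMultiMaximo})), which act on the $\text{\Large \Fontlukas m}$-summand as $x\mapsto xr$. That $h(L_i)\subseteq M_i$ is the crux, and here I would use the element $e_{i,\text{\large \Fontlukas m}}1_i\in\Lambda$, noting that $1_i\in\mathcal{R}_i\subseteq A=\mathcal{R}_{i,\text{\large \Fontlukas m}}$: by (\ref{ModdeSis}) and L.2 its right action realizes precisely the inclusion $L_i\hookrightarrow L=L_\text{\large \Fontlukas m}$, so commuting $\phi$ with it yields $\phi_\text{\large \Fontlukas m}|_{L_i}=\phi_i$, whence $h(L_i)=\phi_i(L_i)\subseteq M_i$. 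Finally $\text{\Large\Fontamici u}(h)=\phi$, because both agree componentwise: on the $\text{\Large \Fontlukas m}$-summand by the definition of $h$, and on each $L_i$ because $\text{\Large\Fontamici u}(h)|_{L_i}=h|_{L_i}=\phi_\text{\large \Fontlukas m}|_{L_i}=\phi_i=\phi|_{L_i}$.

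The step I expect to be the main obstacle is the identity $\phi_\text{\large \Fontlukas m}|_{L_i}=\phi_i$, that is, seeing that the single map on the top summand $L$ already encodes all the individual component maps $\phi_i$. This is exactly where the two special features of the construction enter: the fully equipped relation $\mathcal{R}_{i,\text{\large \Fontlukas m}}=A$ (so that $e_{i,\text{\large \Fontlukas m}}1_i$ belongs to $\Lambda$ and supplies the comparison between the $i$-summand and the $\text{\Large \Fontlukas m}$-summand), and the defining property $L=L_\text{\large \Fontlukas m}$ of the objects of $\mathcal{S}$ (so that $h=\phi_\text{\large \Fontlukas m}$ is defined on all of $L$ and is a genuine $A$-module map). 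The remaining verifications are routine bookkeeping with the multiplication rule (\ref{ModdeSis}) and conditions L.1 and L.2.
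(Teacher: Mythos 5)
Your proposal is correct and follows essentially the same route as the paper: faithfulness via $L_{\text{\large \Fontlukas m}}=L$, and fullness by defining the candidate morphism as the $\text{\Large \Fontlukas m}$-component of the given $\Lambda$-map and using right multiplication by $e_{i,\text{\large \Fontlukas m}}$ (the paper uses the unit of $A=\mathcal{R}_{i,\text{\large \Fontlukas m}}$ where you use $1_i$, which works equally well by L.2) to show it restricts correctly on each $L_i$. Your explicit check that $h$ is $A$-linear via the elements $e_{\text{\large \Fontlukas m},\text{\large \Fontlukas m}}r$ is a small detail the paper leaves implicit, but the argument is the same.
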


\begin{proof}
Let $\widetilde{L}=(L, L_i:i\in \mathscr{P}^\text{\large \Fontlukas m})$ and $\widetilde{M}=(M, M_i:i\in \mathscr{P}^\text{\large \Fontlukas m})$ be objects in $\mathcal{S}$ and $f:\widetilde{L}\rightarrow \widetilde{M}$ be a morphism between them, such that $\text{\Large\Fontamici u}(f)=0$. That is
\[\text{\Large\Fontamici u}(f)= (f(L_1),f(L_2),\ldots,f(L_\text{\large \Fontlukas m})) =0,\]
so, $f(L_\text{\large \Fontlukas m})=f(L)=0$, this is, $f=0$, then the functor is faithful.

Let $g:\text{\Large\Fontamici u}(\widetilde{L}) \rightarrow \text{\Large\Fontamici u}(\widetilde{M})$ be a $\Lambda$-module morphism. For $i\in \mathscr{P}$, we choose an element $y\in L_i\subseteq L_\text{\large \Fontlukas m} = L$. Consider $g(\textsf{\emph{i}}_i(y))\in \text{\Large\Fontamici u}(\widetilde{M})$, and its $i$-th projection $\pi_ig(\textsf{\emph{i}}_i(y))\in M_i$.

By defining
\[f=\pi_\text{\large \Fontlukas m}g\textsf{\emph{i}}_\text{\large \Fontlukas m}:L_\text{\large \Fontlukas m}=L \rightarrow M_\text{\large \Fontlukas m}=M\]
we have that
\begin{align*}
f(y) &= (\pi_\text{\large \Fontlukas m} g\textsf{\emph{i}}_\text{\large \Fontlukas m})(y)=(\pi_\text{\large \Fontlukas m}g)(\textsf{\emph{i}}_\text{\large \Fontlukas m}(y))\\
 &= (\pi_\text{\large \Fontlukas m}g)(\textsf{\emph{i}}_i(y)e_{i,\text{\large \Fontlukas m}}) = \pi_\text{\large \Fontlukas m}(g(\textsf{\emph{i}}_i(y))e_{i,\text{\large \Fontlukas m}}) = \pi_i(g(\textsf{\emph{i}}_i(y)))\\
 &= (\pi_ig\textsf{\emph{i}}_i)(y) \in M_i.
\end{align*}

Then $f(L_i)\subseteq M_i$, therefore $f$ is a morphism in $\mathcal{S}$ such that
\[\text{\Large\Fontamici u}(f)=(g\textsf{\emph{i}}_1,g\textsf{\emph{i}}_2,\ldots,g\textsf{\emph{i}}_\text{\large \Fontlukas m})=g.\]

We conclude that $\text{\Large\Fontamici u}$ is a full functor.
\end{proof}

\begin{proposition}\label{submodulo}
Suppose that $X\in \Mod A$ is of the form $X=\text{\Large\Fontamici u}(\widetilde{L})$, for some $\widetilde{L}=(L, L_i:i\in \mathscr{P}^\text{\large \Fontlukas m})\in \mathcal{S}$. If $Y$ is a submodule of $X$, then there exists $\widetilde{N} \in \mathcal{S}$, such that $Y=\text{\Large\Fontamici u}(\widetilde{N})$.
\end{proposition}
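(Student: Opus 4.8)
The plan is to use the idempotent decomposition $1=\sum_{i\in\mathscr{P}^\text{\large \Fontlukas m}}e_i$ of the unit of $\Lambda=\Lambda(\mathscr{P}^\text{\large \Fontlukas m})$ to cut $Y$ into pieces supported at each point and then reassemble these into a representation in $\mathcal{S}$. First I would write $Y=\bigoplus_{i\in\mathscr{P}^\text{\large \Fontlukas m}}Ye_i$, which is legitimate since $Y$ is a $\Lambda$-submodule and the $e_i$ are orthogonal idempotents summing to $1$. Because $Xe_i=\textsf{\emph{i}}_i(L_i)$ (here one uses condition L.2, so that $1_i$ acts as the identity on $L_i$), each $Ye_i$ is contained in $\textsf{\emph{i}}_i(L_i)$, hence $Ye_i=\textsf{\emph{i}}_i(N_i)$ for the \textsf{K}-subspace $N_i:=\pi_i(Ye_i)\subseteq L_i$. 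This already yields $Y=\bigoplus_i\textsf{\emph{i}}_i(N_i)$, and the candidate object is $\widetilde{N}=(N,N_i:i\in\mathscr{P}^\text{\large \Fontlukas m})$ with $N:=N_\text{\large \Fontlukas m}$.

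The step I expect to be the main obstacle is verifying that $\widetilde{N}$ is genuinely a representation lying in $\mathcal{S}$, and specifically that $N_i\subseteq N_\text{\large \Fontlukas m}$ for every $i\in\mathscr{P}$, so that $N_\text{\large \Fontlukas m}$ really serves as a common ambient module. This is exactly where the extension by the maximal point pays off: since $\mathcal{R}_{i,\text{\large \Fontlukas m}}=A$, the element $e_{i,\text{\large \Fontlukas m}}1_\text{\large \Fontlukas m}\in\Lambda$ is available, and for $n\in N_i$ one computes $\textsf{\emph{i}}_i(n)\,(e_{i,\text{\large \Fontlukas m}}1_\text{\large \Fontlukas m})=\textsf{\emph{i}}_\text{\large \Fontlukas m}(n)$, using that $n\cdot 1_\text{\large \Fontlukas m}=n$ (as $1_\text{\large \Fontlukas m}$ is the unit of $\mathcal{R}_\text{\large \Fontlukas m}=A$). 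Since $\textsf{\emph{i}}_i(n)\in Ye_i\subseteq Y$ and $Y$ is a submodule, it follows that $\textsf{\emph{i}}_\text{\large \Fontlukas m}(n)\in Y$, whence $n\in\pi_\text{\large \Fontlukas m}(Y)=N_\text{\large \Fontlukas m}$.

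The remaining checks are routine. To see that $N=N_\text{\large \Fontlukas m}$ is a right $A$-submodule of $L$, I would act on elements of $Y$ supported at $\text{\large \Fontlukas m}$ by $e_{\text{\large \Fontlukas m},\text{\large \Fontlukas m}}a$ for $a\in A=\mathcal{R}_\text{\large \Fontlukas m}$ and project to the $\text{\large \Fontlukas m}$-component, obtaining $na\in N_\text{\large \Fontlukas m}$. Condition L.1, namely $N_i\mathcal{R}_{i,j}\subseteq N_j$ for $i\leq j$, is obtained by acting with $e_{i,j}r$, $r\in\mathcal{R}_{i,j}$, on $\textsf{\emph{i}}_i(n)\in Y$ and reading off the $j$-th component $\textsf{\emph{i}}_j(nr)\in Ye_j=\textsf{\emph{i}}_j(N_j)$, so $nr\in N_j$; condition L.2 is inherited directly from $\widetilde{L}$ because $N_i\subseteq L_i$. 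Thus $\widetilde{N}\in\mathcal{S}$, and by construction $\text{\Large\Fontamici u}(\widetilde{N})=\bigoplus_i N_i=\bigoplus_i Ye_i=Y$, as required.
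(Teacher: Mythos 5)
Your proposal is correct and follows essentially the same route as the paper: decompose $Y$ via the idempotents $e_i$, set $N_i=\pi_i(Ye_i)\subseteq L_i$, and verify L.1 by acting with $e_{i,j}r$, with the containment $N_i\subseteq N_\text{\large \Fontlukas m}$ coming from $\mathcal{R}_{i,\text{\large \Fontlukas m}}=A$ exactly as in the paper (where it appears as the case $j=\text{\large \Fontlukas m}$ of L.1). The only cosmetic difference is that you isolate that containment as the key step, while the paper folds it into the general verification of L.1.
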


\begin{proof}
For every $i\in \mathscr{P}$,
\[Ye_i\subseteq Xe_i=\text{\Large\Fontamici u}(\widetilde{L})e_i=\textsf{\emph{i}}_i(L_i).\]

The  $e_iAe_i$-module $Ye_i$ has a \textsf{K}-vector space structure because $e_iAe_i\cong \mathcal{R}_i$ is a \textsf{K}-algebra. Besides, as  $L_i$ is a \textsf{K}-subspace of $L$, there exists a \textsf{K}-subspacie $N_i\subseteq L_i$ such that $\textsf{\emph{i}}_i(N_i)=Ye_i$.

We define the collection 
\[\widetilde{N}=(N_\text{\large \Fontlukas m}, N_i:i\in \mathscr{P}^\text{\large \Fontlukas m}).\]

Clearly, $\widetilde{N}$ satisfies L.2.

Consider $i,j\in \mathscr{P}$ such that $i\leq j$, $y\in N_i$ and $a\in \mathcal{R}_{i,j} \cong e_iAe_j$. We have that $\textsf{\emph{i}}_i(y)\in Ye_i$, then $\textsf{\emph{i}}_i(y)(e_iae_j)\in Ye_j$. Besides,
\[\textsf{\emph{i}}_i(y)(e_iae_j) = \textsf{\emph{i}}_i(ya)e_{i,j} = \textsf{\emph{i}}_j(ya),\]
so $\textsf{\emph{i}}_j(ya) \in \textsf{\emph{i}}_j(N_j)$, therefore $ya=\pi_j(\textsf{\emph{i}}_j(ya))\in \pi_j\textsf{\emph{i}}_j(N_j)=N_j$. This proves that $N_i \mathcal{R}_{i,j} \subseteq N_j$.

In particular, $N_\text{\large \Fontlukas m} \mathcal{R}_{\text{\large \Fontlukas m}}=N_\text{\large \Fontlukas m}A \subseteq N_\text{\large \Fontlukas m}$, then $N_\text{\large \Fontlukas m}$ is an $A$-module containing $N_i$, for all $i\in \mathscr{P}$.

We conclude that $\widetilde{N}$ is an object of $\mathcal{S}$, such that
\[Y=\text{\Large\Fontamici u}(\widetilde{N})=\bigoplus_{i\in \mathscr{P}^\text{\large \Fontlukas m}}N_i \subseteq \bigoplus_{i\in \mathscr{P}^\text{\large \Fontlukas m}}L_i,\]
which finishes the proof.
\end{proof}

%%%%%%%%%%%%%%%%%%%%%%%%%%%%%%%%%%%%%%%%%%%%%%%%%%%%%%%%%%%%%%%%%%%%%%%%%%%%%%%%%%%%%%%%%
%%%%%%%%%%%%%%%%%%%%%%%%%%%%%%%%%%%%%%%%%%%%%%%%%%%%%%%%%%%%%%%%%%%%%%%%%%%%%%%%%%%%%%%%%

\section{Right peak algebras}\label{PicoDerechas}

\begin{definition}\label{sisAdm}
Let $\mathscr{P}$ be a partially ordered set with a maximal element {\large \Fontlukas m}. An \textit{admissible system} $\mathcal{R}=\{\mathcal{R}_{i,j}\}_{i\leq j}$, is a $\mathscr{P}$-multiplicative system that satisfies the following additional conditions
\begin{enumerate}[\text{A}.1]
\item $\mathcal{R}_i=\mathcal{R}_{i,i}$ is a division $\textsf{K}$-ring, for all $i\in \mathscr{P}$.
\item If $j< \text{\large \Fontlukas m}$ and $x\in \mathcal{R}_{i,j}$ is different from 0, then there exists $y\in \mathcal{R}_{j,l}$, with $l\neq j$, such that $xy\neq 0$.
\end{enumerate}
\end{definition}

Let us construct $\Lambda=\Lambda(\mathcal{R})$, if $\mathcal{R}$ is an admissible system, the projective $\Lambda$-module $e_\text{\large \Fontlukas m}\Lambda$ is simple.

To a $p$-equipped or generalized equipped poset $\mathscr{P}$, we assign some $\mathscr{P}$-multiplicative systems $\mathcal{Q}$ and $\mathcal{T}$ (see Sections \ref{genSonALg} and \ref{pequipadosSonALg}). These systems are extended to $\mathcal{Q}^\text{\large \Fontlukas m}$ and $\mathcal{T}^\text{\large \Fontlukas m}$ following (\ref{sisMultiMaximo}). Recall that
\[\mathcal{Q}_x^\text{\large \Fontlukas m}= \begin{cases} 
\textsf{F}, & \text{ if } x \text{ is weak},\\
\textsf{G}, & \text{ if } x \text{ is strong},\\
\end{cases}\]
so A.1. holds for $\mathcal{Q}^\text{\large \Fontlukas m}$. For any $j\in \mathscr{P}$ and  $x\in \mathcal{Q}_{i,j}$ a no-null element, it is enough to take the unit $1\in \textsf{G}= \mathcal{Q}_{j,\text{\large \Fontlukas m}}$, to satisfy A.2. Therefore $\mathcal{Q}^\text{\large \Fontlukas m}$ is an admissible system.

The system $\mathcal{T}^\text{\large \Fontlukas m}$ satisfies A.2, because if $x\in \mathcal{T}_{i,j}$ is such that $x\neq 0$, for some $j\in \mathscr{P}$, there is the identity $id\in A=\mathcal{T}_{j,\text{\large \Fontlukas m}}$ such that $x=x id\neq 0$. However, $\mathcal{T}_{\text{\large \Fontlukas m}}=A$ is not a division ring, then A.1. does not hold.

We will describe next, some interesting properties of admissible systems. As $\mathcal{T}^\text{\large \Fontlukas m}$ does not have those properties, we will construct an admissible system $\mathcal{R}$, in such a way that the algebras $\Lambda(\mathcal{T}^\text{\large \Fontlukas m})$ and $\Lambda(\mathcal{R})$ are Morita equivalent.

\begin{lemma}\label{Mem}
Let $\mathcal{R}$ be an admissible system, and $\Lambda=\Lambda(\mathcal{R})$. If $M$ is a right $\Lambda$-module, then $Me_\text{\large \Fontlukas m}$ is a submodule of $M$, isomorphic to $(e_\text{\large \Fontlukas m}\Lambda)^\nu$, for some cardinal $\nu$. Furthermore, $\nu=\dim_{\mathcal{R}_\text{\large \Fontlukas m}}Me_\text{\large \Fontlukas m}$.
\end{lemma}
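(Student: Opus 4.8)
The plan is to exploit the maximality of $\text{\large \Fontlukas m}$ in order to reduce the whole statement to linear algebra over the division ring $\mathcal{R}_\text{\large \Fontlukas m}$. First I would record the identity $e_\text{\large \Fontlukas m}\Lambda=e_\text{\large \Fontlukas m}\Lambda e_\text{\large \Fontlukas m}$. Indeed, multiplying on the left by $e_\text{\large \Fontlukas m}=e_{\text{\large \Fontlukas m},\text{\large \Fontlukas m}}1_\text{\large \Fontlukas m}$ picks out the rows indexed by $\text{\large \Fontlukas m}$, so $e_\text{\large \Fontlukas m}\Lambda=\bigoplus_{\text{\large \Fontlukas m}\leq j}e_{\text{\large \Fontlukas m},j}\mathcal{R}_{\text{\large \Fontlukas m},j}$; since $\text{\large \Fontlukas m}$ is maximal the only index with $\text{\large \Fontlukas m}\leq j$ is $j=\text{\large \Fontlukas m}$, whence $e_\text{\large \Fontlukas m}\Lambda=e_{\text{\large \Fontlukas m},\text{\large \Fontlukas m}}\mathcal{R}_\text{\large \Fontlukas m}=e_\text{\large \Fontlukas m}\Lambda e_\text{\large \Fontlukas m}\cong\mathcal{R}_\text{\large \Fontlukas m}$, which is a division ring by A.1. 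Equivalently $e_\text{\large \Fontlukas m}\Lambda(1-e_\text{\large \Fontlukas m})=0$, so that $e_\text{\large \Fontlukas m}\lambda=e_\text{\large \Fontlukas m}\lambda e_\text{\large \Fontlukas m}$ for every $\lambda\in\Lambda$.

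From this it follows at once that $Me_\text{\large \Fontlukas m}$ is a $\Lambda$-submodule of $M$: for $x\in Me_\text{\large \Fontlukas m}$ we have $x=xe_\text{\large \Fontlukas m}$, hence $x\lambda=xe_\text{\large \Fontlukas m}\lambda=x(e_\text{\large \Fontlukas m}\lambda e_\text{\large \Fontlukas m})\in Me_\text{\large \Fontlukas m}$. The same computation shows that the $\Lambda$-action on $Me_\text{\large \Fontlukas m}$ factors through the corner ring $e_\text{\large \Fontlukas m}\Lambda e_\text{\large \Fontlukas m}\cong\mathcal{R}_\text{\large \Fontlukas m}$ (the general fact that $Xe$ is a right $e\Lambda e$-module for any idempotent $e$), so that $Me_\text{\large \Fontlukas m}$ is simply a right vector space over the division ring $\mathcal{R}_\text{\large \Fontlukas m}$. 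Since $\mathcal{R}_\text{\large \Fontlukas m}$ is a division ring, I would then pick a basis $\{x_t\}_{t\in T}$ of this vector space and set $\nu=|T|=\dim_{\mathcal{R}_\text{\large \Fontlukas m}}Me_\text{\large \Fontlukas m}$.

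Finally I would produce the isomorphism $Me_\text{\large \Fontlukas m}\cong(e_\text{\large \Fontlukas m}\Lambda)^\nu$ explicitly. For each $t$, the assignment $e_\text{\large \Fontlukas m}\lambda\mapsto x_t\,e_\text{\large \Fontlukas m}\lambda$ is a morphism of right $\Lambda$-modules $e_\text{\large \Fontlukas m}\Lambda\to Me_\text{\large \Fontlukas m}$, and these assemble into a $\Lambda$-morphism $\phi\colon\bigoplus_{t\in T}e_\text{\large \Fontlukas m}\Lambda\to Me_\text{\large \Fontlukas m}$ sending a finitely supported family $(e_\text{\large \Fontlukas m}r_t)_t$, with $r_t\in\mathcal{R}_\text{\large \Fontlukas m}$, to $\sum_t x_t r_t$. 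Surjectivity of $\phi$ is exactly the statement that the $x_t$ span $Me_\text{\large \Fontlukas m}$ over $\mathcal{R}_\text{\large \Fontlukas m}$, and injectivity is their linear independence; hence $\phi$ is an isomorphism with $\nu=\dim_{\mathcal{R}_\text{\large \Fontlukas m}}Me_\text{\large \Fontlukas m}$, as claimed. The argument is essentially formal: the only genuine input is the identity $e_\text{\large \Fontlukas m}\Lambda=e_\text{\large \Fontlukas m}\Lambda e_\text{\large \Fontlukas m}$ forced by the maximality of $\text{\large \Fontlukas m}$, and the one point to watch is that the right $\Lambda$-module structure on $Me_\text{\large \Fontlukas m}$ genuinely coincides with its $\mathcal{R}_\text{\large \Fontlukas m}$-vector-space structure, so that an $\mathcal{R}_\text{\large \Fontlukas m}$-basis indeed yields a direct-sum decomposition into copies of the simple module $e_\text{\large \Fontlukas m}\Lambda$.
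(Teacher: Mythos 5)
Your proof is correct, and it rests on the same essential input as the paper's: the maximality of $\text{\large \Fontlukas m}$ forces $e_\text{\large \Fontlukas m}\Lambda=e_{\text{\large \Fontlukas m},\text{\large \Fontlukas m}}\mathcal{R}_\text{\large \Fontlukas m}=e_\text{\large \Fontlukas m}\Lambda e_\text{\large \Fontlukas m}$, a division ring by A.1, and this is exactly what makes $Me_\text{\large \Fontlukas m}$ a $\Lambda$-submodule. Where you diverge is in how the direct-sum decomposition is produced. The paper argues via semisimplicity: for each nonzero $x\in Me_\text{\large \Fontlukas m}$ the map $f_x:e_\text{\large \Fontlukas m}\Lambda\to Me_\text{\large \Fontlukas m}$, $a\mapsto xa$, has image a simple submodule isomorphic to $e_\text{\large \Fontlukas m}\Lambda$, so $Me_\text{\large \Fontlukas m}$ is a sum of copies of this simple module and hence, by the general structure theory of semisimple modules, a direct sum of such copies; the equality $\nu=\dim_{\mathcal{R}_\text{\large \Fontlukas m}}Me_\text{\large \Fontlukas m}$ is then left as ``clear''. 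You instead observe that the $\Lambda$-action on $Me_\text{\large \Fontlukas m}$ factors through the corner ring $e_\text{\large \Fontlukas m}\Lambda e_\text{\large \Fontlukas m}\cong\mathcal{R}_\text{\large \Fontlukas m}$, choose an $\mathcal{R}_\text{\large \Fontlukas m}$-basis, and write the isomorphism $\bigoplus_{t}e_\text{\large \Fontlukas m}\Lambda\to Me_\text{\large \Fontlukas m}$ explicitly. This buys you two things: you avoid invoking the structure theorem for sums of simple submodules, and the identification of $\nu$ with the dimension is built into the construction rather than asserted afterwards. The paper's route, on the other hand, keeps the simplicity of $e_\text{\large \Fontlukas m}\Lambda$ --- which is used repeatedly later, for instance in the socle computations --- as the visible protagonist. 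Both arguments are complete.
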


\begin{proof}
We have $Me_\text{\large \Fontlukas m}\Lambda =Me_\text{\large \Fontlukas m}\Lambda e_m\subset Me_\text{\large \Fontlukas m}$, therefore $Me_\text{\large \Fontlukas m}$ is a submodule of $M$.

For any $x\in Me_\text{\large \Fontlukas m}$, $x\neq 0$, we define $f_x:e_\text{\large \Fontlukas m}\Lambda \rightarrow Me_\text{\large \Fontlukas m}: a\mapsto xa$, for $a\in e_\text{\large \Fontlukas m}\Lambda$. Notice that $f_x$ is a right  $\Lambda$-module morphism such that $f(e_\text{\large \Fontlukas m})=xe_\text{\large \Fontlukas m}=x$, then $\I f_x$ is a submodule of $Me_\text{\large \Fontlukas m}$ isomorphic to $e_\text{\large \Fontlukas m}\Lambda$, hence, $Me_\text{\large \Fontlukas m}$ is a sum of simple modules isomorphic to $e_\text{\large \Fontlukas m}\Lambda$, that is why $Me_\text{\large \Fontlukas m}$ is isomorphic to a direct sum of the form $(e_\text{\large \Fontlukas m}\Lambda)^\nu$, for some cardinal $\nu$. It is clear that $\nu=\dim_{\mathcal{R}_\text{\large \Fontlukas m}}Me_\text{\large \Fontlukas m}$.
\end{proof}

\begin{proposition}
Let $\mathcal{R}$ be an admissible system. For every $i\in \mathscr{P}$, we have
\[\soc (e_i\Lambda)\cong (e_\text{\large \Fontlukas m}\Lambda)^{\nu(i)},\]
with $\nu(i)=\dim_{\mathcal{R}_\text{\large \Fontlukas m}}\mathcal{R}_{i,\text{\large \Fontlukas m}}$.
\end{proposition}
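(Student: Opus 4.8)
The plan is to identify $\soc(e_i\Lambda)$ with the submodule $e_i\Lambda e_\text{\large \Fontlukas m}$ and then read off its structure from Lemma \ref{Mem}. First I would apply Lemma \ref{Mem} to the projective module $M=e_i\Lambda$: it yields that $e_i\Lambda e_\text{\large \Fontlukas m}$ is a submodule of $e_i\Lambda$ isomorphic to $(e_\text{\large \Fontlukas m}\Lambda)^{\nu}$ with $\nu=\dim_{\mathcal{R}_\text{\large \Fontlukas m}}e_i\Lambda e_\text{\large \Fontlukas m}$. Since $e_i\Lambda e_\text{\large \Fontlukas m}=e_{i,\text{\large \Fontlukas m}}\mathcal{R}_{i,\text{\large \Fontlukas m}}$ is, as a right $\mathcal{R}_\text{\large \Fontlukas m}=e_\text{\large \Fontlukas m}\Lambda$-module, just $\mathcal{R}_{i,\text{\large \Fontlukas m}}$, this gives $\nu=\nu(i)=\dim_{\mathcal{R}_\text{\large \Fontlukas m}}\mathcal{R}_{i,\text{\large \Fontlukas m}}$. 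As $e_\text{\large \Fontlukas m}\Lambda$ is simple, $e_i\Lambda e_\text{\large \Fontlukas m}$ is semisimple, hence contained in $\soc(e_i\Lambda)$; the whole content of the statement is therefore the reverse inclusion $\soc(e_i\Lambda)\subseteq e_i\Lambda e_\text{\large \Fontlukas m}$.

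For the reverse inclusion I would use the decomposition $\Lambda=S\oplus J$ of (\ref{semisimpleRadical}). By A.1 each $\mathcal{R}_i$ is a division ring, so $S=\bigoplus_i e_{i,i}\mathcal{R}_i$ is semisimple and $J$ is the radical of $\Lambda$; consequently every simple right $\Lambda$-module is supported at a single point $j\in\mathscr{P}$ and is annihilated by $J$. Let $T$ be any simple submodule of $e_i\Lambda$; then $T=Te_j\subseteq e_i\Lambda e_j=e_{i,j}\mathcal{R}_{i,j}$ for some $j$ with $i\le j$, and $TJ=0$. I claim $j=\text{\large \Fontlukas m}$. Indeed, suppose $j<\text{\large \Fontlukas m}$ and pick a nonzero $x=e_{i,j}r\in T$ with $r\in\mathcal{R}_{i,j}$. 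By condition A.2 of Definition \ref{sisAdm} there is some $s\in\mathcal{R}_{j,l}$ with $l\neq j$ and $rs\neq 0$; since $j<l$, the element $y=e_{j,l}s$ lies in $J$, whence $xy=e_{i,l}rs$ is a nonzero element of $TJ$, contradicting $TJ=0$. Therefore every simple submodule of $e_i\Lambda$ is supported at $\text{\large \Fontlukas m}$, so $\soc(e_i\Lambda)\subseteq e_i\Lambda e_\text{\large \Fontlukas m}$.

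Combining the two inclusions gives $\soc(e_i\Lambda)=e_i\Lambda e_\text{\large \Fontlukas m}\cong(e_\text{\large \Fontlukas m}\Lambda)^{\nu(i)}$, as desired. The routine parts are the identification $e_i\Lambda e_\text{\large \Fontlukas m}=e_{i,\text{\large \Fontlukas m}}\mathcal{R}_{i,\text{\large \Fontlukas m}}$ and the bookkeeping of which simple module sits at which vertex; the step that carries the real weight is ruling out simple submodules concentrated at a point $j<\text{\large \Fontlukas m}$, which is exactly where the right-peak hypothesis A.2 is indispensable---without it $e_i\Lambda$ could acquire extra simple submodules below the peak and the socle would be strictly larger than $e_i\Lambda e_\text{\large \Fontlukas m}$.
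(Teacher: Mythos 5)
Your proposal is correct and follows essentially the same route as the paper: both arguments use condition A.2 to show that a nonzero socle element supported at a vertex $j\neq \text{\large \Fontlukas m}$ would be moved nontrivially by an element $e_{j,l}y_0$ of the radical, forcing $\soc(e_i\Lambda)=e_i\Lambda e_\text{\large \Fontlukas m}$, and both then invoke Lemma \ref{Mem} to identify $e_i\Lambda e_\text{\large \Fontlukas m}$ with $(e_\text{\large \Fontlukas m}\Lambda)^{\nu(i)}$. Your version merely makes explicit the easy inclusion $e_i\Lambda e_\text{\large \Fontlukas m}\subseteq\soc(e_i\Lambda)$, which the paper leaves implicit.
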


\begin{proof}
For $j\neq \text{\large \Fontlukas m}$, consider $x\in \soc (e_i\Lambda)e_j\subset e_i\Lambda e_j$, $x\neq 0$, then $x=e_{i,j}x_0$, for some $x_0\in \mathcal{R}_{i,j}$, with $x_0\neq 0$. By definition \ref{sisAdm}, A.2, there exists $y_0\in \mathcal{R}_{j,l}$, for some $l\neq j$, such that $x_0y_0\neq 0$.

Then $y=e_{j,l}y_0$ belongs to the radical of $\Lambda$, and $xy=x_0e_{i,j}e_{j,l}y_0=x_0y_0e_{i,l}\neq 0$, which contradicts $x\in \soc (e_i\Lambda)$. Hence,
\[\soc (e_i\Lambda)=e_i\Lambda e_\text{\large \Fontlukas m}.\]
By Lemma \ref{Mem}, $e_i\Lambda e_\text{\large \Fontlukas m}$ is a submodule of $e_i\Lambda$, such that $e_i\Lambda e_\text{\large \Fontlukas m}=\soc (e_i\Lambda)\cong (e_\text{\large \Fontlukas m}\Lambda)^{\nu(i)}$, with $\nu(i)=\dim_{\mathcal{R}_\text{\large \Fontlukas m}}\mathcal{R}_{i,\text{\large \Fontlukas m}}$.
\end{proof}

Following  \cite{Simson}, we call an algebra \textit{right peak}, if its socle is a direct sum of finite copies of a simple projective module. From the previous result we have:

\begin{proposition}
If $\mathcal{R}$ is an admissible system, then $\Lambda=\Lambda(\mathcal{R})$ is a right peak algebra.
\end{proposition}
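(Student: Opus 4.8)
The plan is to compute the right socle of $\Lambda$ directly from the regular representation and reduce everything to the preceding proposition. First I would use the decomposition $1=\sum_{i\in\mathscr{P}}e_i$ into orthogonal idempotents to write $\Lambda$, as a right $\Lambda$-module, in the form
\[
\Lambda_\Lambda=\bigoplus_{i\in\mathscr{P}}e_i\Lambda .
\]
Since the socle functor commutes with direct sums, this gives
\[
\soc(\Lambda)=\bigoplus_{i\in\mathscr{P}}\soc(e_i\Lambda).
\]
Next I would invoke the preceding proposition, which identifies each summand as $\soc(e_i\Lambda)\cong(e_\text{\large \Fontlukas m}\Lambda)^{\nu(i)}$ with $\nu(i)=\dim_{\mathcal{R}_\text{\large \Fontlukas m}}\mathcal{R}_{i,\text{\large \Fontlukas m}}$. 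Substituting, I obtain
\[
\soc(\Lambda)\cong(e_\text{\large \Fontlukas m}\Lambda)^{\nu},\qquad \nu=\sum_{i\in\mathscr{P}}\nu(i),
\]
so that $\soc(\Lambda)$ is a direct sum of copies of the single module $e_\text{\large \Fontlukas m}\Lambda$.

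It then remains to observe that $e_\text{\large \Fontlukas m}\Lambda$ has the two properties demanded by the definition of a right peak algebra recalled above: it is \emph{simple}, as recorded immediately after Definition \ref{sisAdm} for admissible systems, and it is \emph{projective}, being the direct summand of the free module $\Lambda_\Lambda$ cut out by the idempotent $e_\text{\large \Fontlukas m}$. Hence $\soc(\Lambda)$ is a direct sum of copies of one simple projective module, which is exactly the condition defining a right peak algebra.

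The only point requiring genuine care is the finiteness of the multiplicity $\nu$, since the definition asks for a direct sum of \emph{finitely} many copies. This is where I would focus attention: the index set $\mathscr{P}$ is finite (it has $m$ elements, matching the ambient matrix algebra $\mathscr{M}_m(A)$), and each $\nu(i)=\dim_{\mathcal{R}_\text{\large \Fontlukas m}}\mathcal{R}_{i,\text{\large \Fontlukas m}}$ is finite because the spaces $\mathcal{R}_{i,j}$ are finite-dimensional; hence $\nu=\sum_i\nu(i)<\infty$. With this, $\soc(\Lambda)$ is genuinely a finite direct sum of copies of $e_\text{\large \Fontlukas m}\Lambda$, and the proof is complete. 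Everything else is a formal consequence of the preceding proposition, so no real obstacle remains beyond bookkeeping the multiplicities.
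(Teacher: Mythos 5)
Your proposal is correct and follows essentially the same route as the paper: decompose $\soc\Lambda_\Lambda=\bigoplus_{i\in\mathscr{P}}\soc(e_i\Lambda)$ and apply the preceding proposition to identify each summand as $(e_\text{\large \Fontlukas m}\Lambda)^{\nu(i)}$. The extra remarks on simplicity, projectivity, and finiteness of the multiplicity are sensible bookkeeping that the paper leaves implicit.
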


\begin{proof}
We have that 
\[\soc \Lambda_\Lambda=\bigoplus_{i\in \mathscr{P}}\soc(e_i\Lambda)=\bigoplus_{i\in \mathscr{P}}(e_\text{\large \Fontlukas m}\Lambda)^{\nu(i)}=(e_\text{\large \Fontlukas m}\Lambda)^{\nu},\]
besides, $\nu=\dim_{\mathcal{R}_\text{\large \Fontlukas m}}\Lambda e_\text{\large \Fontlukas m}$.
\end{proof}

%%%%%%%%%%%%%%%%%%%%%%%%%%%%%%%%%%%%%%%%%%%%%%%%%%%%%%%%%%%%%%%%%%%%%%%%%%%%%%%%%%%%%%%%%

\subsection{Morita equivalence}\label{Morita}

Consider a poset $(\mathscr{P},A,\mathcal{T})$ as in Section \ref{genSonALg} or \ref{pequipadosSonALgRep}, and its extension to $(\mathscr{P}^\text{\large \Fontlukas m},A,\mathcal{T}^\text{\large \Fontlukas m})$. 

By Proposition \ref{sisMultEnd} and Equation (\ref{TxSubalgebra}), we have that $\mathcal{T}_{x}$ is semisimple with one simple, up to isomorphism, then, for all $x\in \mathscr{P}^\text{\large \Fontlukas m}$, there exist idempotents $\varepsilon_{x}^{1},\ldots , \varepsilon_{x}^{n(x)}\in \mathcal{T}_{x}$ such that
$$\mathcal{T}_{x}=\varepsilon_{x}^{1}\mathcal{T}_{x}\oplus \cdots \oplus \varepsilon_{x}^{n(x)}\mathcal{T}_{x};$$
with $\varepsilon_{x}^{1}\mathcal{T}_{x}\cong \cdots \cong \varepsilon_{x}^{n(x)}\mathcal{T}_{x}$.

For every $x\leq y$ in $\mathscr{P}^\text{\large \Fontlukas m}$, we define the following subspace of $A$,
$$\mathcal{R}_{x,y}=\varepsilon_{x}^{1}\mathcal{T}_{x,y}\varepsilon_{y}^{1}.$$

The collection $\mathcal{R}=\{\mathcal{R}_{x,y}\}_{\substack{x\leq y \\ x,y\in \mathscr{P}^\text{\large \Fontlukas m}}}$ is a $\mathscr{P}^\text{\large \Fontlukas m}$-multiplicative system.

For every $x\in \mathscr{P}^\text{\large \Fontlukas m}$, 
$$\mathcal{R}_{x} = \varepsilon_{x}^{1}\mathcal{T}_{x}\varepsilon_{x}^{1} \cong \End_{\mathcal{T}_{x}}(\varepsilon_{x}^{1}\mathcal{T}_{x});$$
with $\varepsilon_{x}^{1}\mathcal{T}_{x}$ a simple module, then $\mathcal{R}_x$ is a field, and A.1 holds.

For $j\in \mathscr{P}$ if there is $x\in \mathcal{R}_{i,j}$ with $x\neq 0$, we choose $id\in \mathcal{T}_{j,\text{\large \Fontlukas m}}=A$, 
$$x=xid=x\varepsilon_{j}^{1}id=x\varepsilon_{j}^{1} (\varepsilon_{\text{\large \Fontlukas m}}^{1} + \cdots + \varepsilon_{\text{\large \Fontlukas m}}^{n(\text{\large \Fontlukas m})});$$
then, for some idempotent $\varepsilon_{\text{\large \Fontlukas m}}^{\ell}$ we have $x\varepsilon_{j}^{1} \varepsilon_{\text{\large \Fontlukas m}}^{\ell}\neq 0$.

Every isomorphism $\psi: \varepsilon_{\text{\large \Fontlukas m}}^{1}A \rightarrow \varepsilon_{\text{\large \Fontlukas m}}^{\ell}A$ is such that
$$\psi(a) = \psi(\varepsilon_{\text{\large \Fontlukas m}}^{1}a) = \psi(\varepsilon_{\text{\large \Fontlukas m}}^{1})a = \psi(\varepsilon_{\text{\large \Fontlukas m}}^{1})\varepsilon_{\text{\large \Fontlukas m}}^{1} a;$$
for all $a\in \varepsilon_{\text{\large \Fontlukas m}}^{1}A$, in particular, $\varepsilon_{\text{\large \Fontlukas m}}^{\ell} = \psi(a_0)$, for some $a_0\in \varepsilon_{\text{\large \Fontlukas m}}^{1}A$.

We have
$$0 \neq x\varepsilon_{j}^{1} \varepsilon_{\text{\large \Fontlukas m}}^{\ell} = x\varepsilon_{j}^{1} \psi(a_0) = x\varepsilon_{j}^{1} \psi(\varepsilon_{\text{\large \Fontlukas m}}^{1})\varepsilon_{\text{\large \Fontlukas m}}^{1} a_0.$$

Then $\varepsilon_{j}^{1} \psi(\varepsilon_{\text{\large \Fontlukas m}}^{1})\varepsilon_{\text{\large \Fontlukas m}}^{1}\in \mathcal{R}_{j,\text{\large \Fontlukas m}}$, and is such that $x \varepsilon_{j}^{1} \psi(\varepsilon_{\text{\large \Fontlukas m}}^{1})\varepsilon_{\text{\large \Fontlukas m}}^{1} \neq 0$.

Hence, $\mathcal{R}$ satisfies A.2, therefore, it is an admissible system.

Calling $\Lambda = \Lambda(\mathcal{T}^\text{\large \Fontlukas m})$, and 
\begin{equation}\label{idempotente}
\varepsilon = \sum_{i \in \mathscr{P}^\text{\large \Fontlukas m}} e_i \varepsilon_{i}^{1},
\end{equation}
then, $\Lambda(\mathcal{R}) = \varepsilon \Lambda \varepsilon$.

As $\End_{\Lambda}(e_i \varepsilon_{i}^{\ell} \Lambda) \cong e_i \varepsilon_{i}^{\ell} \Lambda e_i \varepsilon_{i}^{\ell} \cong \mathcal{R}_i$ is a field, the indecomposable projective $\Lambda$-modules are of the form $e_i \varepsilon_{i}^{\ell} \Lambda$, for $i\in \mathscr{P}^\text{\large \Fontlukas m}$ and $\ell \in \{1,\ldots , n(i)\}$.

If $m\in \{1,2,\dots,n(i)\}$, with $\ell \neq m$, for each $e_{i}\varepsilon_{i}^{\ell}, e_{i}\varepsilon_{i}^{m}\in \Lambda$, we have isomorphisms $\psi: \varepsilon_{i}^{\ell}A \rightarrow e_{i}\varepsilon_{i}^{m}A$ and $\varphi: \varepsilon_{i}^{m}A \rightarrow e_{i}\varepsilon_{i}^{\ell}A$ such that $\psi(\varepsilon_{i}^{\ell}) = \varepsilon_{i}^{m}$ and $\varphi(\varepsilon_{i}^{m}) = \varepsilon_{i}^{\ell}$. Then
\[e_{i}\varepsilon_{i}^{\ell}= e_i\varphi(\varepsilon_{i}^{m}) = e_i\varphi(\varepsilon_{i}^{m})\varepsilon_{i}^{m} = e_i\varphi(\varepsilon_{i}^{m})\psi(\varepsilon_{i}^{\ell}) = (e_i\varphi(\varepsilon_{i}^{m}))(e_{i}\psi(\varepsilon_{i}^{\ell}))\]
and 
\[e_{i}\varepsilon_{i}^{m}= e_i\psi(\varepsilon_{i}^{\ell}) = e_i\psi(\varepsilon_{i}^{\ell})\varepsilon_{i}^{\ell} = e_i\psi(\varepsilon_{i}^{\ell})\varphi(\varepsilon_{i}^{m}) = (e_i\psi(\varepsilon_{i}^{\ell}))(e_{i}\varphi(\varepsilon_{i}^{m}));\]
Hence,
\[e_{i}\varepsilon_{i}^{\ell}\Lambda \rightarrow e_{i}\varepsilon_{i}^{m}\Lambda : a \mapsto (e_{i}\psi(\varepsilon_{i}^{\ell}))a\]
is a   $\Lambda$-module isomorphism.

If $j\in \mathscr{P}^\text{\large \Fontlukas m}$ is such that $i\neq j$, or
\[\Hom_\Lambda(e_{j}\varepsilon_{j}^{1}\Lambda, e_{i}\varepsilon_{i}^{1}\Lambda )\cong e_{i}\varepsilon_{i}^{1}\Lambda e_{j}\varepsilon_{j}^{1}=0,\] 
or
\[\Hom_\Lambda(e_{i}\varepsilon_{i}^{1}\Lambda , e_{j}\varepsilon_{j}^{1}\Lambda)\cong e_{j}\varepsilon_{j}^{1}\Lambda e_{i}\varepsilon_{i}^{1}=0.\] 
Therefore $e_{i}\varepsilon_{i}^{1}\Lambda$ is not isomorphic to $e_{j}\varepsilon_{j}^{1}\Lambda$.

The projective $\Lambda$-modules of the form $e_i \varepsilon_{i}^{\ell} \Lambda$, for each $i\in \mathscr{P}^\text{\large \Fontlukas m}$, are a representative system of the indecomposable projective $\Lambda$-modules.

We conclude that $\Lambda$ and $\Lambda(\mathcal{R})$ are Morita-equivalent.

%%%%%%%%%%%%%%%%%%%%%%%%%%%%%%%%%%%%%%%%%%%%%%%%%%%%%%%%%%%%%%%%%%%%%%%%%%%%%%%%%%%%%%%%
%%%%%%%%%%%%%%%%%%%%%%%%%%%%%%%%%%%%%%%%%%%%%%%%%%%%%%%%%%%%%%%%%%%%%%%%%%%%%%%%%%%%%%%%

\section{Socle-projective modules}

Let $\Lambda(\mathcal{R})$ be the algebra corresponding to an admissible system $\mathcal{R}$. The right $\Lambda(\mathcal{R})$-modules whose socle is projective, were called, in \cite{Simson}, \textit{socle-projective modules}, naturally.  

The only simple projective $\Lambda(\mathcal{R})$-module is $e_\text{\large \Fontlukas m}\Lambda(\mathcal{R})$, by Lema \ref{Mem}, a right $\Lambda(\mathcal{R})$-module $M$ is socle-projective, if and only if
\[\soc M=Me_\text{\large \Fontlukas m} \cong (e_\text{\large \Fontlukas m}\Lambda(\mathcal{R}))^{\nu},\]
with $\nu=\dim_{\mathcal{R}_\text{\large \Fontlukas m}}Me_\text{\large \Fontlukas m}$.

We denote by $E$ the injective envelope of $e_\text{\large \Fontlukas m}\Lambda(\mathcal{R})$.

\begin{proposition}\label{subE}
A right $\Lambda(\mathcal{R})$-module $M$, is socle-projective if and only if it is a submodule of a coproduct of copies of the injective envelope $E$.
\end{proposition}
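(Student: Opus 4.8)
The plan is to characterize socle-projectivity entirely through the socle, using that $E$ is the injective envelope of the unique simple projective module. Write $\Lambda=\Lambda(\mathcal{R})$ and $S=e_\text{\large \Fontlukas m}\Lambda$, which by Lemma \ref{Mem} is the only simple projective module. Since $S$ is essential in $E$, every nonzero submodule of $E$ meets $S$, so every simple submodule of $E$ equals $S$; hence $\soc E=S$. Because the socle commutes with arbitrary coproducts, this gives $\soc\!\big(E^{(\kappa)}\big)=(\soc E)^{(\kappa)}=S^{(\kappa)}$ for every cardinal $\kappa$. These two facts are the backbone of both implications.

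For the forward implication, assume $M$ is socle-projective. By the characterization preceding this proposition together with Lemma \ref{Mem}, $\soc M=Me_\text{\large \Fontlukas m}\cong S^{(\nu)}$ with $\nu=\dim_{\mathcal{R}_\text{\large \Fontlukas m}}Me_\text{\large \Fontlukas m}$, and this socle is essential in $M$. Since $\Lambda$ is right noetherian, an arbitrary coproduct of injectives is again injective, so $E^{(\nu)}$ is injective with socle $S^{(\nu)}$. Fix an isomorphism from $\soc M$ onto $S^{(\nu)}=\soc\!\big(E^{(\nu)}\big)\subseteq E^{(\nu)}$; by injectivity of $E^{(\nu)}$ it extends to a homomorphism $\varphi:M\to E^{(\nu)}$. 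As $\varphi$ is injective on $\soc M$ and $\soc M$ is essential in $M$, the kernel of $\varphi$ meets $\soc M$ trivially and is therefore zero. Thus $\varphi$ embeds $M$ into the coproduct $E^{(\nu)}$ of copies of $E$.

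For the converse, suppose $M$ is a submodule of a coproduct $E^{(\kappa)}$. For a submodule one has $\soc M=M\cap\soc\!\big(E^{(\kappa)}\big)$, so by the first paragraph $\soc M\subseteq S^{(\kappa)}$. Hence $\soc M$ is a submodule of a semisimple module all of whose simple summands are copies of the single simple module $S$, which forces $\soc M$ itself to be a coproduct of copies of $S$, and in particular projective. Therefore $M$ is socle-projective, completing the equivalence.

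The steps that are standard and I would state without grinding are $\soc E=S$, the commutation of socle with coproducts, and the essentiality of $\soc M$ in $M$. The one place genuinely requiring care is the forward direction when $\nu$ is infinite: there one really needs that $\Lambda$ is (right) noetherian so that the possibly infinite coproduct $E^{(\nu)}$ remains injective and the extension argument goes through. When $M$ is finitely generated (in particular finite-dimensional) the cardinal $\nu$ is finite, a finite coproduct of injectives is automatically injective, and this subtlety evaporates.
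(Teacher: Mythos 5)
Your proof is correct and follows essentially the same route as the paper: the forward direction identifies $E(M)=E(\soc M)$ with a coproduct of copies of $E$, and the converse reads off the socle of a submodule of $E^{(\kappa)}$. If anything, your version is slightly more careful than the paper's, which writes $\soc M=\soc L$ where it should be $\soc M=M\cap\soc L$, and which leaves implicit the injectivity of the (possibly infinite) coproduct $E^{(\nu)}$ that you justify via the noetherian hypothesis.
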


\begin{proof}
Suppose that $M$ is socle-projective, $\soc M$ is a direct sum of copies of $e_\text{\large \Fontlukas m}\Lambda(\mathcal{R})$, therefore the injective envelope of $\soc M$ is a direct sum of copies of $E$. As the injective envelopes of $M$ and $\soc M$ coincide, and $M$ is a submodule of its injective envelope, we have that $M$ is a submodule of a direct sum of copies of $E$.

Conversely, suppose that $M$ is submodule of $L$, which is a direct sum of copies of $E$. We have $\soc M = \soc L$, and $\soc L$ is a direct sum of copies of $e_\text{\large \Fontlukas m}\Lambda(\mathcal{R})$, therefore $M$ is socle-projective.
\end{proof}

We introduce next a definition that involves the algebraically equipped posets we have considered.

\begin{definition}\label{extensible}
An algebraically equipped poset $(\mathscr{P},A,\mathcal{T})$ is called \textit{extendable} with admissible system $\mathcal{R}$, if $A$ is a semisimple \textsf{K}-algebra with only one simple module $\varepsilon_\text{\large \Fontlukas m} A$, up to isomorphism, $\mathcal{T}_i$ is a subalgebra of $A$, for every $i\in \mathscr{P}$, and there exist idempotents $\varepsilon_i \in \mathcal{T}_i$ such that, when we construct $\mathcal{T}^\text{\large \Fontlukas m}$, the collection $\mathcal{R}= \left\{ \mathcal{R}_{i,j}=\varepsilon_{i}\mathcal{T}_{i,j}\varepsilon_{j}\ | \ i,j \in \mathscr{P}^\text{\large \Fontlukas m} \right\}$ is an admissible system and $\Lambda(\mathcal{R})$ is Morita equivalent to $\Lambda = \Lambda(\mathcal{T}^\text{\large \Fontlukas m})$.
\end{definition}

\begin{lemma}\label{epsAAeps}
For an extendable poset $(\mathscr{P},A,\mathcal{T})$, we have the $A$-module isomorphism
$$\varepsilon_\text{\large \Fontlukas m} A \cong D\left(A \varepsilon_\text{\large \Fontlukas m}\right).$$
\end{lemma}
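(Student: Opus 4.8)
The plan is to reduce everything to the structure of $A$: for an extendable poset $A$ is a finite-dimensional semisimple $\textsf{K}$-algebra with a unique simple module up to isomorphism, so both sides of the claimed isomorphism will turn out to be \emph{the} simple right $A$-module, and the $\textsf{K}$-duality $D$ will match them.

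First I would observe that, by Definition \ref{extensible}, $\varepsilon_\text{\large \Fontlukas m} A$ is the unique simple right $A$-module; hence the idempotent $\varepsilon_\text{\large \Fontlukas m}$ is primitive. For a semisimple algebra a primitive idempotent $e$ produces simultaneously a simple right module $eA$ and a simple left module $Ae$, so $A\varepsilon_\text{\large \Fontlukas m}$ is a simple \emph{left} $A$-module. I would then use that, $A$ being finite-dimensional over $\textsf{K}$, the functor $D=\Hom_\textsf{K}(-,\textsf{K})$ is a duality between finitely generated left and right $A$-modules; in particular it carries simple modules to simple modules, since the submodules of $D(N)$ correspond to the quotients of $N$. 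Thus $D(A\varepsilon_\text{\large \Fontlukas m})$ is a simple right $A$-module.

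Finally, since $\varepsilon_\text{\large \Fontlukas m} A$ is the only simple right $A$-module up to isomorphism, I would conclude $D(A\varepsilon_\text{\large \Fontlukas m})\cong \varepsilon_\text{\large \Fontlukas m} A$, as desired. As an alternative to the simplicity argument I would, after writing $A\cong M_n(\Delta)$ via Wedderburn, decompose the semisimple module $D(A\varepsilon_\text{\large \Fontlukas m})$ into copies of $\varepsilon_\text{\large \Fontlukas m} A$ and pin down the number of copies through the dimension count $\dim_\textsf{K}D(A\varepsilon_\text{\large \Fontlukas m})=\dim_\textsf{K}A\varepsilon_\text{\large \Fontlukas m}=\dim_\textsf{K}\varepsilon_\text{\large \Fontlukas m} A$. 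The one point deserving care---and the main obstacle---is confirming that $A\varepsilon_\text{\large \Fontlukas m}$ is genuinely simple (equivalently that $\varepsilon_\text{\large \Fontlukas m}$ is primitive, not merely that $A\varepsilon_\text{\large \Fontlukas m}$ is indecomposable projective); this is precisely where the hypothesis that $A$ is semisimple with a single simple type must be invoked, and I would make that step explicit.
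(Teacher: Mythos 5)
Your argument is correct, but it takes a genuinely different route from the paper's. The paper constructs the isomorphism explicitly: it picks a $\textsf{K}$-linear functional $h_0$ on the division ring $\mathcal{R}_\text{\large \Fontlukas m}=\varepsilon_\text{\large \Fontlukas m} A\varepsilon_\text{\large \Fontlukas m}$ with $h_0(\varepsilon_\text{\large \Fontlukas m})\neq 0$, sends $x\in\varepsilon_\text{\large \Fontlukas m} A$ to the functional $y\mapsto h_0(xy)$ on $A\varepsilon_\text{\large \Fontlukas m}$, checks this is a right $A$-module map, gets injectivity from the simplicity of $\varepsilon_\text{\large \Fontlukas m} A$ (plus nonvanishing at $\varepsilon_\text{\large \Fontlukas m}$), and concludes by equality of $\textsf{K}$-dimensions. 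You instead avoid writing down any map: you note that $\varepsilon_\text{\large \Fontlukas m}$ is primitive, hence $A\varepsilon_\text{\large \Fontlukas m}$ is a simple left module over the semisimple algebra $A$, that $D$ carries simples to simples, and that the unique-simple-type hypothesis in Definition \ref{extensible} then forces $D(A\varepsilon_\text{\large \Fontlukas m})\cong\varepsilon_\text{\large \Fontlukas m} A$. Both proofs are sound and both ultimately rest on the simplicity of $\varepsilon_\text{\large \Fontlukas m} A$ and (explicitly or implicitly) on finite dimensionality of $A$ over $\textsf{K}$, which the paper assumes throughout. Your version is shorter and more conceptual; the paper's version buys an explicit formula for the isomorphism via the nondegenerate pairing $(x,y)\mapsto h_0(xy)$, and does not need to invoke that $A\varepsilon_\text{\large \Fontlukas m}$ is simple or that the simple type is unique, only that $\varepsilon_\text{\large \Fontlukas m} A$ is simple. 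You are right to flag the primitivity/simplicity of $A\varepsilon_\text{\large \Fontlukas m}$ as the step requiring the semisimplicity hypothesis; making that explicit closes the only potential gap in your argument.
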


\begin{proof}
If $\mathcal{R}$ is the admissible system of $(\mathscr{P},A,\mathcal{T})$, then $\mathcal{R}_\text{\large \Fontlukas m} = \varepsilon_\text{\large \Fontlukas m} A \varepsilon_\text{\large \Fontlukas m}$ is a division ring, and a \textsf{K}-algebra. We choose a \textsf{K}-linear transformation $h_0: \varepsilon_\text{\large \Fontlukas m} A \varepsilon_\text{\large \Fontlukas m} \rightarrow \textsf{K}$ such that $h_0(\varepsilon_\text{\large \Fontlukas m}) \neq 0$.

For every $x\in \varepsilon_\text{\large \Fontlukas m} A$ we define 
$$\psi_x: A \varepsilon_\text{\large \Fontlukas m} \rightarrow \varepsilon_\text{\large \Fontlukas m} A \varepsilon_\text{\large \Fontlukas m}: y \mapsto xy.$$

We have a \textsf{K}-vector space morphism
$$\psi: \varepsilon_\text{\large \Fontlukas m} A \rightarrow D\left(A \varepsilon_\text{\large \Fontlukas m}\right) : x \mapsto h_0 \psi_x.$$

For every $a\in A$,
$$h_0 \psi_{xa}(y) = h_0 \psi_x(ay);$$
therefore $\psi$ is a right $A$-module morphism.

The $A$-module $\varepsilon_\text{\large \Fontlukas m} A$ is simple, then $\psi$ is a monomorphism, besides $\varepsilon_\text{\large \Fontlukas m} A$ and $D\left(A \varepsilon_\text{\large \Fontlukas m}\right)$ have the same dimension over \textsf{K}. Hence $\psi$ is an isomorphism, and the lemma is proved.
\end{proof}

Let us recall the functor $\text{\Large\Fontamici u}$ and the constant representations (Section \ref{algebras}) in the following result.

\begin{proposition}\label{Econstante}
If $(\mathscr{P},A,\mathcal{T})$ is an extendable poset with admissible system $\mathcal{R}$, then the injective envelope $E$, of the simple module $e_\text{\large \Fontlukas m}\Lambda(\mathcal{R})$, has the form
\[E \cong \text{\Large\Fontamici u}\left(\widetilde{\varepsilon_\text{\large \Fontlukas m}A}_c\right)\varepsilon;\]
where $\varepsilon = \sum_{i \in \mathscr{P}^\text{\large \Fontlukas m}} e_i \varepsilon_{i}.$
\end{proposition}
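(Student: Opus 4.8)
The plan is to exhibit $\text{\Large\Fontamici u}(\widetilde{\varepsilon_\text{\large \Fontlukas m}A}_c)$ as the $\textsf{K}$-dual of an indecomposable projective left $\Lambda$-module and then descend to $\Lambda(\mathcal{R})=\varepsilon\Lambda\varepsilon$ by right multiplication with $\varepsilon$, which is precisely what the two preceding lemmas were prepared for. Throughout write $\varepsilon'=e_\text{\large \Fontlukas m}\varepsilon_\text{\large \Fontlukas m}$, one of the orthogonal idempotents summing to $\varepsilon$. The first step is to identify the left $\Lambda$-module coming from the left constant representation. Because $\mathcal{R}_{i,\text{\large \Fontlukas m}}=\mathcal{T}_{i,\text{\large \Fontlukas m}}=A$ for every $i\in\mathscr{P}^\text{\large \Fontlukas m}$, the module $\Lambda\varepsilon'=\bigoplus_i e_{i,\text{\large \Fontlukas m}}A\varepsilon_\text{\large \Fontlukas m}$ has underlying space $\bigoplus_i A\varepsilon_\text{\large \Fontlukas m}$, and for $b=\sum e_{k,l}s_{k,l}$ the left action $b\cdot(e_{i,\text{\large \Fontlukas m}}w_i)=\sum_{k\le i}e_{k,\text{\large \Fontlukas m}}s_{k,i}w_i$ coincides termwise with the action defining $\text{\Large\Fontamici u}(\widetilde{A\varepsilon_\text{\large \Fontlukas m}}_c)$ on $\bigoplus_i A\varepsilon_\text{\large \Fontlukas m}$. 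Hence $\text{\Large\Fontamici u}(\widetilde{A\varepsilon_\text{\large \Fontlukas m}}_c)\cong\Lambda\varepsilon'$, which is projective, being a direct summand of $\Lambda e_\text{\large \Fontlukas m}$.

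Next I would combine the two lemmas. By Lemma \ref{epsAAeps} we have $\varepsilon_\text{\large \Fontlukas m}A\cong D(A\varepsilon_\text{\large \Fontlukas m})$ as right $A$-modules, so the associated constant representations agree and $\text{\Large\Fontamici u}(\widetilde{\varepsilon_\text{\large \Fontlukas m}A}_c)\cong\text{\Large\Fontamici u}(\widetilde{D(A\varepsilon_\text{\large \Fontlukas m})}_c)$. Applying Lemma \ref{cteDual} with $L=A\varepsilon_\text{\large \Fontlukas m}$ and using the previous step yields
$$\text{\Large\Fontamici u}(\widetilde{\varepsilon_\text{\large \Fontlukas m}A}_c)\cong D\bigl(\text{\Large\Fontamici u}(\widetilde{A\varepsilon_\text{\large \Fontlukas m}}_c)\bigr)\cong D(\Lambda\varepsilon').$$
Right multiplying by $\varepsilon$ and invoking the standard isomorphism $D(X)\varepsilon\cong D(\varepsilon X)$ for a left module $X$ (induced by restriction $f\mapsto f|_{\varepsilon X}$), together with $\varepsilon\varepsilon'=\varepsilon'$, gives
$$\text{\Large\Fontamici u}(\widetilde{\varepsilon_\text{\large \Fontlukas m}A}_c)\varepsilon\cong D(\Lambda\varepsilon')\varepsilon\cong D(\varepsilon\Lambda\varepsilon')=D(\Lambda(\mathcal{R})\varepsilon').$$

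Finally I would identify the right-hand side with $E$. Since $\Lambda(\mathcal{R})\varepsilon'$ is the indecomposable projective left $\Lambda(\mathcal{R})$-module at the peak, its $\textsf{K}$-dual $D(\Lambda(\mathcal{R})\varepsilon')$ is an indecomposable injective right $\Lambda(\mathcal{R})$-module whose socle is the $\textsf{K}$-dual of the simple top of $\Lambda(\mathcal{R})\varepsilon'$, namely the simple right module $e_\text{\large \Fontlukas m}\Lambda(\mathcal{R})$. As $e_\text{\large \Fontlukas m}\Lambda(\mathcal{R})$ is the simple projective module, it coincides with its own top, so $D(\Lambda(\mathcal{R})\varepsilon')$ is exactly the injective envelope $E$ of $e_\text{\large \Fontlukas m}\Lambda(\mathcal{R})$, which gives the asserted isomorphism.

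The main obstacle is the first step: matching the left $\Lambda$-module structure of $\text{\Large\Fontamici u}(\widetilde{A\varepsilon_\text{\large \Fontlukas m}}_c)$ with $\Lambda\varepsilon'$, where the equality $\mathcal{R}_{i,\text{\large \Fontlukas m}}=A$ for all $i$ is exactly what forces the two actions to agree. Everything afterward is a formal consequence of the duality lemmas and of standard idempotent–duality manipulations under the Morita equivalence $\Lambda\sim\Lambda(\mathcal{R})$.
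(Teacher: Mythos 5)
Your proposal is correct and follows essentially the same route as the paper: both identify $\Lambda e_\text{\large \Fontlukas m}\varepsilon_\text{\large \Fontlukas m}$ with $\text{\Large\Fontamici u}\left(\widetilde{A\varepsilon_\text{\large \Fontlukas m}}_c\right)$, invoke Lemmas \ref{cteDual} and \ref{epsAAeps}, and use $D(\varepsilon X)\cong D(X)\varepsilon$ together with $E=D(\Lambda(\mathcal{R})e_\text{\large \Fontlukas m})$; you merely run the chain of isomorphisms in the opposite direction and supply more of the justification that the paper leaves implicit. (The only blemish is the incidental equation $\mathcal{R}_{i,\text{\large \Fontlukas m}}=\mathcal{T}_{i,\text{\large \Fontlukas m}}$, which is not literally true since $\mathcal{R}_{i,\text{\large \Fontlukas m}}=\varepsilon_i A\varepsilon_\text{\large \Fontlukas m}$, but your argument only uses $\mathcal{T}_{i,\text{\large \Fontlukas m}}=A$, so nothing is affected.)
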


\begin{proof}
Naturally, $\Lambda(\mathcal{R})=\varepsilon \Lambda \varepsilon$.

Notice that the injective envelope
$$E=D(\Lambda(\mathcal{R})e_\text{\large \Fontlukas m})=D(\varepsilon \Lambda \varepsilon e_\text{\large \Fontlukas m}) = D(\Lambda \varepsilon_\text{\large \Fontlukas m} e_\text{\large \Fontlukas m})\varepsilon.$$

We have $\mathcal{T}_{i,\text{\large \Fontlukas m}}=A, \text{ for all } i\in \mathscr{P}^\text{\large \Fontlukas m}$, and $A \varepsilon_\text{\large \Fontlukas m}$ is a left $A$-module, then by Lemma \ref{cteDual},
\[E = D\left(\text{\Large\Fontamici u}\left(\widetilde{A \varepsilon_\text{\large \Fontlukas m}}_c\right)\right)\varepsilon \cong \text{\Large\Fontamici u}\left(\widetilde{D(A \varepsilon_\text{\large \Fontlukas m})}_c\right)\varepsilon.\]

Then, by Lemma \ref{epsAAeps},
\[E \cong \text{\Large\Fontamici u}\left(\widetilde{\varepsilon_\text{\large \Fontlukas m}A}_c\right)\varepsilon.\]
\end{proof}

\begin{theorem}\label{equivalenciaRepU}
Let $(\mathscr{P},A,\mathcal{T})$ be an extendable poset with admissible system $\mathcal{R}$. The category $\mathcal{U}$, of socle-projective $\Lambda(\mathcal{R})$-modules is equivalent to $\Rep (\mathscr{P},A,\mathcal{T})$.
\end{theorem}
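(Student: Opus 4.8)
The plan is to realize the equivalence as a composite, transporting representations of $(\mathscr{P},A,\mathcal{T})$ first to $\Lambda=\Lambda(\mathcal{T}^\text{\large \Fontlukas m})$-modules through the functor $\text{\Large\Fontamici u}$, and then to $\Lambda(\mathcal{R})$-modules through the Morita equivalence of Section \ref{Morita}. Recall that $\Rep(\mathscr{P},A,\mathcal{T})\cong\mathcal{S}$ via $(L,L_i:i\in\mathscr{P})\mapsto(L,L_i:i\in\mathscr{P},L_\text{\large \Fontlukas m}=L)$, that $\text{\Large\Fontamici u}:\mathcal{S}\rightarrow\Mod\Lambda$ is full and faithful by Proposition \ref{uFielyPleno}, and that the idempotent $\varepsilon=\sum_{i\in\mathscr{P}^\text{\large \Fontlukas m}}e_i\varepsilon_i$ satisfies $\Lambda(\mathcal{R})=\varepsilon\Lambda\varepsilon$ with $\Lambda$ and $\Lambda(\mathcal{R})$ Morita equivalent, so that $M\mapsto M\varepsilon$ is an equivalence $\Mod\Lambda\xrightarrow{\sim}\Mod\Lambda(\mathcal{R})$ (hence exact, and commuting with coproducts since it is multiplication by $\varepsilon$). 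I would therefore set
\[\Phi:\Rep(\mathscr{P},A,\mathcal{T})\longrightarrow\Mod\Lambda(\mathcal{R}),\qquad \Phi(\widetilde L)=\text{\Large\Fontamici u}\left(L,L_i:i\in\mathscr{P},L_\text{\large \Fontlukas m}=L\right)\varepsilon,\]
and show that $\Phi$ restricts to an equivalence onto $\mathcal{U}$. Full faithfulness is immediate, as $\Phi$ is the composite of the equivalence $\Rep(\mathscr{P},A,\mathcal{T})\cong\mathcal{S}$, the full and faithful functor $\text{\Large\Fontamici u}$, and the equivalence $-\varepsilon$.

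Next I would check that $\Phi$ lands in $\mathcal{U}$. Given $\widetilde L\in\mathcal{S}$, the inclusions $L_i\hookrightarrow L=L_\text{\large \Fontlukas m}$ give a monomorphism $\widetilde L\hookrightarrow\widetilde L_c$ into the constant representation of the $A$-module $L$ (available since the units of $\mathcal{T}^\text{\large \Fontlukas m}$ all equal $1_A$). Because $A$ is semisimple with unique simple $\varepsilon_\text{\large \Fontlukas m}A$, we have $L\cong(\varepsilon_\text{\large \Fontlukas m}A)^{(\kappa)}$, hence $\widetilde L_c\cong(\widetilde{\varepsilon_\text{\large \Fontlukas m}A}_c)^{(\kappa)}$, and since $\text{\Large\Fontamici u}$ and $-\varepsilon$ commute with coproducts, Proposition \ref{Econstante} gives $\Phi(\widetilde L_c)\cong E^{(\kappa)}$. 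As $-\varepsilon$ is exact and $\text{\Large\Fontamici u}$ preserves monomorphisms, $\Phi(\widetilde L)$ embeds into $E^{(\kappa)}$ and is thus socle-projective by Proposition \ref{subE}; so $\Phi(\widetilde L)\in\mathcal{U}$.

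Finally, for essential surjectivity onto $\mathcal{U}$, I would take an arbitrary $M\in\mathcal{U}$ and use Proposition \ref{subE} to embed $M\hookrightarrow E^{(\kappa)}$ for some cardinal $\kappa$. Setting $X=(\varepsilon_\text{\large \Fontlukas m}A)^{(\kappa)}$, the previous step identifies $E^{(\kappa)}\cong\text{\Large\Fontamici u}(\widetilde X_c)\varepsilon$ with $\widetilde X_c\in\mathcal{S}$. Since $-\varepsilon$ is an equivalence, it induces a bijection on subobject lattices, so the submodule $M\subseteq\text{\Large\Fontamici u}(\widetilde X_c)\varepsilon$ corresponds to a submodule $Y\subseteq\text{\Large\Fontamici u}(\widetilde X_c)$ with $Y\varepsilon\cong M$. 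Proposition \ref{submodulo} then produces $\widetilde N\in\mathcal{S}$ with $Y=\text{\Large\Fontamici u}(\widetilde N)$, whence $\Phi(\widetilde N)=\text{\Large\Fontamici u}(\widetilde N)\varepsilon=Y\varepsilon\cong M$. Together with full faithfulness this yields the desired equivalence $\Rep(\mathscr{P},A,\mathcal{T})\cong\mathcal{U}$.

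I expect the main obstacle to be this last step: one must argue carefully that the Morita functor $-\varepsilon$ reflects the submodule $M\subseteq E^{(\kappa)}$ to an honest $\Lambda$-submodule of a $\text{\Large\Fontamici u}$-image, so that Proposition \ref{submodulo} applies, and that the identifications $\widetilde L_c\cong(\widetilde{\varepsilon_\text{\large \Fontlukas m}A}_c)^{(\kappa)}$ and $\Phi(\widetilde L_c)\cong E^{(\kappa)}$ are compatible with these embeddings. By contrast, the full faithfulness of $\Phi$ and the fact that its image lies in $\mathcal{U}$ are comparatively routine consequences of Propositions \ref{uFielyPleno}, \ref{Econstante} and \ref{subE}.
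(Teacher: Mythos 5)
Your proposal is correct and follows essentially the same route as the paper's proof: the composite of $\Rep(\mathscr{P},A,\mathcal{T})\cong\mathcal{S}$, the full and faithful functor $\text{\Large\Fontamici u}$, and multiplication by $\varepsilon$, with Propositions \ref{subE}, \ref{Econstante}, \ref{espacio} and \ref{submodulo} doing exactly the work you assign them (the paper phrases your coproduct $(\varepsilon_\text{\large \Fontlukas m}A)^{(\kappa)}$ as $V\otimes_{\textsf{K}}\varepsilon_\text{\large \Fontlukas m}A$, which is the same thing). The step you flag as the main obstacle — lifting the submodule $M\subseteq E^{\nu}$ across the Morita equivalence to a $\Lambda$-submodule $M'$ with $M'\varepsilon=M$ before applying Proposition \ref{submodulo} — is handled in the paper in precisely the way you describe.
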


\begin{proof}
Due to the Morita equivalence, $M$ is a $\Lambda$-module if and only if $M\varepsilon$ is a $\Lambda(\mathcal{R})$-module. 

The category $\Rep (\mathscr{P},A,\mathcal{T})$ is equivalent to $\mathcal{S}$, and the functor $\text{\Large\Fontamici u}:\mathcal{S} \rightarrow \Mod \Lambda$ is full and faithful (proposition \ref{uFielyPleno}). So, there is a full and faithful functor:
\begin{align*}
\hspace{3cm} \mathcal{S} & \rightarrow \mathcal{U}\\
\widetilde{L} & \mapsto \text{\Large\Fontamici u}(\widetilde{L})\varepsilon\ \  \text{ for the objects};\\
h  & \mapsto \text{\Large\Fontamici u}(h)\varepsilon\ \  \text{ for the morphisms}.
\end{align*}

To obtain an equivalence $\mathcal{S} \cong \mathcal{U}$, it is enough to prove that a $\Lambda(\mathcal{R})$-module $M$ is socle-projective if and only if $M \cong \text{\Large\Fontamici u}(\widetilde{L})\varepsilon$, for some $\widetilde{L} \in \mathcal{S}$.

By Proposition \ref{subE}, if $M$ is a socle-projective $\Lambda(\mathcal{R})$-module, then it is a submodule of $E^{\nu}$, for some cardinal $\nu$. 

There is a \textsf{K}-vector space $V$ with dimension $\nu$, such that
\begin{align*}
E^{\nu} &\cong V\otimes_{\textsf{K}}E,\\
 &\cong V\otimes_{\textsf{K}} \text{\Large\Fontamici u}(\widetilde{\varepsilon_\text{\large \Fontlukas m}A}_c) \varepsilon, \text{ (proposition \ref{Econstante})}\\
 &\cong \text{\Large\Fontamici u} (V\otimes_{\textsf{K}}\widetilde{\varepsilon_\text{\large \Fontlukas m}A}_c) \varepsilon, \text{ (proposition \ref{espacio})}.
\end{align*}

Then, $M$ is a submodule of $\text{\Large\Fontamici u}(V\otimes_{\textsf{K}}\widetilde{\varepsilon_\text{\large \Fontlukas m}A}_c)\varepsilon$, with $V\otimes_{\textsf{K}}\widetilde{\varepsilon_\text{\large \Fontlukas m}A}_c \in \mathcal{S}$. So, there is a $\Lambda$-module $M'$, submodule of $\text{\Large\Fontamici u}(V\otimes_{\textsf{K}}\widetilde{\varepsilon_\text{\large \Fontlukas m}A}_c)$, such that $M' \varepsilon =M$. By Proposition \ref{submodulo}, there exists $\widetilde{L} \in \mathcal{S}$ such that $\text{\Large\Fontamici u}(\widetilde{L})\cong M'$, that is,
\[\text{\Large\Fontamici u}(\widetilde{L})\varepsilon \cong M.\]

Conversely, for $M \in \Mod \Lambda(\mathcal{R})$ of the form $M=\text{\Large\Fontamici u}(\widetilde{L})\varepsilon$, with $\widetilde{L}=(L, L_i:i\in \mathscr{P}^\text{\large \Fontlukas m}) \in \mathcal{S}$. As $L_\text{\large \Fontlukas m}=L$ is a right $A$-module, it has the form $L=V\otimes_{\textsf{K}}\varepsilon_\text{\large \Fontlukas m}A$, for some \textsf{K}-vector space $V$. 

Hence $\widetilde{L} \subseteq V\otimes_{\textsf{K}}\widetilde{\varepsilon_\text{\large \Fontlukas m}A}_c$, and then $\text{\Large\Fontamici u}(\widetilde{L})$ is a submodule of 
$$\text{\Large\Fontamici u}(V\otimes_{\textsf{K}}\widetilde{\varepsilon_\text{\large \Fontlukas m}A}_c) \cong V\otimes_{\textsf{K}}\text{\Large\Fontamici u}(\widetilde{\varepsilon_\text{\large \Fontlukas m}A}_c).$$

Therefore $M=\text{\Large\Fontamici u}(\widetilde{L})\varepsilon$ is a submodule of $V\otimes_{\textsf{K}}\text{\Large\Fontamici u}(\widetilde{\varepsilon_\text{\large \Fontlukas m}A}_c)\varepsilon \cong E^{\nu}$. By Proposition \ref{subE}, the module $M$ is socle-projective, and the proof is finished.
\end{proof}

%%%%%%%%%%%%%%%%%%%%%%%%%%%%%%%%%%%%%%%%%%%%%%%%%%%%%%%%%%%%%%%%%%%%%%%%%%%%%%%%%%%%%%%%
%%%%%%%%%%%%%%%%%%%%%%%%%%%%%%%%%%%%%%%%%%%%%%%%%%%%%%%%%%%%%%%%%%%%%%%%%%%%%%%%%%%%%%%%

\section{1-Gorenstein algebras}\label{unoG}

Let $\Lambda=\Lambda(\mathcal{R})$ be that algebra associated to an admissible system $\mathcal{R}$, with the notation of the previous sections, we construct the algebra
\begin{equation}\label{algGor}
\widetilde{\Lambda}=
 \begin{pmatrix} \mathcal{R}_\text{\large \Fontlukas m} & E \\
 0 & \Lambda \end{pmatrix}.
 \end{equation}
 
Consider the idempotents 
\[e_0=  \begin{pmatrix} 1_\text{\large \Fontlukas m} & 0 \\ 0&0  \end{pmatrix},\]
by abuse of notation, for $i\in \mathscr{P}$
\[e_i=  \begin{pmatrix} 0 & 0 \\ 0& e_i  \end{pmatrix},\]
and $\epsilon = \sum_{i\in \mathscr{P}^\text{\large \Fontlukas m}}e_i.$

We have a $\mathcal{R}_\text{\large \Fontlukas m}-\Lambda$-bimodule isomorphism $e_0\widetilde{\Lambda}\epsilon =  \begin{pmatrix} 0 & E \\ 0&0  \end{pmatrix} \cong E$.

Let us describe the right and left $\widetilde{\Lambda}$-modules.

For every right $\widetilde{\Lambda}$-module $M$, we have that $Me_0$ is a right $\mathcal{R}_\text{\large \Fontlukas m}$-module, $M\epsilon$ is a right $\Lambda$-module, and the product $Me_0 \times e_0\widetilde{\Lambda}\epsilon \rightarrow M\epsilon$ induces a morphism $\mu : Me_0 \otimes_{\mathcal{R}_\text{\large \Fontlukas m}} e_0\widetilde{\Lambda}\epsilon \rightarrow M\epsilon$. Through the descomposition $M=Me_0 \oplus M\epsilon$, the right $\widetilde{\Lambda}$-module structure of $M$ is given by
\begin{equation}\label{modGor}
(m_1,m_2) \begin{pmatrix} r & x \\ 0 & a \end{pmatrix} = (m_1r, \mu (m_1\otimes x)+m_2a),
\end{equation}
for all $m_1\in Me_0, m_2\in M\epsilon, r\in \mathcal{R}_\text{\large \Fontlukas m}, x\in E, a\in \Lambda$.

Conversely, if $M_1$ is a right $\mathcal{R}_\text{\large \Fontlukas m}$-module, $M_2$  is a right $\Lambda$-module, and there is a right $\Lambda$-module morphism $\mu : M_1\otimes_{\mathcal{R}_\text{\large \Fontlukas m}} E \rightarrow M_2$, the equation (\ref{modGor}) induces a right $\widetilde{\Lambda}$-module structure in $M=M_1\oplus M_2$.

For any triple $(M'_1,M'_2,\mu')$ where $M'_1$ is a right $\mathcal{R}_\text{\large \Fontlukas m}$-module, $M'_2$  is a right $\Lambda$-module, and $\mu' : M'_1\otimes_{\mathcal{R}_\text{\large \Fontlukas m}} E \rightarrow M'_2$ is a right $\Lambda$-module morphism, by (\ref{modGor}), we have a right $\widetilde{\Lambda}$-module $M'=M'_1\oplus M'_2$.
 
A right $\widetilde{\Lambda}$-module morphism $h=(h_1,h_2):M \rightarrow M'$ is determined by an $\mathcal{R}_\text{\large \Fontlukas m}$-module morphism $h_1:M_1\rightarrow M'_1$ and a $\Lambda$-module morphism $h_2:M_2\rightarrow M'_2$, such that the following diagram commutes
\begin{equation}\label{diagISO}
\begin{CD}
M_1\otimes_{\mathcal{R}_\text{\large \Fontlukas m}} E @>{\mu}>> M_2 \\
@V{h_1\otimes id}VV @VV{h_2}V \\
M'_1\otimes_{\mathcal{R}_\text{\large \Fontlukas m}} E @>>{\mu'}> M'_2
\end{CD}
\end{equation}
clearly $h$ is an isomorphism if and only if $h_1$ and $h_2$ are isomorphisms.

A left $\widetilde{\Lambda}$-module $N$, is given by a decomposition into $\textsf{K}$-vector spaces $N=N_1\oplus N_2$, with $N_1=e_0N$ a left $\mathcal{R}_\text{\large \Fontlukas m}$-module, $N_2=\epsilon N$ a left $\Lambda$-module, and a left $\mathcal{R}_\text{\large \Fontlukas m}$-module morphism $\nu : E\otimes_\Lambda N_2 \rightarrow N_1$. The left $\widetilde{\Lambda}$-module structure of $N$ is as follows
\begin{equation}\label{modGorIZ}
\begin{pmatrix} r & x \\ 0 & a \end{pmatrix} \begin{pmatrix} n_1 \\ n_2 \end{pmatrix} = \begin{pmatrix} rn_1 + \nu(x\otimes n_2) \\ an_2 \end{pmatrix}
\end{equation}

Consider a right $\widetilde{\Lambda}$-module $D(N)$. We have $D(N)e_0=D(e_0N)=D(N_1)$ and $D(N)\epsilon = D(\epsilon N)=D(N_2)$. There exists a morphism $\mu : D(N_1) \otimes_{\mathcal{R}_\text{\large \Fontlukas m}} E \rightarrow D(N_2)$ given by
\begin{equation}\label{morDual}
\mu(\eta_1 \otimes x)(n_2) = \eta_1(\nu(x\otimes n_2)),
\end{equation}
where $\eta_1:N_1\rightarrow \textsf{K}$ belongs to $D(N_1)$, $x\in E$ and $n_2\in N_2$.

\begin{proposition}\label{Eproy}
Let $\widetilde{\Lambda}$ be as in the equation (\ref{algGor}). There is an isomorphism:
\[D(\widetilde{\Lambda}e_\text{\large \Fontlukas m})\cong e_0\widetilde{\Lambda}.\]
\end{proposition}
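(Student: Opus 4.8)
The plan is to exploit the explicit description of left and right $\widetilde{\Lambda}$-modules as triples (given around (\ref{modGor}) and (\ref{modGorIZ})), the dualization recipe (\ref{morDual}), and the isomorphism criterion encoded in diagram (\ref{diagISO}). Concretely, I would write each of $D(\widetilde{\Lambda}e_\text{\large \Fontlukas m})$ and $e_0\widetilde{\Lambda}$ as a right $\widetilde{\Lambda}$-module triple, and then match the two triples componentwise, reducing the whole statement to producing two compatible component isomorphisms over $\mathcal{R}_\text{\large \Fontlukas m}$ and over $\Lambda$.

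First I would describe $\widetilde{\Lambda}e_\text{\large \Fontlukas m}$ as a left $\widetilde{\Lambda}$-module. Block multiplication gives $\widetilde{\Lambda}e_\text{\large \Fontlukas m}=\begin{pmatrix} 0 & Ee_\text{\large \Fontlukas m} \\ 0 & \Lambda e_\text{\large \Fontlukas m}\end{pmatrix}$, so in the notation preceding (\ref{modGorIZ}) its components are $N_1=e_0\widetilde{\Lambda}e_\text{\large \Fontlukas m}\cong Ee_\text{\large \Fontlukas m}$, a left $\mathcal{R}_\text{\large \Fontlukas m}$-module, and $N_2=\epsilon\widetilde{\Lambda}e_\text{\large \Fontlukas m}\cong\Lambda e_\text{\large \Fontlukas m}$, a left $\Lambda$-module, with structure map $\nu:E\otimes_\Lambda\Lambda e_\text{\large \Fontlukas m}\to Ee_\text{\large \Fontlukas m}$ the canonical multiplication isomorphism. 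Applying (\ref{morDual}), the right $\widetilde{\Lambda}$-module $D(\widetilde{\Lambda}e_\text{\large \Fontlukas m})$ has components $D(\widetilde{\Lambda}e_\text{\large \Fontlukas m})e_0=D(Ee_\text{\large \Fontlukas m})$ and $D(\widetilde{\Lambda}e_\text{\large \Fontlukas m})\epsilon=D(\Lambda e_\text{\large \Fontlukas m})$, with structure morphism $\mu(\eta\otimes x)(n)=\eta(xn)$ for $\eta\in D(Ee_\text{\large \Fontlukas m})$, $x\in E$ and $n\in\Lambda e_\text{\large \Fontlukas m}$. On the other side, from $e_0\widetilde{\Lambda}=\begin{pmatrix}\mathcal{R}_\text{\large \Fontlukas m} & E \\ 0 & 0\end{pmatrix}$ together with the bimodule identification $e_0\widetilde{\Lambda}\epsilon\cong E$, the right $\widetilde{\Lambda}$-module $e_0\widetilde{\Lambda}$ has components $\mathcal{R}_\text{\large \Fontlukas m}$ and $E$, with structure morphism $\mu'\colon\mathcal{R}_\text{\large \Fontlukas m}\otimes_{\mathcal{R}_\text{\large \Fontlukas m}}E\to E$ the canonical multiplication.

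By the criterion (\ref{diagISO}) it then suffices to produce right-module isomorphisms $h_1:D(Ee_\text{\large \Fontlukas m})\to\mathcal{R}_\text{\large \Fontlukas m}$ and $h_2:D(\Lambda e_\text{\large \Fontlukas m})\to E$ making the resulting square commute. For $h_2$ I would take the identification $E=D(\Lambda e_\text{\large \Fontlukas m})$ established in the proof of Proposition \ref{Econstante}, so that $h_2=\mathrm{id}$. For $h_1$, I observe that this same identification restricts to $Ee_\text{\large \Fontlukas m}\cong D(e_\text{\large \Fontlukas m}\Lambda e_\text{\large \Fontlukas m})=D(\mathcal{R}_\text{\large \Fontlukas m})$; since $\mathcal{R}_\text{\large \Fontlukas m}$ is a division ring finite-dimensional over $\textsf{K}$, $Ee_\text{\large \Fontlukas m}$ is free of rank one over $\mathcal{R}_\text{\large \Fontlukas m}$, and the canonical evaluation map $\mathcal{R}_\text{\large \Fontlukas m}\to D(D(\mathcal{R}_\text{\large \Fontlukas m}))$ is a right $\mathcal{R}_\text{\large \Fontlukas m}$-module isomorphism. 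I define $h_1$ to be the composite $D(Ee_\text{\large \Fontlukas m})\cong D(D(\mathcal{R}_\text{\large \Fontlukas m}))\cong\mathcal{R}_\text{\large \Fontlukas m}$, characterized by $\eta=\mathrm{ev}_{h_1(\eta)}$.

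Finally I would check that the square of (\ref{diagISO}) commutes, which is the one genuinely delicate point. Unwinding the definitions, for $\eta\in D(Ee_\text{\large \Fontlukas m})$ and $x\in E$ both $\mu(\eta\otimes x)$ and $\mu'(h_1(\eta)\otimes x)=h_1(\eta)\cdot x$ are functionals in $E=D(\Lambda e_\text{\large \Fontlukas m})$, and evaluating at $n\in\Lambda e_\text{\large \Fontlukas m}$ I expect both to equal $x(n\,h_1(\eta))$: under $Ee_\text{\large \Fontlukas m}\cong D(\mathcal{R}_\text{\large \Fontlukas m})$ one has $(xn)(r)=x(nr)$, whence $\eta(xn)=(xn)(h_1(\eta))=x(n\,h_1(\eta))$, while the left $\mathcal{R}_\text{\large \Fontlukas m}$-action on $E=D(\Lambda e_\text{\large \Fontlukas m})$ gives $(h_1(\eta)\cdot x)(n)=x(n\,h_1(\eta))$. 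The hard part is precisely this evaluation chase: one must keep straight the left/right $\mathcal{R}_\text{\large \Fontlukas m}$- and $\Lambda$-actions across the two tensor products and the nested duals, and confirm that the canonical double-dual map is indeed a morphism of right $\mathcal{R}_\text{\large \Fontlukas m}$-modules (which holds by naturality together with finite-dimensionality over $\textsf{K}$). Once the square commutes, $h=(h_1,h_2)$ is by (\ref{diagISO}) the desired isomorphism $D(\widetilde{\Lambda}e_\text{\large \Fontlukas m})\cong e_0\widetilde{\Lambda}$, and everything else is bookkeeping with the block-matrix products.
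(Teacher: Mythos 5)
Your proposal is correct and follows essentially the same route as the paper's proof: both describe $\widetilde{\Lambda}e_{\text{\large \Fontlukas m}}$ by the triple $(Ee_{\text{\large \Fontlukas m}},\Lambda e_{\text{\large \Fontlukas m}},\nu)$, dualize via (\ref{morDual}), identify the components with $\mathcal{R}_{\text{\large \Fontlukas m}}$ (through double duality/evaluation) and with $E=D(\Lambda e_{\text{\large \Fontlukas m}})$ (through the identity), and verify commutativity of the square in (\ref{diagISO}) by the same evaluation computation reducing both sides to $x(a\,r)$. The "delicate point" you flag is exactly the one-line check the paper performs, so nothing further is needed.
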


\begin{proof}
The $\widetilde{\Lambda}$-module $e_0\widetilde{\Lambda}$ is determined by the triple $(\mathcal{R}_\text{\large \Fontlukas m},E,\mu)$, where $\mu$ is induced by the multiplication of $\mathcal{R}_\text{\large \Fontlukas m}$ times $E$, i. e.
\[\mu : \mathcal{R}_\text{\large \Fontlukas m} \otimes_{\mathcal{R}_\text{\large \Fontlukas m}} E \rightarrow E : r\otimes x \mapsto rx.\]

We have 
\[e_0\widetilde{\Lambda}e_\text{\large \Fontlukas m}\cong Ee_\text{\large \Fontlukas m}\]
as left $\mathcal{R}_\text{\large \Fontlukas m}$-modules,
\[\epsilon \widetilde{\Lambda}e_\text{\large \Fontlukas m}\cong \Lambda e_\text{\large \Fontlukas m}\]
as left $\Lambda$-modules, and the left $\widetilde{\Lambda}$-module structure of $\widetilde{\Lambda}e_\text{\large \Fontlukas m}$, induces the morphism
\[\nu : E\otimes_\Lambda \Lambda e_\text{\large \Fontlukas m}\rightarrow Ee_\text{\large \Fontlukas m}\cong D(\mathcal{R}_\text{\large \Fontlukas m})\]
given by the action of $\Lambda e_\text{\large \Fontlukas m}$ on $E$.

Therefore $\widetilde{\Lambda}e_\text{\large \Fontlukas m}$ is isomorphic to the left $\widetilde{\Lambda}$-module determined by the triple
\[(Ee_\text{\large \Fontlukas m}, \Lambda e_\text{\large \Fontlukas m}, \nu ).\]

Then the dual $D(\widetilde{\Lambda}e_\text{\large \Fontlukas m})$, is determined by the triple $(D(Ee_\text{\large \Fontlukas m}), D(\Lambda e_\text{\large \Fontlukas m}), \rho )$, where $\rho : D(Ee_\text{\large \Fontlukas m})\otimes_{\mathcal{R}_\text{\large \Fontlukas m}} E \rightarrow D(\Lambda e_\text{\large \Fontlukas m})$ is a morphism given by the equation (\ref{morDual}).

We have $D(Ee_\text{\large \Fontlukas m})\cong DD(e_\text{\large \Fontlukas m}\Lambda e_\text{\large \Fontlukas m})\cong DD(\mathcal{R}_\text{\large \Fontlukas m})$ and $D(\Lambda e_\text{\large \Fontlukas m})=E$, and we want to see that the pair of morphisms $\ev : DD(\mathcal{R}_\text{\large \Fontlukas m})\rightarrow \mathcal{R}_\text{\large \Fontlukas m}$ and $id:D(\Lambda e_\text{\large \Fontlukas m})\rightarrow E$ determine a $\widetilde{\Lambda}$-module isomorphism between $D(\widetilde{\Lambda}e_\text{\large \Fontlukas m})$ and $e_0\widetilde{\Lambda}$. Following the diagram (\ref{diagISO}), it is enough to prove
\[\rho(\ev \otimes id)=\mu .\]

For every $r\in \mathcal{R}_\text{\large \Fontlukas m}, x\in E=D(\Lambda e_\text{\large \Fontlukas m}), a\in \Lambda e_\text{\large \Fontlukas m}$, it holds
\[\rho(\ev(r) \otimes x)(a)=\ev(r)(xa)=xa(r)=x(ar),\]
on the other hand
\[\mu(r\otimes x)= rx \text{ y } rx(a)=x(ar).\]

That proves
\[D(\widetilde{\Lambda}e_\text{\large \Fontlukas m})\cong e_0\widetilde{\Lambda}.\]
\end{proof}

Notice that $\widetilde{\Lambda}$ has a decomposition similar to that of the equation (\ref{semisimpleRadical}):
\[\widetilde{\Lambda}=\widetilde{S}\oplus \widetilde{J}, \ \ \text{where} \ \ \widetilde{S}=e_0\mathcal{R}_\text{\large \Fontlukas m}e_0\oplus \sum_{i\in \mathscr{P}^\text{\large \Fontlukas m}}e_i\mathcal{R}_ie_i\ \ \text{y} \ \ \widetilde{J}=E \oplus \bigoplus_{\substack{i< j \\ i,j\in \mathscr{P}^\text{\large \Fontlukas m}}}e_{i,j}\mathcal{R}_{i,j}.\]

\begin{proposition}\label{picoDI}
If $\widetilde{\Lambda}$ has the form (\ref{algGor}), then it is a right and left peak  algebra.
\end{proposition}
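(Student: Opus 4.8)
The plan is to prove the two halves separately; in each case I exhibit a simple projective module and check that the socle of the regular representation is a direct sum of copies of it. For the right socle most of the work is already done for $\Lambda=\Lambda(\mathcal{R})$. Since the $\widetilde{\Lambda}$-action on $e_\text{\large \Fontlukas m}\widetilde{\Lambda}\cong e_\text{\large \Fontlukas m}\Lambda$ factors through $\epsilon\widetilde{\Lambda}\epsilon\cong\Lambda$, this module is simple projective (it was the peak of $\Lambda$). For $i\in\mathscr{P}^\text{\large \Fontlukas m}$ one has $e_i\widetilde{\Lambda}\cong e_i\Lambda$ as $\widetilde{\Lambda}$-modules and $e_i\widetilde{\Lambda}e_0=0$, so no new composition factor appears and $\soc(e_i\widetilde{\Lambda})=\soc(e_i\Lambda)$ is a direct sum of copies of $e_\text{\large \Fontlukas m}\Lambda\cong e_\text{\large \Fontlukas m}\widetilde{\Lambda}$ by the socle computation of Section \ref{PicoDerechas}. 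The only genuinely new projective is $e_0\widetilde{\Lambda}$, which contains $E=e_0\widetilde{\Lambda}\epsilon$ as a submodule with simple quotient the $e_0$-top; since $e_0\widetilde{\Lambda}e_0=\mathcal{R}_\text{\large \Fontlukas m}$ is a division ring, $e_0$ is primitive and $e_0\widetilde{\Lambda}$ is indecomposable, so its socle cannot split off the top copy and equals $\soc(E)$. As $E$ is the injective envelope of the simple $e_\text{\large \Fontlukas m}\Lambda(\mathcal{R})$, we get $\soc(e_0\widetilde{\Lambda})=\soc(E)\cong e_\text{\large \Fontlukas m}\widetilde{\Lambda}$. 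Summing over all idempotents shows $\soc(\widetilde{\Lambda}_{\widetilde{\Lambda}})$ is a direct sum of copies of $e_\text{\large \Fontlukas m}\widetilde{\Lambda}$, so $\widetilde{\Lambda}$ is right peak.

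For the left side I would take $\widetilde{\Lambda}e_0\cong\mathcal{R}_\text{\large \Fontlukas m}$ as the candidate simple projective: it is projective, and $\widetilde{J}e_0=0$ gives $\rad(\widetilde{\Lambda}e_0)=0$, so it is simple. Since $\soc(\widetilde{\Lambda}e_0)=\widetilde{\Lambda}e_0$, it remains to show each $\soc(\widetilde{\Lambda}e_i)$ is a sum of copies of $\widetilde{\Lambda}e_0$. Here I use the description of left $\widetilde{\Lambda}$-modules by triples: $\widetilde{\Lambda}e_i$ corresponds to $(Ee_i,\Lambda e_i,\nu)$ with $\nu(x\otimes b)=xb$. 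By $(\ref{modGorIZ})$ a simple submodule with nonzero $\epsilon$-component $W_2\subseteq\Lambda e_i$ forces $E\cdot W_2$ into the $e_0$-component $Ee_i$, producing a proper nonzero submodule and contradicting simplicity, provided $E\cdot s\neq 0$ for every $0\neq s\in\Lambda e_i$. Granting this, every simple submodule lies in $Ee_i$, which is a semisimple module over the division ring $\mathcal{R}_\text{\large \Fontlukas m}$, whence $\soc(\widetilde{\Lambda}e_i)=Ee_i$ is a direct sum of copies of $\mathcal{R}_\text{\large \Fontlukas m}\cong\widetilde{\Lambda}e_0$.

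Everything therefore reduces to the key lemma, which I expect to be the main obstacle: for every $0\neq s\in\Lambda e_i$ one has $s\,\Lambda e_\text{\large \Fontlukas m}\neq 0$, equivalently $Es\neq 0$ since $E=D(\Lambda e_\text{\large \Fontlukas m})$ and $Eb=0\Leftrightarrow b\Lambda e_\text{\large \Fontlukas m}=0$. To establish it I would choose a nonzero homogeneous component $s_{j_0}=e_{j_0,i}r_0$ of $s$ with $0\neq r_0\in\mathcal{R}_{j_0,i}$ and, by iterating condition A.2 of Definition \ref{sisAdm}, build a strictly increasing chain $i<l_1<l_2<\cdots$ with elements $y_t\in\mathcal{R}_{l_{t-1},l_t}$ for which $r_0y_1\cdots y_t\neq 0$ stays nonzero; finiteness of $\mathscr{P}^\text{\large \Fontlukas m}$ and maximality of $\text{\large \Fontlukas m}$ force the chain to reach $\text{\large \Fontlukas m}$, giving $c=e_{i,\text{\large \Fontlukas m}}(y_1\cdots y_k)\in\Lambda e_\text{\large \Fontlukas m}$ with $s_{j_0}c\neq 0$. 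As the components $e_{j,\text{\large \Fontlukas m}}\mathcal{R}_{j,\text{\large \Fontlukas m}}$ lie in independent summands, $sc\neq 0$ (the case $i=\text{\large \Fontlukas m}$ being immediate since $se_\text{\large \Fontlukas m}=s$). Combined with the preceding paragraph this shows the left socle of $\widetilde{\Lambda}$ is a direct sum of copies of the simple projective $\widetilde{\Lambda}e_0$, so $\widetilde{\Lambda}$ is left peak as well.
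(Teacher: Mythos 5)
Your proof is correct and follows the same overall strategy as the paper's: identify $e_\text{\large \Fontlukas m}\widetilde{\Lambda}$ and $\widetilde{\Lambda}e_0$ as the relevant simple projectives and compute the socle of every indecomposable projective on each side. The differences are in the execution. On the right side, the paper recomputes $\soc(e_i\widetilde{\Lambda})=e_i\widetilde{\Lambda}e_\text{\large \Fontlukas m}$ directly from condition A.2 and obtains $\soc(e_0\widetilde{\Lambda})\cong e_\text{\large \Fontlukas m}\widetilde{\Lambda}$ by citing Proposition \ref{Eproy} (so that $e_0\widetilde{\Lambda}\cong D(\widetilde{\Lambda}e_\text{\large \Fontlukas m})$ is the injective envelope of the simple projective); you instead reuse the socle computation of Section \ref{PicoDerechas} and deduce $\soc(e_0\widetilde{\Lambda})=\soc(E)$ from the indecomposability of $e_0\widetilde{\Lambda}$ together with $E=\rad(e_0\widetilde{\Lambda})$ --- both routes work, and yours avoids invoking the duality isomorphism. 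On the left side the content is the same (a nonzero element of $\epsilon\widetilde{\Lambda}e_i$ is pushed out of the socle by a radical element coming from $E$, which reduces everything to showing $x\mathcal{R}_{i,\text{\large \Fontlukas m}}\neq 0$, whence $\soc(\widetilde{\Lambda}e_i)=e_0\widetilde{\Lambda}e_i\cong(\widetilde{\Lambda}e_0)^{\mu(i)}$), but you are more careful than the paper on one point: A.2 as stated only produces $y\in\mathcal{R}_{j,l}$ for \emph{some} $l\neq j$, not directly an element of $\mathcal{R}_{i,\text{\large \Fontlukas m}}$, so the iteration along a strictly increasing chain that must terminate at $\text{\large \Fontlukas m}$, which you spell out in your key lemma, is genuinely needed. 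The paper asserts the existence of $y\in\mathcal{R}_{i,\text{\large \Fontlukas m}}$ in a single step; your lemma makes explicit (and justifies) that elided reduction, so this part of your write-up is actually the more complete of the two.
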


\begin{proof}
To see that $\widetilde{\Lambda}$ is right peak, let us consider the socle of the right projective $\widetilde{\Lambda}$-modules.

We have that $e_\text{\large \Fontlukas m}\widetilde{\Lambda}\cong e_\text{\large \Fontlukas m}\Lambda\cong \mathcal{R}_\text{\large \Fontlukas m}$ is a division ring, then it is a simple right projective $\widetilde{\Lambda}$-module. Its injective envelope $E(e_\text{\large \Fontlukas m}\widetilde{\Lambda})=D(\widetilde{\Lambda}e_\text{\large \Fontlukas m})$ is isomorphic to the projective $\widetilde{\Lambda}$-module $e_0\widetilde{\Lambda}$. Therefore
\[\soc e_0\widetilde{\Lambda}\cong e_\text{\large \Fontlukas m}\widetilde{\Lambda}.\]

For $i,j\in \mathscr{P}^\text{\large \Fontlukas m}$ with $j\neq \text{\large \Fontlukas m}$, and $x\in \soc (e_i \widetilde{\Lambda})e_j\subset e_i \widetilde{\Lambda}e_j$ such that $x\neq 0$, then $x=e_{i}x_0e_j$, for some $x_0\in \mathcal{R}_{i,j}$, with $x_0\neq 0$. By Definition \ref{sisAdm}, A.2, there is $y_0\in \mathcal{R}_{j,l}$, for some $l\neq j$, such that $x_0y_0\neq 0$.

Hence $y=e_{j}y_0e_l$ belongs to the radical of $\Lambda$, and $xy=e_ix_0y_0e_l\neq 0$, so $x$ cannot belong to $\soc (e_i \widetilde{\Lambda})$. Therefore
\[\soc (e_i \widetilde{\Lambda})=e_i \widetilde{\Lambda}e_\text{\large \Fontlukas m} = \begin{pmatrix} 0 & 0 \\ 0 & e_i\Lambda e_\text{\large \Fontlukas m} \end{pmatrix},\]
then
\[\soc (e_i \widetilde{\Lambda})\cong (e_\text{\large \Fontlukas m}\Lambda)^{\nu(i)}\cong (e_\text{\large \Fontlukas m}\widetilde{\Lambda})^{\nu(i)}, \ \ \text{where} \ \nu(i)\in \mathbb{N}.\]

When we consider $\widetilde{\Lambda}$ as a right $\widetilde{\Lambda}$-module
\[\soc (\widetilde{\Lambda})\cong (e_\text{\large \Fontlukas m}\widetilde{\Lambda})^{\nu}, \ \ \text{for some} \ \nu \in \mathbb{N},\]
that is, $\widetilde{\Lambda}$ is right peak.

Now, let us compute the socle of the left projective $\widetilde{\Lambda}$-modules.

As $\widetilde{\Lambda}e_0\cong \mathcal{R}_\text{\large \Fontlukas m}$, it is a simple  $\widetilde{\Lambda}$-module.

For $i,j\in \mathscr{P}^\text{\large \Fontlukas m}$ and a non-null element $e_jxe_i\in e_j\widetilde{\Lambda}e_i$, we have $x\in \mathcal{R}_{j,i}$, and there is some $y\in \mathcal{R}_{i,\text{\large \Fontlukas m}}$, such that $xy\neq 0$. The existence of $y$ is given by Definition \ref{sisAdm}, A.2, if $i< \text{\large \Fontlukas m}$, or because $\mathcal{R}_\text{\large \Fontlukas m}$ is a division ring, when $i= \text{\large \Fontlukas m}$.

We choose a linear transformation $h: e_j\widetilde{\Lambda}e_\text{\large \Fontlukas m} \rightarrow \textsf{K}$, such that $h(xy)\neq 0$, then $hx\neq 0$.

The element  $e_0he_j$ is in the radical of $\widetilde{\Lambda}$, and it is such that $e_0he_je_jxe_i=e_0hxe_i\neq 0$, so, $e_jxe_i$ cannot belong to $\soc \widetilde{\Lambda}e_i$. Hence,
\[\soc \widetilde{\Lambda}e_i=e_0 \widetilde{\Lambda}e_i.\]

For any $x\in e_0 \widetilde{\Lambda}e_i$, $x\neq 0$, we define $f_x:\widetilde{\Lambda}e_0 \rightarrow e_0 \widetilde{\Lambda}e_i:a\mapsto ax$, for all $a\in \widetilde{\Lambda}e_0$. As $f_x$ is a left $\widetilde{\Lambda}$-module morphism such that $f(e_0)=e_0x=x$, then $\I f_x$ is a submodule of $e_0 \widetilde{\Lambda}e_i$ isomorphic to $\widetilde{\Lambda}e_0$, therefore $e_0 \widetilde{\Lambda}e_i$ is a sum of simple modules isomorphic to $\widetilde{\Lambda}e_0$, then $e_0 \widetilde{\Lambda}e_i$ is isomorphic to a direct sum $(\widetilde{\Lambda}e_0)^{\mu(i)}$, for some $\mu(i) \in \mathbb{N}$. 

The socle of the left $\widetilde{\Lambda}$-module $\widetilde{\Lambda}$ is
\[\soc _{\widetilde{\Lambda}}\widetilde{\Lambda}=\bigoplus_{i\in \mathscr{P}\cup \{0\}}\soc(\widetilde{\Lambda}e_i)=\bigoplus_{i\in \mathscr{P}}(\widetilde{\Lambda}e_0)^{\mu(i)}=(\widetilde{\Lambda}e_0)^{\mu},\]
for some $\mu \in \mathbb{N}$. Therefore $\widetilde{\Lambda}$ is a left peak algebra, and the proposition is proved.
\end{proof}

\begin{proposition}\label{AesGor}
The algebra $\widetilde{\Lambda}$, constructed in (\ref{algGor}), is 1-Gorenstein.
\end{proposition}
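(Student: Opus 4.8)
The plan is to establish the two defining inequalities of a $1$-Gorenstein algebra, $\mathrm{id}\,\widetilde{\Lambda}_{\widetilde{\Lambda}}\le 1$ and $\mathrm{id}\,{}_{\widetilde{\Lambda}}\widetilde{\Lambda}\le 1$; equivalently, that every indecomposable projective right $\widetilde{\Lambda}$-module has injective dimension at most $1$ and, after applying the duality $D$, that every indecomposable injective right $\widetilde{\Lambda}$-module has projective dimension at most $1$. I would carry this out using the description of right $\widetilde{\Lambda}$-modules as triples $(M_1,M_2,\mu)$ from (\ref{modGor}): the indecomposable projectives are $e_0\widetilde{\Lambda}=(\mathcal{R}_\text{\large \Fontlukas m},E,\cong)$ and the inflations $(0,e_i\Lambda,0)=e_i\widetilde{\Lambda}$, while the indecomposable injectives are the duals $D(\widetilde{\Lambda}e_k)$. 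Proposition \ref{Eproy} already shows that $e_0\widetilde{\Lambda}\cong D(\widetilde{\Lambda}e_\text{\large \Fontlukas m})$ is projective-injective, so it contributes nothing to either dimension.

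For the right-hand inequality I would exploit that $\widetilde{\Lambda}$ is right peak (Proposition \ref{picoDI}): the socle of each $e_i\widetilde{\Lambda}$ is a sum of copies of $e_\text{\large \Fontlukas m}\widetilde{\Lambda}$, whose injective envelope is the projective-injective $e_0\widetilde{\Lambda}$. This yields an embedding into the injective envelope
\[0\longrightarrow e_i\widetilde{\Lambda}\longrightarrow (e_0\widetilde{\Lambda})^{\nu(i)}\longrightarrow C_i\longrightarrow 0,\]
and the task is reduced to proving that the cokernel $C_i$ is injective. I would compute $C_i$ as a triple---on the $\Lambda$-component the embedding is the inclusion of $e_i\Lambda$ into the injective envelope $E^{\nu(i)}$ of its socle---and then match $C_i$ against the list $D(\widetilde{\Lambda}e_k)$ of indecomposable injectives.

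For the left-hand inequality I would proceed symmetrically. Proposition \ref{picoDI} also computes the left socle of $\widetilde{\Lambda}$ as a sum of copies of the simple projective $\widetilde{\Lambda}e_0$; dualizing, it suffices to show that each indecomposable injective right module $D(\widetilde{\Lambda}e_k)$ has projective dimension at most $1$. The delicate case is the simple injective $D(\widetilde{\Lambda}e_0)$, whose projective cover is $e_0\widetilde{\Lambda}$ and whose first syzygy is the inflation $(0,E,0)$ of $E$; one must show that this syzygy is projective, i.e. that its projective dimension is zero.

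The main obstacle, in both directions, is exactly the same phenomenon: the injectivity of the cokernels $C_i$ and the projectivity of the syzygy $(0,E,0)$. This is the step that must use the admissibility of $\mathcal{R}$ and the identification $E=D(\Lambda e_\text{\large \Fontlukas m})$ in an essential way, since it is here that the homological contribution of gluing $E$ over the peak has to collapse after a single step rather than propagate to a second syzygy. After the triple computation I expect the whole matter to come down to a statement about $E$ purely as a $\Lambda$-module---concretely, that $E$ is $\Lambda$-projective, equivalently that $\Lambda e_\text{\large \Fontlukas m}$ is injective as a left $\Lambda$-module---and verifying that the admissible structure forces this is the crux on which the proposition turns.
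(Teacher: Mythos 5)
Your plan proves a different statement from the one the paper intends, because it substitutes a different definition of ``1-Gorenstein''. Here (following \cite{BM}) the term means that the injective envelope of the regular module $\widetilde{\Lambda}_{\widetilde{\Lambda}}$ is \emph{projective}, and with that reading the proof is essentially already done: by Proposition \ref{picoDI}, $\soc \widetilde{\Lambda}_{\widetilde{\Lambda}}\cong (e_\text{\large \Fontlukas m}\widetilde{\Lambda})^{\nu}$, hence $E(\widetilde{\Lambda}_{\widetilde{\Lambda}})\cong E(e_\text{\large \Fontlukas m}\widetilde{\Lambda})^{\nu}\cong D(\widetilde{\Lambda}e_\text{\large \Fontlukas m})^{\nu}\cong (e_0\widetilde{\Lambda})^{\nu}$ by Proposition \ref{Eproy}, which is projective. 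No cokernels or second syzygies need to be controlled. What you set out to prove instead is the Iwanaga-type condition $\mathrm{id}\,\widetilde{\Lambda}_{\widetilde{\Lambda}}\le 1$ and $\mathrm{id}\,{}_{\widetilde{\Lambda}}\widetilde{\Lambda}\le 1$, and that condition is in general \emph{false} for the algebras of (\ref{algGor}), so your strategy cannot be carried out.

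Concretely, take $\mathscr{P}=\{1,2,\text{\large \Fontlukas m}\}$ with $1<\text{\large \Fontlukas m}$, $2<\text{\large \Fontlukas m}$, $1$ and $2$ incomparable, and $\mathcal{R}_{i,j}=\textsf{K}$ for all $i\le j$. This is an admissible system, $\Lambda$ is the path algebra of $1\rightarrow \text{\large \Fontlukas m}\leftarrow 2$, and $\widetilde{\Lambda}$ is the incidence algebra of the commutative square on $\{0,1,2,\text{\large \Fontlukas m}\}$. Here $E=D(\Lambda e_\text{\large \Fontlukas m})$ has dimension vector $(1,1,1)$ and is not projective over $\Lambda$, so the ``crux'' you isolate at the end (that $E$ is $\Lambda$-projective, equivalently that $\Lambda e_\text{\large \Fontlukas m}$ is left injective) already fails. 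Likewise, the cokernel of the injective envelope $e_\text{\large \Fontlukas m}\widetilde{\Lambda}\hookrightarrow e_0\widetilde{\Lambda}$ is $e_0\widetilde{\Lambda}/\soc(e_0\widetilde{\Lambda})$, an indecomposable module with top the simple at $0$ and socle the direct sum of the simples at $1$ and $2$; it is not injective, and one checks $\mathrm{id}\,e_\text{\large \Fontlukas m}\widetilde{\Lambda}=2$. So the two notions genuinely diverge for these algebras; the proposition is the statement about $E(\widetilde{\Lambda}_{\widetilde{\Lambda}})$ being projective, and that follows in two lines from Propositions \ref{picoDI} and \ref{Eproy}.
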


\begin{proof}
Considering $\widetilde{\Lambda}$ as a right $\widetilde{\Lambda}$-module 
\[\soc \widetilde{\Lambda}_{\widetilde{\Lambda}}\cong (e_\text{\large \Fontlukas m} \widetilde{\Lambda})^{\nu}, \ \ \text{for some} \ \nu \in \mathbb{N}.\]

Its injective envelope 
\[E \left( \widetilde{\Lambda}_{\widetilde{\Lambda}} \right)=E(e_\text{\large \Fontlukas m} \widetilde{\Lambda})^{\nu},\]
by Proposition \ref{Eproy} we have $E(e_\text{\large \Fontlukas m} \widetilde{\Lambda})\cong D(\widetilde{\Lambda}e_\text{\large \Fontlukas m})\cong e_0\widetilde{\Lambda}$, then the injective envelope $E\left( \widetilde{\Lambda}_{\widetilde{\Lambda}} \right)$ is projective, therefore $\widetilde{\Lambda}$ is 1-Gorenstein.
\end{proof}

\begin{proposition}\label{admGor}
Let $(\mathscr{P},A,\mathcal{R})$ be an algebraically equipped poset, where $\mathscr{P}$ has a maximal point $\text{\large \Fontlukas m}$ and a minimal point $0$, and $\mathcal{R}$ is an admissible system satisfying 
\begin{enumerate}[$\mathrm{G.1}$]
\item $\mathcal{R}_0=\mathcal{R}_\text{\large \Fontlukas m}$,
\item $\mathcal{R}_{0,i}\cong D(\mathcal{R}_{i,\text{\large \Fontlukas m}})$, for all $i\in \mathscr{P}$,
\end{enumerate}
then the algebra $\Lambda=\Lambda(\mathcal{R})$ is 1-Gorenstein.
\end{proposition}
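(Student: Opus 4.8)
The plan is to realize $\Lambda=\Lambda(\mathcal{R})$ as an algebra of the form (\ref{algGor}) built from a smaller admissible system, and then simply to quote Proposition \ref{AesGor}. First I would delete the minimal point: set $\mathscr{P}'=\mathscr{P}\setminus\{0\}$ and let $\mathcal{R}'$ be the restriction of $\mathcal{R}$ to $\mathscr{P}'$. Because $0$ is minimal, any $l$ with $j\le l$ and $l\neq j$ (for $j\in\mathscr{P}'$) again lies in $\mathscr{P}'$, so the witnesses required by A.2 of Definition \ref{sisAdm} stay inside $\mathscr{P}'$; together with A.1 this shows $\mathcal{R}'$ is an admissible system on $\mathscr{P}'$, which still has $\text{\large \Fontlukas m}$ as its maximal point. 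Hence $\Lambda'=\Lambda(\mathcal{R}')$ is a right peak algebra, its unique simple projective is $e_\text{\large \Fontlukas m}\Lambda'$, and the injective envelope of that simple is $E=D(\Lambda' e_\text{\large \Fontlukas m})$, exactly as in the proof of Proposition \ref{Econstante}.

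Next I would put $\Lambda$ into block form. Writing $\epsilon=\sum_{i\in\mathscr{P}'}e_i$ so that $1=e_0+\epsilon$, minimality of $0$ forces $\mathcal{R}_{i,0}=0$ for $i\neq 0$, whence $\epsilon\Lambda e_0=0$ and
\[\Lambda\cong\begin{pmatrix} e_0\Lambda e_0 & e_0\Lambda\epsilon\\ 0 & \epsilon\Lambda\epsilon\end{pmatrix}=\begin{pmatrix}\mathcal{R}_0 & e_0\Lambda\epsilon\\ 0 & \Lambda'\end{pmatrix}.\]
By hypothesis G.1 the corner is $\mathcal{R}_0=\mathcal{R}_\text{\large \Fontlukas m}$, and $\epsilon\Lambda\epsilon=\Lambda'$, so comparing with (\ref{algGor}) for $\Lambda'$ it remains only to identify the bimodule $e_0\Lambda\epsilon$ with $E$. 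On each component this already holds: for $j\in\mathscr{P}'$ one has $(e_0\Lambda\epsilon)e_j=e_{0,j}\mathcal{R}_{0,j}\cong\mathcal{R}_{0,j}$ while $Ee_j=D(e_j\Lambda'e_\text{\large \Fontlukas m})\cong D(\mathcal{R}_{j,\text{\large \Fontlukas m}})$, and these agree by hypothesis G.2.

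The main obstacle will be to upgrade the componentwise identifications of G.2 to a single isomorphism of $\mathcal{R}_\text{\large \Fontlukas m}$-$\Lambda'$-bimodules $e_0\Lambda\epsilon\cong E$. Indeed, the right $\Lambda'$-action on $e_0\Lambda\epsilon$ is the multiplication $\mathcal{R}_{0,i}\times\mathcal{R}_{i,j}\to\mathcal{R}_{0,j}$, $t\mapsto tr$, whereas the right $\Lambda'$-action on $E=D(\Lambda'e_\text{\large \Fontlukas m})$ is the $\textsf{K}$-dual of the left multiplication $\mathcal{R}_{i,j}\times\mathcal{R}_{j,\text{\large \Fontlukas m}}\to\mathcal{R}_{i,\text{\large \Fontlukas m}}$, $s\mapsto rs$, i.e. on components it is the transpose map $D(\mathcal{R}_{i,\text{\large \Fontlukas m}})\to D(\mathcal{R}_{j,\text{\large \Fontlukas m}})$. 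I would therefore need the isomorphisms $\theta_i\colon\mathcal{R}_{0,i}\xrightarrow{\sim}D(\mathcal{R}_{i,\text{\large \Fontlukas m}})$ of G.2 to intertwine these actions, that is $\theta_j(tr)(s)=\theta_i(t)(rs)$ for all $r\in\mathcal{R}_{i,j}$ and $s\in\mathcal{R}_{j,\text{\large \Fontlukas m}}$ (the left $\mathcal{R}_\text{\large \Fontlukas m}$-action matching automatically by G.1). This naturality in $i$ is precisely the coherence that must be read into condition G.2. Granting it, $e_0\Lambda\epsilon\cong E$ as bimodules, so $\Lambda$ is isomorphic to the algebra $\widetilde{\Lambda'}$ of the shape (\ref{algGor}), and Proposition \ref{AesGor} yields that $\Lambda=\Lambda(\mathcal{R})$ is 1-Gorenstein.
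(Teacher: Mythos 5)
Your proposal is correct and follows essentially the same route as the paper: delete the minimal point to get an admissible system on $\mathscr{P}^{>0}$ with algebra $\Gamma$, exhibit $\Lambda$ as the triangular matrix algebra $\begin{pmatrix}\mathcal{R}_\text{\large \Fontlukas m} & E_{\Gamma}\\ 0 & \Gamma\end{pmatrix}$ of the shape (\ref{algGor}), and invoke Proposition \ref{AesGor}. You are in fact more explicit than the paper about the one delicate point, namely that the componentwise identifications $\mathcal{R}_{0,i}\cong D(\mathcal{R}_{i,\text{\large \Fontlukas m}})$ of G.2 must be compatible with the right $\Gamma$-action in order to yield a bimodule isomorphism $e_0\Lambda\epsilon\cong E_{\Gamma}$; the paper asserts this isomorphism componentwise and passes to the block form without comment.
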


\begin{proof}
We denote by $\mathscr{P}^{>0}$ the poset obtained from $\mathscr{P}$\index{$\mathscr{P}$} without its minimal point. Clearly, the collection $\mathcal{R}^{>0}=\{\mathcal{R}_{i,j} | i,j\in \mathscr{P}, 0<i,j\}$ is an admissible system for $\mathscr{P}^{>0}$, so we construct the algebra $\Gamma=\Gamma(\mathcal{R}^{>0})$.

Let $E_{\Gamma}$ be the injective envelope of the projective $\Gamma$-module $e_m\Gamma$, we have 
\[E_\Gamma=D(\Gamma e_\text{\large \Fontlukas m})=\bigoplus_{\substack{i>0 \\ i\in \mathscr{P}}}D(\Gamma e_\text{\large \Fontlukas m})e_i=\bigoplus_{\substack{i>0 \\ i\in \mathscr{P}}}D(e_i\Gamma e_\text{\large \Fontlukas m})\cong \bigoplus_{\substack{i>0 \\ i\in \mathscr{P}}}D(\mathcal{R}_{i,\text{\large \Fontlukas m}}) \cong \bigoplus_{\substack{i>0 \\ i\in \mathscr{P}}} \mathcal{R}_{0,i}.\]

Therefore the algebra $\Lambda$ has the form
\[\Lambda=
 \begin{pmatrix} \mathcal{R}_\text{\large \Fontlukas m} & E_{\Gamma} \\
 0 & \Gamma \end{pmatrix},\]
as in the equation (\ref{algGor}). Then, by Proposition \ref{AesGor}, $\Lambda$ is 1-Gorenstein.
\end{proof}

Let $(\mathscr{P},A,\mathcal{T})$ be an extendable poset with admissible system $\mathcal{R}$. We construct an algebraically equipped poset $(\underline{\mathscr{P}}^\text{\large \Fontlukas m} ,A, \underline{\mathcal{T}}^\text{\large \Fontlukas m})$, by adding to $\mathscr{P}^\text{\large \Fontlukas m}$ a minimal point 0, and with the $\underline{\mathscr{P}}^\text{\large \Fontlukas m}$-multiplicative system $\underline{\mathcal{T}}^\text{\large \Fontlukas m}$ given by the collection
\begin{equation}\label{sisMulti1gor}
\begin{split}
& \mathcal{T}_{i,j}, \text{ for every } i\leq j \text{ en } \mathscr{P}^\text{\large \Fontlukas m},\\
& \mathcal{T}_{0,i}=A, \text{ for every } i\in \underline{\mathscr{P}}^\text{\large \Fontlukas m}.
\end{split}
\end{equation}

Notice that $(\underline{\mathscr{P}}^\text{\large \Fontlukas m} ,A, \underline{\mathcal{T}}^\text{\large \Fontlukas m})$ has an admissible system 
\begin{equation}\label{sisAdm1gor}
\underline{\mathcal{R}}= \left\{ \mathcal{R}_{i,j}=\varepsilon_{i}\mathcal{T}_{i,j}\varepsilon_{j}\ | \ i,j \in \underline{\mathscr{P}}^\text{\large \Fontlukas m} \right\}
\end{equation}
which satisfies the hypothesis of Proposition \ref{admGor}, therefore $\Lambda(\underline{\mathcal{R}})$ is 1-Gorenstein.

%%%%%%%%%%%%%%%%%%%%%%%%%%%%%%%%%%%%%%%%%%%%%%%%%%%%%%%%%%%%%%%%%%%%%%%%%%%%%%%%%%%%%%%%%
%%%%%%%%%%%%%%%%%%%%%%%%%%%%%%%%%%%%%%%%%%%%%%%%%%%%%%%%%%%%%%%%%%%%%%%%%%%%%%%%%%%%%%%%%

\section{Categories of modules, morphisms and equivalences}\label{sucAR}

Let $\mathscr{C}$ be a full subcategory of $\mathrm{Mod}\textrm{-}\Lambda$, where $\Lambda$ is a finite dimensional $\textsf{K}$-algebra. Suppose that for every $X, Y, Z\in \mathrm{Mod}\textrm{-}\Lambda$ with $X\in \mathscr{C}$, the following conditions hold:
\begin{enumerate}[C.1]
\item If $X\cong Y$ then $Y\in \mathscr{C}$.
\item If $Z$ is a submodule of $X$, then $Z\in \mathscr{C}$.
\end{enumerate}

\begin{definition}\label{parExacto}
If $M\stackrel{u}{\rightarrow }E\stackrel{v}{\rightarrow }N$\index{$u$} is a sequence of morphisms in $\mathscr{C}$, we say that $(u,v)$ is an \textit{exact pair} if for every $W\in \mathscr{C}$ the following sequences are exact:
$$ 0\rightarrow \mathrm{Hom}_{\Lambda}(W,M)\xrightarrow{\mathrm{Hom}(id_{W},u)} \mathrm{Hom}_{\Lambda}(W,E) \xrightarrow{\mathrm{Hom}(id_{W},v)} \mathrm{Hom}_{\Lambda}(W,N)$$
$$ 0\rightarrow \mathrm{Hom}_{\Lambda}(N,W)\xrightarrow{\mathrm{Hom}(v,id_{W})} \mathrm{Hom}_{\Lambda}(E,W)\xrightarrow{\mathrm{Hom}(u,id_{W})} \mathrm{Hom}_{\Lambda}(M,W)$$
\end{definition}

\begin{proposition}\label{paresExactos}
If $\mathscr{C}$ is a full subcategory of $\mathrm{Mod}\textrm{-}\Lambda$ satisfying $\mathrm{C.1}$ and $\mathrm{C.2}$, a pair of morphisms $M\stackrel{u}{\rightarrow }E\stackrel{v}{\rightarrow }N$ is an exact pair if and only if the sequence
$0\rightarrow M\stackrel{u}{\rightarrow }E\stackrel{v}{\rightarrow }N\rightarrow 0$ is an exact sequence of $\Lambda$-modules, or equivalently the pair $(u,v)$ is exact in $\mathrm{Mod}\textrm{-}\Lambda$.
\end{proposition}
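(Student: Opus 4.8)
The plan is to prove the two implications separately, observing at the outset that the reverse implication is essentially formal. If $0\to M\xrightarrow{u}E\xrightarrow{v}N\to 0$ is a short exact sequence of $\Lambda$-modules, then applying the left exact functors $\mathrm{Hom}_\Lambda(W,-)$ and $\mathrm{Hom}_\Lambda(-,W)$ to it yields precisely the two exact sequences required in Definition \ref{parExacto}, for every $W$ whatsoever (in particular for $W\in\mathscr{C}$). So all the content lies in the forward implication: assuming $(u,v)$ is an exact pair, I must recover that $u$ is a monomorphism, that $\mathrm{im}\,u=\ker v$, and that $v$ is an epimorphism.

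First I would extract that $u$ is mono and that $\ker v=\mathrm{im}\,u$ from the covariant sequence, using C.2 to manufacture the needed test objects. Since $\ker u$ is a submodule of $M\in\mathscr{C}$, condition C.2 gives $\ker u\in\mathscr{C}$; the inclusion $\ker u\hookrightarrow M$ is annihilated by $u$, so injectivity of $\mathrm{Hom}(\mathrm{id}_{\ker u},u)$ forces it to vanish, whence $\ker u=0$. Taking $W=M$ and chasing $\mathrm{id}_M$ through the middle exactness shows $vu=0$, i.e. $\mathrm{im}\,u\subseteq\ker v$. For the reverse inclusion I would take $W=\ker v\in\mathscr{C}$ (again C.2): the inclusion $\iota:\ker v\hookrightarrow E$ satisfies $v\iota=0$, so it lies in the image of $\mathrm{Hom}(\mathrm{id}_W,u)$ and therefore factors as $\iota=uf$, giving $\ker v\subseteq\mathrm{im}\,u$.

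The main obstacle is surjectivity of $v$, precisely because $\mathscr{C}$ is only assumed closed under submodules and not under quotients, so the cokernel $N/\mathrm{im}\,v$ is not available as a test object. The idea is to first show that $\mathrm{im}\,v$ is a direct summand of $N$, after which its complement becomes a submodule of $N$ and hence does lie in $\mathscr{C}$. I factor $v=j\bar v$ through its image, with $\bar v:E\to\mathrm{im}\,v$ surjective and $j:\mathrm{im}\,v\hookrightarrow N$, noting $\mathrm{im}\,v\in\mathscr{C}$ by C.2. Since $\bar v u=0$, the contravariant middle exactness (with $W=\mathrm{im}\,v$) produces $h:N\to\mathrm{im}\,v$ with $\bar v=hv=hj\bar v$; as $\bar v$ is epi this yields $hj=\mathrm{id}_{\mathrm{im}\,v}$, so $j$ splits and $N=\mathrm{im}\,v\oplus\ker h$.

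Finally, setting $C=\ker h$, which lies in $\mathscr{C}$ by C.2, and letting $p:N\to C$ be the projection with kernel $\mathrm{im}\,v$, one has $pv=0$, so injectivity of $\mathrm{Hom}(v,\mathrm{id}_C)$ forces $p=0$ and hence $C=0$; thus $\mathrm{im}\,v=N$ and $v$ is epi. Once these three facts are in hand the sequence $0\to M\to E\to N\to 0$ is exact, and by the first paragraph this is equivalent to $(u,v)$ being exact in $\mathrm{Mod}\textrm{-}\Lambda$, which completes the proof.
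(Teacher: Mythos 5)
Your proof is correct. The overall strategy is the same as the paper's --- use C.2 to place the relevant kernels and images inside $\mathscr{C}$ and then feed them into the two exactness conditions as test objects --- but the execution differs enough to be worth noting, mainly in the epimorphism step. The paper factors $v=\lambda\rho$ through $\mathrm{Coker}\,u$, checks directly that $\lambda$ is mono, invokes C.1 together with C.2 to conclude $\mathrm{Coker}\,u\in\mathscr{C}$ (it is only \emph{isomorphic} to a submodule of $N$), and then uses the contravariant sequence twice to produce a two-sided inverse of $\lambda$. You instead split the literal submodule $\mathrm{im}\,v$ off $N$, obtaining the retraction $h$ from middle exactness of the contravariant sequence and cancelling the epimorphism $\bar v$, and then kill the complement $C=\ker h$ using left exactness of the same sequence with $W=C$. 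Your route costs one extra test object, but it has the small advantage that every test object you use ($\ker u$, $\ker v$, $\mathrm{im}\,v$, $\ker h$) is an honest submodule of $M$, $E$ or $N$, so your forward implication needs only C.2 and never C.1. The kernel half of your argument is likewise a bit more elementary than the paper's, which juggles the two universal properties to produce mutually inverse maps $s$ and $t$; the mathematical content there is the same.
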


\begin{proof}
It is clear that if $(u,v)$ is exact in $\mathrm{Mod}\textrm{-}\Lambda$, then $(u,v)$ is an exact pair in $\mathscr{C}$.

Suppose that $(u,v)$ is an exact pair in $\mathscr{C}$, let $i:X\rightarrow E$ be the kernel of $v$, by $\mathrm{C.2}$, we have $X\in \mathscr{C}$.

Notice that $i:X\rightarrow E$ is in the kernel of $\mathrm{Hom}(id_{X},v)$ (because $vi=0$),  by the first condition in the definition of exact pair, there exists a unique $s:X\rightarrow M$ such that $us=i$. On the other hand, $i$ is the kernel of $v$ and $vu=0$, so there is a unique $t:X\rightarrow M$ such that $it=u$.

Therefore $its=us=i$, and $i$ is monomorphism, so, $ts=id_{X}$.

Besides $ust=it=u$, as $\mathrm{Hom}_{A}(id_{X},u)$ is a monomorphism, $st=id_{M}$, then $u:M\rightarrow E$ is a kernel of $v:E\rightarrow N$.

Let $\rho :E\rightarrow \mathrm{Coker}u$\index{$p$} be a cokernel of $u$. As $vu=0$, there is a morphism $\lambda :\mathrm{Coker}u\rightarrow N$ such that $v=\lambda \rho$. For every $x\in \Ker \lambda$, there is some $y\in E$ such that $\rho(y)=x$, because $\rho$ is an epimorphism. We have $\lambda(\rho(y))=v(y)=0$, and $\Ker v=\I u=\Ker \rho$, then $\rho(y)=x=0$. Therefore $\lambda $ is a monomorphism. That is $\mathrm{Coker}u$ is isomorphic to a submodule of $N$ which belongs to $\mathscr{C}$, by using C.1 and C.2, we conclude that $\mathrm{Coker}(u)$ is an object of $\mathscr{C}$.

The morphism $\rho$ is in the kernel of $\mathrm{Hom}(u,id_{\mathrm{Coker}u})$, because $pu=0$. By using the second condition of the definition \ref{parExacto}, there exists $\nu :N\rightarrow \mathrm{Coker}(u)$ such that $\nu v=p$, then $\lambda \nu v=\lambda p = v$, therefore $\lambda \nu =id_{N}$. Hence, $\lambda $ is an epimorphism, then it is an isomorphism $\mathrm{Coker}(u)\cong N$. 

Consequently, the sequence
$$0\rightarrow M\stackrel{u}{\rightarrow }E\stackrel{v}{\rightarrow }N\rightarrow 0$$
is exact.
\end{proof}

With dual arguments, we have the following result:

\begin{proposition}
Let $\mathscr{C}$ be a full subactegory of $\mathrm{Mod}\textrm{-}\Lambda$ satisfying $\mathrm{C.1}$, and such that if $M\in \mathscr{C}$ and $N$ is a quotient of $M$, then $N\in \mathscr{C}$.

A pair of morphisms $M\stackrel{u}{\rightarrow }E\stackrel{v}{\rightarrow }N$ is an exact pair if and only if the sequence
$0\rightarrow M\stackrel{u}{\rightarrow }E\stackrel{v}{\rightarrow }N\rightarrow 0$ is an exact sequence of $\Lambda$-modules, or equivalently the pair $(u,v)$ is exact in $\mathrm{Mod}\textrm{-}\Lambda$.
\end{proposition}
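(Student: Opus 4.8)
The plan is to run the proof of Proposition~\ref{paresExactos} in the mirror, exchanging the roles of kernels and cokernels and invoking closure under quotients wherever that proof used closure under submodules (C.2). The easy implication needs no change: if $0\to M\xrightarrow{u}E\xrightarrow{v}N\to 0$ is exact in $\mathrm{Mod}\textrm{-}\Lambda$, then $\mathrm{Hom}_\Lambda(W,-)$ and $\mathrm{Hom}_\Lambda(-,W)$ are left exact, so both sequences of Definition~\ref{parExacto} are exact for every $W\in\mathscr{C}$ and $(u,v)$ is an exact pair.

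For the converse, suppose $(u,v)$ is an exact pair. First I would note $vu=0$: with $W=M$ in the first exact sequence, $id_M\mapsto u$ under $\mathrm{Hom}(id_M,u)$, so $u$ lies in the kernel of $\mathrm{Hom}(id_M,v)$, i.e. $vu=0$. The key reordering is to establish that $v$ is a cokernel of $u$ \emph{first}, since that is the step to which quotient-closure applies directly. Let $\rho:E\to Y:=\mathrm{Coker}\,u$ be the cokernel; as $Y$ is a quotient of $E\in\mathscr{C}$, the hypothesis gives $Y\in\mathscr{C}$. From $\rho u=0$ and the second exact sequence with $W=Y$ I get a unique $\nu:N\to Y$ with $\nu v=\rho$, while $vu=0$ and the universal property of $\rho$ give a unique $\lambda:Y\to N$ with $\lambda\rho=v$. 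Then $\lambda\nu v=\lambda\rho=v$ together with the injectivity of $\mathrm{Hom}(v,id_N)$ (second sequence, $W=N$) forces $\lambda\nu=id_N$, and $\nu\lambda\rho=\nu v=\rho$ with $\rho$ epic forces $\nu\lambda=id_Y$. Hence $\lambda$ is an isomorphism, $v$ is a cokernel of $u$, and in particular $\Ker v=\Ker\rho=\I u$.

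It remains to show $u$ is a monomorphism, i.e. a kernel of $v$; this is dual to the kernel step of the previous proof. Since $\Ker v=\I u$ is a quotient of $M\in\mathscr{C}$, closure under quotients gives $X:=\Ker v\in\mathscr{C}$. Writing $i:X\hookrightarrow E$ for the inclusion, I would use $vi=0$ with the first exact sequence at $W=X$ to obtain $s:X\to M$ with $us=i$, and use $vu=0$ with $i=\Ker v$ to factor $u=it$ for a unique $t:M\to X$. Then $its=us=i$ and $i$ monic give $ts=id_X$, while $u(st)=(us)t=it=u$ and the injectivity of $\mathrm{Hom}(id_M,u)$ (first sequence, $W=M$) give $st=id_M$. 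Thus $t$ is an isomorphism, $u=it$ is a kernel of $v$, and the sequence $0\to M\xrightarrow{u}E\xrightarrow{v}N\to 0$ is exact.

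I do not expect a genuine obstacle so much as a point of care: the dualization must take the cokernel step before the kernel step, because it is precisely $\mathrm{Coker}\,u$ (a quotient of $E$) and $\I u=\Ker v$ (a quotient of $M$) that the new hypothesis places in $\mathscr{C}$, exactly where the previous proof placed $\Ker v$ and a submodule of $N$ using C.2. Once the two objects are seen to lie in $\mathscr{C}$, the remaining manipulations are the same diagram chases as before, read backwards.
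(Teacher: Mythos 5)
Your proof is correct, and it is exactly the argument the paper intends: the paper gives no explicit proof, stating only that the result follows "with dual arguments," and your write-up is a faithful dualization of the proof of Proposition \ref{paresExactos}, with the cokernel step correctly taken first so that quotient-closure supplies $\mathrm{Coker}\,u$ and $\I u=\Ker v$ as objects of $\mathscr{C}$ where the original used C.2.
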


Consider an extendable poset $(\mathscr{P},A,\mathcal{T})$, we construct its admissible system $\underline{\mathcal{R}}$ as in \ref{sisAdm1gor}, and its incidence algebra $\Lambda=\Lambda(\underline{\mathcal{R}})$ which is 1-Gorenstein.

Following the previous section, the injective envelope of $e_\text{\large \Fontlukas m}\Lambda$ is $e_0\Lambda$, besides
\[\soc \Lambda \cong (e_\text{\large \Fontlukas m} \Lambda)^{\nu}\ \ \text{and} \ \mathrm{top}D(\Lambda)\cong D(\Lambda e_{0})^{\mu } \ \ \text{for some} \ \nu , \mu \in \mathbb{N}.\]

Let $\mathcal{U}$ be the full subcategory of $\mathrm{Mod}\textrm{-}\Lambda$ whose objects are the right socle-projective $\Lambda$-modules which do not have $e_{0}\Lambda$ as a direct summand. We denote by $\mathcal{V}$ the full subcategory of $\mathrm{Mod}\textrm{-}\Lambda$ whose objects have injective top and do not have $e_{0}\Lambda$ as a direct summand.

\begin{proposition}\label{noMorfismos}
If $M\in \mathcal{U}$, then $\mathrm{Hom}_{\Lambda}(e_{0}\Lambda,M)=0$. If $N\in \mathcal{V}$, then $\mathrm{Hom}_{\Lambda}(N,e_{0}\Lambda)=0$.
\end{proposition}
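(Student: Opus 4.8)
The plan is to treat both Hom-spaces by exploiting that $e_0\Lambda$ is simultaneously projective and injective, indecomposable, with endomorphism ring the \emph{division} ring $\End_\Lambda(e_0\Lambda)\cong e_0\Lambda e_0=\mathcal{R}_0=\mathcal{R}_\text{\large \Fontlukas m}$ (the last equality by G.1 and A.1); moreover its socle is the simple projective $e_\text{\large \Fontlukas m}\Lambda$ and its top is the simple module $S_0$. The two assertions are formally dual, so I would prove the first directly and deduce the second by the dual argument.

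For the first claim, let $f\colon e_0\Lambda\to M$ with $M\in\mathcal{U}$, and suppose $f\neq 0$. Since $M$ is socle-projective, Proposition \ref{subE} gives an embedding $M\hookrightarrow (e_0\Lambda)^{(\kappa)}$ into a coproduct of copies of $e_0\Lambda$. As $e_0\Lambda$ is generated by $e_0$, the composite $c\colon e_0\Lambda\xrightarrow{f}M\hookrightarrow (e_0\Lambda)^{(\kappa)}$ factors through a finite subcoproduct, so $c=(c_1,\dots,c_n)$ with each $c_i\in\End_\Lambda(e_0\Lambda)$. Because this endomorphism ring is a division ring, every $c_i$ is either $0$ or invertible; and $c\neq 0$ (the embedding is monic and $f\neq 0$) forces some $c_i$ to be an isomorphism. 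Then $c$, and hence $f$, is a monomorphism, so $f(e_0\Lambda)\cong e_0\Lambda$ is injective and therefore a direct summand of $M$. This contradicts $M\in\mathcal{U}$. Hence $f=0$ and $\Hom_\Lambda(e_0\Lambda,M)=0$.

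For the second claim I would run the dual argument with projective covers replacing injective envelopes. First observe that $S_0=\mathrm{top}(e_0\Lambda)$ is the unique simple injective right $\Lambda$-module: since $\Lambda$ is left peak (Proposition \ref{picoDI}), any simple projective left module lies in $\soc{}_\Lambda\Lambda\cong(\Lambda e_0)^{\mu}$ and is thus $\cong\Lambda e_0$, so $\Lambda e_0$ is the unique simple projective left module, and applying $D$ shows $D(\Lambda e_0)\cong S_0$ is the unique simple injective right module. Consequently, if $N\in\mathcal{V}$ has injective (semisimple) top $\cong S_0^{(\kappa)}$, its projective cover is $p\colon(e_0\Lambda)^{(\kappa)}\twoheadrightarrow N$, because $e_0\Lambda$ is the projective cover of $S_0$. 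Now let $g\colon N\to e_0\Lambda$ be nonzero and set $c=g\circ p$. Restricting to each summand gives components in the division ring $\End_\Lambda(e_0\Lambda)$; since $p$ is epic and $g\neq 0$ we have $c\neq 0$, so some component is an isomorphism, making $c$ epic and hence $g$ epic. As $e_0\Lambda$ is projective, $g$ splits and $e_0\Lambda$ is a direct summand of $N$, contradicting $N\in\mathcal{V}$. Therefore $g=0$ and $\Hom_\Lambda(N,e_0\Lambda)=0$.

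The Hom--summand identifications and the structural facts about $e_0\Lambda$ are immediate from the preceding sections, so the only genuinely delicate point is the dual input: that a module with injective top is an epimorphic image of a coproduct of copies of $e_0\Lambda$, which rests on identifying $S_0$ as the \emph{unique} simple injective. This is where the left-peak structure of $\Lambda$ (Proposition \ref{picoDI}) is essential. If one prefers to avoid projective covers of possibly infinite modules, the alternative is to apply the $\textsf{K}$-duality $D$, which converts the second statement into the first for the left $\Lambda^{\mathrm{op}}$-module $D(N)$, using $D(e_0\Lambda)\cong\Lambda e_\text{\large \Fontlukas m}$ from Proposition \ref{Eproy}.
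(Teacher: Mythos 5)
Your proof is correct and follows essentially the same route as the paper: embed $M$ into a coproduct of copies of the injective $e_0\Lambda$ (resp.\ cover $N$ by a coproduct of copies of the projective $e_0\Lambda$), compose with the given nonzero map, and use that $\End_\Lambda(e_0\Lambda)$ is a division ring to produce an isomorphism on some component, forcing $e_0\Lambda$ to split off as a direct summand, a contradiction. The extra justification you supply for $\mathrm{top}\,N\cong D(\Lambda e_0)^{(\kappa)}$ via the left-peak property is a detail the paper leaves implicit, not a different argument.
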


\begin{proof}
We have $\mathrm{soc}M=e_{\text{\large \Fontlukas m}}\Lambda^{L}$ for some set $L$, then $E(M)=E(\mathrm{soc}M)\cong (e_{0}\Lambda)^{L}$. There is a monomorphism:
$i_{M}:M\rightarrow (e_{0}\Lambda)^{L}$. If there exists a non-null morphism $s:e_{0}\Lambda\rightarrow M$, there is a projection $\pi :(e_{0}\Lambda)^{L}\rightarrow e_{0}\Lambda$ such that $\pi i_{M}s\neq 0$, this is an endomorphism of $e_{0}\Lambda$ and the endomorphism ring of $e_{0}\Lambda$ is a field, then $\pi i_{M}s$ is an isomorphism. Then $e_{0}\Lambda$ is a direct summand of $M$, which contradicts the hypothesis. Therefore $\mathrm{Hom}_{\Lambda}(e_{0}\Lambda,M)=0$.

If $N\in \mathcal{V}$, then $\mathrm{top}N\cong D(\Lambda e_{0})^{L}$, and $\mathrm{top}(e_{0}\Lambda)\cong D(\Lambda e_{0})$. So the projective cover of $N$ has the form $\eta :(e_{0}\Lambda)^{L}\rightarrow N$. 

If there exists a non-zero morphism $t:N\rightarrow e_{0}\Lambda$, then $t\eta \neq 0$, so there is an inclusion $i:e_{0}\Lambda\rightarrow (e_{0}\Lambda)^{L}$ such that $t\eta i$ is a non-zero endomorphism of $e_{0}\Lambda$, and therefore an isomorphism. This implies that $e_{0}\Lambda$ is a direct summand of $N$, which is a contradiction, then $\mathrm{Hom}_{\Lambda}(N,e_{0}\Lambda)=0$, and the proposition is proved
\end{proof}

Let $(L, L_i:i\in \mathscr{P})$ be a representation of $(\mathscr{P},A,\mathcal{T})$, then $\widetilde{L}=(L, 0, L_i:i\in \mathscr{P}, L)$ is a representation of $(\underline{\mathscr{P}}^\text{\large \Fontlukas m} ,A, \underline{\mathcal{R}})$. By Theorem \ref{equivalenciaRepU}, $M=\text{\Large\Fontamici u}(\widetilde{L})\varepsilon$ is a socle-projective $\Lambda$-module such that $Me_0=0$, then $M\in \mathcal{U}$. Conversely, if $M\in \mathcal{U}$, we have $Me_0=0$ because if there exists $m\in Me_0$, $m\neq 0$, we can define a morphism $e_{0}\Lambda\rightarrow M: x \mapsto mx$, for all $x\in e_{0}\Lambda$, which contradicts the previous proposition. Therefore $M=\text{\Large\Fontamici u}(\widetilde{L})\varepsilon$, where $\widetilde{L}=(L, L_0, L_i:i\in \mathscr{P}, L)$, with $L_0=0$, hence $(L, L_i:i\in \mathscr{P})\in \Rep (\mathscr{P},A,\mathcal{T})$. We have an equivalence
\[\Rep (\mathscr{P},A,\mathcal{T}) \cong \mathcal{U}.\]

\begin{proposition}
There exists an equivalence $F:\mathcal{U}\rightarrow \mathcal{V}$, where $F$\index{$F$} is an exact functor.
\end{proposition}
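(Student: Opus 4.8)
The plan is to realise the equivalence by a cosyzygy/syzygy pair relative to the unique indecomposable projective-injective module $e_0\Lambda$. First I would record the structural facts that drive everything. Since $e_0\Lambda$ is the injective envelope of the simple projective $e_\text{\large \Fontlukas m}\Lambda$, it has simple socle $\soc(e_0\Lambda)\cong e_\text{\large \Fontlukas m}\Lambda$; and since $0$ is minimal, $\Lambda e_0 = e_0\Lambda e_0\cong \mathcal{R}_\text{\large \Fontlukas m}$ is simple, so $\mathrm{top}(e_0\Lambda)\cong D(\Lambda e_0)$ is a simple injective module. Consequently, for $M\in\mathcal{U}$ the injective envelope is $E(M)\cong (e_0\Lambda)^{n}$ (it is the envelope of the projective socle $(e_\text{\large \Fontlukas m}\Lambda)^{n}$, cf. Proposition \ref{subE}), while for $N\in\mathcal{V}$ the projective cover is a sum $(e_0\Lambda)^{m}$, because $\mathrm{top}\,N$ is a sum of copies of the simple $\mathrm{top}(e_0\Lambda)$.

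I would then define $F(M)=\mathrm{Coker}\left(M\hookrightarrow E(M)\right)$ and, in the other direction, $G(N)=\Ker\left((e_0\Lambda)^{m}\twoheadrightarrow N\right)$ from a projective cover. To see $F(M)\in\mathcal{V}$: because $Me_0=0$ for $M\in\mathcal{U}$ (as observed in the construction of the equivalence $\Rep(\mathscr{P},A,\mathcal{T})\cong\mathcal{U}$), the module $M$ has no composition factor isomorphic to the simple $\mathrm{top}(e_0\Lambda)$, so $M\subseteq\mathrm{rad}\,E(M)$ and the epimorphism $E(M)\to F(M)$ is a projective cover; hence $\mathrm{top}\,F(M)\cong(\mathrm{top}\,e_0\Lambda)^{n}$ is injective, and minimality of the injective envelope forbids an $e_0\Lambda$ summand. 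Dually, for $N\in\mathcal{V}$ the absence of an $e_0\Lambda$ summand, together with the fact that $e_0\Lambda$ is injective with simple (hence essential) socle, forces $\soc\left((e_0\Lambda)^{m}\right)\subseteq G(N)$; therefore $G(N)\hookrightarrow(e_0\Lambda)^{m}$ is an injective envelope, $G(N)$ is socle-projective, and being contained in the radical of $(e_0\Lambda)^{m}$ it has no $e_0\Lambda$ summand, i.e. $G(N)\in\mathcal{U}$. The single sequence $0\to M\to (e_0\Lambda)^{n}\to F(M)\to 0$, read in both directions, then gives $GF\cong\mathrm{id}_{\mathcal{U}}$ and $FG\cong\mathrm{id}_{\mathcal{V}}$ on objects.

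The main obstacle is turning $F$ and $G$ into honest functors, since extending a morphism to injective envelopes, respectively lifting through projective covers, is unique only up to a map factoring through $e_0\Lambda$. This is exactly where Proposition \ref{noMorfismos} enters: two extensions of $f\colon M\to M'$ differ by a map $E(M)\to E(M')$ vanishing on $M$, whose induced map on cokernels factors through $\Hom_\Lambda(F(M),e_0\Lambda)=0$; symmetrically, the ambiguity in $G(g)$ factors through $\Hom_\Lambda(e_0\Lambda,G(N'))=0$. Hence $F(f)$ and $G(g)$ are well defined, functoriality and the naturality of the isomorphisms $GF\cong\mathrm{id}$ and $FG\cong\mathrm{id}$ follow, and $F$ is an equivalence.

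Finally, for exactness I would invoke Proposition \ref{paresExactos} and its dual, so that the exact pairs of $\mathcal{U}$ and of $\mathcal{V}$ are precisely the short exact sequences of $\Lambda$-modules with all three terms in the subcategory. Given such a sequence $0\to M_1\to M_2\to M_3\to 0$ in $\mathcal{U}$, the horseshoe construction on the envelopes produces, over it, the split sequence $0\to E(M_1)\to E(M_1)\oplus E(M_3)\to E(M_3)\to 0$, and passing to cokernels yields $0\to F(M_1)\to P_2/M_2\to F(M_3)\to 0$ with $P_2=E(M_1)\oplus E(M_3)$. Writing $P_2=E(M_2)\oplus C$ with $C\in\mathrm{add}(e_0\Lambda)$ gives $P_2/M_2\cong F(M_2)\oplus C$; since $\Hom_\Lambda(F(M_1),C)=0$ and $F(M_3)$ has no $e_0\Lambda$ summand, this forces $C=0$, leaving the clean conflation $0\to F(M_1)\to F(M_2)\to F(M_3)\to 0$ in $\mathcal{V}$. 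Thus $F$ is exact, which completes the construction.
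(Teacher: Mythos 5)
Your construction of $F$ is the same as the paper's: $F(M)=\mathrm{Coker}\bigl(M\hookrightarrow (e_0\Lambda)^{\nu}\bigr)$, with Proposition \ref{noMorfismos} doing the two jobs it does in the paper — forcing the image of $M$ into the radical of the envelope (so the cokernel has injective top) and making the extension of a morphism to the envelopes unique, hence $F$ well defined and faithful. You diverge in how the equivalence and the exactness are verified. For the equivalence, the paper proves fullness and density directly (lifting a map $F(M)\to F(N)$ through the projective $(e_0\Lambda)^{\nu(M)}$, and using Proposition \ref{subE} to see that the kernel of a minimal projective cover of $N\in\mathcal{V}$ lies in $\mathcal{U}$), whereas you package the same facts as an explicit quasi-inverse $G=\Ker(\text{projective cover})$; this is an equivalent amount of work, and your remark that one short exact sequence $0\to M\to(e_0\Lambda)^{\nu}\to F(M)\to 0$ witnesses both $GF\cong\mathrm{id}$ and $FG\cong\mathrm{id}$ is a tidy way to organize it (your justification that $G(N)$ is essential in $(e_0\Lambda)^m$ is slightly compressed — the cleanest route is to split $(e_0\Lambda)^m=E(G(N))\oplus C$ and observe that $C$ would be an $e_0\Lambda$-summand of $N$). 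The genuinely different step is exactness: the paper gets it almost for free from the exact-pair formalism of Section \ref{sucAR} — an equivalence preserves exact pairs, and the quotient-closed dual of Proposition \ref{paresExactos} converts exact pairs in $\mathcal{V}$ back into short exact sequences — while you run a horseshoe argument over the envelopes and then eliminate the spurious injective summand $C$ of $E(M_1)\oplus E(M_3)$ using $\Hom_\Lambda(F(M_1),e_0\Lambda)=0$ and the absence of $e_0\Lambda$-summands in $F(M_3)$. Your version is more self-contained and makes the mechanism visible; the paper's is shorter but leans on the exact-pair machinery it has already set up. Both arguments are correct.
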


\begin{proof}
For every $M\in \mathcal{U}$ we choose an injective envelope
$$i_{M}:M\rightarrow (e_{0}\Lambda)^{\nu (M)}.$$\index{$i$}

We have an isomorphism
\[(e_{0}\Lambda)^{\nu (M)}/\mathrm{rad}(e_{0}\Lambda)^{\nu (M)}\cong [(e_{0}\Lambda)/\mathrm{rad}(e_{0}\Lambda)]^{\nu (M)},\] 
if $\mathrm{Im}i_{M}$ is not contained in $\mathrm{rad}(e_{0}\Lambda)^{\nu (M)}$, there is a non-null morphism $s:M\rightarrow e_{0}\Lambda/\mathrm{rad}(e_{0}\Lambda)$, this implies the existence of a non-zero morphism from $e_{0}\Lambda$ to $M$, which contradicts Proposition \ref{noMorfismos}. Therefore $\mathrm{Im}i_{M}\subset \mathrm{rad}(e_{0}\Lambda)^{\nu (M)}$.

If $t: (e_{0}\Lambda)^{\nu (M)} \rightarrow (e_{0}\Lambda)^{\nu (M)}/\mathrm{rad}(e_{0}\Lambda)^{\nu (M)}$ is the canonical epimorphism, $ti_M=0$. Considering the cokernel $\eta _{M}:(e_{0}\Lambda)^{\nu (M)} \rightarrow \mathrm{Coker}i_M$, there is a morphism $\mathrm{Coker}i_M \rightarrow (e_{0}\Lambda)^{\nu (M)}/\mathrm{rad}(e_{0}\Lambda)^{\nu (M)}$ which, with $\eta _{M}$, induces an isomorphism $(e_{0}\Lambda)^{\nu (M)}/\mathrm{rad}(e_{0}\Lambda)^{\nu (M)}\cong \mathrm{Coker}i_{M}/\mathrm{rad}\mathrm{Coker}i_{M}$. Therefore $\mathrm{Coker}i_{M}\in \mathcal{V}$.

We define $F:\mathcal{U}\rightarrow \mathcal{V}$, in objects, as $F(M)=\mathrm{Coker}i_{M}$, for every $M \in \mathcal{U}$.

There is an exact sequence of $\Lambda$-modules:
$$0\rightarrow M \xrightarrow {i_{M}} (e_{0}\Lambda)^{\nu (M)} \xrightarrow {j_{M}}F(M)\rightarrow 0.$$\index{$j$}

A morphism $f:M\rightarrow N$ in $\mathcal{U}$ induces a morphism $\hat{f}:(e_{0}\Lambda)^{\nu (M)}\rightarrow (e_{0}\Lambda)^{\nu (N)}$ such that $\hat{f}i_{M}=i_{N}f$. Suppose there is another morphism $\hat{f}_{1}:(e_{0}\Lambda)^{\nu (M)}\rightarrow (e_{0}\Lambda)^{\nu (N)}$ such that $\hat{f}_{1}i_{M}=i_{N}f$, then $(\hat{f}-\hat{f}_{1})i_{M}=0$. There exists $s:F(M)\rightarrow (e_{0}\Lambda)^{\nu (N)}$ such that $\hat{f}-\hat{f}_{1}=s\rho $, if $s\neq 0$ there exists a non-null morphism $F(M)\rightarrow e_{0}\Lambda$, by Proposition \ref{noMorfismos}, we conclude $s=0$, and $\hat{f}=\hat{f}_{1}$. 

Each $f:M\rightarrow N$ in $\mathcal{U}$, induces a unique $\hat{f}:(e_{0}\Lambda)^{\nu (M)}\rightarrow (e_{0}\Lambda)^{\nu (N)}$ such that $\hat{f}i_{M}=i_{N}f$. If $\eta _{N}$ is a cokernel of $i_N$, then $\eta _{N}\hat{f}i_{M}=\eta _{N}i_{N}f=0$. We define $F(f)=\underline{f}$, which finishes the definition of the functor $F$.

Suppose $F(f)=0$, then $\eta _{N}\hat{f}=0$. So there exists a morphism $t:(e_{0}\Lambda)^{\nu (M)}\rightarrow N$ such that $i_{N}t=\hat{f}$. By Proposition \ref{noMorfismos}, $t=0$, hence $\hat{f}=0$, and therefore $f=0$, consequently $F$ is a faithful functor.

Consider a morphism $g:F(M)\rightarrow F(N)$. There exists some $\tilde{g}:(e_{0}\Lambda)^{\nu (M)}\rightarrow (e_{0}\Lambda)^{\nu (N)}$ such that $\eta _{N}\tilde{g}=g\eta _{M}$, because $(e_{0}\Lambda)^{\nu (M)}$ is projective and $\eta _{N}$ is epic. Then $\eta _{N}(\tilde{g}i_{M})=g\eta _{M}i_{M}=0$. The injective envelope $i_N$ is a kernel of $\eta_N$, so there exists a unique $h:M\rightarrow N$ such that $i_{N}h=\tilde{g}i_{M}$, then $F(h)=g$. Therefore $F$ is a full functor.

If $N\in \mathcal{V}$, by definition $\mathrm{top}N\cong [(e_{0}\Lambda)/\mathrm{rad}(e_{0}\Lambda)]^{\nu (N)}$. A minimal projective cover of $N$ has the form $(e_{0}\Lambda)^{\nu (N)}\stackrel{\eta _{N}}{\rightarrow }N$. By Proposition \ref{subE}, the submodule $M=\mathrm{Ker}\eta _{N}$ de $(e_{0}\Lambda)^{\nu (N)}$ is an object of $\mathcal{U}$, such that $F(M)\cong N$. We conclude that $F$ is dense, and an equivalence. Hence, there is a functor $G:\mathcal{V}\rightarrow \mathcal{U}$ such that $FG\cong id_{\mathcal{V}}$ and $GF\cong id_{\mathcal{U}}$.

Let us show that $F$ is an exact functor.

Let $0\rightarrow M_{1}\stackrel{u}{\rightarrow }M_{2}\stackrel{v}{\rightarrow }M_{3}\rightarrow 0$ be an exact sequence in $\mathcal{U}$, then $(u,v)$ is an exact pair in $\mathcal{U}$, so, $(F(u),F(v))$ is an exact pair in $\mathcal{V}$, because $F$ is an equivalence. The hypothesis of Proposition \ref{paresExactos} hold for $\mathcal{V}$, then the following sequence is exact
$$0\rightarrow F(M_{1})\stackrel{F(u)}{\rightarrow}F(M_{2})\stackrel{F(v)}{\rightarrow }F(M_{3})\rightarrow 0.$$
That finishes the proof.
\end{proof}

With the algebra $\Lambda$, we construct the ditalgebra $\mathcal{D}$, as in Example \ref{ditDrozd}. Using the elements $w_1$ y $w_2$, as in equation (\ref{w}), we define the idempotents 
\begin{equation}
e=\sum _{i=1}^{\text{\large \Fontlukas m}}w_1 e_i + w_2 e_0\,; \hspace{1cm} e'=1-e.
\end{equation}

Following the notation of Example \ref{ditDrozd}, we have a surjective $\textsf{K}$-algebra morphism $R\rightarrow eRe$ and a $R$-$R$-bimodule epimorphism $W\rightarrow eWe$. These morphisms induce an epimorphism of $\textsf{K}$-algebras
$$\eta :T_{R}(W)\rightarrow T_{eRe}(eWe).$$

Clearly, $d$ induces a differential $d_{e}$ in $T_{eRe}(eWe)$ such that $d_{e}\eta =\eta d$.

We have a ditalgebra $\mathcal{D}^{e}=(T_{eRe}(eWe),d_{e})$ and a ditalgebra morphism $\eta :\mathcal{D}\rightarrow \mathcal{D}^{e}$. This morphism induces a full and faithful functor
$$F_{\eta }:\mathrm{Mod}\, \mathcal{D}^{e}\rightarrow \mathrm{Mod}\, \mathcal{D}.$$

An object $M\in \mathrm{Mod}\, \mathcal{D}$ has the form $F_{\eta }(N)$ for some $N\in \mathrm{Mod}\, \mathcal{D}^{e}$ if and only if $Me'=0$. So, we say that $\mathrm{Mod}\, \mathcal{D}^{e}$ is a full subcategory of $\mathrm{Mod}\, \mathcal{D}$ whose objects are the right $T_R(W)_0$-modules $M$, such that $Me'=0$.

The category $\mathrm{Mod}\, \mathcal{D}$ is equivalent to the category $\mathcal{M}^1(\Lambda)$ of Definition \ref{catMorfismosProj}. Naturally, there is a subcategory of $\mathcal{M}^1(\Lambda)$ equivalent to $\mathrm{Mod}\, \mathcal{D}^e$, as follows.

\begin{definition}\label{catProblema}
We denote by $\mathcal{M}$ the full subcategory of $\mathcal{M}^1(\Lambda)$ whose objects are morphisms of the form $P\stackrel{\phi }{\rightarrow }(e_{0}\Lambda)^{\nu }$, with $\nu$ a set, and $P$ a projective $\Lambda$-module such that $Pe_{0}=0$. 
\end{definition}

\begin{lemma}\label{equvInducida}
The equivalence $\Xi :\mathrm{Mod}\, \mathcal{D}\rightarrow \mathcal{M}^1(\Lambda)$ induces an equivalence $\Xi _{e}:\mathrm{Mod}\, \mathcal{D}^{e}\rightarrow \mathcal{M}.$
\end{lemma}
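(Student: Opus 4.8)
The plan is to leverage the fact that $\Xi$ is already an equivalence together with the identification, supplied by the full and faithful functor $F_{\eta}$, of $\mathrm{Mod}\,\mathcal{D}^{e}$ with the full subcategory of $\mathrm{Mod}\,\mathcal{D}$ whose objects are the $M$ with $Me'=0$. Since the restriction of an equivalence to any full subcategory is again full, faithful, and dense onto its essential image, it suffices to prove that $\Xi$ carries this subcategory precisely onto $\mathcal{M}$; that is, that for $M\in\mathrm{Mod}\,\mathcal{D}$ one has $\Xi(M)\in\mathcal{M}$ if and only if $Me'=0$. Defining $\Xi_{e}:=\Xi|_{\mathrm{Mod}\,\mathcal{D}^{e}}$ and invoking the full-and-faithfulness inherited from $\Xi$ will then conclude the argument.

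First I would recall from the construction of $\Xi:\mathrm{Mod}\,\mathcal{D}\rightarrow\mathcal{M}^{1}(\Lambda)$ the dictionary translating the $R$-module structure of an object $M$ into the morphism $\Xi(M)\colon P_{1}\rightarrow P_{2}$: the idempotent $w_{1}$ selects the source projective $P_{1}=Mw_{1}$ and $w_{2}$ the target $P_{2}=Mw_{2}$, while right multiplication by $e_{i}$ picks out the $i$-th block of the corresponding projective. With this dictionary the idempotent $e=\sum_{i=1}^{\text{\large \Fontlukas m}}w_{1}e_{i}+w_{2}e_{0}$ retains, on the source side, the blocks $e_{1},\dots,e_{\text{\large \Fontlukas m}}$ and discards the block $e_{0}$, whereas on the target side it retains only the block $e_{0}$.

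Next I would read off the two conditions packaged in $Me'=0$, namely $Mw_{1}e_{0}=0$ and $Mw_{2}e_{i}=0$ for every $i\neq 0$. Under the dictionary above the first says $P_{1}e_{0}=0$, and the second says $P_{2}e_{i}=0$ for all $i\neq0$, i.e. $P_{2}=P_{2}e_{0}$, so that $P_{2}$ is a direct sum of copies of $e_{0}\Lambda$, say $P_{2}\cong(e_{0}\Lambda)^{\nu}$. These are exactly the defining requirements for the objects of $\mathcal{M}$ in Definition \ref{catProblema}: a projective source $P$ with $Pe_{0}=0$ mapping into a power of $e_{0}\Lambda$. Conversely, any object of $\mathcal{M}$ manifestly satisfies both vanishing conditions on its source and target blocks, hence arises from an $M$ with $Me'=0$. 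Thus the essential image of $\{M:Me'=0\}$ under $\Xi$ is precisely $\mathcal{M}$, and $\Xi_{e}$ is an equivalence $\mathrm{Mod}\,\mathcal{D}^{e}\rightarrow\mathcal{M}$.

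The main obstacle is exactly this translation step: one must track carefully how the explicit form of $\Xi$ turns right multiplication by the idempotents $w_{1},w_{2},e_{i}$ into the block decomposition of the source and target of $\Xi(M)$, so that the single identity $Me'=0$ is equivalent to the conjunction ``$P_{1}e_{0}=0$ and $P_{2}\cong(e_{0}\Lambda)^{\nu}$'' (the precise matching of $w_{1},w_{2}$ to source and target being dictated by the construction of $\Xi$). Once this correspondence of idempotents is made explicit, everything else is formal, since restricting the equivalence $\Xi$ to a full subcategory automatically yields a full, faithful, and dense functor onto the identified image $\mathcal{M}$.
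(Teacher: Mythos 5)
Your overall strategy is exactly the paper's: identify $\mathrm{Mod}\,\mathcal{D}^{e}$ with the full subcategory $\{M\in\mathrm{Mod}\,\mathcal{D}\,:\,Me'=0\}$ and show that $\Xi$ carries this subcategory precisely onto $\mathcal{M}$, both directions. However, the ``dictionary'' step --- which you yourself single out as the main obstacle --- is stated incorrectly, and the error is not cosmetic. You conflate the $S$-modules $M_{1}=Mw_{1}1_{S}$, $M_{2}=Mw_{2}1_{S}$ with the induced projective $\Lambda$-modules $P_{1}=M_{1}\otimes_{S}\Lambda$, $P_{2}=M_{2}\otimes_{S}\Lambda$ that actually form the source and target of $\Xi(M)$. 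Your claim that the target condition reads ``$P_{2}e_{i}=0$ for all $i\neq 0$, i.e.\ $P_{2}=P_{2}e_{0}$, so that $P_{2}$ is a direct sum of copies of $e_{0}\Lambda$'' is false as stated: for the module $(e_{0}\Lambda)^{\nu}$ one has $(e_{0}\Lambda)^{\nu}e_{i}=(\mathcal{R}_{0,i})^{\nu}\neq 0$ for every $i$, since $0$ is the minimal point; a nonzero projective with $P_{2}=P_{2}e_{0}$ does not exist. The correct statement is $M_{2}e_{i}=0$ for $i\neq 0$, i.e.\ $M_{2}=M_{2}e_{0}$, from which $P_{2}=M_{2}e_{0}\otimes_{S}\Lambda=M_{2}\otimes_{S}e_{0}\Lambda\cong(e_{0}\Lambda)^{\nu}$. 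The source condition happens to transfer cleanly only because $\Lambda e_{0}=e_{0}\Lambda e_{0}$ ($0$ minimal), giving $(M_{1}\otimes_{S}\Lambda)e_{0}\cong M_{1}e_{0}\otimes_{S}\Lambda e_{0}$, so $P_{1}e_{0}=0$ is genuinely equivalent to $M_{1}e_{0}=0$; this is precisely the computation the paper writes out and that your argument needs but omits.

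The converse (density) direction is also dispatched too quickly with ``manifestly satisfies both vanishing conditions.'' Given $\phi\colon P\rightarrow(e_{0}\Lambda)^{\nu}$ in $\mathcal{M}$ with $\phi=\Xi(M)$, deducing $Me'=0$ requires (a) $Pe_{0}=0\Rightarrow M_{1}e_{0}=0$, which again uses $\Lambda e_{0}=e_{0}\Lambda e_{0}$, and (b) $M_{2}\otimes_{S}\Lambda\cong(e_{0}\Lambda)^{\nu}\Rightarrow M_{2}=M_{2}e_{0}$, which the paper obtains by observing $M_{2}\otimes_{S}\Lambda=M_{2}e_{0}\otimes_{S}\Lambda$ and invoking the equivalence of Proposition \ref{projEquivDitalg} to conclude $M_{2}\cong M_{2}e_{0}$. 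Neither ingredient is ``formal''; both disappear into your appeal to the unspecified dictionary. So the skeleton of your proof is right, but the step carrying all the content is wrong as written and needs to be replaced by the explicit idempotent computations at the level of the $S$-module components of $M$, not of the induced projectives.
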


\begin{proof}
Using the functors $F$ and $G$ built in the proofs of Propositions \ref{projEquivDitalg} and \ref{DitalgEquivDrozd}, for every $M\in \mathrm{Mod}\, \mathcal{D}$,
$$\Xi (M):M _{1}\otimes _{S}\Lambda \rightarrow M_{2}\otimes _{S}\Lambda.$$

Recall that $M_i=Mw_i1_S$, with $i=1,2$.

If $M\in \mathrm{Mod}\, \mathcal{D}^{e}$, then $Me'=0$.

We have $(M _{1}\otimes _{S}\Lambda)e_0=M _{1}\otimes _{S}\Lambda e_0 \cong M _{1}\otimes _{S} e_0\Lambda e_0 = M _{1}e_0 \otimes _{S}\Lambda e_0 = Mw_1e_0\otimes _{S}\Lambda e_0 = Me'w_1e_0\otimes _{S}\Lambda e_0 = 0$.

Besides, $M_2\otimes _{S}\Lambda = M(e+e')w_21_S\otimes _{S}\Lambda = Mw_2e \otimes _{S}\Lambda = Mw_2e_0 \otimes _{S}\Lambda = M_2 \otimes _{S}e_0\Lambda$.

That is, $\Xi (M)\in \mathcal{M}$, therefore $\Xi $ induces a full and faithful functor
$$\Xi _{e}:\mathrm{Mod}\, \mathcal{D}^{e}\rightarrow \mathcal{M}.$$

If $P\stackrel{\phi }{\rightarrow }(e_{0}\Lambda)^{\nu }\in \mathcal{M}$, then $\phi = \Xi (M)$ for some $M\in \mathrm{Mod}\, \mathcal{D}$ such that $Pe_0=(M _{1}\otimes _{S}\Lambda)e_0= 0$.

Therefore:
\begin{align*}
M_1e' &= Mw_1e_0 \cong Mw_1e_0 \otimes _{e_{0}\Lambda e_{0}}e_{0}\Lambda e_{0} \cong Mw_1e_0\otimes _{S}e_{0}\Lambda e_{0}\\
 &= M_1 \otimes _{S}e_{0}\Lambda e_{0} \cong M_1\otimes _{S}\Lambda e_{0} = Pe_0 =0.
\end{align*}

On the other hand $(e_{0}\Lambda)^{\nu }=M_2\otimes _{S}\Lambda$, hence, $M_2\otimes _{S}\Lambda = M_2\otimes _{S}e_0\Lambda = M_2e_0\otimes _{S}\Lambda,$
by the equivalence built in Proposition \ref{projEquivDitalg}, $M_2\cong M_2e_0$, then $M_2e'\cong M_2e_0e'=0$.

We have that $Me'=M_1e'+M_2e'=0$, that is $M\in \mathrm{Mod}\, \mathcal{D}^{e}$, which proves that $\Xi _{e}$ is dense, and finishes the proof of the proposition.
\end{proof}

\begin{proposition}
The functor $\mathrm{Cok}:\mathcal{M}\rightarrow \mathcal{V}$ induces an equivalence:
$$\mathrm{Cok}:\overline{\mathcal{M}}\rightarrow \mathcal{V}$$
where $\overline{\mathcal{M}}$ is the category $\mathcal{M}$ modulo the morphisms which have a factorization through objects of the form $P\rightarrow 0$ with $Pe_{0}=0$.
\end{proposition}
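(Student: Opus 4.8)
The plan is to define the cokernel functor $\mathrm{Cok}\colon\mathcal M\to\mathrm{Mod}\text{-}\Lambda$ on objects by $\mathrm{Cok}(\phi)=\mathrm{Coker}\,\phi$ for $\phi\colon P\to (e_0\Lambda)^\nu$, and on a morphism $(\alpha,\beta)\colon\phi\to\phi'$ (a pair $\alpha\colon P\to P'$, $\beta\colon(e_0\Lambda)^\nu\to (e_0\Lambda)^{\nu'}$ with $\phi'\alpha=\beta\phi$) by the induced map $\overline\beta\colon\mathrm{Coker}\,\phi\to\mathrm{Coker}\,\phi'$; then to show that $\mathrm{Cok}$ annihilates exactly the ideal of maps factoring through objects $P\to 0$, and that it is dense and full, so that it descends to an equivalence $\overline{\mathcal M}\to\mathcal V$. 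First I would check that $\mathrm{Cok}$ takes values in $\mathcal V$. Since $Pe_0=0$, the top of $P$ contains no copy of $S_0:=\mathrm{top}(e_0\Lambda)$, so the composite of $\phi$ with the projection $(e_0\Lambda)^\nu\to S_0^{\,\nu}$ vanishes; hence $\mathrm{Im}\,\phi\subseteq\rad(e_0\Lambda)^\nu$ and $\mathrm{top}(\mathrm{Coker}\,\phi)\cong S_0^{\,\nu}$ is injective. Modulo the projective–injective summands (addressed at the end), this places $\mathrm{Coker}\,\phi$ in $\mathcal V$ and gives the induced functor $\overline{\mathcal M}\to\mathcal V$.

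For density, take $N\in\mathcal V$. Its top is a coproduct of copies of the injective simple $S_0$, so its minimal projective cover has the form $\eta\colon(e_0\Lambda)^\nu\to N$. By Proposition \ref{subE} the kernel $K=\Ker\eta$, being a submodule of a coproduct of copies of $E\cong e_0\Lambda$, is socle-projective; since it has no $e_0\Lambda$ summand we get $K\in\mathcal U$, and in particular $Ke_0=0$. Consequently $\mathrm{top}\,K$ has no copy of $S_0$, whence the projective cover $p\colon P\to K$ satisfies $Pe_0=0$. Setting $\phi=\iota p$, with $\iota\colon K\hookrightarrow(e_0\Lambda)^\nu$, yields an object of $\mathcal M$ with $\mathrm{Coker}\,\phi\cong N$.

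The decisive step is faithfulness, and here the special shape of the objects is what makes the stated ideal the right one. Suppose $(\alpha,\beta)\colon\phi\to\phi'$ satisfies $\overline\beta=0$, i.e. $\mathrm{Im}\,\beta\subseteq\mathrm{Im}\,\phi'$. Using that $(e_0\Lambda)^\nu$ is projective and that $\phi'$ maps onto its image, I lift $\beta$ to $\tilde\beta\colon(e_0\Lambda)^\nu\to P'$ with $\phi'\tilde\beta=\beta$. But $\Hom_\Lambda\big((e_0\Lambda)^\nu,P'\big)\cong (P'e_0)^\nu=0$, because $P'e_0=0$; therefore $\tilde\beta=0$ and $\beta=0$. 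Since $(\alpha,\beta)$ is a morphism, $\beta=0$ forces $\phi'\alpha=\beta\phi=0$, so $(\alpha,0)$ factors as $\phi\xrightarrow{(\mathrm{id}_P,0)}(P\to 0)\xrightarrow{(\alpha,0)}\phi'$ and lies in the ideal generated by the objects $P\to 0$. This shows $\Ker\mathrm{Cok}$ equals that ideal, hence $\overline{\mathrm{Cok}}$ is faithful. Fullness is then routine: given $g\colon\mathrm{Coker}\,\phi\to\mathrm{Coker}\,\phi'$, I lift $g$ along the canonical projection $(e_0\Lambda)^{\nu'}\to\mathrm{Coker}\,\phi'$ using projectivity of $(e_0\Lambda)^\nu$ to obtain $\beta$, then lift $\beta\phi$ along $\phi'$ using projectivity of $P$ to obtain $\alpha$; the pair $(\alpha,\beta)$ is a morphism of $\mathcal M$ with $\overline\beta=g$. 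Density, fullness and faithfulness together yield the equivalence.

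The main obstacle I anticipate is precisely the identification $\Ker\mathrm{Cok}=\{\text{maps through }P\to 0\}$. A priori the cokernel functor on a morphism category kills every null-homotopic map, which would force the larger quotient by the contractible objects $Q\xrightarrow{\mathrm{id}}Q$; what rescues the smaller ideal is the vanishing $\Hom_\Lambda((e_0\Lambda)^\nu,P')=0$, valid exactly because every $P$ in $\mathcal M$ satisfies $Pe_0=0$ while the codomains are sums of the projective–injective $e_0\Lambda$. This collapses all homotopies to $0$ and reduces the homotopy ideal to the ideal of maps with zero codomain component, and it is the step deserving the most care. A secondary point to pin down is the well-definedness into $\mathcal V$, namely that the projective–injective summands of cokernels are genuinely removed in $\overline{\mathcal M}$, which should be handled by decomposing each object and discarding its $0\to(e_0\Lambda)$ summands.
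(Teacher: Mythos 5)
Your proof is correct and follows essentially the same route as the paper: the decisive step in both is that $\mathrm{Cok}(f)=0$ forces the second component $\beta$ to lift through $\phi'$ to a map $(e_0\Lambda)^{\nu}\rightarrow P'$, which vanishes because $\Hom_{\Lambda}((e_0\Lambda)^{\nu},P')\cong (P'e_0)^{\nu}=0$ (the paper phrases this vanishing via Proposition \ref{noMorfismos}), whence $f=(\alpha,0)$ factors through $P\rightarrow 0$. The only differences are that you additionally spell out density and fullness, which the paper simply asserts.
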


\begin{proof}
We have that $\mathrm{Cok}$ is a full and dense functor, so it is enough to prove that if $f:\phi_1\rightarrow \phi_2$ is a morphism in $\mathcal{M}$ such that $\mathrm{Cok}(f)=0$, then $f$ has a factorization through an object of the form $P\rightarrow 0$ with $Pe_{0}=0$.

Let $\phi_{1}: Q_1\rightarrow (e_{0}\Lambda)^{\nu_1}$, $\phi_{2}: Q_2\rightarrow (e_{0}\Lambda)^{\nu_2}$ be two objects of $\mathcal{M}$ and $f=(f_{1},f_{2})$ a morphism between them, where $f_{1}: Q_{1}\rightarrow Q_2$ and $f_{2}: (e_{0}\Lambda)^{\nu_1} \rightarrow (e_{0}\Lambda)^{\nu_2}$ such that $f_{2}\phi_1=\phi_2f_{1}$. Suppose $\mathrm{Cok}(f)=0$, we have the following exact sequences
$$Q_{1}\stackrel{\phi_1}{\rightarrow }(e_{0}\Lambda)^{\nu_1}\stackrel{\eta _{1}}{\rightarrow }\mathrm{Cok}\phi_1\rightarrow 0$$
$$Q_2\stackrel{\phi_2}{\rightarrow }(e_{0}\Lambda)^{\nu_2}\stackrel{\eta _{2}}{\rightarrow }\mathrm{Cok}\phi_2\rightarrow 0,$$
and $\eta _{2}f_{2}=\mathrm{Cok}(f)\eta _{1}=0$. Hence, there is a morphism $s:(e_{0}\Lambda)^{\nu_1}\rightarrow Q_2$ such that $\phi_2 s=f_{2}$, by Proposition \ref{noMorfismos}, we have $s=0$, therefore $f_{2}=0$. That is $f=(f_{1},0)$ is the composition of the following morphisms in $\mathcal{M}$: 
$$(id_{Q_{1}},0):(Q_{1}\stackrel{\phi_1}{\rightarrow }(e_{0}\Lambda)^{\nu_1})\rightarrow (Q_{1}\rightarrow 0)$$
$$(f_{1},0):(Q_{1}\rightarrow 0)\rightarrow (Q_2\stackrel{\phi_2}{\rightarrow }(e_{0}\Lambda)^{\nu_2}).$$
The proof is complete.
\end{proof}

We are now going to study $\mathrm{mod}\, \mathcal{D}^e$, the full subcategory of $\mathrm{Mod}\, \mathcal{D}^e$ whose objects are those having finite dimension over \textsf{K}.

%%%%%%%%%%%%%%%%%%%%%%%%%%%%%%%%%%%%%%%%%%%%%%%%%%%%%%%%%%%%%%%%%%%%%%%%%%%%%%%%%%%%%%%%
%%%%%%%%%%%%%%%%%%%%%%%%%%%%%%%%%%%%%%%%%%%%%%%%%%%%%%%%%%%%%%%%%%%%%%%%%%%%%%%%%%%%%%%%

\section{Getting the matrix problem}

Let $(\mathscr{P},A,\mathcal{T})$ be an extendable poset with an admissible system $\underline{\mathcal{R}}$, as in \ref{sisAdm1gor}. We know that the algebra $\Lambda=\Lambda(\underline{\mathcal{R}})$ is 1-Gorenstein and has a decomposition $\Lambda=S \oplus J$, as in \ref{semisimpleRadical}.

Consider the ditalgebra $\mathcal{D}^{e}$, constructed in the previous section. Let us write $e=w_1 \hat{e} + w_2 e_0$, where $\hat{e}=1-e_0$, and 1 is the unit in $\Lambda$.

We have
\[eRe= \left( \begin{array}{cc}\hat{e}S\hat{e} &0 \\ 0&e_0S e_0 \end{array} \right) = \left( \begin{array}{cc}\hat{e}S &0 \\ 0& e_0\mathcal{R}_0 \end{array} \right),\]
\[W_{0}=\left( \begin{array}{cc}0&\hat{e}\,^{*}J e_0\\0&0 \end{array} \right) = w_{1,2}\hat{e}\,^{*}J e_0 ,\ \ \ W_{1}=\left( \begin{array}{cc}\hat{e}\,^{*}J\hat{e}&0\\0&0 \end{array} \right) = w_1\hat{e}\,^{*}J\hat{e}.\]

An object $M\in \mathrm{mod}\, \mathcal{D}^e$ is a right module over $\begin{pmatrix} \hat{e}S & \hat{e}\,^{*}J e_0 \\ 0 & e_0\mathcal{R}_0 \end{pmatrix}$, then  $M_1=Mw_1\hat{e}$ is a right $\hat{e}S$-module, and $M_0=Mw_2e_0$ is an $e_0\mathcal{R}_0$-module, so it is an $\mathcal{R}_0$-vector space.

For every $x \in \underline{\mathscr{P}}^\text{\large \Fontlukas m}$, we denote $M_x=Mw_1 e_x$.

We say that $\mathscr{B}(M)\subset M$ is a \textit{local basis} of $M$ if $\mathscr{B}(M) = \bigcup_{x \in \underline{\mathscr{P}}^\text{\large \Fontlukas m}} \mathscr{B}_x(M)$, where $\mathscr{B}_x(M)$ is an $\mathcal{R}_x$-basis of $M_x$.

The \textit{dimension} of $M$ is an integer vector $\underline{d}(M)=(d_x=\dim_{\mathcal{R}_x}M_x )_{x \in \underline{\mathscr{P}}^\text{\large \Fontlukas m}}$. When $M_x=0$, obviously $d_x=0$ and we will obtain formal matrices with 0 rows or 0 columns. By definition, the product for a formal matrix behaves as the zero matrix.

In order to obtain certain matrix problems determined by some algebraically equipped posets, we will use the following remark.

\begin{remark}\label{morfismoGeneral}
Let $S$ and $T$ be rings, and $V$ be a projective $S$-$T$-bimodule finitely generated over $S$ with a dual basis $\{\lambda _{i},p_{i}\}_{i\in I}$, $\lambda _{i}\in \,^{*}V$ and $p_{i}\in V$.

Recall that if $M$ is a right $S$-module, and $N$ is a right $T$-module, there is an isomorphism:
$$\Xi: \mathrm{Hom}_{S}(M \otimes _{T}\,^{*}V,N)\rightarrow \mathrm{Hom}_{T}(M,N\otimes _{S}V);$$
for $h:M\otimes _{T}\,^{*}V\rightarrow N$, we denote $\tilde{h}= \Xi(h) :M\rightarrow N\otimes _{S}V$, and if $g:M\rightarrow N\otimes _{S}V$ is a $T$-module morphism, we call $\underline{g}$ the morphism in $\mathrm{Hom}_{S}(M \otimes _{T}\,^{*}V,N)$ such that $\Xi (\underline{g})=g$.

Then, for $m\in M$,  
$$\tilde{h}(m)=\sum _{i\in I}h(m\otimes \lambda _{i})\otimes p_{i},$$
and, for $m\in M, \lambda \in \,^{*}V$ we have
$$\underline{g}(m\otimes \lambda )=\varsigma (id\otimes \lambda )g(m),$$
where $\varsigma :N\otimes _{S} V \rightarrow N$ is given by $\varsigma (n\otimes s)=ns$.
\end{remark}

In the situations we will consider in the rest of this section, we suppose that $\mathcal{R}_x$ is a field, for all $x \in \underline{\mathscr{P}}^\text{\large \Fontlukas m}$.

%%%%%%%%%%%%%%%%%%%%%%%%%%%%%%%%%%%%%%%%%%%%%%%%%%%%%%%%%%%%%%%%%%%%%%%%%%%%%%%%%%%%%%%%

\subsection{Matrices over $\mathcal{R}_0$}

For every $x \in \underline{\mathscr{P}}^\text{\large \Fontlukas m}$, suppose that $\mathcal{R}_{0,x}$ is one-dimensional over $\mathcal{R}_0$ (recall that $\mathcal{R}_{x,y}$ is an $\mathcal{R}_x$-$\mathcal{R}_y$-bimodule). Then $e_0Je_x = e_0 \mathcal{R}_{0,x}e_x = e_{0,x}  \mathcal{R}_{0,x} = e_{0,x} \mathcal{R}_0 v_{0,x}$, for some $v_{0,x}\in \mathcal{R}_{0,x}$.

For $y<x$ in $\mathscr{P}$, and $r\in \mathcal{R}_{y,x}$, we have $v_{0,y}r\in \mathcal{R}_{0,y}\mathcal{R}_{y,x} \subseteq \mathcal{R}_{0,x}$, then 
\begin{equation}\label{chi}
v_{0,y}r=\chi_{y,x}(r)v_{0,x},
\end{equation}
for some $\chi _{y,x}(r)\in \mathcal{R}_0$. Hence, there are $\textsf{K}$-linear functions
$$\chi_{y,x}: \mathcal{R}_{y,x} \rightarrow \mathcal{R}_0: r \mapsto \chi _{y,x}(r),$$
satisfying (\ref{chi}).

With the notation $u_{0,x}=e_0v_{0,x}e_x=e_{0,x}v_{0,x}$, in the dual space $e_x\,^{*}Je_0$, we identify $\hat{u}_{0,x}=\widehat{e_{0,x}v_{0,x}}= e_x\hat{v}_{0,x}e_0$. For all 
$z\in  \mathcal{R}_x$, we have $z\hat{u}_{0,x}= e_x z\hat{v}_{0,x}e_0$, then, for each $x\in \mathscr{P}$ there is a $K$-linear function $\phi _{x}: \mathcal{R}_x \rightarrow \mathcal{R}_0$ such that
$$z\hat{u}_{0,x}= e_x\hat{v}_{0,x}\phi _{x}(z)e_0 =\hat{u}_{0,x}\phi _{x}(z).$$

The multiplication of elements in $M\in \mathrm{Mod}\, \mathcal{D}^e$ by elements in $eRe$, determines, for every $\gamma \in \hat{e}\,^{*}Je_0$, a $\textsf{K}$-transformation
\[\psi_M(\gamma): M_1 \rightarrow M_0: m\mapsto m w_{1,2} \gamma;\]
in such a way that $\psi_M: \hat{e}\,^{*}Je_0 \rightarrow \mathrm{Hom}_{\textsf{K}}(M_1,M_0)$ is an $\hat{e}S$-$Se_0$-bimodule morphism (see the proof of Proposition \ref{DitalgEquivDrozd} in the appendix). Notice that $\hat{e}\,^{*}Je_0$ is determined by the elements $\hat{u}_{0,x}$, generators of each $e_x\,^{*}Je_0$, for all $x \in \mathscr{P}^\text{\large \Fontlukas m}$, then $\psi_M$ is determined by 
$$\psi_M(\hat{u}_{0,x}): M_x \rightarrow M_0: m\mapsto m w_{1,2} \hat{u}_{0,x}.$$

Given a local basis $\mathscr{B}(M)$ of $M$, with $\mathscr{B}_x(M)=\{m_i^x : 1\leq i \leq d_x \}$ and $\mathscr{B}_0(M)=\{m_j^0 : 1\leq j \leq d_0 \}$, then
\[\psi_M(\hat{u}_{0,x})(m_i^x) = \sum_{j=1}^{d_0} m_j^0 g_{j,i}^x;\]
for some $g_{j,i}^x \in \mathcal{R}_0$.

We denote $M_{j,i}^x=g_{j,i}^x$, to obtain the matrix $\mathscr{M}(M)_x = \left( M_{j,i}^x \right)$. If $d_0=0$ or $d_x=0$, we consider a formal matrix $\mathscr{M}(M)_x$ with 0 rows or 0 columns. 

The matrix $\mathscr{M}(M)$, divided into vertical stripes $\mathscr{M}(M)_x$, for every $x \in \underline{\mathscr{P}}^\text{\large \Fontlukas m}$, is called \textit{matrix representation} of $M$.

\begin{theorem}\label{MatricesGeneralCoreps}
Let $M$ and $N$ be two objects of $\mathrm{mod}\, \mathcal{D}^e$, with local basis $\mathscr{B}(M)$ and $\mathscr{B}(N)$, respectively. Then, $M\cong N$ if and only if there are non-singular square matrices $T_0\in \mathscr{M}_{d_0} (\mathcal{R}_0)$, $T_x\in \mathscr{M}_{d_x} (\mathcal{R}_x)$, for all $x\in \mathscr{P}^\text{\large \Fontlukas m}$, and for each $y < x$ in $\mathscr{P}^\text{\large \Fontlukas m}$, a matrix $T_{yx} \in \mathscr{M}_{d_x\times d_y} (\mathcal{R}_{y,x})$, such that
\[\mathscr{M}(M)_x= T_0 \mathscr{M}(N)_x \phi_{x}(T_x) + \sum_{y<x} T_0 \mathscr{M}(N)_y \chi_{y,x}(T_{yx}).\]
\end{theorem}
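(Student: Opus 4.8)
The plan is to translate the notion of isomorphism in $\mathrm{mod}\,\mathcal{D}^{e}$ into the displayed matrix relation by unwinding the structure of morphisms in the ditalgebra $\mathcal{D}^{e}=(T_{eRe}(eWe),d_{e})$ and reading everything off in the chosen local bases. First I would record the explicit shape of a morphism $f:N\to M$ in $\mathrm{mod}\,\mathcal{D}^{e}$. Following the appendix, such an $f$ is a pair $(f^{0},f^{1})$, where $f^{0}\in \mathrm{Hom}_{eRe}(N,M)$ is a morphism for the degree-zero (semisimple) part $eRe$ and $f^{1}$ is determined by its values on the degree-one bimodule $W_{1}=w_{1}\hat{e}\,^{*}J\hat{e}$. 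Since $eRe$ is a product of the fields $\mathcal{R}_{x}$ and $\mathcal{R}_{0}$, the component $f^{0}$ splits as an $\mathcal{R}_{0}$-linear map $N_{0}\to M_{0}$ together with $\mathcal{R}_{x}$-linear maps $N_{x}\to M_{x}$; in the bases $\mathscr{B}(N)$, $\mathscr{B}(M)$ these are recorded by the square matrices $T_{0}$ and $T_{x}$. The bimodule $W_{1}$ is exactly the off-diagonal radical $\hat{e}\,^{*}J\hat{e}$, whose generators correspond to the pieces $\mathcal{R}_{y,x}$ with $y<x$; via the adjunction of Remark \ref{morfismoGeneral} applied to the projective bimodule $W_{1}$, the component $f^{1}$ is encoded by matrices $T_{yx}\in\mathscr{M}_{d_{x}\times d_{y}}(\mathcal{R}_{y,x})$, one for each pair $y<x$.

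Next I would write out the condition that $(f^{0},f^{1})$ be a morphism of $\mathcal{D}^{e}$-modules and evaluate it on the degree-zero generators $\hat{u}_{0,x}$ of $\hat{e}\,^{*}Je_{0}$, which are precisely the elements producing the structure maps $\psi_{N}(\hat{u}_{0,x})$ and $\psi_{M}(\hat{u}_{0,x})$, whose matrices are $\mathscr{M}(N)_{x}$ and $\mathscr{M}(M)_{x}$. This condition has two contributions. The first comes from $f^{0}$ commuting with the $\hat{u}_{0,x}$-action; because $\mathcal{R}_{x}$ acts on $\hat{u}_{0,x}$ through the twist $z\hat{u}_{0,x}=\hat{u}_{0,x}\phi_{x}(z)$, this contribution is $T_{0}\,\mathscr{M}(N)_{x}\,\phi_{x}(T_{x})$, with $\phi_{x}$ applied entrywise to $T_{x}$. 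The second contribution comes from the differential term $f^{1}(d_{e}(\hat{u}_{0,x}))$: since $d_{e}$ raises the $W_{1}$-degree, $d_{e}(\hat{u}_{0,x})$ expands into a sum of dotted-arrow/solid-arrow products, one solid factor $\hat{u}_{0,y}$ for each $y<x$ and one dotted factor in $\mathcal{R}_{y,x}$, the structure constants being fixed by the multiplication $\mathcal{R}_{0,y}\,\mathcal{R}_{y,x}\subseteq\mathcal{R}_{0,x}$ together with the normalisation $v_{0,y}r=\chi_{y,x}(r)v_{0,x}$ of equation \eqref{chi}. Pairing this with $f^{1}$ yields exactly $\sum_{y<x}T_{0}\,\mathscr{M}(N)_{y}\,\chi_{y,x}(T_{yx})$, with $\chi_{y,x}$ applied entrywise. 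Collecting both contributions, the single equation expressing that $f$ is a morphism becomes, stripe by stripe, the asserted identity
\[\mathscr{M}(M)_{x}=T_{0}\,\mathscr{M}(N)_{x}\,\phi_{x}(T_{x})+\sum_{y<x}T_{0}\,\mathscr{M}(N)_{y}\,\chi_{y,x}(T_{yx}),\]
where formal stripes with $d_{x}=0$ or $d_{0}=0$ are handled by the zero-matrix convention adopted above.

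Finally I would settle the two implications. For the backward direction, given matrices $T_{0},T_{x},T_{yx}$ with $T_{0}$ and each $T_{x}$ non-singular and satisfying the identity, I define $f^{0}$ and $f^{1}$ from these data; the computation above shows $(f^{0},f^{1})$ is a morphism $N\to M$, and since its degree-zero part $f^{0}$ is invertible over the semisimple algebra $eRe$, the standard ditalgebra criterion (a $\mathcal{D}^{e}$-morphism is an isomorphism as soon as its degree-zero part is) guarantees that $f$ is an isomorphism. For the forward direction, an isomorphism $f:N\to M$ is again a pair $(f^{0},f^{1})$; reading $f^{0}$ and $f^{1}$ in the local bases produces the matrices $T_{0},T_{x},T_{yx}$, invertibility of $f$ forces $T_{0}$ and each $T_{x}$ to be non-singular, and the morphism condition is precisely the displayed identity. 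I expect the main obstacle to be the careful computation of $d_{e}(\hat{u}_{0,x})$ and its pairing with $f^{1}$: one must verify that the differential of the solid arrow $\hat{u}_{0,x}$ in the Drozd ditalgebra is exactly the sum over $y<x$ of the relevant dotted-arrow/solid-arrow products, so that the constants $\chi_{y,x}$ of \eqref{chi} occur with the correct indexing and on the correct side, and one must track the twist $\phi_{x}$ consistently, as it is what makes $\phi_{x}(T_{x})$ appear in place of $T_{x}$.
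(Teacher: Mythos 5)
Your proposal is correct and follows essentially the same route as the paper's proof: decompose a $\mathcal{D}^{e}$-morphism into its $eRe$-part (giving $T_{0}$, $T_{x}$) and its $W_{1}$-part (giving $T_{yx}$ via the adjunction of Remark \ref{morfismoGeneral}), evaluate the morphism condition on the generators $w_{1,2}\hat{u}_{0,x}$, compute $d_{e}(w_{1,2}\hat{u}_{0,x})$ as a sum over $y<x$ with structure constants $\chi_{y,x}$, and track the twist $\phi_{x}$; the converse likewise matches the paper's reconstruction of $(f^{0},f^{1})$ from the matrices, with invertibility of $f^{0}$ forcing the isomorphism. The only (harmless) difference is your choice of direction $N\to M$ versus the paper's $M\to N$, which just inverts the bookkeeping of $T_{0}$.
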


\begin{proof}
If $M$ and $N$ are isomorphic, their dimension vectors are equal.

The pair $(f^0,f^1):M \rightarrow N$ is an isomorphism in $\mathrm{mod}\, \mathcal{D}^e$, if and only if $f^0$ is an $eRe$-module isomorphism that induces isomorphisms $f_0^0:M_0 \rightarrow N_0$ and $f_x^0:M_x \rightarrow N_x$, for all $x\in \mathscr{P}^\text{\large \Fontlukas m}$, such that, for every $m_i^x \in \mathscr{B}_x(M)$,
\begin{align*}
f_x^0(m_i^x) w_{1,2} \hat{u}_{0,x} &= f^0_0 (m_i^x w_{1,2} \hat{u}_{0,x}) + f^1(d_e(w_{1,2} \hat{u}_{0,x})) (m_i^x);\\
\sum_{t=1}^{d_x} n_t^x h_{t,i}^x w_{1,2} \hat{u}_{0,x} &= f^0_0 \left( \sum_{j=1}^{d_0} m_j^0 M_{j,i}^x \right) + f^1(d_e(w_{1,2} \hat{u}_{0,x})) (m_i^x);
\end{align*}
where $n_t^x\in \mathscr{B}_x(N), m_j^0\in \mathscr{B}_0(M), h_{t,i}^x \in \mathcal{R}_x$ and $M_{j,i}^x \in \mathcal{R}_0$, then
\begin{align*}
\sum_{t=1}^{d_x} n_t^x w_{1,2} h_{t,i}^x \hat{u}_{0,x} &= \sum_{j=1}^{d_0}  f^0_0 \left( m_j^0 M_{j,i}^x \right) + f^1(d_e(w_{1,2} \hat{u}_{0,x})) (m_i^x);\\
\sum_{t=1}^{d_x} n_t^x w_{1,2} \hat{u}_{0,x} \phi _{x}(h_{t,i}^x) &= \sum_{j=1}^{d_0}  f^0_0 \left( m_j^0 \right) M_{j,i}^x + f^1(d_e(w_{1,2} \hat{u}_{0,x})) (m_i^x);\\
\sum_{t=1}^{d_x} \sum_{s=1}^{d_0} n_s^0 N_{s,t}^x \phi _{x}(h_{t,i}^x) &= \sum_{j=1}^{d_0} \sum_{s=1}^{d_0} n_s^0 \mu_{s,j} M_{j,i}^x + f^1(d_e(w_{1,2} \hat{u}_{0,x})) (m_i^x);
\end{align*}
with $n_s^0\in \mathscr{B}_0(N)$ and $N_{s,t}^x, \mu_{s,j} \in \mathcal{R}_0$.

To calculate $d_e(w_{1,2} \hat{u}_{0,x})$, consider a basis $\mathscr{B}_0(y,x)$ for every $\mathcal{R}_{y,x}$, as a left $\mathcal{R}_y$-vector space. The elements $e_{y,x}r$ with $r\in \mathscr{B}_0(y,x)$, form a basis $\mathscr{B}_{y,x}$ for $e_y Je_x$. For every $z\in \mathscr{B}= \bigcup_{y,x \in \underline{\mathscr{P}}^\text{\large \Fontlukas m}} \mathscr{B}_{y,x}$, its corresponding dual element $\hat{z} \in \,^*J$, is such that $\hat{z}(w)=0$, if $z\neq w$, and $\hat{z}(z)=e_x$. We have,
\begin{align*}
d_e(w_{1,2} \hat{u}_{0,x}) &= - \sum_{z, v \in \mathscr{B}} (w_1 \hat{z} \otimes w_{1,2} \hat{v}) \hat{u}_{0,x} (vz).
\end{align*}

To have $\hat{u}_{0,x} (vz) \neq 0$, we should have $v\in \mathscr{B}_{0,y}$ and $z\in \mathscr{B}_{y,x}$, for each $y<x$ in $\mathscr{P}^\text{\large \Fontlukas m}$, hence, $v=u_{0,y}$ and $z=e_{y,x}r$ with $r\in \mathscr{B}_0(y,x)$, then,
\begin{align*}
d_e(w_{1,2} \hat{u}_{0,x}) &= - \sum_{y<x} \sum_{r \in \mathscr{B}_0(y,x)} (w_1 \widehat{e_{y,x}r} \otimes w_{1,2} \widehat{u}_{0,y}) \hat{u}_{0,x}(u_{0,y}e_{y,x}r);\\
 &= - \sum_{y<x} \sum_{r \in \mathscr{B}_0(y,x)} (w_1 \widehat{e_{y,x}r} \otimes w_{1,2} \widehat{u}_{0,y}) \hat{u}_{0,x}(e_{0,y}v_{0,y}e_{y,x}r);\\
 &= - \sum_{y<x} \sum_{r \in \mathscr{B}_0(y,x)} (w_1 \widehat{e_{y,x}r} \otimes w_{1,2} \widehat{u}_{0,y}) \hat{u}_{0,x}(e_{0,x}v_{0,y}r);\\
 &= - \sum_{y<x} \sum_{r \in \mathscr{B}_0(y,x)} (w_1 \widehat{e_{y,x}r} \otimes w_{1,2} \widehat{u}_{0,y}) \hat{u}_{0,x}(e_{0,x}\chi_{y,x}(r)v_{0,x});\\
 &= - \sum_{y<x} \sum_{r \in \mathscr{B}_0(y,x)} (w_1 \widehat{e_{y,x}r} \otimes w_{1,2} \widehat{u}_{0,y}) \hat{u}_{0,x}(u_{0,x}\chi_{y,x}(r));\\
  &= - \sum_{y<x} \sum_{r \in \mathscr{B}_0(y,x)} (w_1 \widehat{e_{y,x}r} \otimes w_{1,2} \widehat{u}_{0,y}) \chi_{y,x}(r).
\end{align*}

For $\hat{z}\in e_x\,^*Je_y$, we have $f^1(\hat{z})\in \mathrm{Hom}_{K}(M_x,N_y)$, so it can be consider as $f^1: M_1\otimes_{\hat{e}S} \hat{e}\,^*J\hat{e} \rightarrow N_1 : m\otimes \hat{z} \mapsto f^1(\hat{z})(m)$, for all $m\in M_x$. This determines $\mathcal{R}_y$-morphisms $f^1_{x,y}:M_x \rightarrow N_y \otimes_{\mathcal{R}_y} e_y J e_x$ (see Remark \ref{morfismoGeneral}) such that, if $f^1_{x,y}(m)=\sum _{s=1}^{d_y}n_s\otimes \gamma_s$, for some $n_s \in N_y, \gamma_s \in e_y J e_x$, then $f^1(\hat{z})(m)=\sum _{s=1}^{d_y}n_s\hat{z}(\gamma_s)$. Therefore,
\begin{align*}
f^1(d_e(w_{1,2} \hat{u}_{0,x})) (m_i^x) &= - \sum_{y<x} \sum_{r \in \mathscr{B}_0(y,x)} f^1[(w_1 \widehat{e_{y,x}r} \otimes w_{1,2} \widehat{u}_{0,y}) \chi_{y,x}(r)] (m_i^x)\\
 &= - \sum_{y<x} \sum_{r \in \mathscr{B}_0(y,x)}  f^1(w_1 \widehat{e_{y,x}r}) (m_i^x) (w_{1,2} \widehat{u}_{0,y}) \chi_{y,x}(r);\\
 &= - \sum_{y<x} \sum_{t=1}^{d_y}  \sum_{r \in \mathscr{B}_0(y,x)} (n_t^y \widehat{e_{y,x}r}(\tau_{t,i}^{y,x}))  w_{1,2} \widehat{u}_{0,y} \chi_{y,x}(r);
\end{align*}
for some $\tau_{t,i}^{y,x} \in e_y J e_x$.

We have $\widehat{e_{y,x}r}(\tau_{t,i}^{y,x})\in e_yS= e_{y,y}\mathcal{R}_y$, and every $\tau_{t,i}^{y,x}$ has the form $\tau_{t,i}^{y,x}=e_{y,x}r_{t,i}^{y,x}$, with $r_{t,i}^{y,x}\in \mathcal{R}_{y,x}$, then $\widehat{e_{y,x}r}(\tau_{t,i}^{y,x}) = e_{y,y} \hat{r}(r_{t,i}^{y,x})$, where $\hat{r}(r_{t,i}^{y,x}) \in \mathcal{R}_y$.

In this way,
\begin{align*}
f^1(d_e(w_{1,2} \hat{u}_{0,x})) (m_i^x) &= - \sum_{y<x} \sum_{t=1}^{d_y} \sum_{r \in \mathscr{B}_0(y,x)} n_t^y (e_{y,y} \hat{r}(r_{t,i}^{y,x})) w_{1,2} \widehat{u}_{0,y} \chi_{y,x}(r);\\
 &= - \sum_{y<x} \sum_{t=1}^{d_y} \sum_{r \in \mathscr{B}_0(y,x)} n_t^y w_{1,2}  \hat{r}(r_{t,i}^{y,x}) \widehat{u}_{0,y} \chi_{y,x}(r);\\
 &= - \sum_{y<x} \sum_{t=1}^{d_y} \sum_{r \in \mathscr{B}_0(y,x)} n_t^y w_{1,2} \widehat{u}_{0,y} \phi_y(\hat{r}(r_{t,i}^{y,x})) \chi_{y,x}(r);\\
 &= - \sum_{y<x} \sum_{t=1}^{d_y} \sum_{r \in \mathscr{B}_0(y,x)} n_t^y w_{1,2} \widehat{u}_{0,y}  \chi_{y,x}(\hat{r}(r_{t,i}^{y,x})r);\\
 &= - \sum_{y<x} \sum_{t=1}^{d_y} n_t^y w_{1,2} \widehat{u}_{0,y}  \chi_{y,x} \left( \sum_{r \in \mathscr{B}_0(y,x)} \hat{r}(r_{t,i}^{y,x})r \right);\\
 &= - \sum_{y<x} \sum_{t=1}^{d_y} n_t^y w_{1,2} \widehat{u}_{0,y}  \chi_{y,x}(r_{t,i}^{y,x});\\
 &= - \sum_{y<x} \sum_{t=1}^{d_y} \sum_{s=1}^{d_0} n^0_s N_{s,t}^y \chi_{y,x}(r_{t,i}^{y,x});
\end{align*}
with $N_{j,t}^y \in \mathcal{R}_0$.

Then, for every $x\in \mathscr{P}^\text{\large \Fontlukas m}, m_i^x \in \mathscr{B}_x(M)$,
\[\sum_{t=1}^{d_x} \sum_{s=1}^{d_0} n_s^0 N_{s,t}^x \phi _{x}(h_{t,i}^x) = \sum_{j=1}^{d_0} \sum_{s=1}^{d_0} n_s^0 \mu_{s,j} M_{j,i}^x - \sum_{y<x} \sum_{t=1}^{d_y} \sum_{s=1}^{d_0} n^0_s N_{s,t}^y \chi_{y,x}(r_{t,i}^{y,x})\]

Writing the matrices:
\[\mathscr{M}(N)_x \phi _{x}(h_{t,i}^x) = (\mu_{s,j}) \mathscr{M}(M)_x  - \sum_{y<x}  \mathscr{M}(N)_y \chi_{y,x}(r_{t,i}^{y,x}).\]

We denote
$$T_0^{-1}= (\mu_{s,j}),\hspace{1cm} T_x=(h_{t,i}^x),\hspace{1cm} T_{y,x}=(r_{t,i}^{y,x}),$$
to obtain the desired equation:
\[\mathscr{M}(M)_x= T_0 \mathscr{M}(N)_x \phi_{x}(T_x) + \sum_{y<x} T_0 \mathscr{M}(N)_y \chi_{y,x}(T_{yx}).\]

Conversely, if we have suitable matrices satisfying the previous equation, the dimension of $M$ and $N$ are equal. We define some morphisms $f^{0}_{0}:M_{0}\rightarrow N_{0}$ by using the matrix $L_{0}$ and $f^{0}_{x}:M_{x}\rightarrow N_{x}$, with that matrix $T_{x}$, for all $x\in \mathscr{P}^\text{\large \Fontlukas m}$. They induce an $eSe$-module isomorphism $f^0:M\rightarrow N$, because matrices are invertible.

By using each matrix $T_{x,y}$, we define the morphism $f^1_{x,y}:M_x \rightarrow N_y \otimes_{\mathcal{R}_y} e_y J e_x$ which determine $f^1: M_1\otimes_{\hat{e}S} \hat{e}\,^*J\hat{e} \rightarrow N_1$.

We have the following matrix equation
\[\mathscr{M}(N)_x \phi _{x}(T_x) = T_0^{-1} \mathscr{M}(M)_x  - \sum_{y<x}  \mathscr{M}(N)_y \chi_{y,x}(T_{y,x});\]
then, for all $m_i^x\in \mathscr{B}_x(M)$
\[f_x^0(m_i^x) w_{1,2} \hat{u}_{0,x} = f^0_0 (m_i^x w_{1,2} \hat{u}_{0,x}) + f^1(d_e(w_{1,2} \hat{u}_{0,x})) (m_i^x);\]
where $\hat{u}_{0,x}$ is a generator of $e_x\,^*Je_0$ over $e_0S$.

Hence, the pair $f=(f^{0},f^{1})$ is a morphism from $M$ to $N$ in $\mathrm{Mod}\, \mathcal{D}^e$. Moreover, it is an isomorphism, because $f^{0}$ is an isomorphism. The theorem is proved.
\end{proof}

We will show next, an application for this result.

%%%%%%%%%%%%%%%%%%%%%%%%%%%%%%%%%%%%%%%%%%%%%%%

\subsubsection{Matrix problem for corepresentations of $p$-equipped posets}

To a $p$-equipped poset $\mathscr{P}_0$, we associate an algebraically equipped poset $(\mathscr{P}_{0},\textsf{G}, \mathcal{Q}_{0})$ as in Section \ref{pequipadosSonALgCorep}. We construct $(\mathscr{P},\textsf{G}, \mathcal{Q})$, where $\mathscr{P}=\underline{\mathscr{P}_0}^\text{\large \Fontlukas m}$ is obtained by adding to $\mathscr{P}_0$ a maximal point $\text{\large \Fontlukas m}$ and a minimal point $0$, and $\mathcal{Q}=\underline{\mathcal{Q}_0}^\text{\large \Fontlukas m}$ is obtained from $\mathcal{Q}_0$, as in (\ref{sisMulti1gor}). We know that $\mathcal{Q}$ is an admissible system and $\Lambda=\Lambda(\mathcal{Q})$ is 1-Gorenstein.

Due to the construction, $\mathcal{Q}_{0,x}=\textsf{G}$, for every $x\in \mathscr{P}$, so clearly, $\mathcal{Q}_{0,x}$ is one-dimensional over $\mathcal{Q}_0$, for all $x>0$. We can apply to $(\mathscr{P},\textsf{G}, \mathcal{Q})$ the results of this section.

Any $\mathcal{D}^{e}$-module $M$ with local base, has a matrix representation $\mathscr{M}(M)$, with coefficients in $\mathcal{Q}_0=\textsf{G}$. 

We have $\mathcal{Q}_{x,y}\subseteq \mathcal{Q}_0$, for all $x,y\in \mathscr{P}$, then $\chi_{x,y}$ and $\phi_x$ are the identity transformation. 

By Theorem \ref{MatricesGeneralCoreps}, we have the following result.

\begin{corollary}\label{MatricesCoreps}
Let $M$ and $N$ be two objects of $\mathrm{mod}\, \mathcal{D}^e$, with local basis $\mathscr{B}(M)$ and $\mathscr{B}(N)$, respectively. Then $M\cong N$ if and only if there exist:
\begin{enumerate}[·]
\item a non-singular square matrix $T_0\in \mathscr{M}_{d_0} (\textsf{G})$;
\item for $x>0$ in $\mathscr{P}$, a non-singular square matrix $T_x\in \mathscr{M}_{d_x} (\textsf{G})$ if $x$ is strong, or $T_x\in \mathscr{M}_{d_x} (\textsf{F})$ if $x$ is weak;
\item for each $0< y <^\ell x$ in $\mathscr{P}$ a matrix $T_{yx}$ over $\textsf{F} \langle 1, \xi, \xi^2, \ldots, \xi^{\ell-1} \rangle$;
\end{enumerate}
such that, for every $x\in \mathscr{P}$, $x>0$,
\[\mathscr{M}(M)_x= T_0 \mathscr{M}(N)_x T_x + \sum_{y<x} T_0 \mathscr{M}(N)_y T_{yx}.\]
\end{corollary}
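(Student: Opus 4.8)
The plan is to obtain this statement as a direct specialization of Theorem \ref{MatricesGeneralCoreps}, which already supplies the general isomorphism criterion for objects of $\mathrm{mod}\,\mathcal{D}^e$. The only work is to make the data $\mathcal{R}_0$, $\mathcal{R}_x$, $\mathcal{R}_{y,x}$ explicit for the equipment $\mathcal{Q}$ and to check that the twisting maps $\phi_x$ and $\chi_{y,x}$ degenerate to identities, so that the general equation collapses to the displayed one.

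First I would record the identifications coming from Definition \ref{sisQ} and the extension \eqref{sisMulti1gor}: here $\mathcal{R}_0=\mathcal{Q}_0=\textsf{G}$, so $T_0$ ranges over $\mathscr{M}_{d_0}(\textsf{G})$; for $x>0$ one has $\mathcal{R}_x=\mathcal{Q}_x$, which equals $\textsf{G}$ when $x$ is strong and $\textsf{F}$ when $x$ is weak, fixing the field over which $T_x$ lives; and for $y<^\ell x$ one has $\mathcal{R}_{y,x}=\mathcal{Q}_{y,x}=\textsf{F}\langle 1,\xi,\dots,\xi^{\ell-1}\rangle$, so that $T_{yx}$ is a matrix over that space. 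The one hypothesis required to run the constructions of this section, namely that $\mathcal{Q}_{0,x}$ be one-dimensional over $\mathcal{Q}_0$, holds because both coincide with $\textsf{G}$, as observed just before the statement.

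The heart of the argument is to verify that $\chi_{y,x}$ and $\phi_x$ act as the canonical inclusion of their domain into $\textsf{G}$. Since $A=\textsf{G}$ is a commutative field and every space $\mathcal{Q}_{0,x}$, $\mathcal{Q}_{y,x}$, $\mathcal{Q}_x$ sits inside $\textsf{G}$, I would normalize the generator $v_{0,x}=1$ of each $\mathcal{Q}_{0,x}=\textsf{G}$. Then the defining relation \eqref{chi} reads $v_{0,y}r=r=\chi_{y,x}(r)\cdot 1$, whence $\chi_{y,x}(r)=r$; likewise the defining relation $z\hat{u}_{0,x}=\hat{u}_{0,x}\phi_x(z)$ collapses, by commutativity of $\textsf{G}$, to $\phi_x(z)=z$. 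The main point to be careful about is precisely this degeneration: it rests entirely on the equipment taking values in the single commutative field $\textsf{G}$ and on the freedom to choose $v_{0,x}=1$, which is a canonical choice compatible across all $x$.

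Finally I would substitute these identities into the general criterion of Theorem \ref{MatricesGeneralCoreps},
\[\mathscr{M}(M)_x= T_0\,\mathscr{M}(N)_x\,\phi_{x}(T_x) + \sum_{y<x} T_0\,\mathscr{M}(N)_y\,\chi_{y,x}(T_{yx}),\]
so that the applications of $\phi_x$ and $\chi_{y,x}$ disappear and the equation becomes exactly
\[\mathscr{M}(M)_x= T_0\,\mathscr{M}(N)_x\,T_x + \sum_{y<x} T_0\,\mathscr{M}(N)_y\,T_{yx},\]
with $T_0$, $T_x$, $T_{yx}$ taken over the fields and spaces identified above. The non-singularity conditions and the requirement that each $T$ live over the indicated field transfer verbatim from Theorem \ref{MatricesGeneralCoreps}, which completes the specialization.
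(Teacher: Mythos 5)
Your proposal is correct and follows exactly the paper's own route: the paper likewise observes that $\mathcal{Q}_{0,x}=\textsf{G}$ is one-dimensional over $\mathcal{Q}_0=\textsf{G}$ and that $\mathcal{Q}_{x,y}\subseteq\mathcal{Q}_0$ forces $\chi_{y,x}$ and $\phi_x$ to be identities, then invokes Theorem \ref{MatricesGeneralCoreps}. Your explicit normalization $v_{0,x}=1$ and the appeal to commutativity of $\textsf{G}$ simply spell out the step the paper states without justification.
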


The category $\Cor \mathscr{P}_0$ is equivalent to a subcategory of $\mathrm{Mod}\, \mathcal{D}^e$, this equivalence induces an equivalence between $\mathrm{corep}\,\mathscr{P}_0$, the full subcategory of $\Cor \mathscr{P}_0$ whose objects are of the form $(L, L_i:i\in \mathscr{P}_0)$ with $L$ a finite dimensional \textsf{G}-vector space, and a full subcategory of $\mathrm{mod}\, \mathcal{D}^e$. So the previous result determines a matrix problem for corepresentations of $p$-equipped posets.

%%%%%%%%%%%%%%%%%%%%%%%%%%%%%%%%%%%%%%%%%%%%%%%%%%%%%%%%%%%%%%%%%%%%%%%%%%%%%%%%%%%%%%%%
\subsection{Matrices over $\mathcal{R}_x$, for all $x \in \underline{\mathscr{P}}^\text{\large \Fontlukas m}$}

Let $f=(f^{0},f^{1}):M\rightarrow N$ be a morphism in $\mathrm{mod}\, \mathcal{D}^e$. Recall that $f^{1}\in \mathrm{Hom}_{\hat{e}S-\hat{e}S}(T_{eRe}(eWe)_{1},\mathrm{Hom}_{K}(M,N))$ is determined by its restriction to $w_{1}\hat{e}\,^*J\hat{e}$, and, in turn, this morphism is completely determined by $\underline{f}^{1}:M_1\otimes \hat{e}\,^{*}J\hat{e}\rightarrow N_{1}$ where $\underline{f}^{1}(m\otimes \gamma )=f^{1}(w_{1}\gamma )(m)$, para $m\in M_{1}$, $\gamma \in \hat{e}\,*J\hat{e}$. 

We denote by $\underline{\varsigma }:e_{0}J\hat{e} \otimes _{\hat{e}S}\hat{e}J\hat{e} \rightarrow e_{0}J\hat{e}$ the morphism, such that $\underline{\varsigma }(a\otimes b)=ab$ for all $a \in e_{0}J\hat{e}, b\in \hat{e}J\hat{e}$.

The $\mathcal{D}^{e}$-module structure of $M$ is given by a morphism
$$h_{M}:M_{1}\otimes \hat{e}\,^*Je_{0}\rightarrow M_{0}: m\otimes \gamma \mapsto mw_{1,2}\gamma,$$ 
for $m\in M_{1}$ y $\gamma \in \hat{e}\,*Je_{0}$. 

\begin{proposition} 
A pair of morphisms
$$(f^{0},f^{1})\in \mathrm{Hom}_{S}(M,N)\times \mathrm{Hom}_{\hat{e}S-\hat{e}S}(T_{eRe}(eWe)_{1},\mathrm{Hom}_{K}(M,N))$$ 
is a morphism from $M$ to $N$ in $\mathrm{Mod}\, \mathcal{D}^e$ if and only if 
$$\tilde{h}_{N}f^{0}_{1}=(f^{0}_{0}\otimes id )\tilde{h}_{M}-(id\otimes \underline{\varsigma })(\tilde{h}_{N}\otimes id )\tilde{\underline{f}^{1}},$$
where $f^{0}_{0}:M_{0}\rightarrow N_{0}$ and $f^{0}_{1}:M_{1}\rightarrow N_{1}$ are morphisms induced by $f^{0}$, and $\tilde{h}_{N}, \tilde{h}_{M}, \tilde{\underline{f}^{1}}$ correspond to $h_N, h_M, \underline{f}^{1}$, respectively, following Remark \ref{morfismoGeneral}.
\end{proposition}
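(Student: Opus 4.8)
The plan is to unwind the definition of a morphism in $\mathrm{Mod}\,\mathcal{D}^e$ supplied by the appendix and to translate it, factor by factor, through the tensor--hom adjunction of Remark \ref{morfismoGeneral}. Recall that a pair $(f^0,f^1)$ is a morphism from $M$ to $N$ exactly when $f^0\in\mathrm{Hom}_S(M,N)$ and, for every solid arrow $\gamma$ in $W_0=w_{1,2}\hat e\,^{*}Je_0$ and every element $m$ of the corresponding component, the compatibility
\[
f^{0}_{1}(m)\,\gamma-f^{0}_{0}(m\gamma)=f^{1}(d_{e}\gamma)(m)
\]
holds, where $f^{0}$ splits through $f^{0}_{0}:M_{0}\to N_{0}$ and $f^{0}_{1}:M_{1}\to N_{1}$. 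By the descriptions just given, the first two terms are $h_{N}(f^{0}_{1}(m)\otimes\gamma')$ and $f^{0}_{0}h_{M}(m\otimes\gamma')$, writing $\gamma=w_{1,2}\gamma'$. Since $\hat e\,^{*}Je_0$ is generated over $e_0S$ by the elements $\hat u_{0,x}$, it suffices to check the equation for $\gamma=w_{1,2}\hat u_{0,x}$, $m\in M_{x}$, for each $x\in\mathscr{P}^\text{\large \Fontlukas m}$.

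First I would compute $d_{e}(w_{1,2}\hat u_{0,x})$; this is precisely the calculation already carried out in the proof of Theorem \ref{MatricesGeneralCoreps}, and it shows that $d_{e}\gamma$ lies in $W_{1}\otimes_{\hat eS}W_{0}$, being a sum of terms of the form $w_{1}\hat z\otimes w_{1,2}\hat v$ with $w_{1}\hat z\in\hat e\,^{*}J\hat e$ and $w_{1,2}\hat v\in\hat e\,^{*}Je_0$. The extension of $f^{1}$ evaluates on such a degree-one tensor as $f^{1}(w_{1}\hat z\otimes w_{1,2}\hat v)(m)=f^{1}(w_{1}\hat z)(m)\cdot(w_{1,2}\hat v)$, i.e. one first applies the $W_{1}$-component, which by definition is the map $\underline f^{1}:M_{1}\otimes\hat e\,^{*}J\hat e\to N_{1}$ landing in $N_{1}$, and then acts by the surviving $W_{0}$-factor through the module structure $h_{N}$ of $N$. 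Thus $f^{1}(d_{e}\gamma)(m)$ is a sum of terms $h_{N}\bigl(\underline f^{1}(m\otimes\hat z)\otimes w_{1,2}\hat v\bigr)$, carrying the sign coming from $d_{e}\gamma$.

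The core step is to recognise this composite, after passing to the adjoint maps of Remark \ref{morfismoGeneral}, as $(\mathrm{id}\otimes\underline\varsigma)(\tilde h_{N}\otimes\mathrm{id})\tilde{\underline f^{1}}$: the map $\tilde{\underline f^{1}}:M_{1}\to N_{1}\otimes_{\hat eS}\hat eJ\hat e$ produces the $N_{1}$-part together with the surviving arrow in $\hat eJ\hat e$; then $\tilde h_{N}\otimes\mathrm{id}$ applies the $N$-module structure to the $N_{1}$-factor, landing in $N_{0}\otimes e_{0}J\hat e\otimes\hat eJ\hat e$; and finally $\mathrm{id}\otimes\underline\varsigma$ performs the multiplication $e_{0}J\hat e\otimes_{\hat eS}\hat eJ\hat e\to e_{0}J\hat e$ that contracts the two surviving arrows into one, matching the pairing of $w_{1}\hat z$ against $w_{1,2}\hat v$ dictated by $d_{e}$. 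Writing the first two terms of the compatibility as $\tilde h_{N}f^{0}_{1}$ and $(f^{0}_{0}\otimes\mathrm{id})\tilde h_{M}$ through the same adjunction, the generator equation becomes exactly
\[
\tilde h_{N}f^{0}_{1}=(f^{0}_{0}\otimes\mathrm{id})\tilde h_{M}-(\mathrm{id}\otimes\underline\varsigma)(\tilde h_{N}\otimes\mathrm{id})\tilde{\underline f^{1}}.
\]
Because the correspondence of Remark \ref{morfismoGeneral} is a bijection and the $\hat u_{0,x}$ generate $\hat e\,^{*}Je_0$ as a bimodule, the implication reverses verbatim, which yields the converse and hence the stated equivalence.

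The main obstacle is bookkeeping: one must track which idempotent ($\hat e$ or $e_{0}$) and which tensor slot each arrow occupies, and verify that the three instances of the adjunction $\tilde{(\,\cdot\,)}$ are taken over the correct bimodules, so that $\underline\varsigma$ contracts precisely the pair of arrows produced by $d_{e}$ and the codomains $N_{0}\otimes_{S}e_{0}J\hat e$ agree on both sides. A secondary subtlety is the sign: the minus in the asserted identity is forced by the sign convention of the ditalgebra morphism condition together with the explicit minus in $d_{e}(w_{1,2}\hat u_{0,x})$, and I would confirm that these combine so as to survive the passage through the adjunction unchanged.
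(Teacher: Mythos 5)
Your proposal is correct and follows essentially the same route as the paper: unwind the ditalgebra morphism condition on elements $w_{1,2}\gamma$, pass all three resulting terms through the adjunction of Remark \ref{morfismoGeneral}, and identify the differential term (computed via dual bases, with its explicit minus sign) as $-(id\otimes \underline{\varsigma})(\tilde{h}_{N}\otimes id)\tilde{\underline{f}^{1}}$. The only caveat is that you reduce to the generators $\hat{u}_{0,x}$, which presupposes the one-dimensionality hypothesis that the paper only introduces \emph{after} this proposition; the paper's proof instead works with arbitrary dual bases of $e_{0}J$ and $\hat{e}J$, but since both sides are bimodule morphisms in $\gamma$ your reduction is harmless.
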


\begin{proof}
Consider some $\mathcal{R}_{0}$-space morphisms 
$$\sigma _{0},\sigma _{1},\sigma _{2}:M_{1}\otimes \hat{e}\,*Je_{0}\rightarrow N_{0},$$ 
defined as follows, for $m\in M_{1},\lambda \in \hat{e}\,^{*}J e_{0}$,
$$\sigma _{0}(m\otimes \lambda )=f^{0}_{1}(m)w_{1,2}\lambda ,$$
 $$ \sigma _{1}(m\otimes \lambda )=f_{0}^{0}(mw_{1,2}\lambda ),$$ 
 $$ \sigma _{2}(m\otimes \lambda )=f^{1}(d(w_{1,2}\lambda ))(m).$$
 
The pair $(f^{0},f^{1})$ is a morphism in $\mathrm{Mod}\, \mathcal{D}^e$ if and only if $\sigma _{0}=\sigma _{1}+\sigma _{2}$ and, by Remark \ref{morfismoGeneral}, this happens if and only if $\tilde{\sigma }_{0}=\tilde{\sigma }_{1}+\tilde{\sigma }_{2}$. 
 
We have $\sigma _{0}=h_{N}(f_{1}^{0}\otimes id )$ and $\sigma _{1}=f_{0}^{0}h_{M}$, therefore,
 $$\tilde{\sigma }_{0}=\Xi _{M_{1},N_{0}}(h_{N}(f_{1}^{0}\otimes id ))=\Xi _{N_{1},N_{0}}(h_{N})f_{1}^{0}=\tilde{h}_{N}f^{0}_{1};$$
 $$ \tilde{\sigma }_{1}=\Xi _{M_{1},N_{0}}(f_{0}^{0}h_{M})=(f_{0}^{0}\otimes id )\Xi _{M_{1},M_{0}}(h_{M})=(f^{0}_{0}\otimes id )\tilde{h}_{M}.$$
 
Let $\mathcal{B}_{0}$ be a basis for $e_0J$, and $\mathcal{B}_{1}$ be a basis for $\hat{e}J$, as left $eS$-modules. If $p\in \mathcal{B}_0 \cup \mathcal{B}_1$, its corresponding element in the dual space $\hat{p}\in e\,^*Je$, is such that $\hat{p}(w)=0$, if $w\in \mathcal{B}_0 \cup \mathcal{B}_1$ with $w\neq p$, and $\hat{p}(p)=e_0$, if $p\in \mathcal{B}_0$ or $\hat{p}(p)=\hat{e}$, if $p\in \mathcal{B}_1$.
 
We have
$$\tilde{\sigma }_{2}(m)=\sum _{p\in \mathcal{B}_{0}}f^{1}(d(w_{1,2}\hat{p}))(m)\otimes p$$
$$=-\sum _{\substack{p, \nu \in \mathcal{B}_{0} \\ \mu \in \mathcal{B}_{1}}}f^{1}(w_{1}\hat{\mu}\otimes w_{1,2}\hat{\nu }\hat{p}(\nu \mu ))(m)\otimes p$$
$$=-\sum _{\substack{p, \nu \in \mathcal{B}_{0} \\ \mu \in \mathcal{B}_{1}}}f^{1}(w_{1}\hat{\mu})(m)\otimes w_{1,2}\hat{\nu }\hat{p}(\nu \mu ))\otimes p$$
$$=-\sum _{\substack{\nu \in \mathcal{B}_{0} \\ \mu \in \mathcal{B}_{1}}}f^{1}(w_{1}\hat{\mu})(m)w_{1,2}\hat{\nu }\otimes \nu \mu $$
    $$=-\sum _{\substack{\nu \in \mathcal{B}_{0} \\ \mu \in \mathcal{B}_{1}}}h_{N}(f^{1}(w_{1}\hat{\mu})(m)\otimes\hat{\nu }) \otimes \nu \mu $$
 $$=-(id\otimes \underline{\varsigma })\sum _{\substack{\nu \in \mathcal{B}_{0} \\ \mu \in \mathcal{B}_{1}}}h_{N}(f^{1}(w_{1}\hat{\mu})(m)\otimes\hat{\nu }) \otimes \nu \otimes  \mu$$
      $$=-(id\otimes \underline{\varsigma })\sum _{\mu \in \mathcal{B}_{1}}\tilde{h}_{N}(f^{1}(w_{1}\hat{\mu})(m)) \otimes  \mu $$
       $$=-(id\otimes \underline{\varsigma })(\tilde{h}_{N}\otimes id) \left( \sum _{\mu \in \mathcal{B}_{1}}f^{1}(w_{1}\hat{\mu})(m) \otimes  \mu \right)$$
 $$=-(id \otimes \underline{\varsigma })(\tilde{h}_{N}\otimes id )\tilde{\underline{f}}^{1}(m).$$
 
It follows that $(f^{0},f^{1})$ is a morphism in $\mathrm{Mod}\, \mathcal{D}^e$ if and only if
$$\tilde{h}_{N}f^{0}_{1}=(f^{0}_{0}\otimes id )\tilde{h}_{M}-(id\otimes \underline{\varsigma })(\tilde{h}_{N}\otimes id )\tilde{\underline{f}^{1}},$$
as we wanted to prove. 
\end{proof}
 
Suppose that $\mathcal{R}_{0,x}$ is one-dimensional over $\mathcal{R}_{x}$, for all $x\in \mathscr{P}^\text{\large \Fontlukas m}$. Let $v_{0,x}$ be a generator for $\mathcal{R}_{0,x}$, as $\mathcal{R}_{x}$-space.
 
For every $a\in \mathcal{R}_{0}$, there exists some $\rho_{x}(a) \in \mathcal{R}_{x}$ such that $av_{0,x}=v_{0,x}\rho_{x}(a)$, then there is a field morphism $\rho _{x}:\mathcal{R}_{0} \rightarrow \mathcal{R}_{x} : a\mapsto \rho_{x}(a)$. 

Besides,  $v_{0,y}r \in \mathcal{R}_{0,x}$ for every $r\in \mathcal{R}_{y,x}$, so there is a $\mathcal{R}_x$-linear function $\chi _{y,x}: \mathcal{R}_{y,x} \rightarrow \mathcal{R}_{x}$ such that $v_{0,y}r=v_{0,x}\chi _{y,x}(r)$, for all $r\in \mathcal{R}_{y,x}$. 

If $\mathcal{B}(M)$ is a finite local basis for a $\mathcal{D}^{e}$-module $M$, we associate to it a matrix $\mathscr{M}(M)$, divided into stripes $\mathscr{M}(M)_{x}$, for every $x\in \mathscr{P}^\text{\large \Fontlukas m}$, where each $\mathscr{M}(M)_{x}$ has size $d_{0}\times d_{x}$, and coefficients in $\mathcal{R}_{x}$, which we denote $M^{x}_{j,i}$, such that 
$$\tilde{h}_{M}(m^{x}_{i})=\sum _{j=1}^{d_0}m_{j}^{0}\otimes u_{0,x}M^{x}_{j,i}, $$
for $u_{0,x}=e_{0,x}v_{0,x}$,  $m^{x}_{i}\in \mathcal{B}(M)\cap M_{x}$ and $m_{j}^{0}\in \mathcal{B}(M)\cap M_{0}$.

With this notation we have the following result.

\begin{theorem}\label{RepsMatriciales}
Two objects $M,N \in \mathrm{mod}\, \mathcal{D}^e$, with local basis $\mathscr{B}(M)$ and $\mathscr{B}(N)$, respectively, are isomorphic  if and only if there are non-singular square matrices $T_{0}\in \mathscr{M}_{d_0} (\mathcal{R}_0)$, $T_x\in \mathscr{M}_{d_x} (\mathcal{R}_x)$, for all $x\in \mathscr{P}^\text{\large \Fontlukas m}$, and for each $y < x$ in $\mathscr{P}^\text{\large \Fontlukas m}$, a matrix $T_{yx} \in \mathscr{M}_{d_x\times d_y} (\mathcal{R}_{y,x})$, such that
\[\mathscr{M}(M)_{x}=\rho _{x}(T_{0})\mathscr{M}(N)_{x} T_x +\sum _{y<x}\rho _{x}(T_{0})\chi _{y,x}(\mathscr{M}(N)_{y}T_{y,x}).\]
\end{theorem}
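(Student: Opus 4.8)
The plan is to reduce the isomorphism criterion to a matrix equation by feeding the morphism condition of the preceding proposition into the local bases $\mathscr{B}(M)$ and $\mathscr{B}(N)$. A pair $(f^{0},f^{1})$ is a morphism $M\to N$ in $\mathrm{Mod}\,\mathcal{D}^e$ exactly when
\[\tilde{h}_{N}f^{0}_{1}=(f^{0}_{0}\otimes id)\tilde{h}_{M}-(id\otimes \underline{\varsigma})(\tilde{h}_{N}\otimes id)\tilde{\underline{f}^1},\]
so it suffices to evaluate each of the three maps on a basis vector $m_{i}^{x}$ and match the coefficients of the free generator $u_{0,x}$ of $\mathcal{R}_{0,x}$ over $\mathcal{R}_{x}$. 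Since any isomorphism preserves the dimension vector, in the forward direction I would assume $\underline{d}(M)=\underline{d}(N)$, write $f^{0}_{0}$, the $f^{0}_{x}$, and the components $f^{1}_{x,y}\colon M_{x}\to N_{y}\otimes_{\mathcal{R}_{y}}e_{y}Je_{x}$ (extracted from $f^{1}$ via Remark \ref{morfismoGeneral}) as matrices $S_{0}\in\mathscr{M}_{d_0}(\mathcal{R}_{0})$, $T_{x}\in\mathscr{M}_{d_x}(\mathcal{R}_{x})$ and $T_{y,x}\in\mathscr{M}_{d_x\times d_y}(\mathcal{R}_{y,x})$, and then read the displayed identity off as an equality of matrices.

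The two \emph{diagonal} terms are routine. Applying $f^{0}_{1}$ first and then $\tilde{h}_{N}$, and using that $\tilde{h}_{N}$ is right $\hat{e}S$-linear, the term $\tilde{h}_{N}f^{0}_{1}(m_{i}^{x})$ contributes the $x$-stripe $u_{0,x}\,\mathscr{M}(N)_{x}T_{x}$. For $(f^{0}_{0}\otimes id)\tilde{h}_{M}(m_{i}^{x})$, expanding $\tilde{h}_{M}(m_{i}^{x})=\sum_{j}m_{j}^{0}\otimes u_{0,x}M_{j,i}^{x}$ and pushing the scalars $(S_{0})_{s,j}\in\mathcal{R}_{0}$ through the tensor forces them across $v_{0,x}$; the relation $av_{0,x}=v_{0,x}\rho_{x}(a)$ then produces the field morphism $\rho_{x}$, giving the $x$-stripe $u_{0,x}\,\rho_{x}(S_{0})\mathscr{M}(M)_{x}$. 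After cancelling the free generator $u_{0,x}$ and setting $T_{0}=S_{0}^{-1}$ (so that $\rho_{x}(T_{0})=\rho_{x}(S_{0})^{-1}$, since $\rho_{x}$ is a ring morphism), these two terms already assemble the $\rho_{x}(T_{0})\mathscr{M}(N)_{x}T_{x}$ and $\mathscr{M}(M)_{x}$ parts of the claimed equation.

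The main obstacle is the third term $(id\otimes\underline{\varsigma})(\tilde{h}_{N}\otimes id)\tilde{\underline{f}^1}$, which I must show equals $\sum_{y<x}\chi_{y,x}(\mathscr{M}(N)_{y}T_{y,x})$ on the $x$-stripe; this is the analogue of the $d_{e}(w_{1,2}\hat{u}_{0,x})$ computation in the proof of Theorem \ref{MatricesGeneralCoreps}, now carried out with coefficients in $\mathcal{R}_{x}$. I would choose, for each $y<x$, a left $\mathcal{R}_{y}$-basis of $\mathcal{R}_{y,x}$ with dual basis in $e_{x}\,^{*}Je_{y}$, evaluate $\tilde{\underline{f}^1}(m_{i}^{x})$ through the components $f^{1}_{x,y}$ and the matrices $T_{y,x}$, then apply $(\tilde{h}_{N}\otimes id)$ and contract with $\underline{\varsigma}$. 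The key identity $v_{0,y}r=v_{0,x}\chi_{y,x}(r)$ is what converts the generator $u_{0,y}$ of $\mathcal{R}_{0,y}$ into $u_{0,x}$ times a scalar, introducing $\chi_{y,x}$; the $\mathcal{R}_{x}$-linearity of $\chi_{y,x}$ then lets me absorb the entries of $T_{y,x}$ to obtain $\chi_{y,x}(\mathscr{M}(N)_{y}T_{y,x})$ after cancelling $u_{0,x}$. Combining the three contributions and multiplying through by $\rho_{x}(T_{0})$ yields the stated equation.

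For the converse I would run the computation backwards: given matrices $T_{0},T_{x},T_{y,x}$ satisfying the equation, I define $f^{0}_{0}$, the $f^{0}_{x}$ and the $f^{1}_{x,y}$ by these matrices. Invertibility of $T_{0}$ and of the $T_{x}$ makes $f^{0}$ an $eRe$-module isomorphism, and the matrix identity is precisely the morphism criterion of the preceding proposition, so $(f^{0},f^{1})$ is a morphism, hence an isomorphism $M\to N$ because $f^{0}$ is invertible.
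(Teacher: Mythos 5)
Your proposal is correct and follows essentially the same route as the paper: both reduce the isomorphism criterion to the morphism identity $\tilde{h}_{N}f^{0}_{1}=(f^{0}_{0}\otimes id)\tilde{h}_{M}-(id\otimes \underline{\varsigma})(\tilde{h}_{N}\otimes id)\tilde{\underline{f}^1}$ evaluated on the local bases, extract $\rho_x$ from pushing the entries of the matrix of $f^0_0$ past $v_{0,x}$, extract $\chi_{y,x}$ from $v_{0,y}r=v_{0,x}\chi_{y,x}(r)$ in the $\tilde{\underline{f}^1}$ term, and finish by inverting the matrix of $f^0_0$ and reversing the computation for the converse. The only cosmetic difference is that you frame the third term as an explicit $d_e(w_{1,2}\hat{u}_{0,x})$-style computation, whereas the paper reads it off directly from the packaged form in the preceding proposition; this changes nothing of substance.
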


\begin{proof}
Consider an arbitrary pair  
$$(f^{0},f^{1})\in \mathrm{Hom}_{S}(M,N)\times \mathrm{Hom}_{e^{\prime }S-e^{\prime }S}(T_{eRe}(eWe)_{1},\mathrm{Hom}_{K}(M,N)),$$
the morphism $f^{0}$ induces an $\mathcal{R}_{0}$-linear morphism $f_{0}^{0}:M_{0}\rightarrow N_{0}$, and an $\mathcal{R}_{x}$-linear morphism, $f^{0}_{x}:M_{x}\rightarrow N_{x}$,  for each $x\in \mathscr{P}^\text{\large \Fontlukas m}$. 

Let $L_{0}=(c^{0}_{j,i})$ and $T_{x}=(r^{x}_{j,i})$ be the matrices of $f^{0}_{0}$ and $f^{0}_{x}$, with respect to $\mathcal{B}(M)$ and $\mathcal{B}(N)$. 

The vectors $\underline{d}(M)=(d^M_x : x \in \underline{\mathscr{P}}^\text{\large \Fontlukas m})$, $\underline{d}(N)=(d^N_x : x \in \underline{\mathscr{P}}^\text{\large \Fontlukas m})$ are the dimension of $M$ and $N$, respectively.

For $m^{x}_{i}\in \mathcal{B}(M)\cap M_{x}$ we have
$$\tilde{\underline{f}}^{1}(m_{i}^{x})=\sum _{y<x}\sum _{j=1}^{d_y^N}n_{j}^{y}\otimes e_{y,x}t_{j,i}^{y,x}$$
with $t_{j,i}^{y,x}\in \mathcal{R}_{y,x}$.

Hence, $(f^{0},f^{1})$ determine some matrices $L_{0}$, $T_{x}$ and $T_{y,x}=(t_{j,i}^{y,x})$, as we wanted.

Conversely, if there exist some matrices $L_{0}$, $T_{x}$ for all $x\in \mathscr{P}^\text{\large \Fontlukas m}$, and $T_{yx}$ for each $y<x$, as in the hypothesis, they define a pair $(f^{0},f^{1})$. Moreover $\underline{d}(M)=\underline{d}(N)$.

By the previous proposition, the pair $(f^{0},f^{1})$ is a morphism from $M$ to $N$, if and only if, for every $m^{x}_{i}\in \mathcal{B}(M)\cap M_{x}$,
$$\tilde{h}_{N}f^{0}_{x} (m_{i}^{x}) =(f^{0}_{0}\otimes id )\tilde{h}_{M} (m_{i}^{x}) - (id\otimes \underline{\varsigma })(\tilde{h}_{N}\otimes id )\tilde{\underline{f}^{1}} (m_{i}^{x}).$$

We have,
\begin{align*}
\tilde{h}_{N}f_{x}^{0}(m_{i}^{x}) &= \sum _{j=1}^{d^N_x}\tilde{h}_{N}(n_j^{x}r_{j,i}^{x})=\sum _{j=1}^{d^N_y}\tilde{h}_{N}(n_j^{x})r_{j,i}^{x}\\
 &= \sum _{j=1}^{d^N_x}\sum _{s=1}^{d^N_0}n_{s}^{0}\otimes u_{0,x}N^{x}_{s,j}r_{j,i}^{x};
\end{align*}
besides,
\begin{align*}
(f^{0}_{0}\otimes id)\tilde{h}_{M}(m_{i}^{x}) &= \sum _{j=1}^{d^M_0}(f^{0}_{0}\otimes id)(m_{j}^{0}\otimes u_{0,x}M_{j,i}^{x})\\
 &= \sum _{j=1}^{d^M_0}\sum _{s=1}^{d^N_0}n_{s}^{0}c_{s,j}\otimes u_{0,x}M^{x}_{j,i}\\
 &= \sum _{j=1}^{d^M_0}\sum _{s=1}^{d^N_0}n_{s}^{0}\otimes e_{0,x}c_{s,j} v_{0,x}M^{x}_{j,i}\\
 &= \sum _{j=1}^{d^M_0}\sum _{s=1}^{d^N_0} n_{s}^{0}\otimes e_{0,x}v_{0,x}\rho _{x}(c_{s,j}) M^{x}_{j,i}\\
 &= \sum _{j=1}^{d^M_0}\sum _{s=1}^{d^N_0} n_{s}^{0}\otimes u_{0,x}\rho _{x}(c_{s,j}) M^{x}_{j,i}.
\end{align*}

To write the whole equation in terms of the basis of $N_0$,
\begin{align*}
(id\otimes \underline{\varsigma })(\tilde{h}_{N}\otimes id )\tilde{\underline{f}^{1}}(m^{x}_{i}) &= \sum _{y<x}\sum _{j=1}^{d^N_y} (id\otimes \underline{\varsigma })(\tilde{h}_{M}\otimes id ) \left( n_{j}^{y}\otimes e_{y,x}t_{j,i}^{y,x} \right)\\
 &= \sum _{y<x}\sum _{j=1}^{d^N_y} \sum _{s=1}^{d^N_0}  (id\otimes \underline{\varsigma }) \left( n^{0}_{s}\otimes e_{0,y}v_{0,y}N^{y}_{s,j} \otimes e_{y,x}t_{j,i}^{y,x} \right)\\
 &= \sum _{y<x}\sum _{j=1}^{d^N_y} \sum _{s=1}^{d^N_0} n^{0}_{s}\otimes e_{0,x}v_{0,y}N_{s,j}^{y}t^{y,x}_{j,i}\\
 &= \sum _{y<x}\sum _{j=1}^{d^N_y} \sum _{s=1}^{d^N_0} n^{0}_{s}\otimes e_{0,x}v_{0,x}\chi _{y,x}(N_{s,j}^{y}t^{y,x}_{j,i})\\
  &= \sum _{y<x}\sum _{j=1}^{d^N_y} \sum _{s=1}^{d^N_0} n^{0}_{s}\otimes u_{0,x}\chi _{y,x}(N_{s,j}^{y}t^{y,x}_{j,i}).
\end{align*}

The pair $(f^{0},f^{1})$ given by the matrices $L_{0}$, $T_{x}$, $T_{y,x}$ determines a morphism from $M$ to $N$ if and only if:
$$\mathscr{M}(N)_{x}T_{x}=\rho _{x}(L_{0})\mathscr{M}_{x}-\sum _{y<x}\chi _{y,x}(\mathscr{M}(N)_{y}T_{y,x});$$
or equivalently:
\begin{equation}\label{ParEsMorfismo}
\rho _{x}(L_{0})\mathscr{M}_{x} = \mathscr{M}(N)_{x}T_{x} + \sum _{y<x}\chi _{y,x}(\mathscr{M}(N)_{y}T_{y,x}).
\end{equation}

Moreover, $(f^{0},f^{1}):M\rightarrow N$ is an isomorphism if and only if (\ref{ParEsMorfismo}) holds, with non-singular square matrices $L_{0}$ and $T_{x}$. By denoting $T_0=L_0^{-1}$, this is equivalent to
$$\mathscr{M}(M)_{x}=\rho _{x}(T_{0})\mathscr{M}(N)_{x} T_x +\sum _{y<x}\rho _{x}(T_{0})\chi _{y,x}(\mathscr{M}(N)_{y}T_{y,x}),$$
and the theorem is proved.
\end{proof}

We will apply this theorem to obtain matrix problems to classify representations of generalized equipped posets and $p$-equipped posets.

%%%%%%%%%%%%%%%%%%%%%%%%%%%%%%%%%%%%%%%%%%%%%%%%%%%%%%%%%%%%%%%%%%%%%%%%%%%%%%%%%%%%%%%%
\subsubsection{Matrix problem for representations of generalized equipped, and $p$-equipped posets}

Consider an algebraically equipped poset $(\mathscr{P}_{0},A, \mathcal{T}_{0})$ as in Section \ref{genSonALg} or \ref{pequipadosSonALgRep}, we construct $(\mathscr{P},A, \mathcal{T})$, where $\mathscr{P}=\underline{\mathscr{P}_0}^\text{\large \Fontlukas m}$ is obtained by adding to $\mathscr{P}_0$, a  maximal point $\text{\large \Fontlukas m}$ and a minimal point $0$, and $\mathcal{T}=\underline{\mathcal{T}_0}^\text{\large \Fontlukas m}$ is obtained from $\mathcal{T}_0$, as in (\ref{sisMulti1gor}). 

Recall that $A=(\End_{\textsf{K}}\textsf{L})^{op}$, where $\textsf{L}$ is a normal extension of degree $n$ over $\textsf{K}$ (it could be purely inseparable or a Galois extension). Clearly, $\textsf{L}=\textsf{K}(\xi)$, for some primitive element $\xi$.

For each $x\in \mathscr{P}_{0}$, we have $\mathcal{T}_{x}\cong (\End_{\textsf{K}(x)}\textsf{L})^{op}$, for some field $\textsf{K} \subseteq \textsf{K}(x) \subseteq \textsf{L}$. Hence, if $n(x)$ is the degree of $\textsf{L}$ over $\textsf{K}(x)$, a $\textsf{K}(x)$-basis for $\textsf{L}$ is $\{1,\xi,\xi^2,\ldots,\xi^{n(x)-1}\}$.
 
Therefore $\textsf{L} = \textsf{K}(x)\oplus \xi \textsf{K}(x)+...+\xi ^{n(x)-1}\textsf{K}(x)$. For all $i\in \{0,1,\ldots,n(x)-1\}$, there are projections $\pi_{x}^{i}: \textsf{L} \rightarrow \xi ^{i}\textsf{K}(x)$, and inclusions $\textsf{\emph{i}}^{i}_{x}:\xi ^{i}\textsf{K}(x)\rightarrow \textsf{L}$. In particular, we have idempotents $\varepsilon_{x}^{i}=\textsf{\emph{i}}_{x}^{i}\pi _{x}^{i}$, and the identity $id_{\textsf{L}}=\sum _{i=0}^{n(x)-1}\varepsilon_{x}^{i}$ is a sum of orthogonal primitive idempotents mutually isomorphic. 

As in Section \ref{Morita}, we construct an algebraically equipped poset $(\mathscr{P},A, \mathcal{R})$, with the admissible system $\mathcal{R} = \{\mathcal{R}_{x,y} = \varepsilon_{x}^{0}\mathcal{T}_{x,y}\varepsilon_{y}^{0}\}_{\substack{x\leq y \\ x,y\in \mathscr{P}}}$, in such a way that the algebras $\Lambda(\mathcal{T})$ and $\Lambda(\mathcal{R})$ are Morita equivalent. Notice that $\mathcal{R}_x \cong \textsf{K}(x)$.

For any $f\in A$, the image of $\xi^i$, for all  $i\in \{0,1,\dots,n-1\}$, has the form $f(\xi^i)=\sum_{j=0}^{n-1}a_{i+1,j+1}\xi^j$, with some $a_{i+1,j+1}\in \textsf{K}$. We have an isomorphism
\[\Theta: A \rightarrow \mathscr{M}_n(\textsf{K}): f\mapsto (a_{i+1,j+1}).\] 

If $X$ is a subset of $A$, we denote by $\underline{X}$ its image in $\mathscr{M}_n(\textsf{K})$ under the isomorphism $\Theta$.

There are two additional algebraically equipped posets $(\mathscr{P},\mathscr{M}_n(\textsf{K}),\underline{\mathcal{T}})$, where $\underline{\mathcal{T}}=\{ \underline{\mathcal{T}}_{x,y}\}_{\substack{x\leq y \\ x,y\in \mathscr{P}}}$, and $(\mathscr{P}, \mathscr{M}_n(\textsf{K}),\underline{\mathcal{R}})$, where $\underline{\mathcal{R}}=\{ \underline{\mathcal{R}}_{x,y}\}_{\substack{x\leq y \\ x,y\in \mathscr{P}}}$. The algebras $\Lambda(\underline{\mathcal{T}})$ and $\Lambda(\underline{\mathcal{R}})$ are Morita equivalent.

We will use the following notation:
\[\varepsilon_{x} = \Theta(\varepsilon_{x}^{0}) \text{ for all } x\in \mathscr{P};\]
\[m_a = \Theta(\mu_a) \text{ for all } a \in \textsf{L};\]
\[\Delta _{x} = \textsf{K} \langle m_a \in \mathscr{M}_n(\textsf{K}) | a\in \textsf{K}(x) \rangle;\]
the definition of $\mu_a$ is in (\ref{defmu}).

The admissible system $\mathcal{R}$ has the following property.

\begin{lemma} 
For every $x\in \mathscr{P}$, the $\mathcal{R}_{0}-\mathcal{R}_{x}$-bimodule $\mathcal{R}_{0,x}$, is one-dimensional over $\mathcal{R}_{x}$.
\end{lemma}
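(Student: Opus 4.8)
The plan is to reduce everything to a dimension count inside the endomorphism algebra $A=(\End_{\textsf{K}}\textsf{L})^{op}$. Since $0$ is the minimal point, the extended multiplicative system satisfies $\mathcal{T}_{0,x}=A$, and so by the definition of the admissible system $\mathcal{R}$ (equation (\ref{sisAdm1gor})) we have $\mathcal{R}_{0,x}=\varepsilon_{0}^{0}A\varepsilon_{x}^{0}$. Thus the whole statement rests on understanding the two idempotents $\varepsilon_{0}^{0}$ and $\varepsilon_{x}^{0}$ as $\textsf{K}$-linear projections of $\textsf{L}$.

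First I would identify the images of these idempotents. By construction $\varepsilon_{x}^{0}=\textsf{\emph{i}}_{x}^{0}\pi_{x}^{0}$ is the projection of $\textsf{L}$ onto $\xi^{0}\textsf{K}(x)=\textsf{K}(x)$ along $\xi\textsf{K}(x)\oplus\cdots\oplus\xi^{n(x)-1}\textsf{K}(x)$, so its image is $\textsf{K}(x)$, of $\textsf{K}$-dimension $[\textsf{K}(x):\textsf{K}]=n/n(x)$. For the minimal point one has $\textsf{K}(0)=\textsf{K}$ and $n(0)=n$ (because $\mathcal{T}_{0}=A=(\End_{\textsf{K}}\textsf{L})^{op}$), so $\varepsilon_{0}^{0}$ is the projection onto $\xi^{0}\textsf{K}=\textsf{K}$, whose image is one-dimensional over $\textsf{K}$.

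Next I would invoke the elementary fact that for any two idempotents $e,f\in\End_{\textsf{K}}\textsf{L}$ there is a $\textsf{K}$-linear isomorphism $e(\End_{\textsf{K}}\textsf{L})f\cong\Hom_{\textsf{K}}(\I f,\I e)$, given by $\phi\mapsto\phi|_{\I f}$, so that $\dim_{\textsf{K}}e(\End_{\textsf{K}}\textsf{L})f=\dim_{\textsf{K}}(\I e)\cdot\dim_{\textsf{K}}(\I f)$; passing to the opposite multiplication of $A$ merely reverses the roles of the two idempotents and does not change this $\textsf{K}$-dimension. Applying it to $e=\varepsilon_{0}^{0}$ and $f=\varepsilon_{x}^{0}$ yields
\[\dim_{\textsf{K}}\mathcal{R}_{0,x}=1\cdot[\textsf{K}(x):\textsf{K}]=[\textsf{K}(x):\textsf{K}],\]
and in particular $\mathcal{R}_{0,x}\neq 0$.

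Finally, since $\mathcal{R}_{x}\cong\textsf{K}(x)$ is a field with $\dim_{\textsf{K}}\mathcal{R}_{x}=[\textsf{K}(x):\textsf{K}]$, and $\mathcal{R}_{0,x}$ is a right $\mathcal{R}_{x}$-module (hence a right $\mathcal{R}_{x}$-vector space, $\mathcal{R}_{x}$ being a division ring), the tower formula $\dim_{\textsf{K}}\mathcal{R}_{0,x}=(\dim_{\mathcal{R}_{x}}\mathcal{R}_{0,x})(\dim_{\textsf{K}}\mathcal{R}_{x})$ forces $\dim_{\mathcal{R}_{x}}\mathcal{R}_{0,x}=1$, which is the claim. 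The step needing the most care is keeping the two field layers straight: one must remember that $\mathcal{R}_{x}=\varepsilon_{x}^{0}\mathcal{T}_{x}\varepsilon_{x}^{0}$ is the \emph{small} subfield $\cong\textsf{K}(x)$ and not the larger algebra $\varepsilon_{x}^{0}A\varepsilon_{x}^{0}\cong\mathscr{M}_{n/n(x)}(\textsf{K})$, so that $\mathcal{R}_{0,x}$ is genuinely $[\textsf{K}(x):\textsf{K}]$-dimensional over $\textsf{K}$ yet only one-dimensional over $\mathcal{R}_{x}$. The only other point of bookkeeping is the opposite product of $A$, which, as noted, affects neither the subspace $\varepsilon_{0}^{0}A\varepsilon_{x}^{0}$ nor its dimension.
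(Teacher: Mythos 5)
Your proof is correct and follows essentially the same route as the paper: both arguments compute $\dim_{\textsf{K}}\mathcal{R}_{0,x}=\dim_{\textsf{K}}\Hom_{\textsf{K}}(\textsf{K},\textsf{K}(x))=[\textsf{K}(x):\textsf{K}]$ (you via the idempotent images, the paper by directly identifying $\varepsilon_{0}\mathscr{M}_n(\textsf{K})\varepsilon_{x}$ with that Hom-space) and then divide by $\dim_{\textsf{K}}\mathcal{R}_{x}=[\textsf{K}(x):\textsf{K}]$ using the right $\mathcal{R}_{x}$-vector space structure of $\mathcal{R}_{0,x}$. Your extra care with the opposite multiplication and with distinguishing $\mathcal{R}_{x}\cong\textsf{K}(x)$ from $\varepsilon_{x}^{0}A\varepsilon_{x}^{0}$ is sound and only makes explicit what the paper leaves implicit.
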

 
\begin{proof}
We have that $\mathcal{R}_{0,x}$ is an $\mathcal{R}_{x}$-vector space, hence, its dimension over $\textsf{K}$ is a multiple of $\mathrm{dim}_{\textsf{K}}\mathcal{R}_{x}=\mathrm{dim}_{\textsf{K}}\textsf{K}(x)$. On the other hand, $\mathcal{R}_{0,x}$ is isomorphic to $\varepsilon_{0}M_{n}(\textsf{K})\varepsilon _{x}$, which is isomorphic, as $\textsf{K}$-vector space, to $\mathrm{Hom}_{\textsf{K}}(\textsf{K},\textsf{K}(x))$. This last space has dimension $\mathrm{dim}_{\textsf{K}}\textsf{K}(x)$, consequently $\mathrm{dim}_{\mathcal{R}_{x}}\mathcal{R}_{0,x}=1$, and the lemma is proved.
\end{proof}

For any $\mathcal{D}^{e}$-module $M$, with finite local basis $\mathscr{B}(M)$, we construct a matrix representation such that $\mathscr{M}(M)_{x} \in \mathscr{M}_{d_0,d_x}(\mathcal{R}_{x})$, for $x\in \mathscr{P}$, $x>0$.

Each $\mathcal{R}_{x}$ is isomorphic to $\textsf{K}(x)$, then, by using the isomorphism $\Theta$, we will construct matrix representations with coefficients in $\textsf{K}(x)$. To do this we will use the following result:

\begin{lemma} 
For every $x\in \mathscr{P}$, $x>0$, we have
$$\underline{\mathcal{R}}_x=\varepsilon_{x}\Delta _{x}\varepsilon_{x}.$$
\end{lemma}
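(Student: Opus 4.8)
The plan is to pull the claimed identity back along the $\textsf{K}$-algebra isomorphism $\Theta\colon A\to\mathscr{M}_n(\textsf{K})$. Since $\Theta$ sends $\varepsilon_x^0$ to $\varepsilon_x$, each $\mu_a$ to $m_a$, and $\mathcal{T}_x$ to $\underline{\mathcal{T}}_x$, we have $\Theta^{-1}(\Delta_x)=\textsf{K}\langle \mu_a\mid a\in\textsf{K}(x)\rangle$ and $\Theta^{-1}(\underline{\mathcal{R}}_x)=\mathcal{R}_x=\varepsilon_x^0\mathcal{T}_x\varepsilon_x^0$. Because $\Theta$ is an isomorphism, applying it to a proved equality in $A$ yields the statement, so it is enough to establish inside $A$ the equality
\[
\varepsilon_x^0\mathcal{T}_x\varepsilon_x^0 \;=\; \varepsilon_x^0\bigl(\textsf{K}\langle\mu_a\mid a\in\textsf{K}(x)\rangle\bigr)\varepsilon_x^0 .
\]

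First I would prove the inclusion $\varepsilon_x^0\bigl(\textsf{K}\langle\mu_a\mid a\in\textsf{K}(x)\rangle\bigr)\varepsilon_x^0\subseteq\mathcal{R}_x$. For $a\in\textsf{K}(x)$ the multiplication map $\mu_a$ is $\textsf{K}(x)$-linear, hence $\mu_a\in\End_{\textsf{K}(x)}\textsf{L}=\mathcal{T}_x$; here I use that, by \eqref{TxSubalgebra}, $\mathcal{T}_x$ is precisely the subring $\End_{\textsf{K}(x)}\textsf{L}$ of $\End_\textsf{K}\textsf{L}$ equipped with the opposite product. Thus $\textsf{K}\langle\mu_a\mid a\in\textsf{K}(x)\rangle\subseteq\mathcal{T}_x$, and multiplying by $\varepsilon_x^0$ on both sides gives the inclusion into $\varepsilon_x^0\mathcal{T}_x\varepsilon_x^0=\mathcal{R}_x$.

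For the reverse inclusion I would compare $\textsf{K}$-dimensions. We already know $\mathcal{R}_x\cong\textsf{K}(x)$, so $\dim_\textsf{K}\mathcal{R}_x=[\textsf{K}(x):\textsf{K}]$. Consider the $\textsf{K}$-linear map $\iota\colon\textsf{K}(x)\to\mathcal{R}_x$ given by $\iota(a)=\varepsilon_x^0\mu_a\varepsilon_x^0$; since $a\mapsto\mu_a$ is linear, $\textsf{K}\langle\mu_a\mid a\in\textsf{K}(x)\rangle=\{\mu_a\mid a\in\textsf{K}(x)\}$, and hence $\varepsilon_x^0\bigl(\textsf{K}\langle\mu_a\mid a\in\textsf{K}(x)\rangle\bigr)\varepsilon_x^0=\operatorname{Im}\iota$. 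Evaluating the endomorphism $\iota(a)$ at $1\in\textsf{L}$ returns $a$, because $\varepsilon_x^0$ is the projection onto the summand $\xi^0\textsf{K}(x)=\textsf{K}(x)$ and so fixes $\textsf{K}(x)$ pointwise; therefore $\iota$ is injective and $\dim_\textsf{K}\operatorname{Im}\iota=[\textsf{K}(x):\textsf{K}]$. A subspace of $\mathcal{R}_x$ having the same finite dimension as $\mathcal{R}_x$ must equal it, which yields the reverse inclusion and hence the equality in $A$. Applying $\Theta$ gives $\underline{\mathcal{R}}_x=\varepsilon_x\Delta_x\varepsilon_x$.

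The only genuinely delicate points are bookkeeping: that $\Theta$ really is an isomorphism of $\textsf{K}$-algebras from $A$ with its opposite product $*$ onto $\mathscr{M}_n(\textsf{K})$, so that it transports the threefold corner product $\varepsilon_x^0\mathcal{T}_x\varepsilon_x^0$ faithfully; and the identification of $\mathcal{T}_x$ with the actual subset $\End_{\textsf{K}(x)}\textsf{L}$ of $\End_\textsf{K}\textsf{L}$, which is what licenses the membership $\mu_a\in\mathcal{T}_x$. Both hold uniformly in the Galois and purely inseparable cases (for a weak $p$-point one has $\textsf{K}(x)=\textsf{L}$ and $\varepsilon_x^0=\mathrm{id}$, for a strong one $\textsf{K}(x)=\textsf{K}$), so once these identifications are checked no further case distinction is needed.
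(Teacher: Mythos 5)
Your argument is correct, and its first half coincides with the paper's: both proofs get $\varepsilon_{x}\Delta_{x}\varepsilon_{x}\subseteq \underline{\mathcal{R}}_x$ from the observation that $\mu_a$ is $\textsf{K}(x)$-linear, hence lies in $\mathcal{T}_x=\End_{\textsf{K}(x)}\textsf{L}$ (as a subalgebra of $A$, per (\ref{TxSubalgebra})). Where you diverge is the reverse inclusion. The paper argues constructively: any $\alpha=\varepsilon_{x}^{0}\alpha\varepsilon_{x}^{0}$ with $\alpha\in\End_{\textsf{K}(x)}\textsf{L}$ satisfies $\alpha(a)=\alpha(1)a$ on $\textsf{K}(x)$, so $\alpha=\varepsilon_{x}^{0}\mu_{c}\varepsilon_{x}^{0}$ with $c=\alpha(1)$, exhibiting an explicit preimage in $\Delta_x$. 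You instead run a dimension count: the map $a\mapsto\varepsilon_{x}^{0}\mu_{a}\varepsilon_{x}^{0}$ is injective (evaluation at $1$ recovers $a$), so its image is a $[\textsf{K}(x):\textsf{K}]$-dimensional subspace of $\mathcal{R}_x$, which has the same dimension since $\mathcal{R}_x\cong\textsf{K}(x)$, forcing equality. Both are valid; your version is shorter but leans on the previously stated isomorphism $\mathcal{R}_x\cong\textsf{K}(x)$ (itself a corner-ring fact independent of this lemma, so there is no circularity), whereas the paper's computation is self-contained and, as a bonus, identifies the isomorphism $\underline{\mathcal{R}}_x\to\textsf{K}(x)$ concretely as $\varepsilon_{x}m_{c}\varepsilon_{x}\mapsto c$, which is exactly the map $\phi_x$ used immediately afterwards. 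Your attention to the bookkeeping around $\Theta$ respecting the opposite product $*$ is well placed and checks out.
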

 
\begin{proof}
Each $a\in \textsf{K}(x)$, determines a $\mu_a \in \mathcal{T}_{x}$, then $\varepsilon_{x} m_a \varepsilon_{x} \in \varepsilon_{x}\underline{\mathcal{T}}_{x}\varepsilon_{x} = \underline{\mathcal{R}}_x$. Hence, $\varepsilon_{x}\Delta _{x}\varepsilon_{x} \subseteq \underline{\mathcal{R}}_x$.

The elements in $\underline{\mathcal{R}}_x$ have the form $\Theta(\alpha)$, for $\alpha =\varepsilon_{x}^{0}\alpha \varepsilon_{x}^{0}$, such that $\alpha \in \End_{\textsf{K}(x)}\textsf{L}$. 

If $\alpha \neq 0$, notice that $\alpha (1) \in \textsf{K}(x)$ and $\alpha (a) = \alpha (1) a$, for all $a\in \textsf{K}(x)$. Denoting $c=\alpha (1)$, we have $\alpha = \varepsilon_{x}^{0} \mu_c \varepsilon_{x}^{0}$. Hence, $\Theta(\alpha) = \varepsilon_{x} m_c \varepsilon_{x}$, therefore $\underline{\mathcal{R}}_x \subseteq \varepsilon_{x}\Delta _{x}\varepsilon_{x}$, which finishes the proof.
\end{proof}

Every $r\in \underline{\mathcal{R}}_x=$ can be written as $r=\varepsilon_{x}m_{a}\varepsilon_{x}$, for some $a\in \textsf{K}(x)$, so we have an isomorphism:
$$\phi _{x}:\underline{\mathcal{R}}_x \rightarrow \textsf{K}(x) : \varepsilon_{x}m_{a}\varepsilon_{x} \mapsto a.$$

From the matrix $\mathscr{M}(M)_{x}$, with values in $\mathcal{R}_{x}$, we get the matrix $\underline{\mathscr{M}}(M)_{x}$ over $\underline{\mathcal{R}}_x$, and then, we obtain the matrix $\phi _{x}(\underline{\mathscr{M}}(M)_{x})$ with coefficients in $\textsf{K}(x)$. We denote:
$$\underline{M}_{x} = \phi _{x}(\underline{\mathscr{M}}(M)_{x}).$$

The matrix $\underline{M}$ divided into vertical stripes $\underline{M}_{x}$, for all $x\in \mathscr{P}$, $x>0$, is called matrix representation of $M$.

To calculate $\rho _{x}: \underline{\mathcal{R}}_{0} \rightarrow \underline{\mathcal{R}}_{x}$, we use the next lemma:

\begin{lemma} 
The idempotents satisfy
$$ \varepsilon_{0} \varepsilon_{x} = \varepsilon_{0},$$
for every $x\in \mathscr{P}$.
\end{lemma}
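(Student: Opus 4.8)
The plan is to unwind the matrix identity back to the endomorphism algebra $\End_\textsf{K}\textsf{L}$ where the two idempotents actually live. Since $\Theta: A \to \mathscr{M}_n(\textsf{K})$ is an algebra isomorphism and $\varepsilon_0 = \Theta(\varepsilon_0^0)$, $\varepsilon_x = \Theta(\varepsilon_x^0)$, the desired equality $\varepsilon_0\varepsilon_x = \varepsilon_0$ is equivalent to $\Theta(\varepsilon_0^0 * \varepsilon_x^0) = \Theta(\varepsilon_0^0)$, hence to $\varepsilon_0^0 * \varepsilon_x^0 = \varepsilon_0^0$ in $A$. Recalling that the multiplication in $A = (\End_\textsf{K}\textsf{L})^{op}$ is $h*h' = h'h$, this is in turn equivalent to the composition identity $\varepsilon_x^0 \circ \varepsilon_0^0 = \varepsilon_0^0$ in $\End_\textsf{K}\textsf{L}$, which is what I would actually verify.

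First I would identify the two idempotents explicitly. For the minimal point, the definition of the multiplicative system in (\ref{sisMulti1gor}) gives $\mathcal{T}_0 = \mathcal{T}_{0,0} = A$, so $\textsf{K}(0) = \textsf{K}$ and $n(0) = n$; consequently $\varepsilon_0^0 = \textsf{\emph{i}}_0^0\pi_0^0$ is the projection of $\textsf{L} = \textsf{K}\oplus \xi\textsf{K}\oplus\cdots\oplus\xi^{n-1}\textsf{K}$ onto its first summand $\textsf{K}$, followed by the inclusion. In particular $\I\,\varepsilon_0^0 = \textsf{K}$. For a general $x$, the map $\varepsilon_x^0 = \textsf{\emph{i}}_x^0\pi_x^0$ is the projection of $\textsf{L} = \textsf{K}(x)\oplus\xi\textsf{K}(x)\oplus\cdots$ onto the summand $\textsf{K}(x)$ followed by the inclusion, so $\varepsilon_x^0$ restricts to the identity on $\textsf{K}(x)$.

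The key observation is then the field inclusion $\textsf{K} \subseteq \textsf{K}(x)$. Because $\varepsilon_x^0$ fixes $\textsf{K}(x)$ pointwise, it fixes the smaller field $\textsf{K}$ pointwise as well. Since every value $\varepsilon_0^0(a)$ lies in $\I\,\varepsilon_0^0 = \textsf{K}$, applying $\varepsilon_x^0$ afterwards changes nothing: $\varepsilon_x^0(\varepsilon_0^0(a)) = \varepsilon_0^0(a)$ for all $a\in\textsf{L}$. This yields $\varepsilon_x^0\circ\varepsilon_0^0 = \varepsilon_0^0$, and transporting the identity back through $\Theta$ gives $\varepsilon_0\varepsilon_x = \varepsilon_0$.

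I do not expect a genuine obstacle here; the only delicate points are bookkeeping ones. One must correctly read off $\textsf{K}(0) = \textsf{K}$ from the multiplicative system on the minimal point, and one must track the order reversal coming from the opposite-algebra multiplication $h*h' = h'h$ when passing between $A$ and $\End_\textsf{K}\textsf{L}$, so that the composite is formed in the order $\varepsilon_x^0\circ\varepsilon_0^0$ and lands on $\varepsilon_0^0$ rather than on the (in general different) composite $\varepsilon_0^0\circ\varepsilon_x^0$.
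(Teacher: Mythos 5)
Your argument is correct and is essentially the paper's own proof: both reduce, via $\Theta$ and the opposite multiplication, to showing $\varepsilon_x^0\circ\varepsilon_0^0=\varepsilon_0^0$ in $\End_{\textsf{K}}\textsf{L}$, and your observation that $\I\,\varepsilon_0^0=\textsf{K}\subseteq\textsf{K}(x)$ is fixed pointwise by $\varepsilon_x^0$ is just a rephrasing of the paper's identities $\pi_x^0\textsf{\emph{i}}_0^0=i_x$ and $\textsf{\emph{i}}_x^0 i_x=\textsf{\emph{i}}_0^0$. No gaps.
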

 
\begin{proof}
We have $\varepsilon_{0}=\Theta(\varepsilon_{0}^{0})$ and $\varepsilon_{x}=\Theta(\varepsilon_{x}^{0})$. To prove our lemma is enough to show that
 $\varepsilon^{0}_{x}\varepsilon ^{0}_{0}=\varepsilon ^{0}_{0}$.
 
Let $i_{x}:\textsf{K}\rightarrow \textsf{K}(x)$ be the inclusion, then,
 $$\varepsilon^{0}_{x}\varepsilon ^{0}_{0}=\textsf{\emph{i}}_{x}^{0}\pi _{x}^{0}\textsf{\emph{i}}_{0}^{0}\pi ^{0}_{0}.$$
 
Clearly, $\pi_{x}^{0}\textsf{\emph{i}}_{0}^{0}=i_{x}$ and $\textsf{\emph{i}}^{0}_{x}i_{x}=\textsf{\emph{i}}_{0}^{0}$, hence, 
 $$\textsf{\emph{i}}_{x}^{0}\pi _{x}^{0}\textsf{\emph{i}}_{0}^{0}\pi ^{0}_{0}=\textsf{\emph{i}}_{0}^{0}\pi ^{0}_{0}=\varepsilon _{0}^{0};$$
and our result is proved.
\end{proof}
 
In particular, $\varepsilon_{0}\in \underline{\mathcal{R}}_{0,x}$ for all $x\in \mathscr{P}$. We choose $v_{0,x}=\varepsilon_{0}$, to calculate $\rho _{x}: \underline{\mathcal{R}}_{0} \rightarrow \underline{\mathcal{R}}_{x}$.
For any $\varepsilon_{0}m_{a}\varepsilon_{0} \in \underline{\mathcal{R}}_{0}$, with $a\in \textsf{K}(0) = \textsf{K}$, we have
 $$\varepsilon_{0}m_{a}\varepsilon_{0}v_{0,x}=\varepsilon_{0}m_{a}\varepsilon_{0}=\varepsilon_{0}\varepsilon_{0}m_{a}$$
 $$=\varepsilon_{0}\varepsilon_{x}m_{a}=\varepsilon_{0}\varepsilon_{x}\varepsilon_{x}m_{a}=\varepsilon_{0}\varepsilon_{x}m_{a}\varepsilon_{x}=v_{0,x}\varepsilon_{x}m_{a}\varepsilon_{x},$$
then:
\begin{equation}\label{rhoRepsGeneralizados}
\rho _{x}(\varepsilon_{0}m_{a}\varepsilon_{0})=\varepsilon_{x}m_{a}\varepsilon_{x}.
\end{equation}

For every $y<x$, let $\tau ^{y,x}_{1},...,\tau ^{y,x}_{l(y,x)}$ be an $\underline{\mathcal{R}}_{x}$-basis for $\underline{\mathcal{R}}_{y,x}.$ With the morphism  $\chi _{y,x}:\underline{\mathcal{R}}_{y,x} \rightarrow \underline{\mathcal{R}}_{x}$, each  $\tau ^{y,x}_{i}$ induces a $\textsf{K}$-vector space morphism $u^{y,x}_{i}: \textsf{K}_{y}\rightarrow \textsf{K}_{x}$, given by  
$$u^{y,x}_{i}(a)=\phi_{x}(\chi _{y,x}[(\phi_{y})^{-1}(a) \tau^{y,x}_{i})]),$$
for all $a\in K_{y}$.

With this notation we have the following result.

\begin{theorem}\label{RepsMatricesGenPequipados}
Let $M, N$ be objects in $\mathrm{mod}\, \mathcal{D}^e$, with local basis $\mathscr{B}(M)$ and $\mathscr{B}(N)$, respectively. Then $M\cong N$ if and only if there exist:
\begin{enumerate}[·]
\item a non-singular square matrix $L^{0}\in \mathscr{M}_{d_{0}}(\textsf{K})$;
\item for every $x \in \mathscr{P}$, $x>0$, a non-singular square matrix $L^{x}\in \mathscr{M}_{d_{x}}(\textsf{K}(x))$;
\item for each $0< y < x$ in $\mathscr{P}$ some matrices $L_{1}^{y,x},...,L_{l(y,x)}^{y,x}\in \mathscr{M}_{d_{y},d_{x}}(\textsf{K}(x))$;
\end{enumerate}
such that
\begin{equation}\label{ecuacionRepsMatricesGen}
\underline{M}_{x}=L^{0} \left( \underline{N}_{x}L^{x}+\sum _{y<x}\sum _{i=1}^{l(y,x)}u_{i}^{y,x}(\underline{N}_{y})L_i^{y,x} \right).
\end{equation}
\end{theorem}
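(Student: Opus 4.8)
The plan is to specialize the general matrix isomorphism criterion of Theorem \ref{RepsMatriciales} to the present situation and transport everything through the ring isomorphism $\Theta$ and the field isomorphisms $\phi_x$. The preceding lemma shows that $\mathcal{R}_{0,x}$ is one-dimensional over $\mathcal{R}_x$, so Theorem \ref{RepsMatriciales} applies and gives $M\cong N$ in $\mathrm{mod}\,\mathcal{D}^e$ if and only if there are non-singular $T_0$ over $\mathcal{R}_0$, $T_x$ over $\mathcal{R}_x$ and matrices $T_{y,x}$ over $\mathcal{R}_{y,x}$ with
\[\mathscr{M}(M)_x = \rho_x(T_0)\,\mathscr{M}(N)_x\,T_x + \sum_{y<x}\rho_x(T_0)\,\chi_{y,x}(\mathscr{M}(N)_y\,T_{y,x}).\]
Applying $\Theta$ entrywise turns this into the identical equation with each object replaced by its underlined image in $\mathscr{M}_n(\textsf{K})$, so the whole task is to rewrite that underlined equation in terms of matrices over the fields $\textsf{K}(x)$.

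First I would record the two facts that drive the translation. Each $\phi_x:\underline{\mathcal{R}}_x\to\textsf{K}(x)$ is a field isomorphism, hence, applied entrywise, it respects matrix multiplication; and $\chi_{y,x}:\underline{\mathcal{R}}_{y,x}\to\underline{\mathcal{R}}_x$ is right $\underline{\mathcal{R}}_x$-linear, which follows from its defining relation (\ref{chi}) together with the freeness of $v_{0,x}$ over $\mathcal{R}_x$. Next, from equation (\ref{rhoRepsGeneralizados}) I read off $\phi_x\circ\rho_x=\phi_0$ on $\underline{\mathcal{R}}_0$, where $\phi_0$ is the $x=0$ case of the same isomorphism (recall $\textsf{K}(0)=\textsf{K}$). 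Setting $L^0=\phi_0(\underline{T}_0)\in\mathscr{M}_{d_0}(\textsf{K})$ and $L^x=\phi_x(\underline{T}_x)\in\mathscr{M}_{d_x}(\textsf{K}(x))$, the diagonal summand transforms as
\[\phi_x\big(\rho_x(\underline{T}_0)\,\underline{\mathscr{M}}(N)_x\,\underline{T}_x\big)=\phi_x(\rho_x(\underline{T}_0))\,\underline{N}_x\,\phi_x(\underline{T}_x)=L^0\,\underline{N}_x\,L^x.\]

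The core step is the off-diagonal summand. I would expand each entry of $\underline{T}_{y,x}$ in the right $\underline{\mathcal{R}}_x$-basis $\tau_1^{y,x},\dots,\tau_{l(y,x)}^{y,x}$ of $\underline{\mathcal{R}}_{y,x}$, collecting the coefficients into matrices $C_i$ over $\underline{\mathcal{R}}_x$ and putting $L_i^{y,x}=\phi_x(C_i)\in\mathscr{M}_{d_y,d_x}(\textsf{K}(x))$. Sliding the scalars $C_i$ out by right $\underline{\mathcal{R}}_x$-linearity of $\chi_{y,x}$, and then invoking the very definition $u_i^{y,x}(a)=\phi_x(\chi_{y,x}[\phi_y^{-1}(a)\tau_i^{y,x}])$ applied entrywise to $\underline{N}_y=\phi_y(\underline{\mathscr{M}}(N)_y)$, I obtain
\[\phi_x\big(\chi_{y,x}(\underline{\mathscr{M}}(N)_y\,\underline{T}_{y,x})\big)=\sum_{i=1}^{l(y,x)}u_i^{y,x}(\underline{N}_y)\,L_i^{y,x}.\]
Premultiplying by $\phi_x(\rho_x(\underline{T}_0))=L^0$, summing over $y<x$, reassembling with the diagonal term, and recalling $\underline{M}_x=\phi_x(\underline{\mathscr{M}}(M)_x)$ produces exactly equation (\ref{ecuacionRepsMatricesGen}). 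Non-singularity of $L^0$ and $L^x$ is equivalent to that of $T_0$ and $T_x$ because $\phi_0,\phi_x$ are isomorphisms. For the converse I run the dictionary backwards: given $L^0,L^x,L_i^{y,x}$ as in the statement, I set $\underline{T}_0=\phi_0^{-1}(L^0)$, $\underline{T}_x=\phi_x^{-1}(L^x)$ and $\underline{T}_{y,x}=\sum_i\phi_x^{-1}(L_i^{y,x})\,\tau_i^{y,x}$, pull these back through $\Theta$, and thereby recover a solution of the equation of Theorem \ref{RepsMatriciales}, so that theorem yields $M\cong N$.

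I expect the main difficulty to be bookkeeping rather than conceptual: keeping the left and right module structures straight along the chain $\underline{\mathcal{R}}_y\to\underline{\mathcal{R}}_{y,x}\to\underline{\mathcal{R}}_x\to\textsf{K}(x)$, and verifying that $\phi_x$ genuinely commutes with the mixed product $\underline{\mathscr{M}}(N)_y\,\underline{T}_{y,x}$, whose two factors live in different bimodules. The only points requiring real verification are the right $\underline{\mathcal{R}}_x$-linearity of $\chi_{y,x}$ and the compatibility $\phi_x\rho_x=\phi_0$, both of which reduce to the defining relations (\ref{chi}) and (\ref{rhoRepsGeneralizados}); everything else is entrywise transport of a ring isomorphism.
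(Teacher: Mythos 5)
Your proposal is correct and follows essentially the same route as the paper's own proof: both reduce to Theorem \ref{RepsMatriciales}, expand $T_{y,x}$ in the right $\underline{\mathcal{R}}_x$-basis $\tau_1^{y,x},\dots,\tau_{l(y,x)}^{y,x}$, use the compatibility $\phi_x\rho_x=\phi_0$, and transport the resulting equation through $\phi_x$ to recover the $u_i^{y,x}$ terms, then reverse the dictionary for the converse. The additional care you take with the right $\underline{\mathcal{R}}_x$-linearity of $\chi_{y,x}$ is exactly the verification implicit in the paper's computation $\phi_x\chi_{y,x}(\mathscr{M}(N)_y\tau_i^{y,x})=u_i^{y,x}(\underline{N}_y)$.
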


\begin{proof}
By Theorem \ref{RepsMatriciales}, and the relation between the algebraically equipped posets $(\mathscr{P},A, \mathcal{R})$ and $(\mathscr{P}, \mathscr{M}_n(\textsf{K}),\underline{\mathcal{R}})$, we know that $M\cong N$ if and only if there exist non-singular matrices $T_{0}$, of size $d_{0}\times d_{0}$ with values in $\underline{\mathcal{R}}_{0}$, for every $x\in \mathscr{P}$, $x>0$, a  matrix $T_{x}$, of size $d_{x}\times d_{x}$ over $\underline{\mathcal{R}}_{x}$, and matrices $T_{y,x}$, for each $y<x$ with coefficients in $\underline{\mathcal{R}}_{y,x}$ such that:
\begin{equation}\label{ecuacionDelTeorema}
\underline{\mathscr{M}}(M)_{x}=\rho _{x}(T_{0})\underline{\mathscr{M}}(N)_{x} T_x +\sum _{y<x}\rho _{x}(T_{0})\chi _{y,x}(\underline{\mathscr{M}}(N)_{y}T_{y,x}).
\end{equation}

Every matrix $T_{y,x}$ can be written as $T_{y,x}=\sum _{i=1}^{l(y,x)}\tau ^{y,x}_{i}T_{i}^{y,x}$ with $T^{y,x}_{i}\in \mathscr{M}_{d_{x},d_{y}}(\underline{\mathcal{R}}_{x})$.

It is clear that $\phi _{x}\rho _{x}=\phi_{0}$. Then, with notation $L^{0} = \phi _{0}(T_{0})$, $L^x = \phi _{x}(T_{x})$, $L^{x,y}_{i}=\phi _{x}(T^{y,x}_{i})$, we evaluate $\phi_{x}$ in \ref{ecuacionDelTeorema}, to get:
$$\underline{M}_{x}=L^{0}(\underline{N}_{x}L^{x}+\sum _{y<x}\sum _{i=0}^{l(y,x)}\phi _{x}\chi _{y,x}(\mathscr{M}(N)_{y}\tau _{i}^{y,x})L^{y,x}_i).$$

Besides, 
$$\phi _{x}\chi _{y,x}(\mathscr{M}(N)_{y}\tau _{i}^{y,x})=\phi _{x}\chi _{y,x}(\phi _{y}^{-1}(\underline{\mathscr{M}}(N)_{y}\tau _{i}^{y,x})=u_{i}^{y,x}(\underline{N}_{y}).$$

Converssely, if $(\ref{ecuacionRepsMatricesGen})$ holds, by denoting $T_{0}=\phi _{0}^{-1}(L^{0})$, $T_{x}=\phi _{x}^{-1}(L^{x})$ and
$T_{x,y}=\sum _{i=1}^{l(y,x)}\tau _{i}^{y,x}\phi _{x}^{-1}(L_{i}^{y,x})$, and evaluating $\phi _{x}^{-1}$ in $(\ref{ecuacionRepsMatricesGen})$, we obtain $(\ref{ecuacionDelTeorema})$, which is equivalent to $M\cong N$. The theorem is proved.
\end{proof}

Let $\varepsilon_{1,1}\in \mathscr{M}_n(\textsf{K})$ be the idempotent matrix having 1 at the place (1,1) and 0 otherwise, and $I$ be the identity matrix.

In particular, when $\mathscr{P}_0$ is a $p$-equipped poset, $n=p$, and for $x\in \mathscr{P}$
$$\varepsilon_{x} = 
\begin{cases} I, & \text{ if } x \text{ is weak,}\\
\varepsilon_{1,1}, & \text{ if } x \text{ is strong.} \end{cases}$$ 

We are going to use some notation from Section \ref{pequipadosSonALgRep}, and let us introduce the following:
\begin{center}
$\underline{\vartheta} = \Theta(\vartheta);$\\
$U = \varepsilon_{1,1} \mathscr{M}_p(\textsf{K});$\\
$W = \mathscr{M}_p(\textsf{K}) \varepsilon_{1,1};$\\
$\R : \textsf{L} \rightarrow \textsf{K} : a_0+a_1\xi + \cdots + a_{p-1} \xi^{p-1} \mapsto a_0$, with $a_0,\ldots,a_{p-1}\in \textsf{K}$.
%$\R : \textsf{L} \rightarrow \textsf{K}$ is a \textsf{K}-linear map, such that for all $b=a_0+a_1\xi + \cdots + a_{p-1} \xi^{p-1} \in \textsf{L}$, with $a_0,\ldots,a_{p-1}\in \textsf{K}$, we define $\R(b)=a_0$.
\end{center}

For every $x,y \in \mathscr{P}$, we have:
\begin{enumerate}[(i)]
\item if $y \leq^p y \leq x\leq^p x$, then $\underline{\mathcal{R}}_{y,x} = \varepsilon _{1,1} \textsf{K} \varepsilon _{1,1}$;
\item if $y \leq^p y < x\leq^1 x$, then $\underline{\mathcal{R}}_{y,x} = U$;
\item if $y \leq^1 y < x\leq^p x$, then $\underline{\mathcal{R}}_{y,x} = W$;
\item if $y \leq^1 y \leq^\ell x\leq^1 x$, then $\underline{\mathcal{R}}_{y,x} = \underline{A}_\ell$.
\end{enumerate}

Notice that, for a strong point $x$, we are in the case (i), with $y=x$, then $\underline{\mathcal{R}}_{x} = \varepsilon _{1,1} \textsf{K} \varepsilon _{1,1}$, and when $x$ is weak, $\underline{\mathcal{R}}_{x} = \underline{A}_1$ (case (iv) with $y=x$, $\ell=1$). Obviously, if $x$ is strong $\textsf{K}(x)=\textsf{K}$, or $\textsf{K}(x)=\textsf{L}$ if $x$ is weak.

Now we can choose an $\underline{\mathcal{R}}_{x}$-basis for $\underline{\mathcal{R}}_{y,x}$ in each situation, and calculate the induced morphisms $u^{y,x}_{i}$:

When (i) or (ii) hold, $\mathrm{dim}_{\mathcal{R}_{x}}\mathcal{R}_{x,y}=1$, so we put $\tau _{1}^{y,x}=\varepsilon _{1,1}$. For $x$ strong (case (i)), $\tau _{1}^{y,x}$ induces $u^{y,x}_{1}: \textsf{K} \rightarrow \textsf{K}$ which is the identity. When $x$ is a weak point, $u^{y,x}_{1}: \textsf{K} \rightarrow \textsf{L}$ is the inclusion, because for all $a\in \textsf{K}$,
$$u_{1}^{y,x}(a)=\phi_{x}(\chi _{y,x}(\varepsilon _{1,1}m_{a}\varepsilon _{1,1}))=\phi_{x}(\chi _{y,x}(\varepsilon _{1,1}m_{a}))=\phi_{x}(m_{a})=a\in \textsf{L}.$$

In the case (iii), $\mathrm{dim}_{\mathcal{R}_{x}}\mathcal{R}_{x,y}=p$, then $\tau _{i}^{y,x}=m_{\xi^{i-1}}\varepsilon _{1,1}$, for $1\leq i \leq p$. Each $\tau _{i}^{y,x}$ induces a $u^{y,x}_{i}: \textsf{L} \rightarrow \textsf{K}$, such that for all $b \in \textsf{L}$,
$$u^{y,x}_{i} (b) = \phi_{x}(\chi _{y,x}(m_b m_{\xi^{i-1}}\varepsilon _{1,1})) = \phi_{x}(\chi _{y,x}(m_{b \xi^{i-1}}\varepsilon _{1,1})),$$
as $\varepsilon _{1,1}m_{b \xi^{i-1}}\varepsilon _{1,1} = \varepsilon _{1,1}m_{\R(b \xi^{i-1})}\varepsilon _{1,1}$, then
$$u^{y,x}_{i} (b) = \phi_{x}(\varepsilon _{1,1}m_{\R(b \xi^{i-1})}\varepsilon _{1,1}) = \R(b \xi^{i-1}).$$

Suppose that $x$ and $y$ satisfy (iv), we choose $\tau _{j}^{y,x} = \underline{\vartheta}^{j-1}$, with $1\leq j \leq \ell$. For any $b \in \textsf{L}$,
$$u^{y,x}_{j} (b) = \phi_{x}(\chi _{y,x}(m_b \underline{\vartheta}^{j-1})).$$
%we need to consider two situations.

If $\ch \textsf{K}\neq p$, then $\underline{\vartheta} = \underline{\sigma}$, where $\sigma$ is a Galois automorphism. We have $\varepsilon _{1,1} \underline{\sigma}^{j-1} = \varepsilon _{1,1}$. Hence, 
$$\varepsilon _{1,1} m_b \underline{\sigma}^{j-1} = \varepsilon _{1,1} \underline{\sigma}^{j-1} m_{\sigma^{j-1}(b)} = \varepsilon _{1,1} m_{\sigma^{j-1}(b)}.$$

When $\ch \textsf{K}= p$, we have $\underline{\vartheta} = \underline{\delta}$, and $\varepsilon _{1,1} \underline{\delta} = 0$. Then,
$$\varepsilon _{1,1} m_b \underline{\delta}^{j-1} = \sum_{k=0}^{j-1} \binom{j-1}{k} \varepsilon _{1,1}\delta^{j-1-k} m_{\delta^k(b)} = \varepsilon _{1,1} m_{\delta^{j-1}(b)}.$$

Therefore,
$$u^{y,x}_{j} (b) = \phi_{x}(m_{\vartheta^{j-1}(b)}) = \vartheta^{j-1}(b).$$

Every $M \in \mathrm{mod}\, \mathcal{D}^e$ with a local basis, has a matrix representation $\underline{M}$, in which the stripe $\underline{M}_x$, for all $x\in \mathscr{P}$ has coefficients in \textsf{K}, if $x$ is a strong point, or in \textsf{L}, if $x$ is weak.

Denote by $\mathrm{rep}\, \mathscr{P}_0$  the full subcategory of $\Rep \mathscr{P}_0$ whose objects are of the form $(V,r; V_x:x\in \mathscr{P}_0)$, with $V$ a finite dimensional \textsf{L}-vector space. There is an equivalence between $\Rep \mathscr{P}_0$ and a full subcategory of $\mathrm{Mod}\, \mathcal{D}^e$, which induces an equivalence of $\mathrm{rep}\, \mathscr{P}_0$ with a full subcategory of $\mathrm{mod}\, \mathcal{D}^e$. Then, with the previous calculations, the next result determines a matrix problem which allow us to classify finite dimensional representations of a $p$-equipped poset $\mathscr{P}_0$.

\begin{corollary}
Let $M, N$ be objects in $\mathrm{mod}\, \mathcal{D}^e$, with local basis $\mathscr{B}(M)$ and $\mathscr{B}(N)$, respectively. Then $M\cong N$ if and only if there exist:
\begin{enumerate}[·]
\item a non-singular square matrix $L^{0}\in \mathscr{M}_{d_{0}}(\textsf{K})$;
\item for every $x \in \mathscr{P}$, if $x>0$ is a strong point, a non-singular square matrix $L^{x} \in \mathscr{M}_{d_{x}}(\textsf{K})$, and for each $0< y < x$, a matrix $L^{y,x}$ of size $d_{y}\times d_{x}$ with values in \textsf{K} (\textsf{L}) if $y$ is strong (weak);
\end{enumerate}
such that
\[
\underline{M}_{x}=L^{0} \left( \underline{N}_{x}L^{x} + \sum _{y\leq^py<x}\underline{N}_{y}L^{y,x} + \sum _{y\leq^1y<x} \R(\underline{N}_{y}L^{y,x}) \right);
\]
\begin{enumerate}[·]
\item for every weak point $x \in \mathscr{P}$, a non-singular square matrix $L^{x} \in \mathscr{M}_{d_{x}}(\textsf{L})$, for each $0< y\leq^p y < x$, a matrix $L^{y,x}$ of size $d_{y}\times d_{x}$ with values in \textsf{K},  and for $0< y\leq^1 y <^\ell x$, some matrices $L_{1}^{y,x},...,L_{\ell}^{y,x}\in \mathscr{M}_{d_{y},d_{x}}(\textsf{L})$
\end{enumerate}
such that
\[
\underline{M}_{x}=L^{0} \left( \underline{N}_{x}L^{x} + \sum _{y\leq^py<x}\underline{N}_{y}L^{y,x} + \sum _{y\leq^1y<x} \sum _{i=1}^{\ell}\vartheta^i(\underline{N}_{y})L_i^{y,x} \right).
\]
\end{corollary}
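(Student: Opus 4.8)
The plan is to obtain the statement as the specialization of Theorem~\ref{RepsMatricesGenPequipados} to a $p$-equipped poset $\mathscr{P}_0$, for which $\textsf{L}=\textsf{G}$ is normal of degree $n=p$ over $\textsf{K}=\textsf{F}$. First I would record the only features of the $p$-equipped case that enter the computation: every point of $\mathscr{P}$ is strong or weak, so $\textsf{K}(x)=\textsf{K}$ and $\varepsilon_x=\varepsilon_{1,1}$ when $x$ is strong, while $\textsf{K}(x)=\textsf{L}$ and $\varepsilon_x=I$ when $x$ is weak; hence the stripe $\underline{M}_x$ has entries in $\textsf{K}$ for strong $x$ and in $\textsf{L}$ for weak $x$. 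Theorem~\ref{RepsMatricesGenPequipados} then already supplies the equivalence $M\cong N$ in terms of non-singular $L^0$ over $\textsf{K}$, $L^x$ over $\textsf{K}(x)$, and matrices $L_i^{y,x}$ over $\textsf{K}(x)$ subject to equation~(\ref{ecuacionRepsMatricesGen}); thus the entire task reduces to rewriting that one equation with the explicit maps $u^{y,x}_i$ computed just above, noting that the scalar $\rho_x(L^0)$ becomes ordinary left multiplication by $L^0$ after we apply $\phi_x$.

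Next I would substitute the four computations of $u^{y,x}_i$ according to the relative position of $y<x$, splitting the inner sum $\sum_{y<x}$ into its strong-$y$ and weak-$y$ parts while treating strong $x$ and weak $x$ separately. For strong $x$: case~(i) (strong $y$) gives $u^{y,x}_1=\mathrm{id}$ and a single $\textsf{K}$-matrix contributing $\underline{N}_yL^{y,x}$, and case~(iii) (weak $y$) is handled in the next paragraph. For weak $x$: case~(ii) (strong $y$) gives the inclusion $\textsf{K}\hookrightarrow\textsf{L}$ and again the term $\underline{N}_yL^{y,x}$, while case~(iv) (weak $y<^\ell x$) has $u^{y,x}_j(b)=\vartheta^{j-1}(b)$ and therefore contributes the $\vartheta$-twisted sum over the $\ell$ basis elements $\underline{\vartheta}^{\,j-1}$, which is exactly the weak-$y$ term of the second displayed equation. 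Assembling these with the diagonal term $\underline{N}_xL^x$ yields the two equations of the statement.

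The step that is not a mere substitution, and where I expect to spend genuine care, is the strong-$x$/weak-$y$ case~(iii): there $l(y,x)=p$ and $u^{y,x}_i(b)=\R(b\xi^{i-1})$ for $1\le i\le p$, so Theorem~\ref{RepsMatricesGenPequipados} contributes $\sum_{i=1}^{p}\R(\underline{N}_y\xi^{i-1})L_i^{y,x}$ with each $L_i^{y,x}$ over $\textsf{K}$. I would collapse these $p$ matrices into the single $\textsf{L}$-valued matrix $L^{y,x}=\sum_{i=1}^{p}\xi^{i-1}L_i^{y,x}$; since $\xi^{i-1}$ is a central scalar of $\textsf{L}$ and $\R$ is $\textsf{K}$-linear with $\R(AL)=\R(A)L$ for a $\textsf{K}$-matrix $L$, this gives $\R(\underline{N}_yL^{y,x})=\sum_{i=1}^{p}\R(\underline{N}_y\xi^{i-1})L_i^{y,x}$, the term appearing in the first equation. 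Because $\{1,\xi,\dots,\xi^{p-1}\}$ is a $\textsf{K}$-basis of $\textsf{L}$, the assignment $(L_1^{y,x},\dots,L_p^{y,x})\mapsto L^{y,x}$ is a bijection, so passing between the two forms preserves the equivalence. The residual obstacle is bookkeeping: verifying in each of cases~(i)--(iv) that $l(y,x)$ equals $\dim_{\mathcal{R}_x}\mathcal{R}_{y,x}$ and keeping the index ranges of the stripes and of the powers of $\vartheta$ aligned throughout the reorganization.
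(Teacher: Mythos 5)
Your proposal matches the paper's own proof: the paper likewise obtains the corollary by specializing Theorem~\ref{RepsMatricesGenPequipados} via the four computed maps $u^{y,x}_i$, and the only step it spells out is exactly your case~(iii), collapsing the $p$ matrices $L_i^{y,x}$ over $\textsf{K}$ into a single $\textsf{L}$-matrix $L^{y,x}$ so that $\sum_{i=1}^{p}\R(\underline{N}_y\xi^{i-1})L_i^{y,x}=\R(\underline{N}_yL^{y,x})$. Your explicit justification that $(L_1^{y,x},\dots,L_p^{y,x})\mapsto\sum_{i=1}^{p}\xi^{i-1}L_i^{y,x}$ is a bijection, via the $\textsf{K}$-basis $\{1,\xi,\dots,\xi^{p-1}\}$ of $\textsf{L}$, is a detail the paper leaves implicit.
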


\begin{proof}
It is enough to follow the theorem \ref{RepsMatricesGenPequipados} and the calculus of each $u^{y,x}_{i}$, when $\mathscr{P}_0$ is a $p$-equipped poset. 

In particular, for $x,y \in \mathscr{P}$, with $x$ strong, $y$ weak, and $y<x$, Theorem \ref{RepsMatricesGenPequipados} says that there are $p$ matrices $L_{i}^{y,x} \in \mathscr{M}_{d_{y},d_{x}}(\textsf{K})$, for $i\in \{1,2,\ldots, p\}$ which we multiply by $u_{i}^{y,x}(\underline{N}_{y}) = \R(\underline{N}_{y}\xi^{i-1})$.

We have $\sum _{i=1}^{p} \R(\underline{N}_{y}\xi^{i-1})L_{i}^{y,x}$, which is equivalent to have $\R(\underline{N}_{y}L^{y,x})$ for a matrix $L^{y,x}$ over \textsf{L}, and finishes the proof.
\end{proof}

%%%%%%%%%%%%%%%%%%%%%%%%%%%%%%%%%%%%%%%%%%%%%%%%%%%%%%%%%%%%%%%%%%%%%%%%%%%%%%%%%%%%%%%%
%%%%%%%%%%%%%%%%%%%%%%%%%%%%%%%%%%%%%%%%%%%%%%%%%%%%%%%%%%%%%%%%%%%%%%%%%%%%%%%%%%%%%%%%

\appendix
\section{Appendix}

Here we follow \cite{BSZ}, but instead of consider left-modules, we consider right-modules.

\subsection{Ditalgebras}

Let $\textsf{K}$ be a field, $R$ be a $\textsf{K}$-algebra and $W$ be an $R$-bimodule graded over $\mathbb{Z}$ such that $W_{i}=0$ for $i>1$, $i<0$. That is $W=W_{0}\oplus W_{1}$. 

Consider the tensor algebra $T_{R}(W)$ with the graduation induced by the graduation of $W$. For every $r\in R\subset T_{R}(W)$, we have $\mathrm{gr}(r)=0$, and, for  $w_{1}\otimes \cdots \otimes w_{l}\in T_{R}(W)$, with $w_{i}\in W$ an homogeneous element, $i\in \{1,2,\ldots , l\}$, then $\mathrm{gr}(w)=\sum _{i=1}^{l}\mathrm{gr}(w_{i})$. 

For $n\in \mathbb{N}$, we denote by $T_{R}(W)_{n}$ the $\textsf{K}$-vector subspace of $T_{R}(W)$ generated by products of homogeneous elements of $W$ of degree $n$. We have $T_{R}(W)=\oplus _{i=0}^{\infty }T_{R}(W)_{n}$, and $T_{R}(W)_{n}T_{R}(W)_{m}\subset T_{R}(W)_{m+n}$, then $T_{R}(W)$ is a graded $\textsf{K}$-algebra. Besides $T_{R}(W)_{0}$ is a $\textsf{K}$-subalgebra and $T_{R}(W)_{0}=T_{R}(W_{0})$. Each $T_{R}(W)_{n}$ is a $T_{R}(W)_{0}$-bimodule.

\begin{definition}\label{defDiferencial} 
A differential in $T_{R}(W)$ is a $\textsf{K}$-linear transformation
$$d:T_{R}(W)\rightarrow T_{R}(W)$$ 
such that
\begin{enumerate}[(1)]
 \item $d(T_{R}(W)_{m})\subset T_{R}(W)_{m+1}$.
\item $d(R)=0$.
\item If $a$ and $b$ are homogeneous elements of $T_{R}(W)$ then
$$d(ab)=d(a)b+(-1)^{\mathrm{gr}(a)}ad(b).$$
\item $d^{2}=0$.
\end{enumerate} 
\end{definition}

\begin{proposition}\label{extiendeADiferencial} 
Let $d:W\rightarrow T_{R}(W)$ be an $R$-bimodule morphism such that $d(W_{0})\subset T_{R}(W)_{1}$ y $d(W_{1})\subset T_{R}(W)_{2}$, then $d$ can be extended to an $R$-bimodule morphism $\hat{d}:T_{R}(W)\rightarrow T_{R}(W)$, satisfying the conditions $(1)$,$(2)$ and $(3)$ of the previous definition. Moreover, $\hat{d}$ satisfies $(4)$ if
$\hat{d}^{2}(W)=0$.
\end{proposition}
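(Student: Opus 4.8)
The plan is to define $\hat d$ by the graded Leibniz rule on each homogeneous tensor and then to check that this formula is compatible with the relations defining $T_R(W)$. Recall that $T_R(W)=\bigoplus_{n\geq 0}W^{\otimes_R n}$, with $W^{\otimes_R 0}=R$ and $W^{\otimes_R 1}=W$, so that $T_R(W)$ is generated as a $\textsf{K}$-algebra by $R$ and $W$. Setting $\hat d|_R=0$ and $\hat d|_W=d$, I would define, for a decomposable tensor $v_1\otimes\cdots\otimes v_n$ with each $v_i\in W$ homogeneous,
\[
\hat d(v_1\otimes\cdots\otimes v_n)=\sum_{i=1}^{n}(-1)^{\mathrm{gr}(v_1)+\cdots+\mathrm{gr}(v_{i-1})}\,v_1\otimes\cdots\otimes v_{i-1}\otimes d(v_i)\otimes v_{i+1}\otimes\cdots\otimes v_n,
\]
where each summand is computed as a product in $T_R(W)$, using that $d(v_i)\in T_R(W)$. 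Extending $\textsf{K}$-linearly gives a candidate map $\hat d:T_R(W)\to T_R(W)$.

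The main obstacle is well-definedness: since the tensor product is taken over $R$, the formula must respect the identifications $\cdots\otimes v_i r\otimes v_{i+1}\otimes\cdots=\cdots\otimes v_i\otimes rv_{i+1}\otimes\cdots$ for $r\in R$. Here the hypothesis that $d$ is an $R$-bimodule morphism is decisive: from $d(v_i r)=d(v_i)r$ and $d(rv_{i+1})=rd(v_{i+1})$, together with $\mathrm{gr}(v_i r)=\mathrm{gr}(v_i)$ (because $W_0,W_1$ are sub-bimodules), the $i$-th and $(i+1)$-th terms on the two sides of the identification agree once the $R$-balancing inside $T_R(W)$ is used, while the remaining terms are visibly unchanged. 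Once $\hat d$ is well-defined, conditions (1), (2), (3) are immediate: (2) holds by $\hat d|_R=0$; (1) holds because $d$ raises $\mathrm{gr}$-degree by exactly one on homogeneous elements (by $d(W_0)\subset T_R(W)_1$ and $d(W_1)\subset T_R(W)_2$), so each summand has $\mathrm{gr}$-degree $\mathrm{gr}(v_1\otimes\cdots\otimes v_n)+1$; and (3) follows, after reducing by $\textsf{K}$-bilinearity to monomials, by splitting the defining sum for a product $ab$ into the indices lying in $a$ and those lying in $b$, the sign $(-1)^{\mathrm{gr}(a)}$ appearing precisely because the prefix sign accumulated over the factors of $a$ equals $(-1)^{\mathrm{gr}(a)}$. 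Finally, $\hat d$ is an $R$-bimodule morphism, since $\hat d(ra)=r\hat d(a)$ and $\hat d(ar)=\hat d(a)r$ are forced by (3) and $\hat d|_R=0$.

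For the final assertion I would exploit that $\hat d$ is a graded derivation of odd degree $+1$. A direct computation with the graded Leibniz rule shows that the composite $\hat d^2$ is an \emph{ordinary} derivation of degree $+2$, namely $\hat d^2(ab)=\hat d^2(a)b+a\,\hat d^2(b)$ for homogeneous $a,b$, the two cross terms $\pm\hat d(a)\hat d(b)$ cancelling because their signs $(-1)^{\mathrm{gr}(a)}$ and $(-1)^{\mathrm{gr}(a)+1}$ are opposite. Since $\hat d^2$ vanishes on $R$ (as $\hat d|_R=0$) and, by hypothesis, on $W$, and since $R$ and $W$ generate $T_R(W)$ as a $\textsf{K}$-algebra, an induction on the number of tensor factors using this untwisted Leibniz rule forces $\hat d^2=0$ on all of $T_R(W)$, so that condition (4) holds. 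This last step is the only place the hypothesis $\hat d^2(W)=0$ enters, and it is the sole nonformal input needed beyond the construction of $\hat d$.
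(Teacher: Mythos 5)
Your proposal is correct and follows essentially the same route as the paper: define $\hat d$ on decomposable tensors by the graded Leibniz sum (with well-definedness over $\otimes_R$ coming from the $R$-bimodule property of $d$, which the paper phrases via the bimodule maps $\hat d_{(c_1,\dots,c_n)}$ on each summand $W_{c_1}\otimes\cdots\otimes W_{c_n}$), verify (1)--(3) directly, and then observe that $\hat d^{\,2}(ab)=\hat d^{\,2}(a)b+a\,\hat d^{\,2}(b)$ so that vanishing on $W$ propagates to all of $T_R(W)$ by induction on the number of tensor factors. No substantive differences.
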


\begin{proof}
We define $\hat{d}(r)=0$ for $r\in R\subset T_{R}(W)$, and for $w_{1},...,w_{n}$ homogeneous elements of $W$,
$$\hat{d}(w_{1}\otimes w_{2}\otimes \cdots \otimes w_{n})= d(w_1)\otimes w_2\otimes \cdots \otimes w_{n}$$
$$+ \sum _{i=1}^{n}(-1)^{\mathrm{gr}(w_{1}\cdots w_{i-1})}w_{1}\otimes \cdots \otimes w_{i-1}\otimes d(w_{i})\otimes w_{i+1}\otimes \cdots \otimes w_{n}.$$

We have $R$-bimodule transformations 
$$\hat{d}_{(c_{1}, \ldots, c_{n})}:W_{c_{1}}\otimes \cdots \otimes W_{c_{n}}\rightarrow T_{R}(W)$$ 
with $c_{i}\in \{0,1\}$.

Every $W^{\otimes n}$ is a direct sum of $R$-modules of the form $W_{\lambda_{1}}\otimes W_{\lambda_{2}}\otimes \cdots \otimes W_{\lambda_{n}}$ with $\lambda_{i}\in \{0,1\}$, then we have an $R$-bimodule morphism $\hat{d}_{n}:W^{\otimes n}\rightarrow T_{R}(W)$, and therefore an $R$-bimodule morphism $\hat{d}:T_{R}(W)\rightarrow T_{R}(W)$.

By its construction, $\hat{d}$ satisfies $(1),(2)$ and $(3)$ of Definition \ref{defDiferencial}. 

Notice that $\hat{d}^{2}$ has the following property: if $a$ and $b$ are homogeneous elements of $T_{R}(W)$ then:
$$\hat{d}^{2}(ab)=\hat{d}^{2}(a)b+a\hat{d}^{2}(a)b.$$

When $\hat{d}^{2}(W)=0$, we prove using induction on $n$, and the previous equation, that $\hat{d}^{2}(w_{1}\cdots w_{n})=0$ for every homogeneous $w_{i} \in W$. 
\end{proof}

We describe next some examples of ditalgebras.

\subsubsection{Example 1}

Let $Q=(Q_0, Q_1, s, t)$ be a finite quiver with $n$ elements, $A=\textsf{K}Q$ be its path algebra, and $e_1, e_2, \ldots, e_n$ be the trivial paths. We put
\[R=\sum_{i=1}^{n} \textsf{K}e_i, \hspace{1cm} W_0=\sum_{\gamma \in Q_1} \textsf{K}\gamma , \hspace{1cm} W_1=0.\]

Then, the graded algebra $T_R(W)=T_R(W_0)$ is a ditalgebra with differential $d=0$.

\subsubsection{Example 2}

From a partially ordered set $(\mathcal{P}, \leq )$, we construct a bigraph whose vertex are the same as $\mathcal{P}$, plus an additional point $w$. For every $i\in \mathcal{P}$, there is a solid arrow $\alpha_i:i\rightarrow w$, and for all $i,j \in \mathcal{P}$, such that $i< j$, there is a dashed arrow $x_{i,j}:j \dashrightarrow i$. Consider the path algebra where $e_i$ is the trivial path in the point $i\in \mathcal{P}\cup \{w\}$. We define:
\[R=\sum_{i\in \mathcal{P}\cup \{w\}} \textsf{K}e_i, \hspace{1cm} W_0=\sum_{i\in \mathcal{P}} \textsf{K}\alpha_i , \hspace{1cm} W_1=\sum_{i,j\in \mathcal{P}} \textsf{K}x_{i,j},\]
for each solid arrow $\alpha_i$
\[d(\alpha_i)= -\sum_{i< j} \alpha_ix_{i,j},\]
and for every dashed arrow $x_{i,j}$
\[d(x_{i,j})=\sum_{i<r<j} x_{i,r}x_{r,j}.\]

We have a tensor $\textsf{K}$-algebra $T_R(W)$, with $W=W_0\oplus W_1$, besides $d(W_0)\subseteq T_R(W)_1$ and $d(W_1)\subseteq T_R(W)_2$, then $d$ is extended to a morphism $d :T_{R}(W)\rightarrow T_{R}(W)$ satisfying $(1),(2),(3)$ of Definition \ref{defDiferencial}.

Now we evaluate $d^2(W)$. For a solid arrow,
\begin{align*}
d^2(\alpha_i) &=  d \left( -\sum_{i< j} \alpha_ix_{i,j} \right)\\
 &= -\sum_{i< j} d(\alpha_i)x_{i,j} - \sum_{i< j} \alpha_id(x_{i,j})\\
 &= \sum_{i< j} \sum_{i< r} \alpha_ix_{i,r} x_{i,j} - \sum_{i<r<j} \alpha_ix_{i,r}x_{r,j}\\
 &= \sum_{i<r<j} \alpha_ix_{i,r}x_{r,j} - \sum_{i<r<j} \alpha_ix_{i,r}x_{r,j}\\
 &= 0;
\end{align*}
and for a dashed arrow
\begin{align*}
d^2(x_{i,j}) &= d \left( \sum_{i<r<j} x_{i,r}x_{r,j} \right)\\
 &= \sum_{i<r<j} d(x_{i,r})x_{r,j} - \sum_{i<r<j} x_{i,r}d(x_{r,j})\\
 &= \sum_{i<r<j} \sum_{i<t<r} x_{i,t}x_{t,r}x{r,j} - \sum_{i<r<j} \sum_{r<t<j} x_{i,r}x_{r,t}x_{t,j}\\
 &= 0;
\end{align*}

Hence, $(T_{R}(W),d)$ is a ditalgebra.

\subsubsection{Example 3}

Let $R$ be a $\textsf{K}$-algebra, consider $W=\left( \begin{array}{cc}R&R\\0&R \end{array}\right)$, which is a $\textsf{K}$-algebra and an $R$-bimodule, with the operations:
$$r\left( \begin{array}{cc}x_{1}&x_2\\0&x_{3}\end{array} \right)= \left( \begin{array}{cc}rx_{1}&rx_{2}\\0&rx_{3}\end{array} \right) \ \ \text{ and }\ \  \left( \begin{array}{cc}x_{1}&x_2\\ 0&x_{3}\end{array} \right)r= \left( \begin{array}{cc}x_{1}r&x_{2}r\\0&x_{3}r\end{array} \right).$$

We define $w_{1}=\left( \begin{array}{cc}1&0\\0&0 \end{array} \right)$, $w_{2}=\left( \begin{array}{cc}0&0\\0&1 \end{array} \right)$ and $w_{1,2}=\left( \begin{array}{cc}0&1\\0&0 \end{array} \right)$.

Then $W=Rw_{1,2}R\oplus Rw_{1}R\oplus Rw_{2}R$. 

We put: $$W_{0}=Rw_{1,2}R, \quad  W_{1}=Rw_{1}R\oplus Rw_{2}R.$$

Then $T_{R}(W)$ is a graded $\textsf{K}$-algebra. 

The $R$-morphism $\text{\large\Fontauri\slshape d}:W\rightarrow T_{R}(W)$ given by
$$\text{\large\Fontauri\slshape d} (w_{1,2})=w_{1,2}\otimes w_{2} - w_{1}\otimes w_{1,2},$$ 
$$\text{\large\Fontauri\slshape d} (w_{1})=-w_{1}\otimes w_{1},\ \ \   \text{\large\Fontauri\slshape d} (w_{2})=-w_{2}\otimes w_{2},$$ 
can be extended to a morphism $\text{\large\Fontauri\slshape d} :T_{R}(W)\rightarrow T_{R}(W)$ satisfying $(1),(2),(3)$ of Definition \ref{defDiferencial}. 

We verify  
\begin{equation}\label{igual0}
\begin{split}
\text{\large\Fontauri\slshape d}\,^2 (w_{1})= & -\text{\large\Fontauri\slshape d}(w_{1})\otimes w_{1} + w_1 \otimes \text{\large\Fontauri\slshape d}(w_{1})=0\\
\text{\large\Fontauri\slshape d}\,^2 (w_{2})= & -\text{\large\Fontauri\slshape d}(w_{2})\otimes w_{2} + w_2 \otimes \text{\large\Fontauri\slshape d}(w_{2})=0\\
\text{\large\Fontauri\slshape d}\,^2 (w_{1,2})=\, & \text{\large\Fontauri\slshape d}(w_{1,2})\otimes w_{2} + w_{1,2} \otimes\, \text{\large\Fontauri\slshape d}(w_{2}) - \text{\large\Fontauri\slshape d}(w_{1})\otimes w_{1,2} + w_{1} \otimes \text{\large\Fontauri\slshape d}(w_{1,2})\\
=&0.
\end{split}
\end{equation}

Therefore $\text{\large\Fontauri\slshape d}$ is a differential.

\subsubsection{Example 4}\label{ditProj}

Let $\Lambda$ be a finite dimensional $\textsf{K}$-algebra, which admits a decomposition:
$$\Lambda =S\oplus J$$
where $S$ is a semi simple subalgebra of $\Lambda $ and $J$ is the radical of $\Lambda $.

Consider $^*J=\mathrm{Hom}_{S}(_SJ,S)$ and $\{ p_{i},\gamma _{i}\}_{i=1}^{n}$ a dual basis for $_SJ$, with $p_{i}\in J\,$ and $\,\gamma_{i}\in\,^*J$.

We define $\mu :\,^*J \rightarrow \,^*J\otimes_S \,^*J$, 
\[\mu(\gamma )= \sum_{i,j=1}^{n} \gamma_i \otimes \gamma_j \gamma(p_jp_i);\]
for all $\,\gamma \in\,^*J$. We say that $\mu$ is a \textit{comultiplication} because, given the multiplication
$$m:J\otimes _{S}J\rightarrow J:a\otimes b \mapsto ab,$$ 
the following diagram commutes 
\[\begin{diagram}
\node{^{*}J} \arrow{e,t}{\mu} \arrow{se,b}{^{*}m} \node{^{*}J\otimes _{S}\!\,\!^{*}J} \arrow{s,r}{\Phi}\\
\node{} \node{^{*}(J\otimes _{S}J)}
\end{diagram}\]
where $\Phi$ is the $S$-bimodule isomorphism defined as follows 
$$\Phi (\rho \otimes \nu )(a\otimes b)=\nu (a\rho (b)).$$ 

In fact,
\begin{align*}
\Phi (\mu(\gamma ))(a\otimes b) &= \Phi \left( \sum_{i,j=1}^{n} \gamma_i \otimes \gamma_j  \right)\gamma(p_jp_i)(a\otimes b)\\
 &= \Phi \left( \sum_{i,j=1}^{n} \gamma_i \otimes \gamma_j  \right)(a\otimes b)\gamma(p_jp_i)\\
 &= \sum_{i,j=1}^{n} \gamma_j(a \gamma_i(b)) \gamma(p_jp_i) \\
 &= \sum_{i,j=1}^{n} \gamma( \gamma_j(a \gamma_i(b))p_jp_i) \\
 &= \sum_{i=1}^{n} \gamma(a \gamma_i(b)p_i)\\
 &= \gamma(ab).
\end{align*}

We have the graded algebra $T_{S}(^{*}J)$. By Proposition \ref{extiendeADiferencial}, the morphism $\mu$ is extended to a $S$-bimodule morphism $\delta :T_{S}(^{*}J)\rightarrow T_{S}(^{*}J)$ satisfying $(1),(2),(3)$ of Definition \ref{defDiferencial}. 

For every $a=\sum _{i}a_{i}^{1}\otimes a_{i}^{2}\in (^{*}J)^{2}$, 
\begin{equation}\label{deltaCuadrado}
\begin{split}
 \delta (a) &= \sum _{i}\delta (a_{i}^{1})\otimes a_{i}^{2}-a_{i}^{1}\otimes \delta (a_{i}^{2})\\
 &=\sum _{i}\mu (a_{i}^{1})\otimes a_{i}^{2}- a_{i}^1\otimes \mu (a_{i}^{2})=(\mu \otimes id) (a) - (id \otimes \mu )(a).
\end{split}
\end{equation}

For $\gamma \in \,^{*}J$, we have
\begin{equation}
\delta ^{2}(\gamma)=\delta (\delta (\gamma))=\delta (\mu (\gamma))=(\mu \otimes id)\mu (\gamma)-(id \otimes \mu )\mu (\gamma).
\end{equation}

Notice that
\begin{align*}
(id \otimes \mu )\mu (\gamma) &= \sum_{i,j=1}^{n} \gamma_i \otimes \mu(\gamma_j) \gamma(p_jp_i)\\
 &= \sum_{i,j,r,t=1}^{n} \gamma_i \otimes \gamma_r \otimes \gamma_t \gamma_j(p_tp_r) \gamma(p_jp_i)\\
 &= \sum_{i,j,r,t=1}^{n} \gamma_i \otimes \gamma_r \otimes \gamma_t \gamma(\gamma_j(p_tp_r) p_jp_i)\\
 &= \sum_{i,r,t=1}^{n} \gamma_i \otimes \gamma_r \otimes \gamma_t \gamma(p_tp_rp_i).
\end{align*}

Besides, 
\begin{align*}
(\mu \otimes id)\mu (\gamma) &= \sum_{i,j=1}^{n} \mu(\gamma_i) \otimes \gamma_j \gamma(p_jp_i)\\
 &= \sum_{i,j=1}^{n} \sum_{r,t=1}^{n} \gamma_r \otimes \gamma_t \gamma_i(p_tp_r) \otimes \gamma_j \gamma(p_jp_i)\\
 &= \sum_{i,j,r,t=1}^{n} \gamma_r \otimes \gamma_t \otimes \gamma_i(p_tp_r) \gamma_j \gamma(p_jp_i);
\end{align*}
and, for $w\in J$
\begin{align*}
\sum_{i,j=1}^{n} \gamma_i(p_tp_r) \gamma_j \gamma(p_jp_i)(w) &= \sum_{i,j=1}^{n} \gamma_j (w \gamma_i(p_tp_r)) \gamma(p_jp_i)\\
 &= \sum_{i,j=1}^{n} \gamma(\gamma_j (w \gamma_i(p_tp_r)) p_jp_i)\\
 &= \sum_{i=1}^{n} \gamma(w \gamma_i(p_tp_r) p_i)\\
 &= \gamma(wp_tp_r)\\
 &= \sum_{l=1}^{n} \gamma(\gamma_l(w)p_lp_tp_r)\\
 &= \sum_{l=1}^{n} \gamma_l(w) \gamma(p_lp_tp_r);
\end{align*}
hence,
\[\sum_{i,j=1}^{n} \gamma_i(p_tp_r) \gamma_j \gamma(p_jp_i) = \sum_{l=1}^{n} \gamma_l \gamma(p_lp_tp_r).\]

Therefore, 
\begin{equation}\label{igualMU}
(id \otimes \mu )\mu (\gamma)=(\mu \otimes id)\mu (\gamma), 
\end{equation}
then $\delta ^{2}(\gamma )=0$. By Proposition \ref{extiendeADiferencial}, $\delta$ satisfies $(4)$, and $\mathcal{A}=(T_{S}(^{*}J),\delta )$ is a ditalgebra.

\subsubsection{Example 5. Drozd's ditalgebra}\label{ditDrozd}
  
Let $\Lambda$ be a $\textsf{K}$-algebra as in example $4$. We define
$$R=\left( \begin{array}{cc}S&0\\0&S \end{array} \right), W_{0}=\left( \begin{array}{cc}0&^{*}J\\0&0 \end{array} \right), W_{1}=\left( \begin{array}{cc}^{*}J&0\\0&^{*}J \end{array} \right).$$

As $W_{0}$ and $W_{1}$ are $R$-bimodules, we put $W=W_{0}\oplus W_{1}$. We obtain a graded $\textsf{K}$-algebra $T_{R}(W)$. 

Consider a $\textsf{K}$-bimodule $Z=\left( \begin{array}{cc}\textsf{K}&\textsf{K}\\0&\textsf{K} \end{array} \right)$, as in Example $3$, the elements
\begin{equation}\label{w}
w_{1}=\left( \begin{array}{cc}1&0\\0&0\end{array} \right), w_{2}=\left( \begin{array}{cc}0&0\\0&1 \end{array} \right), w_{1,2}=\left( \begin{array}{cc}0&1\\0&0 \end{array} \right),
\end{equation}
and the $\textsf{K}$-algebra $T_{\textsf{K}}(Z)$ with differential {\large\Fontauri\slshape d}, such that
$$\text{\large\Fontauri\slshape d} (w_{1,2})=w_{1,2}\otimes w_{2} - w_{1}\otimes w_{1,2},\ \ \    \text{\large\Fontauri\slshape d} (w_{1})=-w_{1}\otimes w_{1},\ \ \   
\text{\large\Fontauri\slshape d} (w_{2})=-w_{2}\otimes w_{2}.$$

We multiply $^{*}J$ by $Z$ using a $S$-$\textsf{K}$-bimodule morphism $\phi _{1}$ and a $\textsf{K}$-$S$-bimodule morphism $\phi_2$ 
$$\phi _{1}:\,^{*}J\otimes _{\textsf{K}}Z\rightarrow W : \gamma \otimes \left( \begin{array}{cc}x_{1}&x_2\\0&x_3 \end{array} \right) \mapsto 
\left( \begin{array}{cc}\gamma x_{1}&\gamma x_{2}\\0&\gamma x_{3} \end{array} \right);$$
$$\phi _{2}:Z\otimes _{\textsf{K}}\! \,^{*}J\rightarrow W : \left( \begin{array}{cc}x_{1}&x_2\\0&x_{3} \end{array} \right)\otimes \gamma \mapsto 
\left( \begin{array}{cc} x_{1}\gamma &x_{2}\gamma\\ 0& x_{3}\gamma  \end{array} \right).$$

We denote $\gamma u = \phi _{1}(\gamma\otimes u)$, and $u\gamma = \phi _{2}(u\otimes \gamma)$, for $\gamma \in \,^{*}J$, $z\in Z$. We have, for all $s\in S$
$$(\gamma s)z=(\gamma z)s,\ \ \ \ \  z(s\gamma)=s(z\gamma).$$

For every $n\in \mathbb{N}$, there is a $\textsf{K}$-$S$-bimodule morphism
$$*: Z^{\otimes n} \otimes _{\textsf{K}} (^{*}J)^{\otimes n}\rightarrow W^{\otimes n},$$
such that for $\underline{z}=(z_{1}\otimes z_{2}\otimes \cdots \otimes z_{n})\in Z^{\otimes n}, 
\underline{a}=(a_{1}\otimes a_{2}\otimes \cdots \otimes a_{n})\in (^{*}J)^{\otimes n}$,
$$*(\underline{z} \otimes \underline{a}) = \underline{z} * \underline{a}
= (z_1a_1 \otimes z_{2}a_2 \otimes \cdots \otimes z_{n}a_n).$$

Any element $w\in W$ can be written univocally as:
$$w=w_1a_1 + w_{1,2} a_{1,2} + w_2a_2, \text{ con } a_1, a_{1,2}, a_2 \in \,^{*}J,$$
then, we define
\begin{align*}
d(w) &= d(w_1a_1) + d(w_{1,2} a_{1,2}) + d(w_2a_2)\\ 
 &= \text{\large\Fontauri\slshape d} (w_{1})*\mu(a_1) + \text{\large\Fontauri\slshape d} (w_{1,2})*\mu(a_{1,2}) + \text{\large\Fontauri\slshape d} (w_{2})*\mu(a_2).
\end{align*}

Clearly $d :W\rightarrow T_{R}(W)$ is an $R$-bimodule morphism which can be extended to $d : T_{R}(W) \rightarrow T_{R}(W)$, satisfying $(1),(2),(3)$ of Definition \ref{defDiferencial}. 

Consider $w=w_1a_1 + w_{1,2} a_{1,2} + w_2a_2\in W$, to show that $d$ is a differential, we calculate
\begin{align*}
d^2(w) &= d(\text{\large\Fontauri\slshape d} (w_{1})*\mu(a_1) + \text{\large\Fontauri\slshape d} (w_{1,2})*\mu(a_{1,2}) + \text{\large\Fontauri\slshape d} (w_{2})*\mu(a_2))\\
 &= d \left( \sum_{i,j=1}^{n} w_1\gamma_i \otimes w_1\gamma_j a_1(p_jp_i) + \sum_{i,j=1}^{n} w_2\gamma_i \otimes w_2\gamma_j a_2(p_jp_i) \right)\\
 &\ \ \ \ +d \left( \sum_{i,j=1}^{n} w_1\gamma_i \otimes w_{1,2}\gamma_j a_{1,2}(p_jp_i) - \sum_{i,j=1}^{n} w_{1,2}\gamma_i \otimes w_2\gamma_j a_{1,2}(p_jp_i) \right)\\
 &=\ \  \sum_{i,j=1}^{n} \text{\large\Fontauri\slshape d} (w_{1})\mu(\gamma_i) \otimes w_1\gamma_j a_1(p_jp_i) - \sum_{i,j=1}^{n} w_1\gamma_i \otimes \text{\large\Fontauri\slshape d}(w_1)\mu(\gamma_j) a_1(p_jp_i)\\
 &\ \ + \sum_{i,j=1}^{n} \text{\large\Fontauri\slshape d} (w_{2})\mu(\gamma_i) \otimes w_2\gamma_j a_2(p_jp_i) - \sum_{i,j=1}^{n} w_2\gamma_i \otimes \text{\large\Fontauri\slshape d}(w_2)\mu(\gamma_j) a_2(p_jp_i)\\
 &\ \ + \sum_{i,j=1}^{n} \text{\large\Fontauri\slshape d} (w_1)\mu(\gamma_i) \otimes w_{1,2}\gamma_j a_{1,2}(p_jp_i) - \sum_{i,j=1}^{n} w_1\gamma_i \otimes \text{\large\Fontauri\slshape d}(w_{1,2})\mu(\gamma_j) a_{1,2}(p_jp_i)\\
 &\ \ - \sum_{i,j=1}^{n} \text{\large\Fontauri\slshape d} (w_{1,2})\mu(\gamma_i) \otimes w_2\gamma_j a_{1,2}(p_jp_i) - \sum_{i,j=1}^{n} w_{1,2}\gamma_i \otimes \text{\large\Fontauri\slshape d}(w_2)\mu(\gamma_j) a_{1,2}(p_jp_i)\\
 &=\ \  (\text{\large\Fontauri\slshape d}(w_{1})\otimes w_{1})*(\mu \otimes id)\mu(a_1)-(w_1 \otimes \text{\large\Fontauri\slshape d}(w_{1}))*(id \otimes \mu )\mu(a_1)\\
 &\ \ + (\text{\large\Fontauri\slshape d}(w_{2})\otimes w_{2})*(\mu \otimes id)\mu(a_2)-(w_2 \otimes \text{\large\Fontauri\slshape d}(w_{2}))*(id \otimes \mu )\mu(a_2)\\
 &\ \ + (\text{\large\Fontauri\slshape d}(w_{1})\otimes w_{1,2})*(\mu \otimes id)\mu(a_{1,2}) - (w_1 \otimes \text{\large\Fontauri\slshape d}(w_{1,2}))*(id \otimes \mu )\mu(a_{1,2})\\
 &\ \ - (\text{\large\Fontauri\slshape d}(w_{1,2})\otimes w_{2})*(\mu \otimes id)\mu(a_{1,2}) - (w_{1,2} \otimes \text{\large\Fontauri\slshape d}(w_{2}))*(id \otimes \mu )\mu(a_{1,2}),\\
\intertext{by using (\ref{igualMU}),}
 &= -(-\text{\large\Fontauri\slshape d}(w_{1})\otimes w_{1} + w_1 \otimes \text{\large\Fontauri\slshape d}(w_{1}))*(\mu \otimes id)\mu(a_1)\\
 &\ \ \  - (-\text{\large\Fontauri\slshape d}(w_{2})\otimes w_{2} + w_2 \otimes \text{\large\Fontauri\slshape d}(w_{2}))*(\mu \otimes id)\mu(a_2)\\
 &\ \ \  - (\text{\large\Fontauri\slshape d}(w_{1,2})\otimes w_{2} + w_{1,2} \otimes \text{\large\Fontauri\slshape d}(w_{2}) - \text{\large\Fontauri\slshape d}(w_{1})\otimes w_{1,2} + w_{1} \otimes \text{\large\Fontauri\slshape d}(w_{1,2}))(\mu \otimes id)\mu(a_{1,2})\\
 &= 0,\\
\intertext{due to the equations (\ref{igual0}).}
\end{align*}

Therefore, we have the ditalgebra
\[\mathcal{D}=(T_{R}(W),d).\]

%%%%%%%%%%%%%%%%%%%%%%%%%%%%%%%%%%%%%%%%%%%%%%%%%%%%%%%%%%%%%%%%%%%%%%%%%%%%%%%%%%%%%%%%

\subsection{Representations of ditalgebras and Equivalences}

In this section, given a ditalgebra $\mathcal{A}=(T_{R}(W),d)$, we will deal with the category  $\mathrm{Mod}\, \mathcal{A}$, of representations of $\mathcal{A}$, whose objects are the right $T_R(W)_0$-modules. 

A morphism $f:M\rightarrow N$ in $\mathrm{Mod}\, \mathcal{A}$, is a pair $f=(f^{0},f^{1})$, where $f^{0}:M\rightarrow N$ is a right $R$-module morphism, and $f^{1}:T_{R}(W)_{1}\rightarrow \mathrm{Hom}_{k}(M,N)$ is a $T_{R}(W)_{0}$-bimodule morphism, such that
\begin{equation}\label{morfismosDit}
f^{0}(m)a=f^{0}(ma)+f^{1}(\delta (a))(m),
\end{equation}
for all $a\in T_{R}(W)_{0}$, $m\in M$.

Recall that for $v\in T_{R}(W)_{1}$, we have $d(v)=\sum_{i\in I} v_i^1\otimes v_i^2$, for a set $I$, and some $v_i^1, v_i^2\in T_{R}(W)_{1}$.

We have $d(v)\in T_{R}(W)_{1}\otimes_{T_{R}(W)_0} T_{R}(W)_{1}$, if $f:M\rightarrow N$ and $g:N\rightarrow L$ are morphisms in $\mathrm{Mod}\, \mathcal{A}$, consider the following $T_{R}(W)_{0}$-bimodule morphisms 
\[T_{R}(W)_{1}\otimes T_{R}(W)_{1}\xrightarrow{\ f^1\otimes g^1} \mathrm{Hom}_{k}(M,N)\otimes \mathrm{Hom}_{k}(N,L) \longrightarrow \mathrm{Hom}_{k}(M,L);\]
where, for all $v,w\in T_{R}(W)_{1}$
\[v\otimes w \mapsto f^1(v)\otimes g^1(w) \mapsto g^1(w)f^1(v).\]

We define $gf=((gf)^{0},(gf)^{1})$ as follows
\begin{equation}\label{composicion}
\begin{split}
(gf)^{0} &= g^0f^0;\\
(gf)^{1}(v) = g^1(v)f^0 +&g^0f^1(v)+\sum g^1(v_i^2)f^1(v_i^1).
\end{split}
\end{equation}

The pair $id=(id_M,0)$ is the identity for this composition.

We are going to see, next, some equivalences between the categories of representations of the ditalgebras of Examples \ref{ditProj} and \ref{ditDrozd}, and some categories determined by modules over an algebra $\Lambda =S\oplus J$, where $S$ is a semisimple sub algebra of $\Lambda $ and $J$ is the radical of $\Lambda $.

\begin{proposition}\label{projEquivDitalg}
Let $\Lambda$ be a $\textsf{K}$-algebra such that $\Lambda =S\oplus J,$ and $\mathcal{A}=(T_{S}(^{*}J),\delta )$ be a ditalgebra as in Example \ref{ditProj}. Then $\mathrm{Mod}\, \mathcal{A}$ is equivalent to the category of right projectve $\Lambda$-modules $\mathrm{Proj}\, \Lambda$.
\end{proposition}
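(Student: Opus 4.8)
The plan is to realize the equivalence through the functor $F\colon \mathrm{Mod}\,\mathcal{A}\rightarrow \mathrm{Proj}\,\Lambda$ given on objects by $F(M)=M\otimes_{S}\Lambda$. First I would record what $\mathrm{Mod}\,\mathcal{A}$ looks like concretely: since $W={}^{*}J$ sits entirely in degree $1$, we have $T_{S}({}^{*}J)_{0}=S$ and $T_{S}({}^{*}J)_{1}={}^{*}J$, so an object of $\mathrm{Mod}\,\mathcal{A}$ is simply a right $S$-module $M$, and a morphism $f\colon M\rightarrow N$ is a pair $(f^{0},f^{1})$ with $f^{0}\in\mathrm{Hom}_{S}(M,N)$ and $f^{1}\in\mathrm{Hom}_{S\text{-}S}({}^{*}J,\mathrm{Hom}_{\textsf{K}}(M,N))$; the compatibility condition (\ref{morfismosDit}) is automatic, because $\delta(S)=0$ and $f^{0}$ is $S$-linear. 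On objects $M\otimes_{S}\Lambda$ is a projective right $\Lambda$-module: as $S$ is semisimple, $M$ is a direct summand of a free $S$-module, hence $M\otimes_{S}\Lambda$ is a direct summand of some $\Lambda^{(I)}$.

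On morphisms I would define $F(f)$ as the unique right $\Lambda$-module map determined on the generators $M\otimes 1$ by
\[
F(f)(m\otimes 1)=f^{0}(m)\otimes 1+\sum_{i=1}^{n}f^{1}(\gamma_{i})(m)\otimes p_{i},
\]
and extended by $F(f)(m\otimes\lambda)=F(f)(m\otimes 1)\lambda$. The first checks are that this is well defined, i.e. compatible with the $S$-balancing $ms\otimes 1=m\otimes s$: this uses that $f^{0}$ is $S$-linear, that $f^{1}$ is an $S$-bimodule morphism, and the dual basis identity $a=\sum_{i}p_{i}\gamma_{i}(a)$ together with its dual for ${}^{*}J$. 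Using the decomposition $N\otimes_{S}\Lambda=(N\otimes_{S}S)\oplus(N\otimes_{S}J)$, faithfulness is immediate: $F(f)=0$ forces the $S$-component $f^{0}(m)\otimes 1$ to vanish, hence $f^{0}=0$, and then applying $\mathrm{id}_{N}\otimes\gamma$ (legitimate since $\gamma$ is left $S$-linear) to the $J$-component $\sum_{i}f^{1}(\gamma_{i})(m)\otimes p_{i}$ recovers $f^{1}(\gamma)(m)$, forcing $f^{1}=0$. Fullness is the reverse reading: given a $\Lambda$-map $g\colon M\otimes_{S}\Lambda\rightarrow N\otimes_{S}\Lambda$, decompose $g(m\otimes 1)$ along $S\oplus J$ to read off $f^{0}$ from the $S$-part and set $f^{1}(\gamma)(m)=(\mathrm{id}_{N}\otimes\gamma)$ applied to the $J$-part; one then checks that $(f^{0},f^{1})$ is a morphism of $\mathrm{Mod}\,\mathcal{A}$ with $F(f)=g$.

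The step I expect to be the main obstacle is functoriality, $F(gf)=F(g)F(f)$. Here the nontrivial composition law (\ref{composicion}) of $\mathrm{Mod}\,\mathcal{A}$, namely $(gf)^{1}(\gamma)=g^{1}(\gamma)f^{0}+g^{0}f^{1}(\gamma)+\sum g^{1}(v^{2})f^{1}(v^{1})$ with $\delta(\gamma)=\sum v^{1}\otimes v^{2}$, must match the $\Lambda$-linear composite. Expanding $F(g)F(f)(m\otimes 1)$ and projecting onto $N'\otimes_{S}S$ and $N'\otimes_{S}J$, the semisimple part reproduces $(gf)^{0}=g^{0}f^{0}$ at once, while in the radical part the terms $f^{1}(\gamma_{i})(m)\otimes p_{i}$ pushed through $F(g)$ produce products $p_{k}p_{i}\in J^{2}$ that must be reorganized; this is exactly where the comultiplication enters, since $\delta(\gamma)=\mu(\gamma)=\sum_{i,j}\gamma_{i}\otimes\gamma_{j}\,\gamma(p_{j}p_{i})$ and the dual basis identities established in Example \ref{ditProj} convert these products back into the correction term $\sum g^{1}(v^{2})f^{1}(v^{1})$. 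Once $F$ is a full and faithful additive functor, density follows formally: $F$ sends the free $S$-module $S^{(I)}$ to the free $\Lambda$-module $\Lambda^{(I)}$, and since idempotents split in $\mathrm{Mod}\,\mathcal{A}$ the essential image of the full faithful $F$ is closed under direct summands; as every projective $\Lambda$-module is a summand of a free one, $F$ is dense, giving the equivalence $\mathrm{Mod}\,\mathcal{A}\cong\mathrm{Proj}\,\Lambda$.
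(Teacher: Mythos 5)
Your proposal follows the paper's proof essentially verbatim: the same functor $F(M)=M\otimes_{S}\Lambda$ with the same formula on morphisms, the same faithfulness argument via the decomposition $N\otimes_{S}\Lambda=(N\otimes_{S}S)\oplus(N\otimes_{S}J)$, the same fullness argument reading off $f^{0}$ and $f^{1}$ from the $S$- and $J$-components of $g(m\otimes 1)$, and the same use of the comultiplication $\mu$ to verify functoriality. The only cosmetic difference is density, where the paper directly invokes that every projective right $\Lambda$-module has the form $M\otimes_{S}\Lambda$ (using that $S\cong\Lambda/J$ is semisimple), while you pass through free modules and idempotent splitting in $\mathrm{Mod}\,\mathcal{A}$ --- both rest on standard facts of comparable weight.
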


\begin{proof}
Given a dual basis $\{ p_{i},\gamma _{i}\}_{i=1}^{n}$, for $_SJ$, with $p_{i}\in J\,$ y $\,\gamma_{i}\in\,^*J$, for two objects $M,N\in \mathrm{Mod}\, \mathcal{A}$, and a morphism $f:M\rightarrow N$ between them, we define
\[F:\mathrm{Mod}\, \mathcal{A} \rightarrow \mathrm{Proj}\, \Lambda;\]
as follows
\begin{equation}\label{}
\begin{split}
F(M) &= M \otimes_S \Lambda;\\
F(N) &= N \otimes_S \Lambda;\\
F(f)(m \otimes \lambda) &= f^0(m)\otimes \lambda+ \sum_{i=1}^{n}f^1(\gamma _{i})(m) \otimes  p_i \lambda, \text{ with } \lambda \in \Lambda \text{ y } m\in M.
\end{split}
\end{equation}

Consider $l\in \Lambda$,
\begin{align*}
F(f)((m \otimes \lambda)l) &= F(f)(m \otimes \lambda l)\\ 
&= f^0(m)\otimes \lambda l + \sum_{i=1}^{n}f^1(\gamma _{i})(m) \otimes  p_i \lambda l\\ 
&= \left[ f^0(m)\otimes \lambda + \sum_{i=1}^{n}f^1(\gamma _{i})(m) \otimes  p_i \lambda \right] l\\
&= F(f)(m \otimes \lambda) l;
\end{align*}
then $F(f)$ is a right $\Lambda$-module morphism.

For every $s\in S$, $i\in \{1,\ldots n\}$
\begin{align*}
(f^1\otimes id) (s\gamma_i \otimes  p_i) &= (f^1\otimes id) (\gamma_i \otimes  p_i s)\\
f^1(s\gamma_i) \otimes  p_i &= f^1(\gamma_i) \otimes  p_i s\\
f^1(s\gamma_i) \otimes  p_i &= (f^1(\gamma_i) \otimes  p_i) s.\\
\end{align*}

Therefore
\begin{align*}
F(f)(m \otimes s \lambda) &= f^0(m)\otimes s \lambda + \sum_{i=1}^{n}f^1(\gamma _{i})(m) \otimes  p_i s \lambda\\
 &= f^0(m)s \otimes \lambda + \sum_{i=1}^{n} f^1(s\gamma_i)(m) \otimes  p_i \lambda\\
 &= f^0(ms) \otimes \lambda + \sum_{i=1}^{n} f^1(\gamma_i)(ms) \otimes  p_i \lambda\\
 &= F(f)(ms \otimes \lambda);
\end{align*}
then, $F$ is balanced.

Let $M\xrightarrow{f}N\xrightarrow{g}L$ be a pair of morphisms in $\mathrm{Mod}\, \mathcal{A}$. Consider the composition $F(g)F(f)$, 
\begin{align*}
F(g)F(f)(m \otimes \lambda) &= F(g) \left( f^0(m)\otimes \lambda+ \sum_{i=1}^{n}f^1(\gamma _{i})(m) \otimes  p_i \lambda \right)\\
 &= g^0f^0(m) \otimes \lambda + \sum_{i=1}^{n} g^1(\gamma _{i})(f^0(m)) \otimes  p_i \lambda\\
 &\ \ \  + \sum_{i=1}^{n} g^0(f^1(\gamma _{i})(m)) \otimes  p_i \lambda+ \sum_{i,j=1}^{n} g^1(\gamma _{j}) f^1(\gamma _{i})(m) \otimes p_j p_i \lambda.
\end{align*}

Recall that $\delta (\gamma _{i})=\sum_{r,t=1}^{n} \gamma_r \otimes \gamma_t \gamma_i(p_tp_r)$, for all $i\in \{1,\ldots n\}$, then the image of the composition $gf$ is
\begin{align*}
F(gf)(m \otimes \lambda) &= (gf)^0(m) \otimes \lambda + \sum_{i=1}^{n} (gf)^1(\gamma _{i})(m) \otimes  p_i \lambda\\
 &= g^0f^0(m) \otimes \lambda + \sum_{i=1}^{n} g^1(\gamma _{i})(f^0(m)) \otimes  p_i \lambda + \sum_{i=1}^{n} g^0(f^1(\gamma _{i})(m)) \otimes  p_i \lambda\\
 &\ \ \  + \sum_{i,r,t=1}^{n} g^1(\gamma_t \gamma_i(p_tp_r)) f^1(\gamma _{r})(m) \otimes p_i \lambda\\
 &= g^0f^0(m) \otimes \lambda + \sum_{i=1}^{n} g^1(\gamma _{i})(f^0(m)) \otimes  p_i \lambda + \sum_{i=1}^{n} g^0(f^1(\gamma _{i})(m)) \otimes  p_i \lambda\\
 &\ \ \  + \sum_{i,r,t=1}^{n} g^1(\gamma_t) \gamma_i(p_tp_r) f^1(\gamma _{r})(m) \otimes p_i \lambda\\
 &= g^0f^0(m) \otimes \lambda + \sum_{i=1}^{n} g^1(\gamma _{i})(f^0(m)) \otimes  p_i \lambda + \sum_{i=1}^{n} g^0(f^1(\gamma _{i})(m)) \otimes  p_i \lambda\\
 &\ \ \  + \sum_{i,r,t=1}^{n} g^1(\gamma_t)f^1(\gamma _{r})(m) \gamma_i(p_tp_r) \otimes p_i \lambda\\
 &= g^0f^0(m) \otimes \lambda + \sum_{i=1}^{n} g^1(\gamma _{i})(f^0(m)) \otimes  p_i \lambda + \sum_{i=1}^{n} g^0(f^1(\gamma _{i})(m)) \otimes  p_i \lambda\\
 &\ \ \  + \sum_{i,r,t=1}^{n} g^1(\gamma_t)f^1(\gamma _{r})(m) \otimes \gamma_i(p_tp_r) p_i \lambda\\
 &= g^0f^0(m) \otimes \lambda + \sum_{i=1}^{n} g^1(\gamma _{i})(f^0(m)) \otimes  p_i \lambda + \sum_{i=1}^{n} g^0(f^1(\gamma _{i})(m)) \otimes  p_i \lambda\\
 &\ \ \  + \sum_{i,r,t=1}^{n} g^1(\gamma_t)f^1(\gamma _{r})(m) \otimes p_tp_r \lambda\\
 &= F(g)F(f)(m \otimes \lambda). 
\end{align*}

Hence, $F$ is a well-defined functor.

Any right projective $\Lambda$-module $M$, can be written as $M \otimes_S \Lambda$, then $F$ is dense.

Suppose that $f=(f^{0},f^{1}):M \rightarrow N$ is a morphism in $\mathrm{Mod}\, \mathcal{A}$ such that $F(f)=0$. Then, when $\lambda=1\in  \Lambda,$
\[f^0(m)\otimes 1 + \sum_{i=1}^{n}f^1(\gamma _{i})(m) \otimes  p_i = 0,\]
for all $m\in M$.

Notice that 
\begin{align*}
F(N) &= N\otimes_S \Lambda\\
 &= N  \otimes_S (S\oplus J)\\
 &= (N \otimes_S S) \oplus (N \otimes_S J);
\end{align*}
we have $f^0(m)\otimes 1 \in N \otimes_S S$ y $\sum_{i=1}^{n}f^1(\gamma _{i})(m) \otimes  p_i\in N \otimes_S J$, then
\[f^0(m)\otimes 1=0\   \text{  y }  \sum_{i=1}^{n}f^1(\gamma _{i})(m) \otimes  p_i=0, \text{ para toda } m\in M,\]
therefore $f^0=0$.

For $\gamma \in \,^{*}J$, consider 
$$M \otimes_S J \xrightarrow{ id_M \otimes \gamma } M \otimes_S S \xrightarrow{\ \cong\ } M;$$
such that, for all  $u\in J, m\in M$
$$m\otimes u \mapsto m \otimes \gamma(u) \mapsto m\gamma(u).$$

Hence,
\begin{align*}
id_M \otimes \gamma \left( \sum_{i=1}^{n} f^1(\gamma _{i})(m) \otimes  p_i  \right) &= 0\\
\sum_{i=1}^{n} f^1(\gamma _{i})(m) \otimes \gamma (p_i)  &= 0;\\
\intertext{then,}
\sum_{i=1}^{n} f^1(\gamma _{i})(m) \gamma (p_i) &= 0;\\
\sum_{i=1}^{n} f^1(\gamma _{i}\gamma (p_i))(m)  &= 0;\\
\intertext{by the properties of a dual basis $\gamma=\sum_{i=1}^{n} \gamma_i \gamma(p_i)$, therefore}
f^1(\gamma)(m) &= 0,
\end{align*}
so $f^1=0$, and we conclude that $F$ is faithful.

Consider the projection $\pi_0:\Lambda \rightarrow S$, and the morphisms
\begin{equation}\label{morfismoENtodoD}
\gamma : \Lambda \rightarrow S\text{, obtained by defining } \gamma (s)=0, \text{ for all } \gamma \in \,^{*}J, s\in S.
\end{equation}

The set $\{ 1, p_{i}, \pi_0, \gamma _{i}\}_{i=1}^{n}$ is a dual basis for $_S\Lambda$. Indeed, for every $\lambda \in \Lambda$,
\begin{align*}
\lambda &= \pi_0(\lambda) + (\lambda - \pi_0(\lambda));
\intertext{we have $\lambda - \pi_0(\lambda)\in J$,}
\lambda &= \pi_0(\lambda) + \sum_{i=1}^{n} \gamma_i(\lambda - \pi_0(\lambda))p_i\\
 &= \pi_0(\lambda) + \sum_{i=1}^{n} (\gamma_i(\lambda) - \gamma_i(\pi_0(\lambda)))p_i\\
 &= \pi_0(\lambda) + \sum_{i=1}^{n} \gamma_i(\lambda) p_i;\\
\intertext{because $\pi_0(\lambda)\in S$.}
\end{align*}

Let $h: M\otimes_S  \Lambda \rightarrow N \otimes_S \Lambda$ be a $\Lambda$-module morphism, we define a $S$-module morphism, as follows
\[f^0_h(v)=m( id_N \otimes \pi_0)(h(v\otimes 1)),\]
where $v\in M$, and $m$ is an isomorphism $m:N\otimes_S S \rightarrow N: n\otimes s \mapsto ns.$

For every $\gamma \in \,^{*}J$, taking into account  (\ref{morfismoENtodoD}), we define 
\[f^1_h(\gamma)(v)=m(id_N \otimes \gamma)(h(v\otimes 1)).\]

Suppose that $h(v\otimes 1)= \sum_{r=1}^{t} n_r \otimes \lambda_r$, for some $n_r \in N, \lambda_r\in \Lambda$, then, for $s\in S$,
\begin{align*}
f^1_h(s\gamma)(v) &= m(id_N \otimes s\gamma)(h(v\otimes 1))\\
 &=m \left( \sum_{r=1}^{t} n_r \otimes s\gamma(\lambda_r) \right)\\
 &= m \left( \sum_{r=1}^{t} n_r \otimes \gamma(\lambda_r s) \right)\\
 &= \sum_{r=1}^{t} n_r \gamma(\lambda_r s)\\
 &= m(id_N \otimes s\gamma)(h(v\otimes 1)s)\\
 &= m(id_N \otimes s\gamma)(h(v\otimes s))\\
 &= m(id_N \otimes s\gamma)(h(vs\otimes 1))\\
 &= f^1_h(\gamma)(vs).
\end{align*}

Besides
\begin{align*}
f^1_h(\gamma s)(v) &= m(id_N \otimes \gamma s)(h(v\otimes 1))\\
 &=m \left( \sum_{r=1}^{t} n_r \otimes \gamma s(\lambda_r) \right)\\
 &= m \left( \sum_{r=1}^{t} n_r \otimes \gamma(\lambda_r )s \right)\\
 &= \sum_{r=1}^{t} n_r \gamma(\lambda_r )s\\
 &= m(id_N \otimes s\gamma)(h(v\otimes 1))s\\
 &= f^1_h(\gamma)(v)s.
\end{align*}

Therefore $f^1_h: \,^{*}J \rightarrow \mathrm{Hom}_{k}(M,N)$ is a $S$-bimodule morphism, and $f_h=(f^0_h,f^1_h):M \rightarrow N$ is a morphism in $\mathrm{Mod}\, \mathcal{A}$, such that, for all $\lambda \in \Lambda, v \in M$
\begin{align*}
F(f_h)(v \otimes \lambda) &= f_h^0(v)\otimes \lambda+ \sum_{i=1}^{n}f^1_h(\gamma _{i})(v) \otimes  p_i \lambda \\
 &= m( id_N \otimes \pi_0)(h(v\otimes 1)) \otimes \lambda + \sum_{i=1}^{n} m(id_N \otimes \gamma_i)(h(v\otimes 1)) \otimes  p_i \lambda \\
 &= \left[ m( id_N \otimes \pi_0)(h(v\otimes 1)) \otimes 1 + \sum_{i=1}^{n} m(id_N \otimes \gamma_i)(h(v\otimes 1)) \otimes  p_i  \right] \lambda \\
\intertext{using the dual basis of $_S\Lambda$,}
 &= h(v\otimes 1)\lambda \\
 &= h(v\otimes \lambda).
\end{align*}

We conclude that $F$ is a full functor, which finishes the proof.
\end{proof}

Consider the ditalgebra $\mathcal{D}$, as in Example \ref{ditDrozd}
\[\mathcal{D}=(T_{R}(W),d).\]

Every $a\in T_{R}(W)_0$, can be written as $a=w_1s_1+w_2s_2+w_{1,2}\gamma$, for some $s_1, s_2 \in S$ and $\gamma \in \,^{*}J$.
 
A morphism $f:M \rightarrow N$ in the category $\mathrm{Mod}\, \mathcal{D}$, between two objects $M$ and $N$, satisfies for all $a\in T_{R}(W)_0$, $m \in M$
\begin{align*}
f^0(m)(w_1s_1+w_2s_2+w_{1,2}\gamma) &= f^0(m(w_1s_1+w_2s_2+w_{1,2}\gamma))\\
 &\ \  + f^1(d(w_1s_1+w_2s_2+w_{1,2}\gamma))(m)\\
f^0(m)w_1s_1+f^0(m)w_2s_2+f^0(m)w_{1,2}\gamma &= f^0(m)w_1s_1+f^0(m)w_2s_2 + f^0(mw_{1,2}\gamma)\\
 &\ \  + f^1(d(w_{1,2}\gamma))(m),
\end{align*}
then,
\begin{equation}\label{morfismoD}
\begin{split}
f^0(m)w_{1,2}\gamma &= f^0(mw_{1,2}\gamma) + f^1(\delta(w_{1,2})*\mu(\gamma))(m)\\
 &= f^0(mw_{1,2}\gamma) + \sum_{i,j=1}^{n} w_{1,2}\gamma_if^1(w_2\gamma_j\gamma(p_jp_i)) (m)\\
 &\ \   -  \sum_{i,j=1}^{n} f^1(w_1\gamma_i)w_{1,2}\gamma_j \gamma(p_jp_i)(m)\\
 &= f^0(mw_{1,2}\gamma) + \sum_{i,j=1}^{n}f^1(w_2\gamma_j)(m w_{1,2}\gamma_i)\gamma(p_jp_i)\\
 &\ \   -  \sum_{i,j=1}^{n} f^1(w_1\gamma_i)(m)w_{1,2}\gamma_j \gamma(p_jp_i).
\end{split}
\end{equation}

We are going to define, next, a category of morphisms which is equivalent to $\mathrm{Mod}\, \mathcal{D}$.

\begin{definition}\label{catMorfismos}
Let  $\mathcal{A}=(T_{S}(^{*}J),\delta )$ be the ditalgebra of Example  \ref{ditProj}, we denote by $\mathcal{M}^1_{\Lambda}\mathcal{(A)}$ a category whose objects are morphisms in $\mathrm{Mod}\, \mathcal{A}$, of the form 
$$\psi=(0,\psi):M_1 \rightarrow M_2.$$ 

The morphisms between them are pairs of morphisms $(f,g)$ in $\mathrm{Mod}\, \mathcal{A}$, such that the following diagram commutes
\[
\begin{CD}
M_1 @>{(0,\psi_M)}>>  M_2\\
 @V{f=(f^0,f^1)}VV @VV{g=(g^0,g^1)}V \\
N_1 @>>{(0,\psi_N)}>  N_2
\end{CD}
\]
\end{definition}

We have the following result:

\begin{proposition}\label{DitalgEquivDrozd}
From a  $\textsf{K}$-algebra $\Lambda =S\oplus J$, using the notation of this section, we construct the ditalgebras $\mathcal{A}=(T_{S}(^{*}J),\delta )$ and $\mathcal{D}=(T_{R}(W),d)$, and the category of morphisms $\mathcal{M}^1_{\Lambda}\mathcal{(A)}$.

Then the categories $\mathrm{Mod}\, \mathcal{D}$ and $\mathcal{M}^1_{\Lambda}\mathcal{(A)}$ are equivalent.
\end{proposition}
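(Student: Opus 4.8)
The plan is to construct a functor $\Phi\colon \mathrm{Mod}\,\mathcal{D}\to \mathcal{M}^1_{\Lambda}(\mathcal{A})$ directly from the matrix shape of $R$, $W_0$ and $W_1$ used in Example \ref{ditDrozd}, and to show it is an isomorphism (hence an equivalence) of categories by matching data on objects and the defining relation (\ref{morfismoD}) on morphisms.

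First I would unravel the objects. Since $T_R(W)_0=T_R(W_0)$ and $W_0\cdot W_0=0$ in the matrix algebra, we have $T_R(W)_0=R\oplus W_0$ with $R=\begin{pmatrix} S & 0\\ 0 & S\end{pmatrix}$ and $W_0=w_{1,2}\,{}^*J$ (notation of (\ref{w})). Using the orthogonal idempotents $w_1,w_2$, every $M\in\mathrm{Mod}\,\mathcal{D}$ splits as $M=M_1\oplus M_2$ with $M_i=Mw_i$ a right $S$-module. Because $w_2w_{1,2}=0$ and $w_1w_{1,2}=w_{1,2}=w_{1,2}w_2$, the action of $W_0$ vanishes on $M_2$ and sends $M_1$ into $M_2$; thus $\gamma\mapsto(m_1\mapsto m_1w_{1,2}\gamma)$ defines an $S$-bimodule morphism $\psi_M\colon{}^*J\to\mathrm{Hom}_{\textsf{K}}(M_1,M_2)$, i.e. exactly the datum of an object $(0,\psi_M)\colon M_1\to M_2$ of $\mathcal{M}^1_{\Lambda}(\mathcal{A})$ (Definition \ref{catMorfismos}). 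I set $\Phi(M)=(0,\psi_M)$, and conversely reconstruct $M$ from $(M_1,M_2,\psi)$ by letting $W_0$ act through $\psi$; one checks that associativity of this action is equivalent to $\psi$ being an $S$-bimodule map, so objects correspond bijectively.

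Next I would treat morphisms. A morphism $f=(f^0,f^1)\colon M\to N$ in $\mathrm{Mod}\,\mathcal{D}$ has $f^0=f^0_1\oplus f^0_2$ with $f^0_i\colon M_i\to N_i$, while $W_1=w_1\,{}^*J\oplus w_2\,{}^*J$ forces $f^1$ to be determined by $f^1_1(\gamma)=f^1(w_1\gamma)\colon M_1\to N_1$ and $f^1_2(\gamma)=f^1(w_2\gamma)\colon M_2\to N_2$. These assemble into two morphisms of $\mathrm{Mod}\,\mathcal{A}$, namely $\bar f=(f^0_1,f^1_1)\colon M_1\to N_1$ and $\bar g=(f^0_2,f^1_2)\colon M_2\to N_2$, and I set $\Phi(f)=(\bar f,\bar g)$. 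The content is to verify that the single relation (\ref{morfismoD}) — which comes from differentiating $w_{1,2}\gamma$ and hence from the two summands of the differential of $w_{1,2}$ together with the comultiplication $\mu=\delta|_{{}^*J}$ — is, after substituting $\psi_M(\gamma)(m_1)=m_1w_{1,2}\gamma$ and $\psi_N$ likewise, precisely the commutativity of the square $(0,\psi_N)\,\bar f=\bar g\,(0,\psi_M)$ computed with the composition rule (\ref{composicion}). Concretely, the two cross-terms $\sum f^1_2(\gamma_i^2)\psi_M(\gamma_i^1)$ and $\sum \psi_N(\gamma_i^2)f^1_1(\gamma_i^1)$ produced by (\ref{composicion}) match the two sums in (\ref{morfismoD}) arising respectively from the $w_{1,2}\otimes w_2$ and $w_1\otimes w_{1,2}$ summands of the differential, with their signs.

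Finally I would check functoriality and that $\Phi$ is an isomorphism of categories: identities correspond because $\mathrm{id}=(\mathrm{id},0)$ on both sides, and composition is preserved since the $\mathcal{D}$-composition (\ref{composicion}) restricts componentwise to the $\mathcal{A}$-composition used to define morphisms in $\mathcal{M}^1_{\Lambda}(\mathcal{A})$. Since the assignments on objects and morphisms are mutually inverse to the reconstruction described above, $\Phi$ is an isomorphism, hence an equivalence. The main obstacle is the bookkeeping in the morphism step: one must track carefully the $S$-bimodule structure of $\mathrm{Hom}_{\textsf{K}}(-,-)$ and the placement of the scalars $\gamma(p_jp_i)\in S$, so that the cross-terms of (\ref{morfismoD}) line up with those of (\ref{composicion}) including signs; everything else is a routine, if lengthy, identification of data.
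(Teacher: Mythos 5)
Your proposal is correct and follows essentially the same route as the paper: the functor $G$ in the paper's proof is defined by exactly the same splitting $M=Mw_11_S\oplus Mw_21_S$, the same $\psi_M(\gamma)(m)=mw_{1,2}\gamma$, and the same restrictions $f^0_i=f^0|_{M_i}$, $f^1_i(\gamma)=f^1(w_i\gamma)$, with the identification of relation (\ref{morfismoD}) as the commutativity of the square under the composition (\ref{composicion}) carried out in the same way. The only cosmetic difference is that the paper concludes by verifying that $G$ is full, faithful and dense rather than exhibiting a strict inverse on objects, which amounts to the same thing here.
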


\begin{proof}
Let $1_S$ be the unit in $S$, notice that $w_11_S$ y $w_21_S$ are idempotents in $T_R(W)_0= \left( \begin{array}{cc}S&^{*}J\\0&S \end{array} \right)$.

For any object $M \in \mathrm{Mod}\, \mathcal{D}$, we denote
\begin{equation}\label{dosSmodulosD}
M_1=Mw_11_S; \hspace{1cm} M_2=Mw_21_S.
\end{equation}

By the right $T_R(W)_0$-module structure of $M$, we have that $M_1$ and $M_2$ are right $S$-modules, $M=M_1 \oplus M_2$, and for all $\gamma \in \,^{*}J$, the multiplication induces a $\textsf{K}$-transformation 
\[\psi_M(\gamma): M_1 \rightarrow M_2: m\mapsto m w_{1,2} \gamma.\]

For every $s\in S$, $m\in M_1$, 
\[\psi_M(s\gamma)(m)= m w_{1,2}s \gamma = m sw_{1,2} \gamma = \psi_M(\gamma)(ms),\]
besides,
\[\psi_M(\gamma s)(m) = m w_{1,2} \gamma s = \psi_M(\gamma)(m)s,\]
which proves that $\psi_M:\,^{*}J \rightarrow \mathrm{Hom}_{k}(M_1,M_2)$ is a $S$-bimodule morphism.

Let $M$ and $N$ be objects in $\mathrm{Mod}\, \mathcal{D}$, and $f=(f^0,f^1):M \rightarrow N$ be a morphism between them. We define
$$G: \mathrm{Mod}\, \mathcal{D} \rightarrow \mathcal{M}^1_{\Lambda}\mathcal{(A)}$$
as follows
$$G(M)=M_1 \xrightarrow{(0,\psi_M)} M_2,$$
$$G(N)=N_1 \xrightarrow{(0,\psi_N)} N_2,$$
$$G(f) = (f_1,f_2) = ((f_1^0,f_1^1),(f_2^0,f_2^1));$$
where $f_i^0=f^0|_{M_i}$, and $f^1_i(\gamma)=f^1(w_i\gamma)$, for every $\gamma \in\!\,^{*}J$, for $i=1,2$.

The morphism $f^0$ is a right $R$-module morphism, so 
\[f^0(M_i) = f^0(Mw_i1_S)=f^0(M)w_i1_S\subseteq N_i,\]
then $f_i^0: M_i \rightarrow N_i$ is a right $R$-module morphism.

Consider $\gamma \in \,^{*}J$, the product $w_i\gamma\in T_{R}(W)_1$, is such that
$$w_i\gamma=w_i1_S w_i \gamma w_i1_S.$$

We have that $f^1: T_{R}(W)_1 \rightarrow \mathrm{Hom}_{k}(M,N)$, is a $T_R(W)_0$-bimodule morphism 
$$f^1_i(\gamma)=f^1(w_i\gamma)= w_i1_S f^1(w_i\gamma) w_i1_S,$$
hence,
$$f^1_i(\gamma)\in \mathrm{Hom}_{k}(M_i,N_i).$$

Notice that, for $\gamma \in J^{*}, m\in M_1$, the composition $(f_2^0,f_2^1)(0,\psi_M)=(0,(f_2\psi_M)^1)$, is such that
\begin{align*}
(f_2\psi_M)^1(\gamma)(m) &=  f_2^0(\psi_M(\gamma)(m)) + \sum_{i,j=1}^{n} f_2^1(\gamma_j\gamma(p_jp_i))\psi_M(\gamma_i)(m)\\
 &=  f^0(m w_{1,2} \gamma) + \sum_{i,j=1}^{n} f^1(w_2 \gamma_j\gamma(p_jp_i))(m w_{1,2} \gamma_i)\\
 &=  f^0(m w_{1,2} \gamma) + \sum_{i,j=1}^{n} f^1(w_2 \gamma_j)(m w_{1,2} \gamma_i)\gamma(p_jp_i);
\end{align*}
and for $(0,\psi_N)(f_1^0,f_1^1)=(0,(\psi_Nf_1)^1)$, we have
\begin{align*}
(\psi_Nf_1)^1(\gamma)(m) &= \psi_N(\gamma)(f^0_1(m)) + \sum_{i,j=1}^{n} \psi_N(\gamma_j\gamma(p_jp_i)) f^1_1(\gamma_i)(m)\\
 &= f^0(m) w_{1,2} \gamma + \sum_{i,j=1}^{n} f^1(w_1\gamma_i)(m) w_{1,2} \gamma_j\gamma(p_jp_i).
\end{align*}

Substracting,
\begin{align*}
(f_2\psi_M)^1(\gamma)(m) &- (\psi_Nf_1)^1(\gamma)(m) = f^0(m w_{1,2} \gamma) - f^0(m) w_{1,2} \gamma\\
& + \sum_{i,j=1}^{n} (f^1(w_2 \gamma_j)(m w_{1,2} \gamma_i)-f^1(w_1\gamma_i)(m) w_{1,2} \gamma_j)\gamma(p_jp_i).
\end{align*}

Hence, $(f_2\psi_M)^1(\gamma)(m) - (\psi_Nf_1)^1(\gamma)(m) = 0$ if and only if (\ref{morfismoD}) holds. Therefore, $G(f)$ is a morphism in $\mathcal{M^{\mathrm{1}}(A)}$, if and only if $f$ is a morphism in $\mathrm{Mod}\, \mathcal{D}$. 

We conclude that $G$ is a full functor, because $h=((h_1^0,h^1_1),(h_2^0,h_2^1)):G(M) \rightarrow G(N)$ is a morphism in $\mathcal{M}^1_{\Lambda}\mathcal{(A)}$ if and only if $f=(w_11_Sh_1^0\oplus w_21_Sh_2^0, w_11_Sh_1^1 \oplus w_21_Sh_2^1)$ is a morphism in $\mathrm{Mod}\, \mathcal{D}$, and $G(f)=h$.

If a morphism $f:M \rightarrow N$ in $\mathrm{Mod}\, \mathcal{D}$, is such that $G(f)=0$, then $f^0(M_1)=f^0(M_2)=0$, hence $f^0=0$. We have that $x\in T_{R}(W)_1$ can be written as $x= w_1 a_1 + w_2 a_2$, for some $a_1, a_2 \in\!\,^{*}J$, then $f^1(x)=f^1_1(a_1)+f^1_2(a_2)=0$. Therefore $G$ is faithful.

Let $M_1 \xrightarrow{(0,\psi_M)} M_2$ be an object of $\mathcal{M}^1_{\Lambda}\mathcal{(A)}$. We give to $M_1\oplus M_2$ a right $T_R(W)_0$-module structure, with the product
\[(m_1\ \ m_2) \left( \begin{array}{cc}s_1&\gamma\\ 0 & s_2 \end{array} \right) = \left( m_1s_1\ \ \ \psi_M(\gamma)(m_1)+m_2s_2 \right).\]

Therefore, the functor $G$ is dense, and the proposition is proved.
\end{proof}

Consider the following category of morphisms.

\begin{definition}\label{catMorfismosProj}
We denote by $\mathcal{M}^1(\Lambda)$ the category whose objects are morphisms $\varphi: P \rightarrow Q$, between right projective $\Lambda$-modules $P$ and $Q$, such that $\mathrm{Im}\,\varphi \subseteq \mathrm{rad}Q$. Its morphisms are pairs of right projective $\Lambda$-module morphisms, in such a way that the following diagram commutes 
\[
\begin{CD}
P @>{\varphi}>>  Q\\
 @V{h}VV @VV{h'}V \\
P' @>>{\varphi'}>  Q'
\end{CD}
\]
for every pair of objects $\varphi, \varphi' \in \mathcal{M}^1(\Lambda)$.
\end{definition}

Due to the proof of Proposition \ref{projEquivDitalg}, every morphism $F(f):M \otimes_S \Lambda \rightarrow N \otimes_S \Lambda$ between projective $\Lambda$-modules can be written as
\[F(f)(m \otimes \lambda) = f^0(m) \otimes\lambda + \sum_{i=1}^{n}  f^1(\gamma _{i})(m)\otimes p_i  \lambda \]
for some morphism $f=(f^0,f^1)$ in $\mathrm{Mod}\, \mathcal{A}$.

We have that $\sum_{i=1}^{n}  f^1(\gamma _{i})(m) \otimes  p_i \lambda \in N\otimes_S J \subseteq \mathrm{rad}(N\otimes_S \Lambda)$, then $\mathrm{Im}\,F(f)\subseteq \mathrm{rad}N\otimes_S \Lambda$ if and only if $f^0=0$.

Therefore $\mathcal{M}^1(\Lambda)$ is equivalent to $\mathcal{M}^1_{\Lambda}\mathcal{(A)}$, and, in turn, to $\mathrm{Mod}\, \mathcal{D}$ by the previous proposition.

%%%%%%%%%%%%%%%%%%%%%%%%%%%%%%%%%%%%%%%%%%%%%%%%%%%%%%%%%%%%%%%%%%%%%%%%%%%%%%%%%%%%%%%%%
%%%%%%%%%%%%%%%%%%%%%%%%%%%%%%%%%%%%%%%%%%%%%%%%%%%%%%%%%%%%%%%%%%%%%%%%%%%%%%%%%%%%%%%%%
%REFERENCES

\begin{bibdiv}
\begin{biblist}
%\bibselect{BIBidorado}

\bib{Arnold2000}{book}{
title={Abelian groups and representations of finite partially ordered sets} author={Arnold, D. M.}  date={2000}
publisher={CMS Books in Mathematics} }

\bib{Arnold}{article}{
title={Finite Rank Torsion Free Abelian Groups and Rings} author={Arnold, D. M.} journal={Springer Lecture Notes} volume={931}
date={1982} }

\bib{ASS}{book}{
title={Elements of the Representation Theory of Associative Algebras} 
author={Assem, I.} author={Simson, D.} author={Skowro\'nski}
date={2006}
publisher={Cambridge Univ. Press}
}

\bib{ARS-95}{book}{
title={Representation Theory of Artin Algebras} author={Auslander,
M.} author={Reiten, I.} author={Smal{\o}, S.} date={1995}
publisher={Cambridge Univ. Press} address={Cambridge} }

\bib{BM}{article}{
title={Representations Of Partially Ordered Sets and $1$-Gorenstein Artin Algebras} 
author={Bautista, R.} author={Mart\'inez-Villa, R.}
journal={ Ring theory, Proc. Antwerp. Conf., Lect. Notes Pure Appl. Math.}volume={51} date={1979}
pages={385--433}}

\bib{BSZ}{book}{
title={Differential Tensor Algebras and their Module Categories} 
author={Bautista, R.} author={Salmer\'on, L.} author={Zuazua, R.}
journal={London Math. Society Lecture Note Series} volume={362}
date={2009}
publisher={Cambridge Univ. Press} }

\bib{Butler}{article}{
title={Torsion-free modules and diagrams of vector spaces} author={Butler,
} journal={Proc. Lond. Math. Soc.}
volume={18} date={1968} }

\bib{Dorado10}{article}{
title={Three-equipped posets and their representations and
corepresentations (inseparable case)} author={Dorado, I.}
journal={Linear Algebra and its Applications} volume={433} date={2010}
pages={1827--1850} }

\bib{Dr3}{article}{
title={Coxeter transformations and representations of partially ordered sets}
author={Drozd, Yu.A.} journal={Funkts. Anal. Prilozh.} volume={8:3} date={1974}
pages={34--42} }

\bib{Gabriel-72}{article}{
title={Unzerlegbare Darstellungen I} author={Gabriel, P.}
journal={Manuscripta Math.} volume={6} date={1972} pages={71--103} }

\bib{Gab-Roi-92}{book}{
title={Representations of Finite-Dimensional Algebras}
author={Gabriel, P.} author={Roiter, A.V.}date={1992}
publisher={Algebra VIII, Encyclopedia of Math. Sc., \textbf{73}}
 address={Springer} }

\bib{Jacobson-64}{book}{
title={Lectures in Abstract Algebra III. Theory of Fields and Galois Theory}
author={Jacobson, N.} date={1964} publisher={Springer} address={NY}
}

\bib{KS}{article}{
title={Schurian sp-representation-finite right peak PI-rings and
their indescomposable socle proyective modules} author={Klemp,
B.}author={Simson, D.} journal={Journal of Algebra} volume={134:2}
date={1990} pages={390--468}}

\bib{Naz-Roi-02}{article}{
title={The norm of a relation, separating functions and
representations of marked quivers (in Russian)} author={Nazarova,
L.A.}author={Roiter, A.V.} journal={Ukr. Math. J.} volume={54:6}
date={2002} pages={808--840} eprint={arXiv:math.RT/0206052 v1} }

\bib{Naz-Roi-Sm-05}{article}{
title={Antimonotonous quadratic forms and partially ordered sets}
author={Nazarova, L.A.}author={Roiter, A.V.} author={Smirnova, M.N.}
date={2005} eprint={arXiv:math.RT//0501374 v1} }

\bib{Ringel-84}{book}{
title={Tame Algebras and Integral Quadratic Forms} author={Ringel,
C.M.} date={1984} publisher={LNM, \textbf{1099}} address={Springer}
}

\bib{corep07}{article}{
title={On corepresentations of equipped
posets and their differentiation} author={Rodriguez,
C.} author={Zavadskij,
A.G.} journal={Revista Colombiana de Matem\'aticas} volume={41} date={2007}
pages={117--142} eprint={http://www.emis.de/journals/RCM/Articulos/820.pdf}}

\bib{R-11}{article}{
title={On corepresentations of one parametric equipped posets} author={Rodriguez,
C.} journal={S\~{a}o Paulo Journal of Mathematical Sciences } volume={4} date={2010}
pages={141--175} }

\bib{Simson}{article}{
title={Vector space categories, right peak rings and their socle projective modules} author={Simson, D.}
journal={J. Algebra} volume={92}
date={1985} pages={532--571} }

\bib{Zab-Zav}{article}{
title={One-parameter equipped partially ordered sets and their representations (in Russian)} author={Zabarilo, A.V.}
author={Zavadskij, A.G.}journal={Funktsional. Anal. i
Prilozhen} volume={34}date={2000} pages={72--75}
translation={journal={Funct. Anal. Appl.}volume={34}date={2000} pages={138--140} } }

\bib{Zav-91-555}{article}{
title={Differentiation algorithm and classification of
representations (in Russian)} author={Zavadskij, A.G.} journal={Izv.
AN SSSR, Ser. Mat.} volume={55:5} date={1991}
pages={1007--1048}translation={journal={ Math. USSR
Izvestia}volume={39:2}date={1992} pages={975--1012} }  }

\bib{Zav-03-TEP}{article}{
title={Tame equipped posets} author={Zavadskij, A.G.}
journal={Linear Algebra Appl.} volume={365} date={2003}
pages={389--465} }

\bib{Zav-05-EPFG}{article}{
title={Equipped posets of finite growth} author={Zavadskij, A.G.}
journal={Representations of Algebras and Related Topics, AMS, Fields
Inst. Comm. Ser. } volume={45} date={2005} pages={363--396} }

\bib{ZavGEP-11}{article}{
title={Representations of generalized equipped posets and posets
with automorphisms over Galois field extensions} author={Zavadskij, A.G.} journal={Journal of Algebra} volume={332}
date={2011} pages={386--413} }

\bib{Zav-Kir-76}{article}{
title={Torsion-free modules over primary rings (in Russian)}
author={Zavadskij, A.G.} author={Kirichenko, V.V.}journal={Zap.
Nauchn. Semin. LOMI} volume={57} date={1976}
pages={100--116}translation={journal={J. Soviet
Math.}volume={11:4}date={1979} pages={598--612} } }

\bib{Zav-Kir-77}{article}{
title={Semimaximal rings of finite type (in Russian)}
author={Zavadskij, A.G.} author={Kirichenko, V.V.}journal={Mat.
Sbornik } volume={103(145):3} date={1977}
pages={323--345}translation={journal={Math. USSR
Sbornik}volume={32:3}date={1977} pages={273--291} } }

\end{biblist}
\end{bibdiv}

%%%%%%%%%%%%%%%%%%%%%%%%%%%%%%%%%%%%%%%%%%%%%%%%%%%%%%%%%%%%%%%%%%%%%%%%%%%%%%%%%%%%%%%%%
%%%%%%%%%%%%%%%%%%%%%%%%%%%%%%%%%%%%%%%%%%%%%%%%%%%%%%%%%%%%%%%%%%%%%%%%%%%%%%%%%%%%%%%%%

\end{document}